\documentclass{amsbook}
\usepackage{amssymb, enumitem}
\usepackage{aliascnt, graphicx,hyperref,xcolor}
\usepackage{mathtools}
\usepackage[all]{xy}
\usepackage{tikz}
\usepackage{tikz-cd}
\usetikzlibrary{arrows.meta, positioning}
\usepackage{imakeidx}
\usetikzlibrary{arrows.meta, decorations.pathreplacing, calc}
\usepackage{standalone}
\usetikzlibrary{shapes.geometric}
\usepackage{float}

\numberwithin{section}{chapter}
\numberwithin{equation}{chapter}

\newtheorem{lma}{Lemma}[section]

\newaliascnt{thmCt}{lma}
\newtheorem{thm}[thmCt]{Theorem}
\aliascntresetthe{thmCt}

\newaliascnt{corCt}{lma}
\newtheorem{cor}[corCt]{Corollary}
\aliascntresetthe{corCt}

\newaliascnt{prpCt}{lma}
\newtheorem{prp}[prpCt]{Proposition}
\aliascntresetthe{prpCt}

\newtheorem*{thm*}{Theorem}
\newtheorem*{cor*}{Corollary}
\newtheorem*{prp*}{Proposition}

\newaliascnt{thmChapterCt}{lmaChapter}

\aliascntresetthe{thmChapterCt}

\newaliascnt{prpChapterCt}{lmaChapter}

\aliascntresetthe{prpChapterCt}

\theoremstyle{definition}

\newaliascnt{pgrCt}{lma}
\newtheorem{pgr}[pgrCt]{}
\aliascntresetthe{pgrCt}

\newaliascnt{dfnCt}{lma}
\newtheorem{dfn}[dfnCt]{Definition}
\aliascntresetthe{dfnCt}

\newaliascnt{ntnCt}{lma}
\newtheorem{ntn}[ntnCt]{Notation}
\aliascntresetthe{ntnCt}

\newaliascnt{rmkCt}{lma}
\newtheorem{rmk}[rmkCt]{Remark}
\aliascntresetthe{rmkCt}

\newaliascnt{rmksCt}{lma}

\aliascntresetthe{rmksCt}

\newaliascnt{prblCt}{lma}
\newtheorem{prbl}[prblCt]{Problem}
\aliascntresetthe{prblCt}

\newaliascnt{exaCt}{lma}
\newtheorem{exa}[exaCt]{Example}
\aliascntresetthe{exaCt}

\newaliascnt{exasCt}{lma}
\newtheorem{exas}[exasCt]{Examples}
\aliascntresetthe{exasCt}

\newaliascnt{conjCt}{lma}

\aliascntresetthe{conjCt}

\newaliascnt{pgrChapterCt}{lmaChapter}

\aliascntresetthe{pgrChapterCt}

\newaliascnt{dfnChapterCt}{lmaChapter}

\aliascntresetthe{dfnChapterCt}

\newaliascnt{rmksChapterCt}{lmaChapter}

\aliascntresetthe{rmksChapterCt}

\newaliascnt{prblChapterCt}{lmaChapter}

\aliascntresetthe{prblChapterCt}

\newaliascnt{exaChapterCt}{lmaChapter}

\aliascntresetthe{exaChapterCt}

\newaliascnt{exasChapterCt}{lmaChapter}

\aliascntresetthe{exasChapterCt}

\newcounter{theoremintro}

\newaliascnt{thmIntroCt}{theoremintro}
\newtheorem{thmIntro}[thmIntroCt]{Theorem}
\aliascntresetthe{thmIntroCt}

\newaliascnt{dfnIntroCt}{theoremintro}

\aliascntresetthe{dfnIntroCt}

\newaliascnt{prpIntroCt}{theoremintro}

\aliascntresetthe{prpIntroCt}

\newaliascnt{corIntroCt}{theoremintro}
\newtheorem{corIntro}[corIntroCt]{Corollary}
\aliascntresetthe{corIntroCt}

\newaliascnt{qstIntroCt}{theoremintro}
\newtheorem{qstIntro}[qstIntroCt]{Question}
\aliascntresetthe{qstIntroCt}

\newcommand{\Ifree}{I_{\mathrm{free}}}
\newcommand{\Ireg}{I_{\mathrm{reg}}}

\newcommand{\Pfree}{\mathbb{P} _{\mathrm{free}}}
\newcommand{\Preg}{\mathbb{P}_{\mathrm{reg}}}
\newcommand{\Pmin}{\mathbb{P}_{\mathrm{min}}}
\newcommand{\Pnmfree}{\mathbb{P}_{\mathrm{nmfree}}}
\newcommand{\Pmfree}{\mathbb{P}_{\mathrm{mfree}}}

\DeclareMathOperator{\rL}{L}

\newcommand{\andSep}{\,\,\,\text{ and }\,\,\,}

\def\today{\number\day\space\ifcase\month\or   January\or February\or
   March\or April\or May\or June\or   July\or August\or September\or
   October\or November\or December\fi\   \number\year}

\newcommand{\Z}{{\mathbb{Z}}}
\newcommand{\N}{{\mathbb{N}}}
\newcommand{\ZZ}{{\mathbb{Z}}}
\newcommand{\NN}{{\mathbb{N}}}
\newcommand{\QQ}{{\mathbb{Q}}}
\newcommand{\id}{{\mathrm{id}}}

\newcommand{\CSP}{{\mathrm{CSP}}}

\newcommand{\ad}{{\mathrm{ad}}}

\newcommand{\rr}{{\bf r}}

\newcommand{\ol}{\overline}

\newcommand{\Sgen}{S^{\mathrm{gen}}}
\newcommand{\Xgen}{X^{\mathrm{gen}}}

\newcommand{\ca}{$\mathrm{C}^*$-algebra}

\newcommand{\mcan}{\mathfrak m_{\mathrm{can}}}

\newcommand{\Grot}[1]{G(#1)}

\makeindex[name=terms, title=Index of Terms]
\makeindex[name=symbols, title=Index of Symbols]

\author[P.~Ara, J.~Bosa, L.~Cantier, E.~Pardo, and E.~Vilalta]{Pere Ara\and
        Joan Bosa\and
        Laurent Cantier\and
        Enrique Pardo\and 
        Eduard Vilalta}

\address{Pere Ara,
Department de Matemàtiques, Universitat Aut\`onoma de Barcelona, 08193 Bellaterra (Barcelona), Spain}
\email[]{pere.ara@uab.cat}

\address{Joan Bosa,
Departamento de Matematicas, Universidad de Zaragoza, C/Pedro Cerbuna 12, 50009 Zaragoza, Zaragoza, Spain}
\email[]{jbosa@unizar.es}
\urladdr{https://personal.unizar.es/jbosa/}

\address{Laurent Cantier, 
Departamento de Matematicas, Universidad de Zaragoza, C/Pedro Cerbuna 12, 50009 Zaragoza, Spain}
\email[]{lncantier@gmail.com}
\urladdr{https://laurentcantier.fr/}

\address{Enrique Pardo,
Departamento de Matematicas, Facultad de Ciencias, Universidad de Cádiz, Campus de Puerto Real, 11510 Puerto Real (C\'adiz), Spain}
\email[]{enrique.pardo@uca.es}
\urladdr{https://sites.google.com/gm.uca.es/webofenriquepardo/inicio}

\address{Eduard Vilalta,
Department de Matemàtiques, Universitat Politècnica de Catalunya - BarcelonaTech (UPC), Diagonal 647, 08028 Barcelona}
\email[]{eduard.vilalta@upc.edu}
\urladdr{www.eduardvilalta.com}

\subjclass[2020]%
{
Primary
16D70;
Secondary 
16E50, 
19K14,   
46L05,   
46L80.  
}
\keywords{von Neumann regular ring, refinement monoid, separated graph, real rank zero C*-algebra}

\title{Super adaptable graphs and the realization problem}

\begin{document}

\begin{abstract}
We introduce and study the family of super adaptable monoids, which includes, among many others, all conical refinement monoids that are either finitely generated, or countable, primely generated, and regular. Our main result shows that any countable super adaptable monoid can be realized by a von Neumann regular ring, thus answering affirmatively the Realization problem for this class. The result is new even for the subclass of countable, primely generated, regular conical refinement monoids, which we show can be realized by both a von Neumann regular ring and a purely infinite, real rank zero $\mathrm{C}^*$-algebra.

To prove this result, we develop a systematic framework that connects three fundamental structures: abstract monoids, separated graphs (a specific class of colored graphs), and von Neumann regular rings. By establishing functorial connections among these classes, we translate several algebraic properties of monoids into combinatorial and ring-theoretic notions. This allows us not only to prove the main realization result, but also to obtain several characterizations of super adaptable monoids. As a particular application, these can be used to provide a complete characterization of which regular monoids arise as graph monoids. 

To our knowledge, this work offers the first self-contained treatment in the literature where the complete realization process ---from abstract monoids through I-systems and separated graphs to regular rings and $\mathrm{C}^*$-algebras--- is developed sequentially and in full detail within a single text.
\end{abstract}

\maketitle

\tableofcontents

\chapter{Introduction}

One of the longest-standing open problems in the theory of von Neumann regular rings is the \emph{realization problem}, which asks exactly which monoids $M$ can be realized as the monoid $V(R)$ of isomorphism classes of finitely generated projective modules over a regular ring $R$. Classical results establish that $V(R)$ is necessarily conical and satisfies the Riesz refinement property whenever $R$ is regular \cite[Theorem~2.8]{Goo91vNaReg}. Thus, the problem asks:

\begin{qstIntro}\label{qst:RealProb}Let $M$ be a conical refinement monoid. Is there a regular ring $R$ such that $V(R)\cong M$?
\end{qstIntro}

Originally motivated by its analogue on hereditary algebras \cite{Ber74CoProd,BerDic78UniDer}, \autoref{qst:RealProb} was posed over three decades ago by Goodearl \cite{Good95-direct-sums}. The question is deeply connected to other significant problems in non-stable $K$-theory, with notable examples including the \emph{separativity problem} \cite{AGOP98}, the study of regularity properties of \emph{real rank zero $\mathrm{C}^*$-algebras} (see \autoref{subsec:RR0} below for a discussion), the \emph{Atiyah problem} \cite{AraGoo17RealProbAtiy}, or the \emph{Congruence Lattice Problem} \cite{Weh04Semi,Weh07SolDil} (see also \cite[Section~3]{Ara09-realization}).

In this paper, we provide a positive answer to \autoref{qst:RealProb} for the class of countable \emph{super adaptable monoids}, a family that both subsumes most known positive solutions to date and significantly expands the class of monoids that can be realized. Throughout the text, we say that a monoid $M$ is \emph{realizable by a regular ring} if there exists such a ring $R$ for which $V(R)\cong M$.

Before formally defining super adaptability and detailing our main results, we briefly review the historical context and known literature surrounding this problem. The reader is referred to \cite{Ara09-realization} for a survey on the realization problem from 2009, and to \autoref{sect:Preliminaries} for a brief introduction to all the relevant notions. We take this opportunity to review recent results that did not appear in \cite{Ara09-realization} but which are fundamental for this work. 

\section{Background}

The class of conical refinement monoids naturally splits into two subclasses: those that can be expressed as a direct limit of finitely generated conical refinement monoids, and those that cannot. Following \cite{AraGoo15SgpFor}, the former are called \emph{tame}, while the latter are called \emph{wild}. As shown in \cite[Theorem~2.3]{AraGoo15SgpFor}, tame monoids are far better behaved than wild ones, and their additional structural properties make them amenable to study via combinatorial and graph-theoretic tools. For example, among the first classes of tame monoids studied in the literature were countable direct limits of monoids of the form $(\mathbb{Z}^+)^n$, which admit a combinatorial description through their Bratteli diagrams \cite{Bra72IndLim}. These were shown to be realizable by Elliott \cite{Ell76Real} and were later characterized abstractly in \cite{EHS1980,Grillet1976} as the countable dimension monoids; see Sections \ref{sec:RefMon} and \ref{sec:VR} for details. A restriction on the size of the monoids is needed, as Wehrung showed in \cite{Weh98NonMes} that there exist dimension monoids of cardinality at least $\aleph_2$ that cannot be realized by any von Neumann regular ring. Because of this, \autoref{qst:RealProb} is often posed for countable conical refinement monoids, for which no counterexample is known.

In contrast to tame monoids, wild monoids can be far less well-behaved. Although it is known that some wild monoids are realizable \cite{AraGoo17RealProbAtiy}, the general lack of additional properties such as separativity or unperforation places most classes of these monoids outside the scope of current techniques (the definitions of these properties can be found in \autoref{sec:RefMon}). Consequently, research has largely shifted toward countable tame monoids, leading to several positive partial solutions for specific subfamilies.


Two of the largest known families of realizable tame monoids are:
\begin{itemize}
\item[(a)] finitely generated conical refinement monoids \cite[Theorem~B]{AraBosPar20Sel}, a result which built upon the previous works \cite{AraBru02Quiver,AraMorPar07NonStab,AraGoo12Crelle};
\item[(b)] monoids of the form $G\sqcup\{ 0\}$ where $G$ is a countable abelian group \cite{AraGooPar02K0} (in analogy to R\o{}rdam's \cite{Ror95Class}, see \autoref{subsec:RR0} below).
\end{itemize}

The families (a) and (b) belong to the broader category of \emph{primely generated monoids} (see the discussion in \autoref{sect:realizing-larger-families}). Here, recall that an element $p$ in a monoid $M$ is \emph{prime} if it is not invertible and, whenever $p\leq a+b$ for some $a,b\in M$, one has $p\leq a$ or $p\leq b$. A monoid is then \emph{primely generated} if every non-invertible element can be expressed as a sum of prime elements. As shown in \cite[Theorem~4.6]{AraPar16Isr}, all primely generated conical refinement monoids are tame, making them a natural class to investigate within the context of the realization problem. Additionally, the class (b) consists of \emph{regular monoids}, that is, $2x\leq x$ for every element $x$.

Despite the aforementioned successes, tame monoids that fall outside the families (a)-(b) are abundant. These include regular monoids that are not of the type (b), or the non-regular monoid
\begin{equation}\label{eq:ExaMon}
    \langle 
a,b_1,b_2,\cdots \mid 
a = a + b_i \text{ and }
b_i = b_{i+1} + b_1
\rangle ,
\end{equation}
which, since it is not finitely generated, cannot be realized with previous techniques (\autoref{exa:SupAdapt}).

\section{Main results and strategy} In this work, we introduce and study the family of \emph{super adaptable monoids}. This subclass of primely generated conical refinement monoids simultaneously captures all such monoids that are regular, all those that are finitely generated, and examples such as \eqref{eq:ExaMon}. Our main result shows that any countable super adaptable monoid is realizable by a regular ring and, in fact, by a regular $K$-algebra for any field $K$.

\begin{thmIntro}[{\ref{thm:Main}}]\label{thmIntro:Main}
Let $M$ be a countable super adaptable monoid, and let $K$ be any field. Then, there exists a von Neumann regular $K$-algebra $R$ such that $V(R)\cong M$.
\end{thmIntro}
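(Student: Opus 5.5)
The plan follows the pipeline announced in the abstract: monoid $\to$ I-system $\to$ separated graph $\to$ regular ring.

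\textbf{Step 1: from monoid to I-system.} Since $M$ is countable, primely generated and conical with refinement, I would use the structure theory of \cite{AraPar16Isr} to encode $M$ as an I-system. This consists of:
\begin{itemize}
\item a countable poset $I$ of primes, modulo the relation $p\leq q\leq p$;
\item for each $i\in I$, a regular/free type label;
\item the data of the archimedean components, including the groups $G_i$ attached to the regular primes.
\end{itemize}
Primely generated refinement monoids are exactly those recovered from such data.

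\textbf{Step 2: from I-system to separated graph.} Super adaptability should be precisely the hypothesis that lets us choose an \emph{adaptable separated graph} $(E,C)$ whose graph monoid $M(E,C)$ is isomorphic to $M$. Concretely:
\begin{itemize}
\item free primes become vertices carrying a single loop-free color set;
\item regular primes become vertices with loops, together with a Bratteli-type tail realizing the countable group $G_i\sqcup\{0\}$;
\item the order relations between primes become edges between the corresponding vertices.
\end{itemize}
I expect this to be essentially a definitional unpacking, once the characterization of super adaptable monoids as graph monoids of adaptable separated graphs is established. Countability of $M$ guarantees that $E$ is countable.

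\textbf{Step 3: from separated graph to regular ring.} Given a countable adaptable separated graph $(E,C)$ and a field $K$, I would construct a regular $K$-algebra $Q$ with $V(Q)\cong M(E,C)$. The starting point is the Leavitt path algebra $L_K(E,C)$, whose $V$-monoid is known to be $M(E,C)$ by Ara–Goodearl. This algebra is not regular, so I would proceed as follows:
\begin{enumerate}
\item Write $(E,C)$ as a countable increasing union of finite adaptable subgraphs $(E_n,C_n)$, each compatible with the complete subgraph embeddings.
\item For each finite piece, apply the realization of finitely generated conical refinement monoids \cite[Theorem~B]{AraBosPar20Sel}. This uses universal localization of $L_K(E_n,C_n)$ (or of the corresponding path algebra), giving a regular algebra $Q_n$ with $V(Q_n)\cong M(E_n,C_n)$.
\item Handle the regular primes with groups $G_i$ via the construction of \cite{AraGooPar02K0}: form direct limits of matrix algebras over Laurent/rational function fields $K(t)$, to realize $G_i\sqcup\{0\}$.
\item Take $Q=\varinjlim Q_n$. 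Since $V$ commutes with direct limits, $V(Q)\cong\varinjlim M(E_n,C_n)\cong M$, and a direct limit of regular rings is regular.
\end{enumerate}

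\textbf{Main obstacle.} The hardest part is functoriality: the regular algebras $Q_n$ must be chosen so that the inclusions $(E_n,C_n)\subseteq(E_{n+1},C_{n+1})$ induce ring maps $Q_n\to Q_{n+1}$ whose effect on $V$ is the monoid inclusion. Universal localizations are not obviously functorial along graph inclusions that add new relations at old vertices.

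My first approach would be to require adaptability so that each extension only adds edges and vertices *below* existing ones, keeping previously inverted maps inverted. I would then check that the relevant Cohn/universal localization construction is natural with respect to such complete embeddings. I would also verify that $V$ preserves the resulting maps, using the exactness results for universal localization (Bergman/Schofield) to compute $V(Q_n)$ at each stage.
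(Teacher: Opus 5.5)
Your overall pipeline (monoid, $I$-system, separated graph, direct limit of finite pieces, continuity of $V$ and $\mathcal{M}$) is the paper's, but Steps 2 and 3 each have a genuine gap.

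\textbf{Step 2 is not a definitional unpacking, and the claim it rests on is false.} Adaptability forces $\mathbb{P}(E)$ to be finite (\autoref{pgr:AdapGraph}), so monoids of adaptable graphs have only finitely many classes of primes. Hence already the super adaptable monoid $\bigoplus_{n\in\NN}\ZZ^+$ is not of this form. Moreover, \autoref{exa:SupAdapt} gives a super adaptable monoid with only two prime classes that still needs infinitely many colours at its free vertex. This is why the paper introduces the larger class of super adaptable graphs (\autoref{dfn:SupAdSepGra}) and builds $\mathcal{G}(J)$ explicitly (\autoref{pgr:DfnGJ}):
\begin{itemize}
\item A free prime $p$ becomes a single vertex $v^p$ with countably many colours. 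Each colour consists of exactly one loop plus finitely many connectors, and there is one colour for each coefficient tuple $(n_z)_{z\in\mathbf{X}_p}$ with $\sum_z n_z\chi(z)$ in the kernel $K_p$ of $\varphi_p\colon\mathcal{M}(J_p)\to G_p$.
\item A regular prime becomes a Szyma\'nski model of $G_p$ (an SPI graph, not a Bratteli-type tail). Extra vertices are added whose relations (R1)--(R4) encode the maps $\varphi_{p,q}$.
\end{itemize}
Super adaptability, for a suitably chosen mark, is used exactly here: it makes $\mathbf{X}_p$, whose elements correspond to vertices, generate $\mathcal{M}(J_p)$. Proving $J\cong\mathcal{I}(\mathcal{G}(J))$ (\autoref{lma:SuperAdpMonoid}) is then substantial work. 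Your ``loop-free colour set'' for free primes would also give the wrong monoid. A colour without a loop makes $a_v$ a sum of lower generators, so no new free prime appears. The loop is what produces the relations $a_{v^p}=a_{v^p}+x$ with $x\in K_p$.

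\textbf{Step 3: you name the right obstacle, but your remedy fails, and the missing idea is the paper's auxiliary data.} In $(E,C)=\varinjlim(E_n,C^n)$, the passage $E_n\subseteq E_{n+1}$ routinely adds new vertices above old ones. It also adds new colours, meaning a new loop plus new connectors, at \emph{existing} free vertices; in \autoref{exa:SupAdapt}, each finite stage sees only finitely many of the colours at $a$. So extensions do not only add material below. The construction of \cite{AraBosPar20Sel} relies on the finiteness of the set of colours at free vertices, and its assignment of transcendental variables is too rigid to extend along such inclusions. It therefore yields no map $Q_n\to Q_{n+1}$ inducing $\mathcal{M}(\iota_n)$. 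The paper fixes this as follows. It gives \emph{every} vertex countably many commuting invertible variables $t_i^v$, and fixes injective maps $\sigma^p\colon E^1_p\sqcup\Lambda_p\sqcup\NN\to\NN$. It then imposes $t_i^{v^p}\beta=\beta\, t^{r(\beta)}_{\sigma^p(i)}$ and $\alpha\beta=\beta\, t^{r(\beta)}_{\sigma^p(\alpha)}$. This way, the variable that a future loop will need at the range of an old connector is already present. As a result, $Q_K(E,C,\sigma)$ is defined directly for the infinite graph, and it behaves naturally under restriction and quotients (\autoref{prp:restriction}, \autoref{prp:quotients}). In particular, $Q_K(E,C,\sigma)\cong\varinjlim Q_K(E_n,C^n,\sigma_n)$ (\autoref{thm:QK-is-an-injective-direct-limit}). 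Moreover, the finite stages from \autoref{prp:SupAdapLim} are only super adaptable, not adaptable, so \cite[Theorem~B]{AraBosPar20Sel} cannot simply be quoted. Its proof (reduction to condition (F) via crowned pushouts, building blocks realized over posets of fields) must be redone while tracking the auxiliary data. Handling regular primes separately via \cite{AraGooPar02K0} is unnecessary, and you give no way to glue such pieces into the $Q_n$. In the paper the Szyma\'nski components are handled by the same machinery.
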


\begin{corIntro}[\ref{cor:RegRealizable}]
    Every countable primely generated regular conical refinement monoid can be realized by a regular ring.
\end{corIntro}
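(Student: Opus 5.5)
The corollary follows from \autoref{thmIntro:Main} once we know that every primely generated regular conical refinement monoid is super adaptable, as the introduction already asserts. So the plan has two parts: first prove that inclusion, then apply the main theorem. Let $M$ be a countable, primely generated, regular conical refinement monoid.

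\emph{Step 1: the structure of $M$ through its primes.} Since $M$ is primely generated, every nonzero element is a finite sum of primes. Regularity gives $2p\le p$, hence $2p=p$ up to order, so each prime $p$ is idempotent in the sense that $np\le p$ for all $n\ge1$. I would order the set $\mathbb P$ of primes by $p\le q$ and use \cite[Theorem~4.6]{AraPar16Isr}. That theorem presents a primely generated refinement monoid through its poset of primes together with, for each prime, an antipode group or a free/regular label. In the regular case all primes are regular (``non-free''), so each prime $p$ carries an abelian group $G_p$, namely the group of the archimedean component of $p$.

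\emph{Step 2: check super adaptability.} The definition of super adaptable is given later in the paper, presumably in terms of an I-system or adaptable separated graph presentation built on the poset $\mathbb P$ of primes. I would verify it directly from the data in Step 1. Countability of $M$ makes $\mathbb P$ and each $G_p$ countable. Regularity means there are no free primes, so the possibly problematic free-prime conditions in the definition hold vacuously. What remains is to present each $G_p\sqcup\{0\}$-type component by a countable graph piece, as in family (b) \cite{AraGooPar02K0}, and to check that these pieces glue along the order of $\mathbb P$. I expect this to be exactly the argument the paper gives for regular monoids when it lists them as examples of super adaptable monoids (cf.\ \autoref{exa:SupAdapt}), so I would cite that result if it is stated as a proposition.

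\emph{Step 3: conclude.} With $M$ countable and super adaptable, \autoref{thmIntro:Main} (\autoref{thm:Main}) gives, for any field $K$, a regular $K$-algebra $R$ with $V(R)\cong M$. In particular $M$ is realizable by a regular ring.

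\emph{Main obstacle.} The hard part is Step 2, matching regularity with the exact definition of super adaptability. I expect the delicate point to be that the order ideal structure of $\mathbb P$ may be infinite, with no maximal elements and non-well-founded chains. If the definition requires a well-founded or lower-finite poset, I would first present $M$ as a countable increasing union of finitely primely generated regular submonoids. I would then check that super adaptability, or directly realizability, passes to such unions; the latter could be done via direct limits of the constructed rings.
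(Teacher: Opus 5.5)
Your proposal is correct and is the paper's argument: regularity gives $\Pfree(\ol{M})=\emptyset$, so the condition in \autoref{dfn:superadaptableIsystem}, which constrains only free primes, holds vacuously for every mark, and \autoref{thm:Main} then finishes the proof. The rest of your Step~2 and your ``main obstacle'' paragraph are unnecessary: building graph pieces for the groups $G_p$ is already part of the proof of \autoref{thm:Main} (via \autoref{pgr:DfnGJ}), and the definition places no well-foundedness requirement on the poset of primes.
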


While super adaptability of a monoid is formally defined in terms of its asso-
ciated $I$-system (see \hyperlink{Step1}{Step (1)} below), we provide  an alternative, intrinsic characterization of this notion in \autoref{thmIntro:IntChar}. It is this characterization that the reader should keep in mind throughout the manuscript, albeit we do not take it as the definition of super adaptability as it would make our proofs significantly more demanding. Here, a prime element $p \in M$ is said to be \textit{free} if $mp \leq np$ implies $m \leq n$, and a monoid morphism $M \to N$ is termed \textit{free-injective} if, upon passing to the induced map on the antisymmetrizations $\overline{M} \to \overline{N}$, it maps primes to primes, non-minimal free primes to non-minimal free primes, and ensures that each non-minimal free prime of $\ol{N}$ has a fiber of cardinality at most $1$; see \autoref{dfn:free-injective}. 

More concretely, \autoref{thmIntro:IntChar} states that countable super adaptable monoids are those tame monoids whose connecting maps are free-injective:

\begin{thmIntro}[cf. \ref{thm:charac-super-adaptable}]\label{thmIntro:IntChar}
    Let $M$ be a countable conical refinement monoid. The following conditions are equivalent:
    \begin{enumerate}
    \item $M$ is super adaptable.
    \item $M\cong \varinjlim_{n\in \NN} (M_n,\gamma_n)$ for a directed system $(M_n,\gamma_n)$, where each $M_n$ is a finitely generated conical refinement monoid, and each $\gamma_n\colon M_n\to M_{n+1}$ is a free-injective monoid homomorphism.   
\end{enumerate}
\end{thmIntro}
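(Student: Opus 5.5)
The plan is to pass through the $I$-system picture, since super adaptability is formally a property of the $I$-system attached to $M$ (Step (1)). Recall that a primely generated conical refinement monoid $M$ is determined by its poset of prime elements of the antisymmetrization $\ol{M}$. Each prime is labelled as free, regular, or neither, and there is a ``height'' relation recording which primes absorb which (relations of the form $p=p+q$, and $p = q + \dots$ for non-free primes). Because $M$ is countable and primely generated, this poset is countable.

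\emph{(1)$\Rightarrow$(2).} Write the countable prime poset $\mathbb P$ of $M$ as an increasing union of finite subsets $F_1\subseteq F_2\subseteq\cdots$. We choose each $F_n$ \emph{adapted}: it is closed under the finitely many predecessors that the super adaptable condition attaches to each non-minimal free prime, namely the specified lower cover used in the defining relations. Each $F_n$ then carries a finite $I$-system, and therefore a finitely generated conical refinement monoid $M_n$ by the correspondence between finite $I$-systems and finitely generated refinement monoids established earlier. The inclusions $F_n\subseteq F_{n+1}$ induce morphisms $\gamma_n$. On antisymmetrizations these send primes to primes, and, because the free primes in $F_n$ keep the same lower data in $F_{n+1}$, they send non-minimal free primes to non-minimal free primes injectively. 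Hence the $\gamma_n$ are free-injective in the sense of \autoref{dfn:free-injective}. Finally, $\varinjlim M_n\cong M$, because both monoids are presented by the same generators and relations: each relation involves finitely many primes and so already holds in some $F_n$.

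\emph{(2)$\Rightarrow$(1).} Given such a limit, the limit is a countable conical refinement monoid. Free-injectivity ensures that primes survive in the limit, so the limit is primely generated and its prime poset is the direct limit of the prime posets of the $\ol{M_n}$. The fiber condition at non-minimal free primes means that a free prime is never created by identifying two distinct free primes. Consequently, the data witnessing super adaptability at each free prime of the limit (its distinguished decomposition or adaptable lower set) can be read off at a finite stage and is preserved at later stages. We then verify the defining axioms of a super adaptable $I$-system for the limit $I$-system stage by stage.

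\emph{Main obstacle.} The delicate point is (2)$\Rightarrow$(1), namely showing that freeness and the ``non-minimal'' status are stable under passage to the limit. A prime that is free in every $M_n$ must remain free in $M$, and minimality must not collapse. The approach is to use the antisymmetrization fiber bound to prevent merging of free primes, and to use that $\gamma_n$ preserves primes so that relations $mp\le np$ in the limit are already witnessed at a finite stage.
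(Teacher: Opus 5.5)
Your proposal has genuine gaps in both directions, with a common source. A primely generated conical refinement monoid is not determined by its poset of primes. It is determined by the whole $I$-system $\mathcal I(M,\mathfrak m)$, including the groups $G_p$ and the maps $\varphi^{\mathfrak m}_{p,q}$. Moreover, super adaptability depends on the mark $\mathfrak m$ (\autoref{exa:not-super adaptable-Isystem}), which your proposal never mentions. Also, \autoref{dfn:superadaptableIsystem} does not attach finitely many predecessors or a lower cover to a free prime $p$. It says that $G_p$ is generated as a semigroup by the sets $\varphi_{p,q}(G_q)$ ($q<p$ regular) and the elements $\varphi_{p,q}(q)$ ($q<p$ free), and infinitely many $q$ may be needed.

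In (1)$\Rightarrow$(2), restricting the $I$-system to a finite set $F_n$ of primes does not give a finitely generated monoid. First, the $G_p$ need not be finitely generated. Second, a finite $F_n$ that is not a lower set violates (b2) of \autoref{dfn:Isystem} at free primes whose predecessors were left out. Both failures already occur in \autoref{exam:not-super-adaptable} with $H=\QQ$, which is super adaptable. The finite stages must instead carry finitely generated groups that merely map to the $G_p$. On a finite poset, (b2) forces each such free group to be generated by images of regular groups and of the elements $(1,0)$ below it (\autoref{exa:ExasSupAdaptMon}(a)). Super adaptability, for a suitable mark, is exactly what lets these images exhaust $G_p$. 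In your construction super adaptability enters only through this nonexistent finiteness. So the same construction would apply to \autoref{exam:not-super-adaptable} with $H=\ZZ$, which is primely generated but not super adaptable, hence not such a limit. The paper instead builds the approximations through separated graphs:
\begin{enumerate}
\item \autoref{thm:MondToGraph} gives $M\cong\mathcal M(E,C)$ with $(E,C)=\mathcal G(\mathcal I(M,\mathfrak m))$.
\item \autoref{prp:SupAdapLim} writes $(E,C)$ as a limit of finite complete super adaptable subgraphs whose non-minimal free classes stay free.
\item Free-injectivity of the induced maps is read off from the single free vertices $v^p$.
\end{enumerate}

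In (2)$\Rightarrow$(1), your first steps are on the right track. Primes of the limit are images of primes, using (a). Non-minimal free primes of $\ol{M}$ come from non-minimal free primes of some $\ol{M_n}$ and stay free, using (b). However, ``verifying the axioms stage by stage'' has no meaning without a mark, since both $\varphi^{\mathfrak m}_{p,q}$ and the element $q=(1,0_{G_q})$ depend on it. Three ideas are missing:
\begin{enumerate}
\item Every finitely generated conical refinement monoid is super adaptable for \emph{every} mark (\autoref{exa:ExasSupAdaptMon}(a)).
\item Condition (c), together with (b), lets you choose marks $\mathfrak m_n$ inductively with $\gamma_n(\mathfrak m_n)\subseteq\mathfrak m_{n+1}$. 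If two free primes merged, their marks could land on different representatives of the same class. These compatible marks induce a mark $\mathfrak m$ on $M$.
\item Only for such compatible marks does $\psi_n^p\circ\varphi_{p,q}=\varphi_{\ol{\psi}_n(p),\ol{\psi}_n(q)}\circ\psi_n^q$ hold \eqref{eq:key-for-limit}. This identity is what transports the finite-stage semigroup decompositions of $(p'+a')-p'$ to $G_p$ in $\mathcal I(M,\mathfrak m)$.
\end{enumerate}
In particular, the role of the fiber bound is not to keep primes free in the limit but to make compatible marks exist.
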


The primary technical novelties of this paper, which allow us to generalize the methods of \cite{AraBosPar20Sel} to prove \autoref{thmIntro:Main}, are the introduction of three notions: 
\begin{itemize}
    \item[(a)] \emph{marked monoids} and their associated $I$-systems (\autoref{dfn:superadaptableIsystem}),
    \item[(b)] \emph{super adaptable graphs} (\autoref{dfn:SupAdSepGra}), and
    \item[(c)] the use of \emph{Szyma\'{n}sky models for simple regular monoids} (\autoref{pgr:DfnGJ}).
\end{itemize}

To properly introduce these concepts and clarify their role in the proof, we provide a general overview of our strategy below. Although our approach loosely follows the blueprint of \cite[Theorem~B]{AraBosPar20Sel}, these refined steps require significantly more work and the unveiling of new technical results. The following diagram provides a visual representation of our approach, which breaks down the proof into four phases.
\[
    \xymatrixrowsep{1pc}
    \xymatrixcolsep{4pc}
        \xymatrix{
        & & & 
        {\begin{array}{c}
                 \text{s.a.}  \\
                  \text{monoid}\\
                  M
            \end{array}}\\
            {\begin{array}{c}
                 \text{s.a.}  \\
                  \text{monoid}\\
                  M
            \end{array}}
            \ar[r]^-{\mathcal{I}}_-{(1)}
            \ar@/^2.0pc/[rrru]^-{\id}& 
            {\begin{array}{c}
                 \text{s.a.}  \\
                  I\text{-system}\\
                  J
            \end{array}}
            \ar[r]^-{\mathcal{G}}_-{(2)}& 
            {\begin{array}{c}
                 \text{s.a.}  \\
                  \text{graph}\\
                  (E,C)
            \end{array}}
            \ar@{}[r] |(0.7) {\cong}
            \ar@{}[ul] |(0.5) {\cong}
            \ar[ru]^-{\mathcal{M}}_-{(3)}
            \ar[rd]_-{Q_K}^-{(4)} 
            &
             \\
            & & & 
            {\begin{array}{c}
                 \text{regular}  \\
                  K\text{-algebra}\\
                  R
            \end{array}}
            \ar[uu]_-{V}
        }
    \]

\noindent\hypertarget{Step1}{\textbf{Step (1).}} From any primely generated conical refinement monoid $M$ one can build an \emph{$I$-system}, a combinatorial construction that codifies the structure of $M$ using its prime elements. More precisely, consider the equivalence relation $\equiv$ on the set of prime elements of $M$ given by $x\equiv y$ if and only if $x\leq y$ and $y\leq x$. A \emph{mark} $\mathfrak m$ on $M$ is a choice of a prime element in each equivalence class, and a \emph{marked monoid} is a pair $(M,\mathfrak m)$. To each marked monoid one can associate an $I$-system $\mathcal{I}(M,\mathfrak{m})$, a construction from \cite{AraPar16Isr} that we recall in \autoref{pgr:IM}. In previous works, such as \cite{AraPar16Isr} or \cite{AraBosPar20Sel}, a mark was (implicitly) chosen at random in the beginning of the paper and the results that were proved did not depend on that choice. However, in our current generalized setting, we need to lift several monoid morphisms to maps between $I$-systems, which forces us to keep track of where elements in their marks are mapped.

In \autoref{sec:MondIsystm}, we revisit the results from \cite{AraPar16Isr} through the lens of marked monoids. In addition to emphasizing the role of the mark in the construction of $\mathcal{I}(M,\mathfrak{m})$, we also recall that any $I$-system $J$ gives rise to a primely generated conical refinement monoid $\mathcal{M}(J)$, which is now equipped with a canonical mark $\mathfrak{m}_J$. As shown in \cite[Theorem~0.1]{AraPar16Isr} (in the language of marks; see \autoref{thm:APIsr16}), these assignments are inverses of each other, in so far that
\begin{equation}\label{eq:AraParIsr}
    (\mathcal{M}(\mathcal{I}(M,\mathfrak{m})),\mathfrak{m}_{\mathcal{I}(M,\mathfrak{m})})\cong (M,\mathfrak{m})
    \quad\text{and}\quad 
    \mathcal{I}(\mathcal{M}(J),\mathfrak{m}_J)\cong J
\end{equation}
for any $(M,\mathfrak{m})$ and $J$.

Also in \autoref{sec:MondIsystm}, we define the class of \emph{super adaptable $I$-systems} and say that a monoid $M$ is \emph{super adaptable} if it has a mark $\mathfrak{m}$ such that $\mathcal{I}(M,\mathfrak{m})$ is super adaptable (\autoref{dfn:superadaptableIsystem}). This class of systems and monoids will be henceforth called s.a.~$I$-systems and s.a.~monoids, and will be our objects of study throughout. After providing several examples (\ref{exa:ExasSupAdaptMon}-\ref{exam:not-super-adaptable}), we end the chapter with the lifting results \autoref{prp:morphism-of-systems} and \autoref{cor:morphisms-inducing-plain-homos}, showing that several monoid morphisms can be realized as morphisms between $I$-systems. As stated, this is the primary reason to introduce marks in the first place.\vspace{0.2cm}

\noindent\hypertarget{Step2}{\textbf{Step (2).}} As defined in \cite[Definition~2.1]{AraGoo12Crelle}, a separated graph $(E,C)$ is a pair consisting of a graph $E$ and a particular coloring $C$ of its edges; see \autoref{dfn:SepGraph}. In \autoref{sec:FromIsystmToSepGraph}, we introduce \emph{super adaptable (separated) graphs}, a generalization of the notion of an adaptable graph from \cite{AraBosPar20Sel}. Crucially, we show that this class of graphs is closed under direct limits, and that any s.a.~graph is the limit of finite s.a.~graphs (\autoref{prp:SupAdapLim}).
    
Further, we associate to any s.a.~graph $(E,C)$ an $I$-system $\mathcal{I}(E,C)$ and, conversely, we construct a separated graph $\mathcal{G}(J)$ for every s.a.~$I$-system $J$. The main results of this chapter (Theorems \ref{prp:constructed-super adaptable} and \ref{thm:necessary-condition}) state that these constructions produce super adaptable systems and graphs respectively. In particular, given any s.a.~monoid $M$, the graph $\mathcal{G}(\mathcal{I}(M,\mathfrak{m}))$ is super adaptable.

The construction of $\mathcal{G}(\mathcal{I}(M,\mathfrak{m}))$ differs significantly from those in \cite[Proposition~5.13]{AraPar17JAlg} and \cite[Theorem~2.11]{AraBosPar20SgpFor}. Notably, several components are now built using what we call \emph{Szyma\'{n}sky models} (\autoref{pgr:DfnGJ}). These are strongly connected row-finite graphs whose $\mathrm{C}^*$-algebra has a  Murray-von Neumann semigroup of the form $G\sqcup\{ 0\}$ with $G$ an abelian group. By the results in \cite{Szy02Ind}, the converse is true: Any such monoid can be realized by a graph of this type. We use this extensively in \hyperlink{Step3}{Step (3)}.\vspace{0.2cm}

\noindent\hypertarget{Step3}{\textbf{Step (3).}} For a given separated graph (in particular, a super adaptable graph) $(E,C)$ one can construct its associated monoid $\mathcal{M}(E,C)$; see \autoref{dfn:monoidsepgraph}. In \autoref{thm:MondToGraph}, we show that
\begin{equation}\label{eq:MeqMGIM}
    M\cong \mathcal{M}(\mathcal{G}(\mathcal{I}(M,\mathfrak{m})))
\end{equation}
for any s.a.~monoid $M$, where $\mathfrak{m}$ is a mark for which $(M,\mathfrak{m})$ is super adaptable.
    
In short, the realization problem for super adaptable monoids reduces to realizing monoids of the form $\mathcal{M}(E,C)$ associated to super adaptable graphs.

We prove \eqref{eq:MeqMGIM} in two phases: In \autoref{prp:isoIsysMon}, we show that the $I$-systems associated to $(E,C)$ and $(\mathcal{M}(E,C),\mcan )$, respectively, agree for any super adaptable graph $(E,C)$. Then, in \autoref{lma:SuperAdpMonoid} we prove another isomorphism of $I$-systems, namely $J\cong \mathcal{I}(\mathcal{G}(J))$ for any super adaptable system $J$. In particular, $\mathcal{I}(M,\mathfrak{m})\cong \mathcal{I}(\mathcal{G}(\mathcal{I}(M,\mathfrak{m})))$. Combined with \eqref{eq:AraParIsr}, this leads to \eqref{eq:MeqMGIM}; see \autoref{thm:MondToGraph} for details.\vspace{0.2cm}

\noindent\hypertarget{Step4}{\textbf{Step (4).}} Let $K$ be any field. Building upon the construction of \cite[Section~2]{AraBosPar20Sel}, in \autoref{sec:RealMon} we introduce a ring $Q_K(E,C,\sigma)$ for any countable s.a. graph $(E,C)$. Here, $\sigma$ is some technical auxiliary data needed to perform our computations which did not appear in \cite{AraBosPar20Sel}; see \autoref{subsect:the-construction} for details. The goal of \autoref{sec:RealMon} is to prove \autoref{thm:realization-general-case}, showing that $Q_K (E,C,\sigma)$ is a regular ring and that 
\begin{equation}\label{eq:VQeqM}
    V(Q_K (E,C,\sigma))\cong \mathcal{M}(E,C).
\end{equation}
Paired with \eqref{eq:MeqMGIM}, this then leads to \autoref{thmIntro:Main}, since we have
\[
    M\overset{\eqref{eq:MeqMGIM}}{\cong } \mathcal{M}(\mathcal{G}(\mathcal{I}(M,\mathfrak{m})))
    \overset{\eqref{eq:VQeqM}}{\cong }
    V(Q_K (\mathcal{G}(\mathcal{I}(M,\mathfrak{m})),\sigma)).
\]
    
The proof of the isomorphism \eqref{eq:VQeqM} breaks down into three steps. First, whenever $(E,C)$ is a finite s.a.~graph, we show in \autoref{thm:realization-finite-case-with-condition-F} that $Q_K (E,C,\sigma)$ is regular and that \eqref{eq:VQeqM} holds. Using the results from Step (2), we then prove in \autoref{thm:QK-is-an-injective-direct-limit} that for any ---possibly infinite--- countable s.a.~graph $(E,C)$ the algebra $Q_K (E,C,\sigma)$ can be written as a direct limit with blocks of the form $Q_K (E_n,C^n,\sigma_n)$ with $(E_n,C^n)$ finite s.a.~graphs such that $(E,C)\cong\lim_n (E_n,C^n)$. Since both $\mathcal{M}(\cdot)$ and $V(\cdot)$ are continuous, we obtain 
\[
V(Q_K (E,C,\sigma))\cong
\varinjlim V(Q_K (E_n,C^n,\sigma_n))\cong 
\varinjlim \mathcal{M}(E_n,C^n)\cong 
\mathcal{M}(E,C)
,
\]
which finishes the proof of \eqref{eq:VQeqM} (i.e. \autoref{thm:realization-general-case}) and, consequently, of \autoref{thmIntro:Main}.

\section{\texorpdfstring{Real rank zero $\mathrm{C}^*$-algebras}{Real rank zero C*-algebras}}\label{subsec:RR0}

We conclude this introduction by detailing the connections of \autoref{qst:RealProb} to real rank zero $\mathrm{C}^*$-algebras mentioned earlier.

Recall that a unital $\mathrm{C}^*$-algebra is of \emph{real rank zero} \cite{BroPed91RR0} if the set of self-adjoint invertible elements is dense within the set of all self-adjoint elements. While these algebras are not generally von Neumann regular, they share significant structural similarities with regular rings (for example, they are both exchange rings). This parallel has produced a deeply fruitful exchange: many problems and results concerning real rank zero \ca{s} have direct analogues in regular rings, and vice versa.

For example, using techniques from the theory of regular rings, Perera and Rørdam showed in \cite[Theorem~5.8]{PerRor04AFEmb} that all separable, non-type I $\mathrm{C}^*$-algebras of real rank zero are weakly divisible. Conversely, the realization of monoids of the form $G \sqcup \{0\}$ \cite{AraGooPar02K0} draws inspiration from the operator-algebraic methods in \cite{Ror95Class}. The following question, known as the \emph{realization problem for real rank zero $\mathrm{C}^*$-algebras}, is the direct analogue of \autoref{qst:RealProb}:

\begin{qstIntro}\label{qstIntro:RealRR0}
    Let $M$ be a conical refinement monoid. Does there exist a real rank zero $\mathrm{C}^*$-algebra $A$ such that $V(A) \cong M$?
\end{qstIntro}

Other than its intrinsic interest, \autoref{qstIntro:RealRR0} is linked to the famous open problem of whether all simple real rank zero $\mathrm{C}^*$-algebras possess strict comparison; a question that dates back to R\o{}rdam's counterexamples of Elliott's classification program \cite{Ror03Simple,Ror05RRCertainSimp}. In the nuclear setting, the problem is often posed as the stronger question: are all simple separable nuclear non-elementary $\mathrm{C}^*$-algebras of real rank zero automatically $\mathcal{Z}$-stable? \cite[Problem~XXX]{SchTikWhi99arX:Prob}.

The connection between these problems and \autoref{qstIntro:RealRR0} is as follows: If \autoref{qstIntro:RealRR0} can be answered affirmatively for all simple conical refinement monoids, then there are simple real rank zero $\mathrm{C}^*$-algebras without strict comparison. Indeed, a real rank zero $\mathrm{C}^*$-algebra $A$ has strict comparison if and only if $V(A)$ is almost unperforated, and it is a consequence of Wehrung's \cite[Corollary~2.7]{Weh98Emb} (concretely, see \cite[Corollary~4.27]{Ara26:Intro} and also the argument in \autoref{exa:NotSep}) that there exist simple conical refinement monoids which fail to have this property.

Another important question in this direction, both for regular rings and real rank zero \ca{s}, is the \emph{separativity problem}. Here, recall that a monoid $M$ is \emph{separative} if $x=y$ whenever $x+x=x+y=y+y$. The problem asks:
\begin{qstIntro}\label{qstIntro:Sep}
    Let $R$ be a von Neumann regular ring (resp. a real rank zero \ca{}). Is $V(R)$ separative?
\end{qstIntro}

Once again, a positive answer to \autoref{qstIntro:RealRR0} implies a negative one for \autoref{qstIntro:Sep}, as there exist conical refinement monoids that are not separative; see, for example, \cite[Page~3]{Ara09-realization}. On the other hand, a positive answer to \autoref{qstIntro:Sep} for simple \ca{s} of real rank zero would solve in the affirmative R\o{}rdam's dichotomy problem \cite[Problems~XXIX,~LX]{SchTikWhi99arX:Prob}.

As previously stated, all monoids realized in this paper are tame and hence unperforated and separative \cite[Theorem~2.3]{AraGoo15SgpFor}. In particular, an adaptation of our proof to a \ca{ic} setting would not provide counter examples to \cite[Problems~XXIX,~XXX]{SchTikWhi99arX:Prob}. However, it is still an interesting question whether the proof of \autoref{thmIntro:Main} can be adapted to an operator-algebraic framework. Steps (1), (2) and (3) are exactly the same, with all difficulties lying in Step (4). An adaptation of this step would require developing $\mathrm{C}^*$-algebraic tools analogous to those in \autoref{sec:RealMon} to transform a \ca{} with prescribed Murray-von Neumann semigroup (built out of a s.a.~graph, akin to those in \cite[Section~3]{Ara26:Intro}) into a real rank zero \ca{}.

In this context, we can prove a realization result for those countable primely generated conical refinement monoids that are regular:
\begin{thmIntro}[{\ref{thm:charac-regular-primely-generated}}]
Let $M$ be a conical regular refinement monoid. Then the following conditions are equivalent:
\begin{itemize}
\item[(i)] $M$ is a countable primely generated monoid. 
\item[(ii)] $M\cong \mathcal M (E)$ where $E$ is a countable super adaptable graph with $\Pfree (E)= \emptyset$.
\item[(iii)] $M\cong \mathcal M (E)$ for a row-finite countable graph $E$.
\end{itemize}
Moreover, in this case $M$ is representable by a von Neumann regular ring and by a purely infinite real rank zero graph $\mathrm{C}^*$-algebra. 
\end{thmIntro}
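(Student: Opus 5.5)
I plan to prove the cycle of implications (i)$\Rightarrow$(ii)$\Rightarrow$(iii)$\Rightarrow$(i), and then derive the realization statements.

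\emph{(i)$\Rightarrow$(ii).} Let $M$ be a countable, primely generated, regular conical refinement monoid. Regularity ($2x\le x$) implies that no prime $p$ is free: from $2p\le p$ we would get $2\le 1$. Hence the $I$-system $\mathcal I(M,\mathfrak m)$ has no free primes, for any mark $\mathfrak m$.

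Since all primes are regular, the super adaptability conditions (\autoref{dfn:superadaptableIsystem}) should hold automatically. The introduction indicates these conditions concern only how free primes sit below other primes, so the free part is empty. This matches the intro's claim that regular primely generated monoids are super adaptable. Thus $M$ is super adaptable.

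By \autoref{thm:constructed-super adaptable} and \autoref{thm:MondToGraph}, $E:=\mathcal G(\mathcal I(M,\mathfrak m))$ is a super adaptable graph with $M\cong\mathcal M(E)$. It is countable because $M$ is countable, so there are countably many primes and the construction uses countable Szyma\'nski models. It remains to check $\Pfree(E)=\emptyset$. This should follow from the correspondence \autoref{prp:isoIsysMon}, which identifies the free primes of $\mathcal I(E)$ with those of $\mathcal I(\mathcal M(E),\mcan)$, and there are none. I also need the separation $C$ to be trivial, so that $\mathcal M(E)$ is the ordinary graph monoid. I expect this to hold because separations with more than one color only arise to encode free primes.

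\emph{(ii)$\Rightarrow$(iii).} I would verify from \autoref{dfn:SupAdSepGra} that a super adaptable graph with $\Pfree(E)=\emptyset$ is a row-finite non-separated graph. Alternatively, if only countability is available, I would use that countable graphs can be desingularized into row-finite graphs with the same graph monoid (Drinen--Tomforde). One caution: desingularization preserves the monoid only up to adding tails, and the graph monoid $\mathcal M(E)$ is unchanged by this.

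\emph{(iii)$\Rightarrow$(i).} Countability of $\mathcal M(E)$ is clear. Graph monoids of row-finite graphs are conical refinement monoids (Ara--Moreno--Pardo) and are primely generated by the results on graph monoids in \cite{AraPar16Isr}; regularity is assumed.

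\emph{Realization.} For the regular ring, apply \autoref{thmIntro:Main}, since $M$ is super adaptable. For the \ca{} realization, take the row-finite graph $E$ from (iii) and use $V(C^*(E))\cong\mathcal M(E)$. I need to show $C^*(E)$ has real rank zero, which holds if $E$ satisfies condition (K). I also need it to be purely infinite, which holds if every vertex connects to a cycle and (K) holds. The key point is that regularity of $M$ forces every vertex $v$ to satisfy $2v\le v$, so no vertex flows only into sinks or acyclic parts, giving infiniteness. If $E$ fails (K), I would modify it, for instance by replacing each cycle block with a Szyma\'nski model as in the construction of $\mathcal G(J)$, or by adding loops at vertices on cycles, which does not change $\mathcal M(E)$ when $2v\le v$.

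I expect this last step to be the main obstacle: choosing the graph so that it satisfies condition (K) and every vertex connects to a cycle while keeping the monoid fixed. The natural choice is the graph $\mathcal G(\mathcal I(M,\mathfrak m))$ itself, since its building blocks are strongly connected Szyma\'nski models whose cycles have exits.
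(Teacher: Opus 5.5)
The gap is in (iii)$\Rightarrow$(i), which is the only substantial implication. Your (i)$\Rightarrow$(ii)$\Rightarrow$(iii) are fine. Regularity rules out free primes, so the condition in \autoref{dfn:superadaptableIsystem} is vacuous and $\mathcal G(\mathcal I(M,\mathfrak m))$ has no free components. Condition (a) of \autoref{dfn:SupAdSepGra} then forces $|C_w|=1$ and row-finiteness.

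In (iii)$\Rightarrow$(i) you assert that graph monoids of row-finite graphs are primely generated, and this is false. The paper's \autoref{exa:GraphNotPG} is a counterexample. There $a_{p_i}=a_{p_{i+1}}+a_a$ for all $i$, the only prime is the class $a_a$ of the sink $a$, and $a_{p_0}$ is not a finite sum of primes. That monoid is not regular, and this is exactly where the regularity hypothesis must enter. In your argument it plays no role in this direction.

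The missing idea is to separate light from non-light vertices. By \autoref{lma:non-light-are-prime}, proved by a confluence argument in the free monoid on $E^0$, every vertex that is a sink or lies on a cycle gives a prime $a_v$. Only light vertices, like the $p_i$ above, can fail to be prime. To handle an arbitrary vertex $v$, proceed as follows.
\begin{enumerate}
\item Regularity of $\mathcal M(E)\cong V(L_K(E))$ makes every vertex properly infinite in $L_K(E)$.
\item Then \cite[Proposition 3.8.12]{AAS} gives SPI (in particular non-light) vertices $w_1,\dots,w_r\in T^0(v)$ with $v$ in the hereditary saturated closure of $\{w_1,\dots,w_r\}$.
\item The argument in that proof gives $a_v\asymp x:=a_{w_1}+\cdots+a_{w_r}$, and $x$ is a sum of primes.
\item Brookfield's \cite[Theorem 5.18(3)]{Bro01Canc} then shows that $a_v$ is primely generated, so $M$ is primely generated.
\end{enumerate}

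The step you flagged as the main obstacle needs no modification of the graph. If $E$ is as in (ii), every strongly connected component is SPI. By \autoref{lma:charcSPI}, every vertex therefore lies on at least two closed simple paths. So $E$ satisfies condition (K) and every vertex lies on a cycle, and $C^*(E)$ is purely infinite of real rank zero with $V(C^*(E))\cong\mathcal M(E)$. The paper argues from regularity instead, for any row-finite $E$: proper infiniteness of all vertices makes $L_K(E)$ a purely infinite exchange ring, and then Hong--Szyma\'nski applies.

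Your fallback of adding loops would in general change the monoid. Regularity gives only $2a_v\le a_v\le 2a_v$, not $2a_v=a_v$. For example, one vertex with two loops has monoid $\{0,a\}$, while adding a third loop gives $\langle a\mid a=3a\rangle\cong \ZZ_2\sqcup\{0\}$.

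Using \autoref{thmIntro:Main} for the ring realization is fine; the paper instead uses the regular algebra $Q_K(E)$ of the quiver directly.
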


We conclude the paper by presenting several open problems stemming from our results in \autoref{chap:FROQ}.

\section*{Acknowledgments} All authors were partially supported by MINECO (grant No.\ PID2023-147110NB-I00). PA and EV were also partially supported by the Comissionat per Universitats i Recerca de la Generalitat de Catalunya (grant No. 2021-SGR-01015). EP was also partially supported by PAIDI grants FQM-298, and ProyExcel 00780 ``Operator Theory: an interdisciplinary approach'' (2021), of the Junta de Andaluc\'{\i}a. JB was also partially supported by MINECO (grant PID2024-155800NB-C32). JB and LC were partially supported by the Spanish State Research Agency through Consolidacion Investigadora program No. CNS2022-135340. EV was also partially supported by PID2025-168310NB-I00, financed by MICIU/AEI/10.13039/501100011033 and the FSE+.

\section*{AI statement}

Generative AI was not used in any way to prove or discover any of the results in this paper. Gemini was used for English language editing and grammatical corrections of some parts of the paper during the final stages of writing.

\chapter{Preliminaries}\label{sect:Preliminaries}

In this chapter we collect some basic definitions and facts that are needed throughout the paper. More tailored preliminaries are deferred to the beginning of their respective chapters.

Some of the results listed below will not be used in our proofs. They are included primarily to introduce the unfamiliar reader to the main notions and techniques surrounding the realization problem.

\section{von Neumann regular rings}\label{subsec:RingsandAlgebras}

 The concept of a (von Neumann) regular ring was originally introduced by von Neumann \cite{vNa36RingOp} in relation to his study of operator algebras and continuous geometries. Although the concept allows for many equivalent characterizations (see \autoref{prp:CharacvNa}), the most commonly used definition is as follows:
 
 \begin{dfn}\label{dfn:vNr}
     A ring $R$ is called \textit{von Neumann regular}\index[terms]{ring!von Neumann regular} if for every $x \in R$ there exists $y \in R$ such that $x = xyx$.
 \end{dfn}
 
 We provide below a list of examples and characterizations of regular rings. All proofs, as well as a comprehensive treatment of such rings can be found in \cite{Goo91vNaReg}.

\begin{exas}\label{exas:vNReg} $ $
    \begin{itemize}
        \item[(a)] Any field $K$ and, more generally, any matrix algebra $M_n(K)$ over $K$ is von Neumann regular. In fact, any algebra of the form $M_n(R)$ with $R$ von Neumann regular is again von Neumann regular \cite[Theorem~24]{Kap69FieRing}.
        \item[(b)] Recall that a ring $R$ is called \emph{unit regular}\index[terms]{ring!unit regular} if for every $x \in R$ there exists a \textbf{unit} $y \in R$ such that $x = xyx$. By definition, all unit regular rings are von Neumann regular.
        \item[(c)] The endomorphism ring of any vector space is von Neumann regular \cite[Page~111,~Example~4]{Kap69FieRing}.
        \item[(d)] For a given finite von Neumann algebra $\mathcal{M}$, its affiliated operators \cite{MurvNa36RingOp} form a regular ring. This was the original motivation behind \autoref{dfn:vNr}.
    \end{itemize}
\end{exas}

\begin{thm}\label{prp:CharacvNa}
Let $R$ be a ring. The following are equivalent:
\begin{itemize}
    \item[(i)] $R$ is von Neumann regular;
    \item[(ii)] every finitely generated left (resp. right) ideal is generated by an idempotent;
    \item[(iii)] every finitely generated left (resp. right) ideal is a direct summand of $R$ as a module;
    \item[(iv)] every left (resp. right) $R$-module is flat;
    \item[(v)] every short exact sequence of left (resp. right) $R$-modules is pure exact.
\end{itemize}
\end{thm}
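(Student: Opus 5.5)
We prove the equivalences along the cycle (i)$\Rightarrow$(ii)$\Rightarrow$(iii)$\Rightarrow$(i), and then attach (iv) and (v) to (i) using the standard homological criteria. We work with left ideals; the right-handed version follows by applying the same argument to the opposite ring, since $x=xyx$ is a left–right symmetric condition.

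For (i)$\Rightarrow$(ii), take $x$ with $x=xyx$ and set $e=yx$. Then $e^2=y(xyx)=yx=e$, and $Rx=Re$ because $e\in Rx$ and $x=xe\in Re$. So every principal left ideal is generated by an idempotent. To pass to finitely generated left ideals, induct on the number of generators; the case of two generators suffices. Given $Re+Rf$ with $e,f$ idempotent, write $Re+Rf=Re+R(f-fe)$, which holds since $fe\in Re$. By regularity, $R(f-fe)=Rg$ for an idempotent $g$. Moreover $g$ is a left multiple of $f-fe$, so $ge=0$. Put $g'=(1-e)g$. Then $g'$ is idempotent, $eg'=0=g'e$, and $Rg'=Rg$ because $g=gg'$. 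Hence $Re+Rf=R(e+g')$ with $e+g'$ idempotent.

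(ii)$\Rightarrow$(iii) is immediate: if $I=Re$, then $R=Re\oplus R(1-e)$. For (iii)$\Rightarrow$(i), write $R=Rx\oplus L$. Then $1=a+l$ with $a\in Rx$ and $l\in L$, and $a$ is idempotent with $Rx=Ra$. Writing $a=yx$, we get $x\in Ra$, so $x=xa=xyx$.

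For (iv) and (v), recall that exactness of $0\to A\to B\to C\to 0$ is pure precisely when its image under $M\otimes_R-$ stays exact for every right module $M$. This holds for all $M$ exactly when $C$ is flat, since one can test against all $M$ in the Tor sequence. So (iv)$\Leftrightarrow$(v) is formal. For (i)$\Rightarrow$(iv), use the flatness criterion: a module $M$ is flat iff $I\otimes M\to M$ is injective for all finitely generated ideals $I$. Every such $I$ is a direct summand of $R$ by (iii), so this map is split injective. For (iv)$\Rightarrow$(i), the module $R/Rx$ is flat and finitely presented, hence projective. So $Rx$ is a direct summand of $R$, giving (iii).

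The main obstacle is the two-generator step in (i)$\Rightarrow$(ii), where we must produce orthogonal idempotents. The homological part relies on standard Tor and flatness facts, which we would cite from \cite{Goo91vNaReg}.
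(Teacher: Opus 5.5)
Your argument is correct and is essentially the standard textbook proof from \cite{Goo91vNaReg}, which the paper cites in place of giving its own; the two-generator step via the orthogonal idempotent $g'=(1-e)g$ is the usual trick there. One claim needs fixing: purity of a single sequence $0\to A\to B\to C\to 0$ does not force $C$ to be flat, since every split sequence is pure. For (v)$\Rightarrow$(iv), apply (v) to a free presentation $0\to K\to F\to C\to 0$; then $\mathrm{Tor}_1(M,F)=0$ together with injectivity of $M\otimes_R K\to M\otimes_R F$ gives $\mathrm{Tor}_1(M,C)=0$ for every right module $M$.
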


\begin{rmk}
    By \cite[Example 1]{AhnMarki87}, any von Neumann regular ring has \emph{local units}\index[terms]{local units}. Here, recall that a ring $R$ is said to have local units if $R=\bigcup_{e\in E} eRe$ with $E$ a directed family of idempotents of $R$ with respect to the order $e\le f$ given by $e=ef=fe$.
\end{rmk}

\section{Refinement monoids}\label{sec:RefMon}

Throughout the paper, $\N$ and $\Z^+$ will denote the semigroups of positive integers and non-negative integers respectively. All monoids will be commutative. 

Recall that a monoid $M$ is \emph{conical}\index[terms]{monoid!conical} if $x=y=0$ whenever $x,y\in M$ satisfy $x+y=0$. $M$ is said to be a \emph{refinement monoid}\index[terms]{monoid!refinement} (or that it satisfies \emph{Riesz refinement}) if, for all quadruples $a, b, c ,d\in M$ such that
\[
    a+b=c+d,
\]
there exist $x,y,z,t\in M$ such that 
\[
    a=x+y,\quad 
    b=z+t,\quad 
    c=x+z,\quad 
    d=y+t.
\]

\begin{exas} $ $
    \begin{itemize}
        \item[(a)] $\Z^+$ is trivially a conical refinement monoid.
        \item[(b)] Any abelian group $G$ is a refinement monoid, but it fails to be conical since $g+(-g)=e$ for any $g\in G$. By forcefully adding a new zero, the set $G\sqcup\{0\}$ becomes conical and still has refinement.
        \item[(c)] \emph{graph monoids} (whose definition we postpone until \autoref{pgr:DfnGJ}) are always conical and have refinement.
        \item[(d)] all \emph{continuous dimension scales} \cite[Definition~3-1.1]{GooWeh05CompDimThe} are conical refinement monoids. By \cite[Corollary~5-3.15]{GooWeh05CompDimThe}, they can all be realized by a von Neumann regular ring.
    \end{itemize}
\end{exas}

In any monoid $M$, we define the \emph{algebraic pre-order}\index[terms]{algebraic pre-order} by writing $x\leq y$ if there exists $z\in M$ such that $x+z = y$. Note that $\leq$ need not be a partial order: In example (a) above, $\leq$ is a total order; however, in example (b), one has $g\leq h$ for any pair $g,h\in G$.

As stated in the introduction, we say that a non-invertible element $p\in M$ is \emph{prime} if, whenever $p\leq x+y$, then either $p\leq x$ or $p\leq y$. Monoids where every non-invertible element can be written as a sum of prime elements are called \emph{primely generated}\index[terms]{monoid!primely generated}. For example, $1$ is the only prime element of $\Z^+$, whilst any group element in $G\sqcup\{0\}$ is prime. In both cases, the monoid is primely generated.

By \cite[Theorem 4.5]{Bro01Canc}, every primely generated refinement monoid is separative where, as also stated in the introduction, $M$ is \emph{separative}\index[terms]{monoid!separative} if $x=y$ whenever $x+x=x+y=y+y$.

If a countable conical refinement monoid is unperforated\index[terms]{monoid!unperforated} ---that is, $x\leq y$ whenever $nx\leq ny$ for some $n\in\mathbb{N}$--- and cancellative\index[terms]{monoid!cancellative} ---i.e. $x\leq y$ whenever $x+z\leq y+z$---, its structure becomes clearer. Recall that a monoid is said to be \emph{simplicial} \index[terms]{monoid!simplicial} if it is of the form $(\Z^+)^n$ for some $n\in\Z^+$.

\begin{thm}[{\cite{EHS1980, Grillet1976}}]
\label{thm:Effros-Handel-Shen-Grillet}    Let $M$ be a conical refinement monoid. Then, $M$ is unperforated and cancellative if and only if $M$ is isomorphic to a direct limit of simplicial monoids.
\end{thm}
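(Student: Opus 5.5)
The plan is to prove the two directions separately. Throughout, ``unperforated'' means: $nx\le ny$ for some $n\in\N$ implies $x\le y$. ``Cancellative'' means: $x+z\le y+z$ implies $x\le y$. Both are the order-theoretic versions fixed in the text.

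\emph{Easy direction.} Suppose $M\cong\varinjlim (\Z^+)^{k_n}$.
\begin{itemize}
\item Each simplicial monoid $(\Z^+)^{k}$ is conical, has refinement (coordinatewise in $\Z^+$), is unperforated, and is cancellative. The order there is coordinatewise, so both order conditions reduce to the corresponding facts in $\Z^+$.
\item These four properties are each witnessed by finitely many elements and finitely many relations of the form $x+z=y$. Hence they pass to direct limits.
\item For example, if $nx\le ny$ in the limit, the witnessing equation $nx+z=ny$ already holds at some finite stage $M_k$ after pushing forward. Unperforation at stage $k$ gives $x\le y$ there, and hence in $M$. Cancellation, conicality and refinement are handled the same way.
\end{itemize}

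\emph{Hard direction.} Let $M$ be a conical, unperforated, cancellative refinement monoid. I would use the Grothendieck group $G=\Grot{M}$ with positive cone the image of $M$.
\begin{itemize}
\item Cancellativity in the strong form gives $x+z=y+z\Rightarrow x=y$: from $x+z\le y+z$ we get $x\le y$, and symmetrically $y\le x$. Since $M$ is conical, the algebraic preorder is antisymmetric by the usual cancellation argument. So $M\to G$ is injective, and $G$ is a partially ordered abelian group with positive cone $M$.
\item Refinement of $M$ says exactly that $G$ has Riesz interpolation.
\item Unperforation of $M$ says $G$ is unperforated.
\item By the Effros--Handelman--Shen theorem, $G$ is a dimension group, i.e.\ an inductive limit of simplicial groups $(\Z^{k_n},(\Z^+)^{k_n})$ with positive maps. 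Restricting to positive cones writes $M$ as a direct limit of simplicial monoids.
\end{itemize}

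\emph{Main obstacle and cardinality.} The main obstacle is the Effros--Handelman--Shen step itself. Its core is Shen's criterion: every positive homomorphism $\Z^k\to G$ factors, through a positive map to some $\Z^l$, so as to kill any prescribed element of its kernel. This is proved using interpolation and unperforation. It is the classical result cited in the statement, and I would invoke it as such rather than reprove it.

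For uncountable $M$, I would use the known extension in which one writes $M$ as a directed union of countable sub-refinement-monoids that are pure (closed under the relevant witnesses), via a Löwenheim--Skolem argument. Each of these is handled by the countable case. The directed system of the resulting simplicial factorizations is then assembled over a directed index set, which gives the direct limit over a general directed set.
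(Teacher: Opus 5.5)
Your argument is correct and follows the standard route behind the cited result. The paper gives no proof of its own; it only cites Effros--Handelman--Shen for the group version, to which you correctly reduce by passing to $G(M)$ with positive cone $M$, and Grillet for the monoid version directly. The L\"owenheim--Skolem paragraph is unnecessary, and as sketched it is your one weak point, because representations of different countable submonoids are not automatically compatible; applying Shen's criterion directly to $G(M)$ already gives a direct limit over a directed set of arbitrary cardinality, which is the form of the theorem the paper states.
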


Thus, in the unperforated cancellative case, conical refinement monoids (known as \emph{dimension monoids}) are completely understood; see also \cite{vas2022} for a generalization to dimension $\Gamma$-groups and a discussion of graded von Neumann regular rings. The main difficulty when unperforation or cancellativity are not assumed is that any conical monoid can be order-embedded into a conical refinement monoid. In particular, if one can produce a conical monoid with certain bad behaviour (e.g. \autoref{exa:NotSep}), then there will exist a refinement monoid with the same property:

\begin{thm}[{\cite{Weh98Emb}}]\label{prp:WehEmb}
    For any countable conical monoid $M$ there exists a countable conical refinement monoid to which $M$ can order-embed.
\end{thm}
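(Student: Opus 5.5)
The plan is to build the refinement monoid as the union of a countable chain $M=M_0\subseteq M_1\subseteq M_2\subseteq\cdots$ of countable conical monoids. Each inclusion $M_k\hookrightarrow M_{k+1}$ will be an order-embedding. Each step will add refinements for one prescribed equation, and the final monoid $N=\varinjlim_k M_k=\bigcup_k M_k$ will have refinement.

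\textbf{Global bookkeeping.}
\begin{itemize}
\item \emph{Countability.} Each $M_k$ is countable, so the set of all quadruples $(a,b,c,d)\in M_k^4$ with $a+b=c+d$, taken over all $k$, is countable.
\item \emph{Enumeration.} Fix a bijection $\NN\to\NN\times\NN$ and dovetail, so that every such equation arising at some stage is handled at some later stage.
\item \emph{Transfer to the limit.} Let $a,b,c,d\in N$ with $a+b=c+d$. The elements and the equation already live in some $M_k$. They are refined in some $M_{k'}$, so they are refined in $N$.
\item \emph{Conicality.} If $x+y=0$ in $N$, this happens in some $M_k$, so $x=y=0$.
\item \emph{Order-embedding.} If $x\le y$ in $N$ for $x,y\in M$, the witness lies in some $M_k$. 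Composing the order-embeddings $M\hookrightarrow M_k$ gives $x\le y$ in $M$.
\end{itemize}
Thus everything reduces to a one-step lemma.

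\textbf{One-step lemma.} Given a countable conical monoid $P$ and $a,b,c,d\in P$ with $a+b=c+d$, produce a countable conical monoid $P'$ with an order-embedding $P\hookrightarrow P'$ in which $a=x+y$, $b=z+t$, $c=x+z$, $d=y+t$ for some $x,y,z,t\in P'$. The candidate is
\[
P'=\bigl(P\times(\ZZ^+)^4\bigr)/\!\sim ,
\]
where $(\ZZ^+)^4$ has free generators $e_x,e_y,e_z,e_t$ and $\sim$ is the monoid congruence generated by the four pairs
\[
(a,0)\sim(0,e_x+e_y),\quad (b,0)\sim(0,e_z+e_t),\quad (c,0)\sim(0,e_x+e_z),\quad (d,0)\sim(0,e_y+e_t).
\]
If $a=0$, conicality forces $x=y=0$. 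In that case I would kill the corresponding generators instead, or first handle the degenerate cases where one of $a,b,c,d$ is zero directly, since they are trivially refinable in $P$.

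Countability of $P'$ is clear. Conicality should follow from a grading argument: the relations never identify a nonzero element with $0$ once the degenerate cases are removed. Injectivity and order-embedding are the real content.

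\textbf{Main obstacle and how I would attack it.} The hard step is showing that for $u,v\in P$, the relation $(u,0)+w\sim(v,0)$ in $P'$ forces $u\le v$ in $P$. I would describe $\sim$ by elementary moves. A move replaces a summand $e_x+e_y$ by $a$, or $a$ by $e_x+e_y$, and similarly for the other three relations. I would then define a "collapse" map on formal expressions, which sends any element of $P\times(\ZZ^+)^4$ to the upward-closed set of elements of $P$ obtainable by trading the generators back in complete pairs. The key is to show that this is invariant under the moves, using $a+b=c+d$ to handle the one critical overlap, namely $e_x+e_y+e_z+e_t$. Then $(v,0)\sim(u,0)+w$ would give $v\in u+P$ after collapsing the $w$-part, provided every leftover generator can be absorbed.

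If this rewriting invariant becomes unmanageable, I would fall back on Wehrung's approach from \cite{Weh98Emb}. There one first order-embeds $P$ into a suitable refinement monoid built from $P$-valued data (for instance, monoids of ideals or of formal intervals), which encodes the preorder of $P$. One then only uses the chain argument above to secure countability.
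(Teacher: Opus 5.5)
\textbf{There is a genuine gap, but your reduction is sound and the gap can be closed.} The countable chain with dovetailed bookkeeping, the passage to the union, and the disposal of degenerate cases (if, say, $a=0$, then $x=y=0$, $z=c$, $t=d$ already refines in $P$) are all fine. Conicality of $P'$ also works as you suggest: when $a,b,c,d\neq 0$, the map $P\times(\ZZ^+)^4\to\{0,1\}$ sending $(p,w)$ to $0$ iff $(p,w)=(0,0)$ descends to $P'$. The paper itself gives no argument and simply cites Wehrung.

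The gap is the one-step lemma, which is where all the content lies. The theorem itself implies it: map $P'$ into a refinement monoid containing $P$. You leave this step open. As described, your collapse map is not well defined, since an $\ZZ^+$-part such as $e_x$ or $e_x+e_y+e_z$ has no decomposition into complete pairs. You also do not show that it is invariant under the moves. Your proviso ``provided every leftover generator can be absorbed'' is exactly the unproved point.

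\textbf{The missing idea is a defect invariant.}
\begin{itemize}
\item \emph{Definition.} Put $\delta(n_xe_x+n_ye_y+n_ze_z+n_te_t)=n_x+n_t-n_y-n_z\in\ZZ$, and $\delta(p,w)=\delta(w)$.
\item \emph{Invariance.} The congruence $\sim$ is the transitive closure of the moves $(p'+a,w')\leftrightarrow(p',w'+e_x+e_y)$ and the three analogues. Each pair-sum has $\delta=0$, so $\delta$ is constant along every chain of moves.
\item \emph{Kernel.} A direct check shows that $\{w:\delta(w)=0\}$ is exactly the submonoid $S$ generated by the four pair-sums. Moreover, two multiplicity vectors $(n_1,n_2,n_3,n_4)$ representing the same $w\in S$ differ by an integer multiple of $(1,1,-1,-1)$.
\item \emph{Collapse map.} Hence $\kappa(w)=n_1a+n_2b+n_3c+n_4d$ is well defined on $S$. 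This is the one place where $a+b=c+d$ is used, and it handles the only overlap, the one you identified. Then $\Phi(p,w)=p+\kappa(w)$ is invariant under every move between elements of $P\times S$. Indeed, $\Phi(p'+a,w')=p'+a+\kappa(w')=\Phi(p',w'+e_x+e_y)$, and similarly for the other three moves.
\item \emph{Conclusion.} Suppose $(u,0)+(q,w)\sim(v,0)$. The whole chain stays in $\delta^{-1}(0)=P\times S$, so no leftover generators ever appear. Therefore $v=\Phi(v,0)=\Phi(u+q,w)=u+q+\kappa(w)$, which gives $u\le v$. Taking $q=0$, $w=0$ gives injectivity.
\end{itemize}

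With this filled in, your approach gives a complete, elementary, self-contained proof in place of the citation. It also shows more: the one-step extension is an order-embedding for any commutative monoid, and conicality is needed only to keep $P'$ conical.
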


\begin{exa}\label{exa:NotSep}
    Consider the monoid 
    \[
        M=\langle x\mid 2x=3x\rangle ,
    \]
    where note that, although $x\neq 2x$, one has
    \[
        x+x=x+2x=2x+2x.
    \]
    
    Thus, $M$ is not separative. By \autoref{prp:WehEmb} above, there is a non-separative countable conical refinement monoid.
\end{exa}
 
\section{\texorpdfstring{The monoid $V(R)$}{The monoid V(R)}}
\index[symbols]{$V(R)$}\label{sec:VR}

In \cite{MurvNa36RingOp}, Murray and von Neumann introduced a notion of equivalence for projections in operator algebras to adequately measure their associated subspaces. This equivalence, nowadays known as \emph{Murray-von Neumann equivalence}, reads as $p\sim q$ if $p=vv^*$ and $v^*v=q$ for some $v$. In the purely algebraic setting, where an involution is usually not present, one can study idempotents instead of projections, and change the definition of equivalence as follows: Two idempotent elements $e,f$ are \emph{equivalent}\index[terms]{equivalent idempotents} in a ring $R$ if $xy=e$ and $yx=f$ for some $x,y\in R$. This was formally defined by Kaplansky \cite{Kap68RingOp}, and has played an instrumental role in non-stable $K$-theory since. We note that, without loss of generality, one can take $x\in eRf$ and $y\in fRe$. Further, one can show that two idempotents $e,f$ are equivalent if and only if their corresponding principal right ideals $eR$ and $fR$ are isomorphic as right $R$-modules.

\begin{dfn}
    Consider the ring $M_{\infty}(R)$ built as the directed union of $M_n(R)$ ($n\in\mathbb N$) with transition maps $M_n(R)\to M_{n+1}(R)$ given by $x\mapsto \left( \smallmatrix x&0\\ 0&0\endsmallmatrix \right)$. Then, $ V (R)$ is defined to be the monoid of equivalence classes $[e]$ of idempotents $e$ in $M_\infty(R)$ with the operation
    \[
        [e]+ [f] := \Big[  \begin{pmatrix} e&0\\ 0&f \end{pmatrix}  \Big].
    \]
\end{dfn}

\begin{rmk}
    For a unital ring $R$, this construction agrees with the monoid of isomorphism classes of finitely generated projective right $R$-modules, where the addition is induced by direct sum. Thus, in this case, its Grothendieck group is $K_0(R)$.
\end{rmk}

\begin{exas} $ $
    \begin{itemize}
        \item[(a)] Let $R$ be any (unital) principal ideal domain. Since any finitely generated projective module is isomorphic to $R^k$ for some $k$, the map $[R^k]\mapsto k$ gives an isomorphism of monoids $V(R)\cong \mathbb{Z}^+$. As stated previously, this monoid is cancellative and unperforated.
        \item[(b)] Let $R=\mathbb{R}[t_0,t_1,t_2]/\langle t_0^2+t_1^2+t_2^2-1\rangle$ be the coordinate ring of the real $2$-sphere. As shown in \cite[Proposition~I.4.15]{Lam06Serre}, there exists an indecomposable (finitely generated projective) $R$-module $P$ such that $P\oplus R\cong R^3$. In particular, one does not have $P\cong R^2$, from which it follows that $V(R)$ is not cancellative.

        Further, as a consequence of Bass's cancellation theorem, one deduces that $P\oplus P\cong R^4$ is a free $R$-module. Therefore, $2[P]=2(2[R])$ and yet $[P]\neq 2[R]$, as seen above. We conclude that $V(R)$ is not unperforated.

 \item[(c)] As shown in  \cite[Example 4]{MenalMoncasi1982}, there exists a von Neumann regular ring $R$ with stable rank $2$ such that $\mathrm{K}_0(R)\cong \Z\oplus\Z/2\Z$. By a standard proof, one can show that unperforation of $V(R)$ implies that $\mathrm{K}_0(R)$ is unperforated. Hence, we conclude that $V(R)$ is not unperforated.
    \end{itemize}
\end{exas}

Let $R$ be a regular ring. Since $M_n (R)$ is von Neumann regular (\autoref{exas:vNReg}~(a)), it follows that $M_\infty (R)$ is also regular. In the case of such rings, $V(R)$ has Riez refinement:

\begin{thm}[{\cite[Corollary 1.3]{AGOP98}}]
    Let $R$ be a von Neumann regular ring. Then, $V(R)$ is a conical refinement monoid.
\end{thm}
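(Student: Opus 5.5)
The statement has two parts: $V(R)$ is conical, and $V(R)$ has Riesz refinement. I will work with finitely generated projective right modules over a unital regular ring. The nonunital case reduces to this one: a regular ring has local units, and every idempotent of $M_\infty(R)$ lies in some corner $M_n(eRe)$ with $eRe$ unital and regular. Refinement only involves finitely many idempotents at a time, so it can be checked inside such a corner. Since $M_n(R)$ is regular, it also suffices to argue with idempotents of $M_\infty(R)$, or equivalently with direct summands $A$ of some $R^n$.

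Conicality comes first. If $[e]+[f]=0$, then the diagonal idempotent $\mathrm{diag}(e,f)$ is equivalent to $0$. Equivalence to $0$ forces the idempotent to be $0$, since $xy=\mathrm{diag}(e,f)$ with $yx=0$ gives $\mathrm{diag}(e,f)=(xy)^2=x(yx)y=0$. Hence $e=f=0$. This holds for any ring.

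For refinement, suppose $A\oplus B\cong C\oplus D$ with all four modules finitely generated projective. Identify them so that $P=A\oplus B=C'\oplus D'$, where $C'\cong C$ and $D'\cong D$. The key input is that finitely generated submodules of finitely generated projective modules over a regular ring are direct summands. This is \autoref{prp:CharacvNa}(iii) applied to $M_n(R)$, translated to modules: $\mathrm{End}(P)$ is a corner of some $M_n(R)$, hence regular, so the image of any endomorphism of $P$ is a direct summand.

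The plan is to take the projection $\pi_A\colon P\to A$ along $B$ and restrict it to $C'$. Its kernel $C'\cap B$ is a direct summand of $C'$, because the image $\pi_A(C')\subseteq A$ is finitely generated, hence a summand of $A$ and therefore projective, so the surjection $C'\to\pi_A(C')$ splits. Write $C'=(C'\cap B)\oplus X$, so that $X\cong\pi_A(C')$. Doing the same with the roles of $C'$ and $D'$ exchanged gives $D'=(D'\cap B)\oplus Y$.

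The expected obstacle is that these pieces need not match up directly. The general principle that overcomes it is that an idempotent-lifting argument turns "$P=A\oplus B=C'\oplus D'$" into a common refinement precisely when $P$ satisfies the exchange property. So the cleaner route is to prove first that finitely generated projective modules over a regular ring have the finite exchange property, as do all modules whose endomorphism ring is regular.

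For $P=A\oplus B=C'\oplus D'$, exchange applied to $A$ yields $P=A\oplus C''\oplus D''$ with $C''\le C'$ and $D''\le D'$. Write $C'=C''\oplus C_1$ and $D'=D''\oplus D_1$ using the modular law. Then $A\cong P/(C''\oplus D'')\cong C_1\oplus D_1$, and $B\cong P/A\cong C''\oplus D''$. Set $x=[C_1]$, $y=[D_1]$, $z=[C'']$ and $t=[D'']$. Then $[A]=x+y$, $[B]=z+t$, $[C]=x+z$ and $[D]=y+t$, which is exactly the required refinement.

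The main work, and the step I would verify most carefully, is the exchange property. I would prove it via Nicholson's criterion for exchange rings: for each $a$ in the endomorphism ring $T=\mathrm{End}(P)$ there is an idempotent $e\in aT$ with $1-e\in(1-a)T$. For regular $T$ this is easy: write $a=aua$ and take $e=au$, so that $e\in aT$. Checking $1-e\in(1-a)T$ needs a small adjustment of the idempotent, as in Goodearl's argument. Finally, Warfield's theorem transfers exchange from $T$ to $P$.
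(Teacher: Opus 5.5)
Your proposal is correct. It follows essentially the route behind the result that the paper only quotes from \cite[Corollary~1.3]{AGOP98}: regular rings are exchange rings, so finitely generated projectives have the finite exchange property (Warfield), exchange yields refinement exactly as in your bookkeeping with $C'',D'',C_1,D_1$, and conicality is the one-line computation you give. The one step you leave vague is easy to fill: if $aT=fT$ with $f^2=f$, then because $aT+(1-a)T=T$ the projection of $T$ onto $(1-f)T$ along $fT$ maps $(1-a)T$ onto the projective module $(1-f)T$, so this surjection splits and gives $T=aT\oplus K$ with $K\subseteq(1-a)T$; the idempotent $e$ with $eT=aT$ and $(1-e)T=K$ then satisfies Nicholson's criterion.
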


Going back to our discussion on dimension monoids, one can use \autoref{thm:Effros-Handel-Shen-Grillet} in combination with Elliott's celebrated classification of AF-algebras (concretely, \cite[Theorem~5.5]{Ell76Real}) to obtain a realization result for all such monoids:

\begin{thm}
    Let $M$ be a countable dimension monoid. Then, there exist an AF-algebra $A$ and an ultramatricial algebra $R$ over an arbitrary field $F$ realizing $M$.
\end{thm}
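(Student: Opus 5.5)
The plan is to combine the Effros--Handel--Shen/Grillet description of countable dimension monoids (\autoref{thm:Effros-Handel-Shen-Grillet}) with Elliott's classification of AF-algebras, treating the $\mathrm{C}^*$-algebraic and the purely algebraic realizations in parallel. First, since $M$ is a countable dimension monoid, \autoref{thm:Effros-Handel-Shen-Grillet} gives $M\cong\varinjlim (M_n,\varphi_n)$ with each $M_n=(\Z^+)^{k_n}$ simplicial. Countability allows us to take the system indexed by $\N$: from an arbitrary directed system of simplicial monoids one extracts a cofinal sequence, since $M$ is countable and each $M_n$ is finitely generated, so one can enumerate the elements of $M$ and inductively choose stages that capture and identify them. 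The connecting maps $\varphi_n\colon(\Z^+)^{k_n}\to(\Z^+)^{k_{n+1}}$ are monoid homomorphisms, hence are given by $k_{n+1}\times k_n$ matrices with entries in $\Z^+$. This is exactly the data of a Bratteli diagram.

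Next, I realize each stage and each connecting map. For the field $F$, put $R_n=\prod_{i=1}^{k_n}M_{d_i^{(n)}}(F)$. Here the sizes $d^{(n)}$ are chosen inductively so that $\varphi_n(d^{(n)})\le d^{(n+1)}$ coordinatewise; for instance, set $d^{(1)}=(1,\dots,1)$ and $d^{(n+1)}=\varphi_n(d^{(n)})+(1,\dots,1)$. I also choose nonunital block-diagonal embeddings $R_n\to R_{n+1}$ in which the $j$-th block of $R_n$ appears $(\varphi_n)_{ij}$ times in the $i$-th block of $R_{n+1}$. Since $V(M_d(F))\cong\Z^+$ via rank, we get $V(R_n)\cong(\Z^+)^{k_n}$, and the induced map on $V$ is precisely multiplication by the matrix $\varphi_n$. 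The construction in the $\mathrm{C}^*$-case is the same, using $M_d(\mathbb{C})$. The one point to watch is that the maps need not be unital and $M$ need not have an order unit. The reason for not requiring unitality is to accommodate this, and it is also why $V$ rather than $K_0$ with a unit is the right invariant.

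Finally, set $R=\varinjlim R_n$ (an ultramatricial $F$-algebra) and $A=\overline{\varinjlim A_n}$ (an AF-algebra). The functor $V$ commutes with direct limits of rings, because idempotents and equivalences in $M_\infty(\varinjlim R_n)$ already live at a finite stage. In the AF case it also commutes with inductive limits of $\mathrm{C}^*$-algebras; this is the standard continuity underlying \cite[Theorem~5.5]{Ell76Real}, and it follows from the fact that projections close to each other are equivalent. Hence $V(R)\cong\varinjlim(\Z^+)^{k_n}\cong M\cong V(A)$. I expect the main obstacle to be the reduction to a sequential system of simplicial monoids with honest monoid homomorphisms. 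Strictly speaking, this is part of the cited \autoref{thm:Effros-Handel-Shen-Grillet} for countable monoids, and I would invoke it directly rather than reprove it. Everything after that reduction is routine bookkeeping of multiplicities.
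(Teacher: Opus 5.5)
Your proposal is correct and takes the paper's route. The paper just combines \autoref{thm:Effros-Handel-Shen-Grillet} with Elliott's \cite[Theorem~5.5]{Ell76Real}, and your explicit Bratteli-diagram construction (non-unital block-diagonal embeddings, needed since $M$ may lack an order unit, together with continuity of $V$) is exactly the content of that citation. One wording correction: an uncountable directed set need not contain a cofinal sequence, but your dovetailing argument yields a countable subsystem with the same limit, and that is all you need.
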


We will also make use of the following definition:

\begin{dfn}
    A ring $R$ is said to be \emph{separative}\index[terms]{ring!separative} if its monoid $ V (R)$ is a separative monoid.
\end{dfn}

\section{Directed graphs}

Directed graphs are our basic combinatorial tool. Here we introduce the notation on directed graphs that we will use throughout the work. We follow the standard text \cite{AAS}.

A (directed) graph $E = (E^0, E^1, s, r)$ consists of two disjoint sets $E^0$ and $E^1$, called \emph{vertices} and \emph{edges} respectively, together with two maps $s, r: E^1 \longrightarrow E^0$. For $e\in E^1$, the vertices $s(e)$ and $r(e)$ are referred to as the \emph{source} and the \emph{range} of $e$, respectively.  A graph $E$ is called {\it row-finite} if $s^{-1}(v)$ is finite for all $v\in E^0$. It is called {\it finite} if both $E^0$ and $E^1$ are finite sets. 

A {\em sink} in a graph $E$ is a vertex $v \in E^0$ with $s^{-1}(v) = \emptyset$; a {\em source} is a vertex $v \in E^0$ with $r^{-1} (v) =\emptyset$.    

A (finite) \emph{path} in a graph $E$ is a string
$p=e_1\cdots e_n$ of edges $e_i\in E^1$ such that $r(e_i) = s(e_{i+1})$ for all $i$. The \emph{length} of the path $p = e_1 \cdots e_n$ is $n$, and is denoted by $|p|$.  
The source and range maps on edges are extended to paths as
\[s(p)=s(e_1)\qquad\text{and}\qquad r(p)=r(e_n).\]
Vertices are  regarded as paths of length $0$. We denote by $\text{Path}(E)$ the set of all paths in $E$.

\section{Posets}

Many of the objects that we will consider naturally induce poset structures. For example, each directed graph $E$ gives rise to a poset $\mathbb P (E)$, which is the antisymmetrization of the pre-order $\le $ on the set of vertices $E^0$ defined by $v\le w$ if and only if there is a path from $w$ to $v$ (see \autoref{ntn:notations-for-separated-graphs}). To deal with these partially ordered sets, we will need the following notation:

\begin{ntn}
    Given a poset $(I,\le )$, a subset $J$ of $I$ is called a \emph{lower subset}\index[terms]{lower subset} if $x\in J$ whenever $x\le y$ and $y\in J$.
    
    For $p\in I$, the set 
    \[
        I\downarrow p := \{ x\in I \mid x\le p \}
    \]    
    is the \emph{lower subset} of $I$ generated by $p$.
    
    We will write $\rL (p)$ to denote the \emph{lower cover}\index[terms]{lower cover} of $p$, that is, 
\[
    \rL(p)=\rL(I , p)=\{q\in I \mid q<p \text{ and } [q,p]=\{q,p\}\}
\]
where $[q,p]= \{x\in I : q\le x\le p \}$ is the \emph{poset interval}\index[terms]{poset!interval} from $q$ to $p$.

The set of all lower subsets will be written as $\mathcal L (I)$, which becomes a complete distributive lattice when equipped with intersection and union.
\end{ntn}

In addition to these notions, we will also need the concepts of tree and forest: 

\begin{dfn}
 \label{dfn:poset-be-a-tree}
 Let $(I,\leq )$ be a poset. We say that
 \begin{itemize}
     \item[(a)] $(I,\leq )$ is a \emph{tree}\index[terms]{poset!tree} if there is a greatest element $i_0\in I$ such that the interval $[i,i_0]$ is a chain for every $i\in I$. We will say that $i_0$ is the \emph{root}\index[terms]{poset!root} of $I$.
     \item[(b)] $(I,\leq )$ is a \emph{forest}\index[terms]{poset!forest} if one can write $I=\bigcup_{\alpha \in \Lambda} I_{\alpha}$ where $I_{\alpha}$ is a tree (with the induced order), and for each $\alpha\ne \beta$ the elements
 of $I_{\alpha}$ and  $I_{\beta}$ are pairwise incomparable.
 \end{itemize} 
 \end{dfn}




\chapter{From monoids to I-systems}\label{sec:MondIsystm}

In this chapter we carry out \hyperlink{Step1}{Step (1)} from the strategy in the introduction. Specifically, we define marked monoids, revisit the constructions of $\mathcal{I}(M,\mathfrak{m})$ and $\mathcal{M}(J)$, and define the notion of super adaptability for monoids and $I$-systems whilst keeping track of their marks. This establishes the constructions in the following diagram:
\[
\xymatrixrowsep{1pc}
\xymatrixcolsep{5pc}
\xymatrix{
    {\begin{array}{c}
         \text{s.a.}  \\
          \text{monoid}\\
          M
    \end{array}}
    \ar@/^/[r]^-{\mathcal{I}}
    &
    {\begin{array}{c}
         \text{s.a.}  \\
          I\text{-system}\\
          J
    \end{array}}
    \ar@/^/[l]^-{\mathcal{M}}
}
\]

In the second part of the chapter, we use the language of marks to prove \autoref{prp:morphism-of-systems}, which allows us to lift certain monoid morphisms to maps between $I$-systems.


\section{$I$-systems}
\label{sect:Isystems}

First, we need to recall certain aspects of primitive monoids and prime elements:

\begin{pgr}[Primitive monoids]\label{pgr:PrimMonod}
    A monoid is said to be \emph{primitive}\index[terms]{monoid!primitive} if it is primely generated, antisymmetric and has Riesz refinement.

    Given a primitive monoid $M$, consider its set of prime elements $\mathbb P (M)$\index[symbols]{$\mathbb P (M)\quad$(prime elements)} and the relation $\lhd $ on $\mathbb P (M)$ given by $q\lhd p$ if and only if $p= p+q$\index[symbols]{$\lhd\quad$(transitive relation)}. One can readily check that $\lhd $ is antisymmetric and transitive.
    
    Any primitive monoid $M$ is determined by these two constructions ($\mathbb P (M)$ and $\lhd $). Indeed, given any pair $(\mathbb P, \lhd)$, where $\mathbb P$ is a set and $\lhd$ is an antisymmetric transitive relation on $\mathbb P$, we know from \cite[Proposition~3.5.2]{Pierce} that the commutative monoid 
    \[
        M(\mathbb P, \lhd):=\langle p\in \mathbb P\mid p=p+q\text{ whenever }q\lhd p\rangle 
    \]
    is a primitive monoid, and that the assignments $M\mapsto (\mathbb P (M),\lhd)$ and $(\mathbb P, \lhd)\mapsto M(\mathbb P, \lhd)$ define mutually inverse correspondences (up to isomorphism) between primitive monoids and pairs $(\mathbb P,\lhd)$ as above.
\end{pgr}

\begin{pgr}[Free and regular primes]\label{pgr:FreeRegPrimes}
    Let $M$ be a primely generated conical refinement monoid. An element $x\in M$ is called \emph{regular}\index[terms]{prime!regular} if $2x\leq x$. It is called \emph{free}\index[terms]{prime!free} if, whenever $nx\leq mx$ for some $n,m\in\mathbb{N}$, one has $n\leq m$. If $M$ is primely generated, every element is either free or regular; see \cite[Theorem~4.5]{Bro01Canc}. In particular, one can divide the set of prime elements into those that are free, which we denote by $\Pfree (M)$, and those that are regular, denoted by $\Preg (M)$\index[symbols]{$\Pfree (M)\quad$ (set of free primes)}\index[symbols]{$\Preg (M)\quad$ (set of regular primes)}. Further, it follows from \cite[Theorem~4.6]{AraPar16Isr} that every primely generated conical refinement monoid is tame (\cite[Definition 2.1]{AraGoo15SgpFor}), that is, it is isomorphic to a direct limit of finitely generated refinement monoids.
    
    Define the relation $\equiv$ on $M$ by writing $x\equiv y$ if and only if $x\leq y$ and $y\leq x$\index[symbols]{$\equiv\quad$(antisymmetric relation)}. The quotient $\overline{M}:=M/\equiv$\index[symbols]{$\overline{M}$} becomes a primitive monoid, and we have a natural surjective homomorphism $\pi_M\colon M\to \ol{M}$. One can easily check that $\overline{p}\in \overline{M}$ is prime (resp. free, regular) if and only if $p\in M$ is prime (resp. free, regular). By choosing a representative in $M$ for each prime element in $\ol{M}$, we obtain a subset $\mathfrak m$ of prime elements in $M$. By \cite[Lemma 2.7]{Bro01Canc}, the set  $\pi_M^{-1}(p)$ is always a group whenever $p$ is a regular prime of $\ol{M}$, and in this case the representative that we select in $\pi_M^{-1}(p)$ is always the identity element of that group.  
   We call the pair $(M,\mathfrak{m})$ of a primely generated conical refinement monoid $M$ and a subset $\mathfrak m$ of representatives of primes in $\ol{M}$ a \emph{marked monoid}\index[terms]{marked monoid} with mark $\mathfrak m$. 
   Although the particular choice of a mark $\mathfrak m$ in $M$ does not affect the main results in this paper, it is convenient to consider marked monoids when we discuss morphisms in  \autoref{subsect:lifting-morphisms}.    
    
      We equip $\mathbb{P} (M)$ (and in particular any mark $\mathfrak m$ of $M$) with the order $\leq^*$\index[symbols]{$\leq^*$} defined by $x\leq^* y$ if and only if $x=y$ in $M$ or if $\bar{x}<\bar{y}$ in $\mathbb{P}(\overline{M})$. We write $x<^* y$ if $\bar{x}<\bar{y}$ in $\mathbb{P}(\overline{M})$.
      
      If $a\in M$, we denote by $M_a$ the \emph{archimedean component}\index[terms]{archimedean component} of $a$, that is, the set of elements $b\in M$ such that $a\leq mb$  and $b\leq na$ for some $m,n\in\mathbb{N}$. Clearly, one has $M_a=M_b$ whenever $a\equiv b$ in $M$, and one may denote the archimedean component $M_a$ by $M_{\ol{a}}$. In what follows (\autoref{dfn:Isystem}), we will make a slight abuse of notation and denote certain monoids (a priori unrelated to archimedean components) by $M_p$ with $p$ a poset element. This is justified in \autoref{pgr:IM}.
\end{pgr}

As stated in the outline of the strategy, we make use of the concept of \emph{$I$-systems} introduced in \cite[Definition~1.1]{AraPar16Isr} to build a bridge between super adaptable separated graphs and primely generated conical refinement monoids. We begin by recalling their definition and main constructions.

\begin{dfn}\label{dfn:Isystem}
Let $I$ be a poset with order $\leq$. An \emph{$I$-system}\index[terms]{I-system} $J$ is a tuple
\[
    (I,\leq ,(G_p)_{p\in I},\varphi_{p,q}\,(q< p))
\]
together with a partition $I=I_{\rm free}\sqcup I_{\rm reg}$ such that
\begin{itemize}
    \item[(a)] $(G_p)_{p\in I}$ is a family of abelian groups, where we adopt the notation
    \begin{enumerate}
        \item[(a1)] For $p\in I_{\rm reg}$, $M_p:=G_p$ and $\widehat{G_p}:=G_p$.
        \item[(a2)] For $p\in I_{\rm free}$, $M_p:=\mathbb{N}\times G_p$ and $\widehat{G_p}:=\mathbb{Z}\times G_p$, which is the Grothendieck group of $M_p$. We see $G_p\subset \widehat{G_p}$ through the embedding $g\mapsto (0,g)$. 
    \end{enumerate}
        
    \item[(b)] $\varphi_{p,q}$ are monoid morphisms $\varphi_{p,q}\colon M_q\to G_p$ such that its associated induced group morphisms $\widehat{\varphi}_{p,q}\colon \widehat{G_q}\to \widehat{G_p}$ satisfy:
    \begin{enumerate}
        \item[(b1)] The assignment
        \[
            \begin{split}
                p\mapsto \widehat{G_p}\\
                (q<p)\mapsto \widehat{\varphi}_{p,q}
            \end{split}
        \]
        is a functor from $(I,\leq )$ to the category of abelian groups.
        \item[(b2)] For each $p\in I_{\rm free}$, the map
        \[
            \bigoplus_{q<p}\varphi_{p,q}\colon \bigoplus_{q<p}M_q\to G_p
        \]
        is surjective.
    \end{enumerate}
\end{itemize}

We will say that the $I$-system is \emph{countable} if $I$ is a countable poset and each $G_p$ is countable.
\end{dfn}

\begin{ntn}\label{ntn:pMp}
    Let $J$ be an $I$-system and let $p\in I_{\rm free}$. Throughout the text we will denote by $p$ the element $(1,0_{G_p})$ in $M_p$, thus writing a general element in $M_p$ as $np+g$ with $n\in\NN$ and $g\in G_p$. The reason for this notation stems from the discussion in \autoref{pgr:IM} and the identification from \autoref{thm:APIsr16}.
\end{ntn}


We will now define \emph{elementary morphisms} of systems, a notion that generalizes \cite[Definition~3.3]{AraPar16Isr}. As we will see later, these elementary morphisms induce monoid homomorphisms between the associated monoids, extending \cite[Lemma 3.4]{AraPar16Isr}.  

Given a group homorphism $f\colon G\to H$ and an element $h\in H$, we may define the semigroup homomorphism
\[
    f^h\colon \NN \times G \longrightarrow \NN\times H,\quad f^h(n, a) = (n, f(a)+nh).
\]
This homomorphism extends to a group homomorphism $\ZZ\times G\to \ZZ\times H$ with the same formula, which will be also denoted by $f^h$. With the usual embedding $G\subset \ZZ\times G$, we have that $f^h$ extends $f$.  

With this notation at hand, we can now present our definition of elementary morphism of systems:

\begin{dfn}
	\label{dfn:hom-of-systems}
An \emph{elementary homomorphism}\index[terms]{I-system!elementary morphism} of systems $f\colon J_1\to J_2$, with 
\begin{align*}
    J_1&=(I^{(1)},\leq ,(G_p^{(1)})_{p\in I^{(1)}},\varphi_{p,q}^{(1)}\,(q< p)),\quad\text{and}\\
    J_2&=(I^{(2)},\leq ,(G_r^{(2)})_{r\in I^{(2)}},\varphi_{r,s}^{(2)}\,(s< r)),
\end{align*}
consists of 
\begin{itemize}
    \item[(a)] a poset map $\psi\colon I^{(1)}\to I^{(2)}$ that sends regular elements to regular elements, free elements to free elements, and pairs $q<p$ to pairs $\psi (q)<\psi (p)$; and
    \item[(b)] a family of group morphisms $f_p\colon G_p^{(1)}\to G_{\psi (p)}^{(2)}$, together with a family of elements $\{h(p) \in G^{(2)}_{\psi (p)}\mid p\in \Ifree^{(1)}\}$ such that the diagram
    \[
        \xymatrixcolsep{5pc}
        \xymatrixrowsep{4pc}
        \xymatrix{
            M_q^{(1)} \ar[r]^{\varphi_{p,q}^{(1)}} \ar[d]_{\bar{f_q}} & 
            G_p^{(1)} \ar[d]^{f_p} \\
            M_{\psi(q)}^{(2)} \ar[r]^{\varphi_{\psi (p),\psi (q)}^{(2)}} & G_{\psi (p)}^{(2)}
        }
    \]
    is commutative, where $\bar{f_q}=f_q$ if $q$ is regular and $\bar{f_q}= f_q^{h(q)}$ if $q$ is free.
\end{itemize}
\end{dfn}

\begin{rmk}
    Elementary morphisms of systems can be composed: if 
    \[
        J_i = (I^{(i)},\leq ,(G_p^{(i)})_{p\in I^{(i)}},\varphi_{p,q}^{(i)}\,(q< p)), \quad i=1,2,3
    \]
are systems, and $f_i \colon J_i\to J_{i+1}$, $i=1,2$, are elementary morphisms, given by the data $(\psi_i, f_p^{(i)}, h_i(p))$ respectively, then $f_2\circ f_1$ is an elementary morphism from $J_1$ to $J_3$, given by the data
\[
    (\psi_2\circ \psi_1, f^{(2)}_{\psi_1(p)}\circ f^{(1)}_p, f_{\psi_1(p)}^{(2)}(h_1(p)) + h_2(\psi_1(p))).
\]
\end{rmk}        

\begin{dfn}
We say that two systems $J_1$ and $J_2$ are \emph{elementary isomorphic} if there is an invertible elementary homomorphism between them, that is there are elementary morphisms of systems $f\colon J_1\to J_2$ and $g\colon J_2\to J_1$ such that $g\circ f =\text{id}_{J_1}$ and $f\circ g=\text{id}_{J_2}$.
\end{dfn}

\begin{ntn}
    The definition of morphism of systems given in \cite[Definition 3.3]{AraPar16Isr} coincides with our definition when taking the family $\{ e_{\psi(p)}\in G^{(2)}_{\psi (p)} \mid p\in \Ifree^{(1)} \}$ of neutral elements in $G_{\psi (p)}$ as $\{h(p) \in G^{(2)}_{\psi (p)}\mid p\in \Ifree^{(1)}\}$. In this case, we will refer to $f$ as a \emph{plain homomorphim}\index[terms]{I-system!plain morphism} of systems. A \emph{plain isomorphism} will be an elementary isomorphism of systems which is also plain. Observe that the inverse of a plain isomorphism is also a plain isomorphism. 
\end{ntn}

\begin{rmk}
    Let us remark on why we have decided to reserve the name \emph{morphism of systems} for future use and instead called our maps plain / elementary morphisms: while every primely generated conical refinement monoid $M$ is isomorphic to $\mathcal{M}(J)$ for some $I$-system $J$ (\autoref{thm:APIsr16}), an arbitrary monoid morphism $f: M_1 \to M_2$ does not generally lift to an elementary morphism of systems. For example, the automorphism of $\mathbb{N}$ defined by $1 \mapsto 2$ cannot be so lifted. Thus, we reserve the term \emph{morphism} for a broader class of maps (to be defined in future work) that will correspond 1-to-1 with monoid morphisms.
\end{rmk}

\section{The constructions}
Following the strategy from the introduction, we start by recalling the construction of the system $\mathcal{I}(M)$ and the monoid $\mathcal{M}(J)$ whilst keeping track of the marks. As shown in \cite[Theorem~0.1]{AraPar16Isr}, these constructions are each others' inverse. We define super adaptable monoids in \autoref{dfn:superadaptableIsystem} and give some examples. 

\begin{pgr}[Defining $\mathcal{I}(M)$]\label{pgr:IM}\index[terms]{I-system!of a monoid}\index[symbols]{$\mathcal{I}(M,\mathfrak{m})\quad$(I-system of a monoid)}
    Given a marked primely generated conical refinement monoid $(M,\mathfrak m)$, one defines its  associated $\mathbb{P}(\ol{M})$-system $\mathcal I (M,\mathfrak m)$ as follows (see \cite[Section~2]{AraPar16Isr} for details, where this was simply denoted by $\mathcal I (M)$):

    The poset is given by $\mathbb{P}(\ol{M})$ with its algebraic order $\leq$ (which is a partial order). Note that, for $p,q\in \mathbb{P}(\ol{M})$, we have $q\le p$ if and only if ($p=q$ or $p=p+q$).

    For $p\in \mathbb{P}(\ol{M})$, let $M_p$ denote the archimedean component of $M$ corresponding to $p$, that is, the set of elements $a\in M$ such that $\ol{a}\leq mp$  and $p\leq n\ol{a}$ (and thus $p\leq \ol{a}$) for some $m,n\in\mathbb{N}$.

    We define the groups $G_p$ as follows:
    \begin{itemize}
        \item If $p$ is regular, $M_p$ is an abelian group \cite[Lemma~2.7]{Bro01Canc}, and we set $G_p=M_p$.
        \item If $p$ is free, let $\Grot{M_p}$ denote the Grothendieck group of $M_p$. We define $G_p$ as the subgroup of $\Grot{M_p}$ given by
        \[
            \{
            (\tilde{p}+a )-\tilde{p}\mid a\in M\text{ and }\tilde{p}+a\leq \tilde{p}
            \},
        \]
        where $\tilde{p}$ is a representative of $p$ in $M$. It is easy to check that the element $(\tilde{p}+a) - \tilde{p}$ of $\Grot{M_p}$ appearing in the above definition does not
        depend on the particular representative $\tilde{p}$ of $p$. Hence, with a slight abuse of notation, we will denote it by $(p+a)-p$. 
        
        One can show that, for any free prime $p\in \mathbb P (\ol{M})$, and any representative $\tilde{p}$ of $p$, there is an isomorphism $\Phi_{\tilde{p}} \colon \NN\times G_p \to M_p$ such that 
        \[
            \Phi_{\tilde{p}} (n, (p+a)-p) =  n\tilde{p} +a,
        \]
        where $p+a\le p$; see \cite[Lemma~2.4]{AraPar16Isr}. 
    \end{itemize}

    By \cite[Lemma~2.6]{AraPar16Isr}, the maps $\varphi_{p,q}\colon M_q\to G_p$ (for $q<p$) given by
    \[
        \varphi_{p,q} (x)=(p+x)-p
    \]
    are well-defined semigroup homomorphisms; note that the semigroups $M_p$, the groups $G_p$, and the morphisms $\varphi_{p,q}$ do not depend on the mark $\mathfrak m$. However, since the semigroup $M_q^{\mathcal I}$ associated to the $\mathcal I$-system is defined to be $M_q^{\mathcal I}= \NN \times G_q$, instead of $M_q$,  whenever $q$ is a free prime, we need to precompose $\varphi_{p,q}$ with the isomorphism $\Phi_{\tilde{q}}\colon \NN\times G_q \to M_q$  in order to get the correct map, which does depend on the mark $\mathfrak m$. Therefore we define the map
    \[
        \varphi^{\mathfrak m}_{p,q} = \varphi_{p,q} \circ \Phi_{\tilde{q}}\colon M^{\mathcal I}_q \to G_p,
    \]
where $\tilde{q} \in \mathfrak m$, $\pi_M(\tilde{q}) =q$, $\Phi_{\tilde{q}} = \text{id}_{G_q}$ when $q$ is regular, and $\Phi_{\tilde{q}}$ is the above isomorphism when $q$ is free. Concretely we obtain the following formula for the connecting maps $\varphi_{p,q}^{\mathfrak m}$, where $q\in \Pfree (\ol{M})$:
\begin{equation}
	\label{eq:formula-for-varphi-m}
	\varphi_{p,q}^{\mathfrak m} (n,(q+a)-q) = (p+n\tilde{q}+a)-p,\qquad (q+a\le q, \tilde{q}\in \mathfrak m, \pi_M(\tilde{q})=q).
		\end{equation}
	
It follows that the system slightly depends on the mark $\mathfrak m$, and that is why we write $\mathcal I (M,\mathfrak m)$ to stress this dependence. 
\end{pgr}

Although the system associated to a given primely generated conical refinement monoid $M$ depends on the mark
we choose in $M$, all of them are elementary (but not plainly) isomorphic, as shown in the next lemma.

\begin{lma}
	\label{lma:isomorphism-of-marked-systems}
	Let $M$ be a primely generated conical refinement monoid, and let $\mathfrak m_1$ and $\mathfrak m_2$ be two marks on $M$. Then, there is an elementary isomorphim of systems $f\colon \mathcal I (M,\mathfrak m_1)\to \mathcal I (M,\mathfrak m_2)$ such that $f_p=\mathrm{id}_{G_p}$ for all $p\in I$, and $h(p)= (0,p_1-p_2)$, where $p_i\in \mathfrak m_i\cap \pi_M^{-1}(p)$ for all $p\in \Pfree (M)$.  
	\end{lma}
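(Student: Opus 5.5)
The plan is to take $\psi=\mathrm{id}$ on $I=\mathbb P(\ol{M})$ and $f_p=\mathrm{id}_{G_p}$, and to check directly that the given data form an elementary morphism. The only content is in the free primes, where the twist $h(p)$ compensates for the change of representative in the isomorphisms $\Phi_{\tilde p}$ of \autoref{pgr:IM}. I also read the statement's $h(p)=(0,p_1-p_2)$ as the element $p_1-p_2$ of $G_p\subseteq \widehat{G_p}$.

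First I would check that $h(p)$ really lies in $G_p$ for each free prime $p$. Since $p_1\equiv p_2$, we can write $p_1=p_2+b$ for some $b\in M$. Moreover $p_2+b=p_1\le p_2$, so
\[
p_1-p_2=(p_2+b)-p_2\in G_p.
\]
The independence of the representative, noted in \autoref{pgr:IM}, gives that this element is well defined.

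The key step is the identity
\[
\Phi_{q_2}\circ f_q^{h(q)}=\Phi_{q_1}\colon \NN\times G_q\to M_q
\]
for every free prime $q$, with $q_i\in\mathfrak m_i\cap\pi_M^{-1}(q)$. Since $\Phi_{\tilde q}$ is an isomorphism from the cancellative monoid $\NN\times G_q$, the monoid $M_q$ is cancellative and embeds in $\Grot{M_q}$. Inside $\Grot{M_q}$ the formula of \autoref{pgr:IM} reads $\Phi_{\tilde q}(n,x)=n\tilde q+x$. Indeed, for $x=(q+a)-q$ and $n\ge 1$ one has $n\tilde q+x=n\tilde q+a$. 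Hence
\[
\Phi_{q_2}(n,\,g+n(q_1-q_2))=nq_2+g+nq_1-nq_2=nq_1+g=\Phi_{q_1}(n,g).
\]
This is the step I expect to need the most care, specifically justifying the Grothendieck-group computation via cancellativity of $M_q$. Once it holds, commutativity of the square in \autoref{dfn:hom-of-systems} follows immediately:
\[
\varphi^{\mathfrak m_2}_{p,q}\circ f_q^{h(q)}=\varphi_{p,q}\circ\Phi_{q_2}\circ f_q^{h(q)}=\varphi_{p,q}\circ\Phi_{q_1}=\varphi^{\mathfrak m_1}_{p,q}.
\]
For regular $q$ nothing needs checking, since both marks pick the identity of the group $M_q$ and so $\varphi^{\mathfrak m_1}_{p,q}=\varphi^{\mathfrak m_2}_{p,q}=\varphi_{p,q}$.

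Finally, the symmetric construction gives an elementary morphism $g\colon\mathcal I(M,\mathfrak m_2)\to\mathcal I(M,\mathfrak m_1)$ with identity maps and $h'(p)=p_2-p_1$. By the composition formula in the remark following \autoref{dfn:hom-of-systems}, $g\circ f$ has data
\[
(\mathrm{id},\ \mathrm{id},\ (p_1-p_2)+(p_2-p_1)=0),
\]
which is the identity of $\mathcal I(M,\mathfrak m_1)$. Similarly $f\circ g=\mathrm{id}$, so $f$ is an elementary isomorphism.
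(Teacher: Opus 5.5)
Your proposal is correct and matches the paper's proof. Both use the identity poset map and identity group maps, check that $p_1-p_2=(p_2+\alpha)-p_2\in G_p$, and verify that $\varphi^{\mathfrak m_2}_{p,q}\circ \mathrm{id}^{h(q)}=\varphi^{\mathfrak m_1}_{p,q}$ for free $q$. The differences are minor: the paper computes directly with formula \eqref{eq:formula-for-varphi-m}, whereas you first show $\Phi_{q_2}\circ\mathrm{id}^{h(q)}=\Phi_{q_1}$; and you explicitly check the inverse via the composition formula, which the paper leaves implicit.
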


\begin{proof}
	First observe that, since $p_1\equiv p_2$ we can write $p_1= p_2+\alpha$, where $\alpha \in M$, and clearly
	$p_2+\alpha = p_1\leq p_2$, so that $p_1-p_2 = (p_2+\alpha)-p_2 \in G_p$, where $\pi_M(p_1)=\pi_M (p_2) =p$ and $p_i\in \mathfrak m_i$ for $i=1,2$. Hence $h(p)\in G_{\psi (p)} = G_p$ as required. 
	We need to check that $\varphi_{p,q} ^{\mathfrak m_1} = \varphi_{p,q}^{\mathfrak m_2}\circ \ol{\text{id}}$ whenever $q<p$. This is obvious when $q$ is a regular prime, so let us assume that $q$ is free. Then we have, for $n\in \NN$ and $q+a\leq  q$,
	\begin{align*}
		 (\varphi_{p,q}^{\mathfrak m_2} \circ \ol{\text{id}}) (n,(q+a)-q)  &= \varphi_{p,q}^{\mathfrak m_2}({\text{id}}^{h(q)} (n,(q+a)-q))\\
         &= \varphi_{p,q}^{\mathfrak m_2} (n, n(q_1-q_2)+a)\\
		  & = (p+nq_2+nq_1-nq_2+a) -p\\
          &= (p+nq_1+a) -p \\
          &= \varphi_{p,q}^{\mathfrak m_1}(n,(q+a)-q).
	\end{align*} 
	This shows the result. 
\end{proof}

\begin{rmk}\label{rmk:GpCountGen}
All the groups $G_p$ defined in \autoref{pgr:IM} above are countable whenever $M$ is a countable, primely generated, conical, refinement monoid. Further, in this case $\mathbb{P} (\ol{M})$ is also countable, which implies that $\mathcal{I}(M)$ is a countable system.
\end{rmk}

\begin{pgr}[Defining $\mathcal{M}(J)$]\label{pgr:MJ}\index[terms]{monoid!of system}\index[symbols]{$\mathcal{M}(J)\quad$(monoid of sysetm $J$)}
    Let $J$ be an $I$-system $(I,\leq ,(G_p)_{p\in I},\varphi_{p,q}\,(q< p))$. We define $\mathcal{M}(J)$ as the monoid generated by the symbols $\chi_p (x)$, with $p\in I$ and $x\in M_p$, subject to
    \[
        \chi_p(x)+\chi_p(z)=\chi_p(x+z)
        ,\andSep 
        \chi_p (x)+\chi_q(y)=\chi_p(x+\varphi_{p,q}(y))
    \]
    for all $x,z\in M_p$, $q<p$, and $y\in M_q$; see \cite[Section~1]{AraPar16Isr} for details.

    It is shown in \cite[Lemma 3.4]{AraPar16Isr} that every plain morphism of systems $f\colon J_1 \to J_2$ induces a monoid morphism $\mathcal{M}(f)\colon \mathcal{M}(J_1) \to \mathcal{M}(J_2)$ given by $$\mathcal{M}(f)(\chi_p (x)):=\chi_{\psi (p)}(\overline{f}_p(x)).$$ 
    The same holds for elementary homomorphisms, as we subsequently show, with the same formula.
    
We may endow $\mathcal M (J)$ with its \emph{canonical mark}\index[terms]{marked monoid!mark of  $\mathcal{M}(J)$}\index[symbols]{$\mathfrak m_J\quad$(mark of  $\mathcal{M}(J)$)} $\mathfrak m_J$, namely we select the neutral element $e_p\in G_p$ for each $p\in \Ireg$ and we select the 
element $(1,e_p)\in M_p=\NN\times G_p$ for each $p\in \Ifree$.
\end{pgr} 

\begin{pgr}[Order-ideals of $\mathcal{M}(J)$]\label{pgr:OIdMJ}
Let $M$ be a monoid. An \emph{order-ideal}\index[terms]{order-ideal} of~$M$ is a submonoid $I$ of~$M$ such that $x,y\in I$ whenever $x+y\in I$.

Now let $J$ be an $I$-system. As shown in \cite[Proposition 1.9]{AraPar16Isr}, there is a lattice isomorphism from the lattice of lower subsets of $I$ onto the lattice of order-ideals of $\mathcal{M}(J)$. More concretely, for any lower subset $L$ of $I$, one can form the monoid $\mathcal{M} (J|_L)$, where $J|_L$ denotes the restriction of the $I$-system to $L$. The monoid $\mathcal{M} (J|_L)$ naturally corresponds to an order-ideal of $\mathcal{M}(J)$.

Now let $p$ be a free prime in $I$, and let $J_p = \{q\in I \mid q<p \}$, which is a lower subset of $I$.  Therefore, we can consider the associated monoid $\mathcal{M}(J_p) := \mathcal{M}(J |_{J_p})$ as above, which is an order-ideal of $\mathcal{M}(J)$. 
  
Following \cite[Lemma~5.1]{AraPar17JAlg}, the maps $\varphi_{p,q}$, $q<p$, induce a surjective semigroup homomorphism
\[
    \varphi_p\colon \mathcal{M}(J_p)\to G_p.
\]

In particular, if $r\in\mathcal{M}(J_p)$ satisfies $\varphi_p(r)=0$, note that $\chi_p (p)+r=\chi_p(p)$.

The map $\varphi_p$ gives rise to a group homomorphism $\Grot{\varphi_p}\colon \Grot{\mathcal{M}(J_p)}\to G_p$, where $G$ denotes the Grothendieck functor. Following \cite[Section~5]{AraPar17JAlg}, we write $\Grot{\mathcal{M}(J_p)}^{++}:=\iota_{J_p}(\mathcal{M}(J_p))$ where $\iota_{J_p}\colon \mathcal{M}(J_p)\to \Grot{\mathcal{M}(J_p)}$ is the canonical homomorphism.
\end{pgr}

\begin{lma}\label{lma:morphisms-of-systems-induce-homos}
Let $f= (\psi, \{f_p\}_{p\in I}, \{h(p)\}_{p\in \Ifree})$ be an elementary homomorphism of systems $f\colon J_1\to J_2$. Then $f$ induces a homomorphism of monoids $\mathcal M (f)\colon \mathcal M (J_1)\to \mathcal M (J_2)$\index[symbols]{$\mathcal{M}(f)$}, given by
\[
    \mathcal M (f) (\chi_p(x)) = \chi_{\psi (p)} (\ol{f_p} (x))
\]
for all $x\in M_p$. The homomorphism $\mathcal M (f)$ sends free primes to free primes and regular primes to regular primes, and $\mathcal M (f)(q) <^* \mathcal M (f)(p)$ whenever $q<^*p$ in $\mathbb P (\mathcal M(I))$.   
\end{lma}
\begin{proof}
The first part of the proof is the same as the one of \cite[Lemma 3.4]{AraPar16Isr}. The last part is clear from the definitions and the fact that, for any $I$-system $J$, the primes in $\mathcal M (J)$ are precisely the elements of the form $\chi_i(x)$ for $x\in G_p$, where $p\in \Ireg$, and the elements of the form $\chi _i(1,x)$ for $x\in G_p$, where $p\in \Ifree$; see \cite[Remark 2.8]{AraPar16Isr}. 
\end{proof}  

\begin{thm}[c.f. {\cite[Theorem~0.1]{AraPar16Isr}}]\label{thm:APIsr16}
    Let $(M,\mathfrak m)$ be a marked, primely generated, conical, refinement monoid, and let $J$ be an $I$-system. Then,
    \begin{itemize}
        \item[(a)] $\mathcal{I}(M, \mathfrak m)$ is a $\mathbb{P}(\ol{M})$-system, and there is an isomorphism of marked monoids 
        \[
            \Phi_{(M,\mathfrak m)} \colon (M,\mathfrak m)\longrightarrow (\mathcal{M}(\mathcal{I}(M,\mathfrak m)), \mathfrak m_{\mathcal I (M,\mathfrak m)})
        \]
        such that
        \begin{enumerate}
            \item $\Phi_{(M,\mathfrak m)} (a) = \chi_{\ol{p}}(a)$ for $\ol{p}\in \Preg (\ol{M})$ and $a\in G_{\ol{p}}$;
            \item $\Phi_{(M,\mathfrak m)} (np+a)= \chi_{\ol{p}}(n,(p+a)-p)$ for $p\in \mathfrak m \cap \Pfree (M)$, $n\in \NN$ and $a\in M$ with $p+a\le p$.
        \end{enumerate}
        \item[(b)] $\mathcal{M}(J)$ is a primely generated, conical, refinement monoid, and there is a canonical plain isomorphism of systems $J\cong \mathcal{I}(\mathcal{M}(J),\mathfrak m_J)$.
    \end{itemize}
\end{thm}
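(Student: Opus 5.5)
The plan is to reduce the statement to \cite[Theorem~0.1]{AraPar16Isr}, and then to track only the marks and the precise form of the isomorphism. For part (a), I would first check that $\mathcal I(M,\mathfrak m)$ satisfies \autoref{dfn:Isystem}. In \cite{AraPar16Isr} the unmarked construction uses the semigroups $M_q$ and the maps $\varphi_{p,q}$ (\cite[Lemma~2.6]{AraPar16Isr}). Our maps are $\varphi^{\mathfrak m}_{p,q}=\varphi_{p,q}\circ\Phi_{\tilde q}$, with $\Phi_{\tilde q}\colon \NN\times G_q\to M_q$ an isomorphism by \cite[Lemma~2.4]{AraPar16Isr}. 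Since $\Phi_{\tilde q}$ extends to an isomorphism of Grothendieck groups $\ZZ\times G_q\to G(M_q)$, conditions (b1) and (b2) transfer verbatim from the unmarked setting.

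Next I would define $\Phi_{(M,\mathfrak m)}$ on generators by formulas (1) and (2). It is well defined on each archimedean component, because every element of $M$ lies in exactly one $M_{\ol a}$, and for a free prime $p$ every element of $M_{\ol p}$ is uniquely of the form $np+a$ with $p+a\le p$ via $\Phi_p$. The map is additive within a component by construction. For elements $x\in M_p$, $y\in M_q$ with $q<p$, additivity reduces to $x+y\in M_p$ and $x+y=x+\varphi_{p,q}(y)$ read through $\Phi_{\tilde p}$. This is exactly the identity $(p+y)-p$ appearing in \eqref{eq:formula-for-varphi-m}, and it matches the second defining relation of $\mathcal M(J)$. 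For incomparable components, one uses that $M$ is primely generated, so every element is a sum of primes and the relation $\chi_p(x)+\chi_q(y)$ is just a formal sum. I would therefore argue via the universal property in the reverse direction: define $\Theta\colon\mathcal M(\mathcal I(M,\mathfrak m))\to M$ by $\chi_p(x)\mapsto\Phi_{\tilde p}(x)$. The relations are verified as above, so $\Theta$ is a homomorphism, and it is surjective since $M$ is the union of its archimedean components. Injectivity is the content of \cite[Theorem~0.1]{AraPar16Isr}, which we invoke; composing with our reparametrization $\Phi_{\tilde q}$ does not affect it. Then $\Phi_{(M,\mathfrak m)}=\Theta^{-1}$ has the stated form.

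It remains to check marks. A regular mark element $\tilde p$ is the identity of the group $M_{\ol p}$, so it maps to $\chi_{\ol p}(e_{\ol p})$. A free mark element $p\in\mathfrak m$ has $n=1$, $a=0$, so it maps to $\chi_{\ol p}(1,0)=\chi_{\ol p}(1,e_{\ol p})$, which is the canonical mark of \autoref{pgr:MJ}.

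For part (b), primely generated, conical and refinement are \cite[Theorem~0.1]{AraPar16Isr}. For the isomorphism, the primes of $\mathcal M(J)$ modulo $\equiv$ are identified with $I$ (\cite[Remark~2.8]{AraPar16Isr}), and the archimedean components are $\chi_p(M_p)$. I would take $\psi=\mathrm{id}_I$ and $f_p$ the identifications $G_p\cong G_p^{\mathcal I(\mathcal M(J))}$ given by $g\mapsto(\chi_p(1,0)+\chi_p(0,g))-\chi_p(1,0)$ in the free case. Because the canonical mark is $\chi_p(1,e_p)$, formula \eqref{eq:formula-for-varphi-m} shows that the connecting maps agree on the nose, so $h(p)=0$ works and the isomorphism is plain.

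The main obstacle I expect is this last check. One has to verify that with the canonical mark no correction term $h(p)$ arises, i.e.\ that $\Phi_{\chi_p(1,e_p)}$ is exactly $(n,g)\mapsto\chi_p(n,g)$. The intended route is to read this off directly from the relation $\chi_p(n,0)+\chi_p(0,g)=\chi_p(n,g)$.
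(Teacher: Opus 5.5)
Your proposal is correct and follows the paper's route. The paper gives no independent proof: it states this result as the marked reformulation of \cite[Theorem~0.1]{AraPar16Isr}, and your reduction is exactly that bookkeeping (defining $\chi_p(x)\mapsto\Phi_{\tilde p}(x)$ on the presented monoid, citing Ara--Pardo for injectivity, then checking the marks). One small fix concerns part (b): since $M_p=\NN\times G_p$ with $\NN$ the positive integers, $\chi_p(0,g)$ is undefined, and $\chi_p(n,0)+\chi_p(0,g)=\chi_p(n,g)$ is not a defining relation of $\mathcal M(J)$. Instead take $f_p(g)=\chi_p(1,g)-\chi_p(1,e_p)$, and use (b2) to write $\chi_p(n,g)=\chi_p(n,e_p)+y$ with $y\in\mathcal M(J_p)$ and $\varphi_p(y)=g$. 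This gives $\Phi_{\chi_p(1,e_p)}(n,f_p(g))=\chi_p(n,g)$, hence $h(p)=e_p$ and the isomorphism is plain.
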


Given an $I$-system $J$, condition (b2) in \autoref{dfn:Isystem} states that $G_p=\sum_{q<p}\varphi_{p,q}(M_q)$ whenever $p$ is free. In particular, $G_p$ is generated as a semigroup by the set
\[
    \left(\bigcup_{q<p\text{ regular}}
    \varphi_{p,q}(G_q)
    \right)
    \cup
    \left(\bigcup_{q<p\text{ free}}
    \varphi_{p,q}(\{1\}\times G_q^*)
    \right)
    \cup
    \{ \varphi _{p,q}(q)\}_{q<p\text{ free}},
\]
with the identification $q=(1,0_{G_q})$ from \autoref{ntn:pMp}. Here and everywhere else $M^*=M\setminus \{0\}$ for every monoid $M$. 

Super adaptability for $I$-systems means that one can dispense with $\varphi_{p,q}(\{1\}\times G_q^*)$:

\begin{dfn}
\label{dfn:superadaptableIsystem}
Let $J$ be an $I$-system. We say that $J$ is \emph{super adaptable}\index[terms]{I-system!super adaptable} if, for each $p\in \Ifree$, the group $G_p$ is generated as a semigroup by $(\cup_{q<p\text{ reg.}}\varphi_{p,q} (G_q))\cup \{ \varphi _{p,q}(q)\}_{q<p\text{ free}}$.  

A marked primely generated conical refinement monoid $(M,\mathfrak m)$ will be said to be \emph{super adaptable}\index[terms]{marked monoid!super adaptable} if $\mathcal{I}(M,\mathfrak m)$ is a super adaptable $I$-system.

A primely generated conical refinement monoid $M$ will then be said to be \emph{super adaptable}\index[terms]{monoid!super adaptable} if $\mathcal{I}(M,\mathfrak m)$ is a super adaptable $I$-system for \emph{some} mark $\mathfrak m$ on $M$. 
\end{dfn}

\begin{rmk}
    Somewhat surprisingly, the notion of super adaptable monoid really depends on the particular mark chosen in $M$; see \autoref{exa:not-super adaptable-Isystem}. It is an easy exercise to show that if $J_1$ and $J_2$ are two plainly isomorphic systems (i.e. there is a plain isomorphism between them), then $J_1$ is super adaptable if and only if $J_2$ is. 
\end{rmk}

\begin{exas}\label{exa:ExasSupAdaptMon}$ $
    \begin{itemize}
        \item[(a)] Let $J$ be an $I$-system such that $\Ifree$ is an artinian poset. Then, $J$ is super adaptable. Indeed, let $I'$ be the set of elements in $\Ifree$ such that the condition in \autoref{dfn:superadaptableIsystem} is not satified. By way of contradiction, assume that $I'$ is non-empty, and let $p\in I'$ be a minimal element in $I'$, which exists because $\Ifree$ is an artinian poset. Let $I_p$ denote the set of free primes $q\in \Ifree$ such that $q<p$.
        By the observation preceding \autoref{dfn:superadaptableIsystem}, in order to arrive to a contradiction, it suffices to show that $\varphi_{p,q} (1,g)$ is generated by $(\cup_{q'<p\text{ reg.}}\varphi_{p,q'} (G_{q'}))\cup \{ \varphi _{p,q''}(q'')\}_{q''<p\text{ free}}$, for each $q\in I_p$ and each $g\in G_q^*$. Since $q<p$ and $p$ is minimal in $I'$, it follows that we can write
        \[
            g= \sum _{i=1}^n \varphi_{q,q'_i} (g_i)+ \sum_{j=1}^m  \varphi _{q,q''_j}(q''_j),
        \]
       where $q_i'\in \Ireg $, $q_j''\in \Ifree$, $g_i\in G_{q'_i}$, and $q_i',q_j'' <q$, for all $i,j$. Thus we have
       \[
            \varphi_{p,q}  (1,g) = \sum _{i=1}^n\varphi_{p,q_i'} (g_i)+ \sum_{j=1}^m \varphi _{p,q_j''}(q_j'')
            + \varphi _{p,q} (q).
       \]
       
       Hence $G_p$ is generated as a semigroup by 
       \[
        (\cup_{q<p\text{ reg.}}\varphi_{p,q} (G_q))\cup \{ \varphi _{p,q}(q)\}_{q<p\text{ free}},
       \]
       which is a contradiction. This shows the result. 
        
      In particular, a monoid $M$ is super adaptable whenever it is a finitely generated conical refinement monoid, since in that case $\mathbb{P}(\ol{M})$ is finite. For an example in which $\Pfree (\ol{M})$ is artinian but not finite, consider the monoid 
      \[
        M:= \oplus^{\text{o}}_{n\in\NN}(\NN\cup\{0\})=(\oplus_{n\in\NN}  \NN)\sqcup \{(0,0,0,\dots )\},
      \]
      which has infinitely many non-equivalent free primes but all of them are minimal.
        
        \item[(b)] Consider the monoid $M$ given by the presentation
        \[
            M=\left\langle a,b_1,b_2,\cdots \mid         
            a=a+b_i\andSep 
            b_i=b_{i+1}+b_1
            \,\, (i\geq 1)\right\rangle .
        \]
        
        It is readily checked that $M$ is a conical refinement monoid that does not have a finitely generated presentation. One can check that $M$ is super adaptable (we postpone the proof until much later, in \autoref{exa:SupAdapt}).
    \end{itemize}
\end{exas}


\begin{exa}\label{exa:not-super adaptable-Isystem}\label{subsection:exa}
Consider the $I$-system $J = (I, \le , (G_p)_{p\in I},  \varphi_{pq}\,  (q<p))$ where $I=\ZZ$, with the usual order, $\Ifree = I$, and $\Ireg= \emptyset$.  For each $i\in I$, take $G_i= \ZZ_2$, the group with two elements. For $x= (n, \ol{m})\in \NN \times \ZZ_2$, we denote by  $x_i= (n,\ol{m})_i$ the corresponding element in $M_i := \NN \times G_i$. For $i<j\in \ZZ$, define $\varphi_{j,i}\colon M_i= \NN\times \ZZ_2 \to G_j= \ZZ_2$ by
\[
    \varphi _{j,i} ((n,\ol{m})_i) = \ol{m} \in \ZZ_2.
\]

The tuple $J =  (\ZZ, \le , (\ZZ_2)_{i\in \ZZ},  \varphi_{j,i} (i<j))$ is an $I$-system, and by definition all the primes are free in $I$. Observing that $\varphi _{j,i} ((1,\ol{0})_i) = \varphi_{j,i}(p_i)= \ol{0}$ for all $i<j$, we conclude that $J$ is not a super adaptable $I$-system. In particular, $(\mathcal{M}(J), \mathfrak m_J)$ is not a super adaptable marked monoid, because there is a canonical plain isomorphism $\mathcal I (\mathcal{M}(J), \mathfrak m_J) \cong J$ by \autoref{thm:APIsr16}~(b).

However, write $M:=\mathcal{M}(J)$ and consider the mark $\mathfrak m'= \{ (1,\ol{1})_i\}_{i\in \ZZ}$. This has a corresponding system $J'= \mathcal I (M,\mathfrak m')$. Although, by \autoref{lma:isomorphism-of-marked-systems}, $J$ is elementary isomorphic to $ J'$, $J'$ is a super adaptable system, because 
\[
    \varphi ^{\mathfrak m'} _{j,i} ((1,\ol{0})_i) = \varphi _{j,i} ((1,\ol{1})_i) = \ol{1}
\]
for $i<j$.
Hence $\ZZ_2$ is generated by $\varphi _{j,i}^{\mathfrak m'}((1,\ol{0})_i)$ for $i<j$, showing that $J'$ is super adaptable and hence that so is $(M,\mathfrak m')\cong (\mathcal M (J'),\mathfrak m_{J'})$. By definition, $M$ is super adaptable.   
\end{exa}

Next, we exhibit a class of examples which includes both super adaptable and non super adaptable primely generated refinement monoids. 

\begin{exa}\label{exam:not-super-adaptable}
Consider a countable $\ZZ$-system 
\[
    J= (\ZZ, \le, (G_i)_{i\in \ZZ}, \varphi_{ji} (i<j))
\]
constructed as follows:
    
Let $H$ be a countable abelian group. For $i\in \ZZ$, set 
\[
    G_i=\big( \bigoplus_{k\le i} \ZZ_2 \big)\oplus H,
\]
and denote its elements by $((a_i, a_{i-1},a_{i-2},\dots  ), h)$, where $a_j\in \{\ol{0},\ol{1}\}$ (almost all $a_j$ are $\ol{0}$) and $h\in H$. 
	
Now let $\varphi_i \colon \NN \times G_i \to G_{i+1}$ be the surjective semigroup homomorphism defined by
\[
    \varphi_i (n,g) = (\ol{n}, g).
\]

This defines a $\ZZ$-system  $J=(\ZZ, (G_i)_{i\in \ZZ}, \varphi_{j,i}(i<j))$, where 
\[
    \varphi _{j,i} = \widehat{\varphi}_{j-1}\circ \cdots \circ \widehat{\varphi}_{i+1}\circ \varphi_i \colon \NN\times G_i\longrightarrow G_j, 
\]
and where $\widehat{\varphi_i}\colon \ZZ\times G_i \to G_{i+1}$ is the induced group homomorphism described by the same formula $\widehat{\varphi}_i (n,g) = (\ol{n}, g)$.
	
Set $M:= \mathcal M (J)$, which is a primely generated conical refinement monoid by \autoref{thm:APIsr16}~(b). In what follows, we will see that $M$ is super adaptable if and only if $H$ is $2$-divisible. In particular, $M$ is not super adaptable when $H=\ZZ_2$ or $H=\ZZ$, and $M$ is super adaptable when $H=\QQ$. 
	
Suppose first that $M$ is super adaptable. Then, there is a mark $\mathfrak m'$ on $M = \mathcal M (J)$ which makes the $\ZZ$-system $\mathcal I (M,\mathfrak m')$ a super adaptable system. Note that necessarily we have $\mathfrak m_i'= (1, g_i)$ with $g_i\in G_i$. In particular, we get
\[
    G_0 = \langle \varphi_{0,-i} (1,g_{-i})\mid i\in \NN \rangle .
\]
    
For $i\ge 0$, write $x_{-i} = (\ol{0},\ol{0},\dots , \ol{1}, \ol{0}, \ol{0} ,\dots )\in \oplus _{k\le 0} \ZZ_2$, where $\ol{1}$ is in the $-i$ position of the direct sum. Note that 
\begin{align*}
			\varphi_{0,-i} (1,g_{-i}) & = \widehat{\varphi}_{-1}\circ \cdots \circ \widehat{\varphi} _{-i+1} \circ \varphi _{-i} (1,g_{-i})\\
			& = \widehat{\varphi}_{-1}\circ \cdots \circ \widehat{\varphi} _{-i+1} (\ol{1},g_{-i}) \\
			& = \widehat{\varphi}_{-1}\circ \cdots \circ \widehat{\varphi} _{-i+1} (0,\ol{1},g_{-i})\\
			& = \widehat{\varphi}_{-1}\circ \cdots \circ \widehat{\varphi} _{-i+2} (\ol{0},\ol{1},g_{-i}) \\
			& = \widehat{\varphi}_{-1} (0,\ol{0},\dots, \ol{0},\ol{1}, g_{-i}) \\
			& = (\ol{0},\ol{0},\dots ,\ol{0},\ol{1},g_{-i}) \\
                &= (x_{-i+1}+z_{-i}, h_{-i})
\end{align*}
with
\[
    g_{-i}= (z_{-i}',h_{-i})\in (\oplus_{k\le -i} \ZZ_2)\times H,
    \andSep 
    z_{-i}= (\ol{0},\overset{(i}{\dots} ,\ol{0}, z_{-i}').
\]
	
We want to show that $H$ is $2$-divisible. Take any element $h$ in $H$. Then, there must exist non-negative integers $a_{1},\dots , a_N$ for some $N\ge 0$ such that
    \[
    ((\ol{0},\ol{0},\dots ), h) =  \sum_{i=1}^N a_i (x_{-i+1}+z_{-i},h_{-i}).
    \]
    
If some $a_i$ is odd, we obtain a contradiction in the above equality. Hence, each $a_i$ is of the form $a_i= 2b_i$ for some non-negative integer $b_i$, and thus $h= 2(\sum_{i=1}^N b_ih_{-i})\in 2H$, showing that $H$ is $2$-divisible.  
	
Conversely, assume that $H$ is $2$-divisible. We want to build a mark $\mathfrak m'$ so that the $\ZZ$-system $\mathcal I (M, \mathfrak m')$ is super adaptable. Let $(h_i)_{i\in \NN}$ be a family of semigroup generators of $H$ such that each $h_i$ is repeated infinitely many times along the sequence. 
	
Take $g_i = {\bf 0} =  ((\ol{0},\ol{0},\dots ), 0_H)\in G_i$ for $i\ge 0$, and $g_{-i} = ((\ol{0},\ol{0},\dots ), h_i)\in G_{-i}$ for $i>0$. Now the mark $\mathfrak m'$ is defined by $\mathfrak m'_i= (1,g_i)$ for all $i\in \ZZ$. Since 
\[
     G_i = \langle \varphi _{i,i-1} (1,{\bf 0}) , \widehat{\varphi}_{i,i-1} (G_{i-1})\rangle ,
\]
we see that it is enough to show that 
\[
    G_{-i} = \langle \{ \varphi _{-i,-j} (1, g_{-j}) \mid j>i \} \rangle
\]
for all $i\ge 0$. 
    
We will only prove that $G_0 = \langle \{ \varphi _{0,-j} (1, g_{-j}) \mid j>0 \} \rangle $, but a  similar argument ---using that $(h_j)_{j>i}$ is a family of generators of $H$ for $i>0$--- gives the equality for $G_{-i}$. Set $S= \langle \{ \varphi _{0,-j} (1, g_{-j}) : j>0 \} \rangle $. By the above computation, we have $\varphi _{0,-j} (1,g_{-j}) = (x_{-j+1},h_j)$ for all $j>0$. Since $H$ is $2$-divisible, for every $h\in H$ there exist $0<j_1<j_2<\cdots < j_r$ and positive integers $a_1,a_2, \dots, a_r$ such that
\[
    h= 2a_1 h_{j_1} +2a_2h_{j_2}+ \cdots +2a_rh_{j_r} .
\]
Thus, we get 
\[
    ((\ol{0},\ol{0},\dots ), h) = 2a_1(x_{-j_1+1},h_{j_1})+ 2a_2(x_{-j_2+1},h_{j_2})+\cdots + 2a_r(x_{-j_r+1},h_{j_r}).
\]
This shows that $0\times H \subset S$. But now, since $(0,-h_j), (x_{-j+1},h_j)\in S$ for all $j>0$, we get also that $(x_{-j+1},0_H)\in S$ for all $j>0$, and thus $G_0=S$, as desired. This concludes the proof. 
	\end{exa}

\section{Lifting morphisms}
\label{subsect:lifting-morphisms}
In this second section we prove \autoref{prp:morphism-of-systems}, which shows that we can lift certain monoid morphisms to elementary homomorphisms of systems. A consequence of this result is \autoref{cor:morphisms-inducing-plain-homos}, which applies to plain homomorphisms and will be used in the proof of our main theorem.

Recall that, for $p\in I$, we denote by $e_p$ the neutral element in $G_p$.

Let $(M^{(1)},\mathfrak m_1)$ and $(M^{(2)},\mathfrak m_2)$ be two marked, primely generated conical refinement monoids. 
Whenever $f\colon \mathcal I (M^{(1)},\mathfrak m_1)\to \mathcal I (M^{(2)},\mathfrak m_2)$ is an elementary homomorphism of systems in the sense of \autoref{dfn:hom-of-systems}, we have a monoid homomorphism 
\[
    \mathcal M (f)\colon \mathcal M (\mathcal I (M^{(1)},\mathfrak m_1))\to \mathcal M (\mathcal I (M^{(2)},\mathfrak m_2))
\]
such that $\mathcal M (f)(\chi_p(x)) = \chi_{\psi (p)} (\ol{f_p}(x))$ for $x\in M_p^{(1)}$; see \autoref{lma:morphisms-of-systems-induce-homos}.


Therefore, suitably composing the above homomorphism with the canonical isomorphisms $\Phi_{(M^{(i)},\mathfrak m_i)}$ from \autoref{thm:APIsr16}~(a), we obtain a homomorphism of monoids
\[
    g :=   \Phi^{-1}_{(M^{(2)},\mathfrak m_2)} \circ \mathcal M (f) \circ \Phi_{(M^{(1)},\mathfrak m_1)}\colon M^{(1)} \to M^{(2)}.
\]

 Observe that, if $f$ is a plain homomorphism of systems, then, since all maps $f_p\colon G^{(1)}_p \to G^{(2)}_{\psi (p)}$ are group homomorphisms and $f$ is plain, the map $\mathcal M (f)$ is indeed a homomorphism of \emph{marked} monoids
 \[
    \mathcal M (f)\colon (\mathcal M (\mathcal I (M^{(1)},\mathfrak m_1)), \mathfrak m_{\mathcal I (M^{(1)},\mathfrak m_1)})\to (\mathcal M (\mathcal I (M^{(2)},\mathfrak m_2)), \mathfrak m_{\mathcal I (M^{(2)},\mathfrak m_2)}).
 \]
Hence, recalling that $\Phi_{(M^{(i)},\mathfrak m_i)}$ are isomorphisms of marked monoids we obtain that 
\[
    g =   \Phi^{-1}_{(M^{(2)},\mathfrak m_2)} \circ \mathcal M (f) \circ \Phi_{(M^{(1)},\mathfrak m_1)}\colon (M^{(1)}, \mathfrak m_1) \to (M^{(2)},\mathfrak m_2)
\]
is a homomorphism of marked monoids, that is, $g(\mathfrak m_1)\subseteq \mathfrak m_2$.

Our aim here is to characterize the monoid homomorphisms $g\colon M^{(1)}\to M^{(2)}$ that appear in the above form, as follows:

\begin{prp}
	\label{prp:morphism-of-systems}
	Let $g\colon M^{(1)}\to M^{(2)}$ be a monoid homomorphism between two primely generated, conical, refinement monoids. Then, the following conditions are equivalent:
\begin{itemize} 
	\item[(i)] $g$ is such that
    \begin{itemize}
        \item[(a)] it maps free primes to free primes, and
        \item[(b)] it maps regular primes to regular primes, and 
        \item[(c)] $g(p) <^* g(q)$ whenever $p<^*q$ in $M^{(1)}$.
    \end{itemize}
\item[(ii)] For any marks $\mathfrak m_i$ on $M^{(i)}$, $i=1,2$, there exists an elementary homomorphism of systems $f\colon \mathcal I (M^{(1)},\mathfrak m_1)\to \mathcal I (M^{(2)},\mathfrak m_2)$ such that 
\[
    g= \Phi^{-1}_{(M^{(1)},\mathfrak m_1)}\circ \mathcal M (f)\circ \Phi_{(M^{(2)},\mathfrak m_2)}.
\] 
	\item[(iii)] There exist marks $\mathfrak m_i$ on $M^{(i)}$, for $i=1,2$ and an elementary  homomorphism of systems
	$f\colon \mathcal I (M^{(1)},\mathfrak m_1)\to \mathcal I (M^{(2)},\mathfrak m_2)$ such that
\[
    g=\Phi^{-1}_{(M^{(2)},\mathfrak m_2)} \circ \mathcal M (f) \circ \Phi_{(M^{(1)},\mathfrak m_1)}.
\]
\end{itemize}
\end{prp}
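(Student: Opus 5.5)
The natural route is the cycle (i)$\Rightarrow$(ii)$\Rightarrow$(iii)$\Rightarrow$(i). Note first that in (ii) the composition should be read as $g=\Phi^{-1}_{(M^{(2)},\mathfrak m_2)}\circ\mathcal M(f)\circ\Phi_{(M^{(1)},\mathfrak m_1)}$, as in (iii). The implication (ii)$\Rightarrow$(iii) is trivial, since marks always exist. For (iii)$\Rightarrow$(i), we use \autoref{lma:morphisms-of-systems-induce-homos}: $\mathcal M(f)$ sends free primes to free primes, regular primes to regular primes, and preserves $<^*$. The maps $\Phi_{(M^{(i)},\mathfrak m_i)}$ are monoid isomorphisms, so they preserve primes, freeness, regularity and the relation $<^*$ (which is defined via the antisymmetrization). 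Composing, $g$ satisfies (a), (b) and (c).

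The main work is (i)$\Rightarrow$(ii). Fix marks $\mathfrak m_1,\mathfrak m_2$. Since $g$ is a homomorphism it preserves $\equiv$, so it induces $\bar g\colon \ol{M^{(1)}}\to\ol{M^{(2)}}$. By (a) and (b), $\bar g$ maps $\mathbb P(\ol{M^{(1)}})$ into $\mathbb P(\ol{M^{(2)}})$, preserving the free/regular partition. By (c), $q<p$ implies $\bar g(q)<\bar g(p)$. This gives the poset map $\psi:=\bar g|_{\mathbb P}$ required in \autoref{dfn:hom-of-systems}(a).

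Next, $g$ maps the archimedean component $M^{(1)}_p$ into $M^{(2)}_{\psi(p)}$, because $a\le mp$ and $p\le n\bar a$ are preserved.
\begin{itemize}
\item For $p$ regular, set $f_p:=g|_{G_p}\colon G_p=M_p\to M_{\psi(p)}=G_{\psi(p)}$. This is a group homomorphism, since both sides are groups and $g$ is additive.
\item For $p$ free, let $\tilde p\in\mathfrak m_1$ and $\tilde r\in\mathfrak m_2$ be the marked representatives of $p$ and $r=\psi(p)$. Define $f_p((p+a)-p):=(r+g(a))-r$. This is well defined: $\tilde p+a\le\tilde p$ gives $g(\tilde p)+g(a)\le g(\tilde p)$, and $g(\tilde p)\equiv \tilde r$, so $(g(\tilde p)+g(a))-g(\tilde p)$ makes sense in $G_r$ and is independent of the representative. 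Put $h(p):=(r+(g(\tilde p)))-r$ computed as $g(\tilde p)-\tilde r\in G_r$, which lies in $G_r$ exactly as in the proof of \autoref{lma:isomorphism-of-marked-systems}. One checks $f_p$ is a group homomorphism, using that $G_p$ is generated as a group by such differences and that $g$ induces a map of Grothendieck groups $\Grot{M_p}\to\Grot{M_r}$.
\end{itemize}
With this choice, $\Phi_{\tilde r}\circ f_p^{h(p)}=g\circ\Phi_{\tilde p}$ on $\NN\times G_p$, since $n\tilde r+n(g(\tilde p)-\tilde r)+g(a)=g(n\tilde p+a)$ in $M_r$.

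It remains to verify commutativity of the square in \autoref{dfn:hom-of-systems}(b), i.e.\ $f_p\circ\varphi^{\mathfrak m_1}_{p,q}=\varphi^{\mathfrak m_2}_{\psi(p),\psi(q)}\circ\ol{f_q}$. Using \eqref{eq:formula-for-varphi-m}, both sides evaluated at $(n,(q+a)-q)$ reduce to $(r+g(n\tilde q+a))-r$. On the left this is immediate from the definition of $f_p$. On the right it follows from the identity $\Phi_{\tilde s}\circ\ol{f_q}=g\circ\Phi_{\tilde q}$ just established, where $s=\psi(q)$. The regular case is the same with the $\Phi$'s equal to identities.

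Finally, we show $g=\Phi^{-1}_{(M^{(2)},\mathfrak m_2)}\circ\mathcal M(f)\circ\Phi_{(M^{(1)},\mathfrak m_1)}$. Both sides are homomorphisms, and $M^{(1)}$ is generated by the elements $n\tilde p+a$ and the elements of the regular components (every element is a sum of primes, and each prime lies in some archimedean component $M_p$). On these generators the equality is exactly the identity $\Phi_{\tilde r}\circ\ol{f_p}=g\circ\Phi_{\tilde p}$ combined with the explicit formulas in \autoref{thm:APIsr16}(a).

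The main obstacle is the free case: showing $f_p$ is well defined and additive on $G_p$ as a subgroup of $\Grot{M_p}$, and checking that the twist $h(p)$ exactly absorbs the discrepancy between $g(\tilde p)$ and the marked prime $\tilde r$. If needed, I would handle this through the isomorphism $\Phi_{\tilde p}\colon \NN\times G_p\to M_p$. Concretely, I would define $\ol{f_p}:=\Phi_{\tilde r}^{-1}\circ g\circ\Phi_{\tilde p}$ directly, and then read off $f_p$ and $h(p)$ from its form $(n,x)\mapsto(n,f_p(x)+nh(p))$. That this map has first coordinate $n$ follows because $g$ sends free primes to free primes, so $g(\tilde p)$ corresponds to an element with first coordinate $1$.
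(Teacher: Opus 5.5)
Your proposal is correct and follows the paper's proof essentially step for step: you use the same $\psi$, the same $f_p$ (the restriction of the map $g$ induces on Grothendieck groups of archimedean components), the same twist $h(p)=g(\tilde p)-\widetilde{\psi(p)}$, and the same check on prime archimedean components; in addition, you explicitly verify the commuting square of \autoref{dfn:hom-of-systems}(b), which the paper leaves implicit, and you correctly read the composition in (ii) as $\Phi^{-1}_{(M^{(2)},\mathfrak m_2)}\circ\mathcal M(f)\circ\Phi_{(M^{(1)},\mathfrak m_1)}$. There is one notational slip: $(r+g(\tilde p))-r$ is not the element you want, since in $\Grot{M_r}$ it is just $g(\tilde p)$ and does not lie in $G_r$; your stated value $g(\tilde p)-\tilde r$ is the correct one.
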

\begin{proof}
	(i)$\implies$(ii). Assume that (i) holds and let $\mathfrak m_i$ be marks on $M^{(i)}$ ($i=1,2$). Let us denote the associated systems by
	\[
	J_i:= \mathcal{I}(M^{(i)},\mathfrak m_i)= (\mathbb{P}(\ol{M^{(i)}}),\leq , (G_p^{(i)})_p, \varphi^{\mathfrak m_i}_{p,q}),\quad i=1,2.
    \]
    
	For $p\in \Pfree (\ol{M^{(i)}})$, we denote by $\tilde{p}$ the unique element in $\mathfrak m_i$ such that 
	$\pi_{M^{(i)}}(\tilde{p}) =p$. 

	The conditions on $g$ ensure that there exists a poset morphism $\psi$ from $\mathbb{P}(\ol{M^{(1)}})$ to $\mathbb{P} (\ol{M^{(2)}})$, defined by $\psi (\ol{r}) = \ol{g(r)}$ for $r\in \mathbb P (M^{(1)})$,  that preserves strict inequalities and sends free/regular elements to free/regular elements. In order to get an elementary homomorphism of systems, we need to define suitable group homomorphisms $f_p\colon G^{(1)}_p\to G^{(2)}_{\psi (p)}$ for all $p\in \mathbb{P}(\ol{M^{(1)}})$, and suitable elements $h(p)\in G^{(2)}_{\psi (p)}$ for all $p\in \Pfree (\ol{M^{(1)}})$.

If $p$ is a regular prime in $\ol{M^{(1)}}$, then $M_p^{(1)}=G_p^{(1)}$ and $M_{\psi(p)}^{(2)}=G^{(2)}_{\psi (p)}$ are abelian groups, and clearly $g(G_p^{(1)}) \subseteq  G_{\psi (p)}^{(2)}$, hence we set $f_p:= g|_{M^{(1)}_p}$. If $p$ is a free prime in $\ol{M^{(1)}}$, the elements of $G^{(1)}_p$ are of the form $(\tilde{p}+a)-\tilde{p}$ in the Grothendieck group of the archimedean component $M_p^{(1)}$, where $a\in M^{(1)}$ and $\tilde{p}+a\le \tilde{p}$. Now observe that 
\[
    g(\tilde{p}) + g(a) = g (\tilde{p} +a)\leq g(\tilde{p}) \in  M^{(2)}_{\psi (p)},
\]
and clearly $g(\tilde{p})$ is a representative of $\psi (p)\in \Pfree (\ol{M^{(2)}})$. It follows that the element
\[
    f_p ((\tilde{p} +a)-\tilde{p}) := (g(\tilde{p})+g(a))- g(\tilde{p})
\]
is a well-defined element in $G^{(2)}_{\psi (p)}$. It is easy to check that $f_p ((\tilde{p} +a)-\tilde{p})$ does not depend on the particular representative $\tilde{p}$ of $p$, and that $f_p\colon G^{(1)}_p\to G^{(2)}_{\psi (p)}$ is a group homomorphism.  We now define the element $h(p)\in G_{\psi (p)}^{(2)}$. Observe that 
\[
    \pi _{M^{(2)}} (\widetilde{\psi (p)} ) = \psi (p)  = \pi_{M^{(2)}}(g(\tilde{p})),
\]
hence $\widetilde{\psi (p)} \equiv g(\tilde{p})$, and thus 
\[
    h(p):=g(\tilde{p}) - \widetilde{\psi (p)}
\]
is a well-defined element in $G^{(2)}_{\psi (p)}$. 
			 
Let $f\colon J_1\to J_2$ be the elementary homomorphism of systems with data 
\[
    (\psi, \{f_p\}_{p\in \mathbb P (\ol{M^{(1)}})}, \{ h(p)\}_{p\in \Pfree (\ol{M^{(1)}})}),
\]
and let $\mathcal M (f)\colon \mathcal M (J_1)\to \mathcal M (J_2)$ be the associated monoid homomorphism. We need to show that $g'=g$, where $g':= \Phi^{-1}_{(M^{(2)},\mathfrak m_2)}\circ \mathcal M (f)\circ \Phi_{(M^{(1)},\mathfrak m_1)} $. Since $M^{(1)}$ is primely generated, it suffices to check that both morphisms agree on the prime archimedean components. This is obvious for regular primes, so let us assume that $p$ is a free prime in $\ol{M^{(1)}}$, and let $n\in \NN$ and $x= (\tilde{p}+a)-\tilde{p}\in G^{(1)}_p$, where $a\in M$ and $\tilde{p}+a\le \tilde{p}$. Then we have
			 \begin{align*}
			 	g' (n\tilde{p} +a) & = \Phi^{-1}_{(M^{(2)},\mathfrak m_2)}( \mathcal M (f)( n, x)) \\
			 	& = \Phi^{-1}_{(M^{(2)},\mathfrak m_2)} \ol{f_p}( n, x)\\
                &= \Phi^{-1}_{(M^{(2)},\mathfrak m_2)}( n, f_p(x)+ n h(p)) \\
			 	& =   \Phi^{-1}_{(M^{(2)},\mathfrak m_2)} ( n, (g(\tilde{p})+g(a)) - g(\tilde{p})+ n ( g(\tilde{p}) -\widetilde{\psi (p)})) \\
			 	& = n\widetilde{\psi(p)} + (g(\tilde{p})+g(a)) - g(\tilde{p})+ n (g(\tilde{p})-\widetilde{\psi (p)})\\
			 	& = ng(\tilde{p}) + g(a)\\
                &= g(n\tilde{p}+a) .
		\end{align*} 
	This ends the proof of (i)$\implies $(ii).
	
	The implication (ii)$\implies$(iii) is obvious and (iii)$\implies$(i) follows from an application of  \autoref{lma:morphisms-of-systems-induce-homos}. 
	\end{proof}

In some relevant cases, we can prove a version of \autoref{prp:morphism-of-systems} for \emph{plain} homomorphisms of systems:

\begin{cor}
	\label{cor:morphisms-inducing-plain-homos}
	Let $g\colon M^{(1)}\to M^{(2)}$ be a monoid homomorphism between two primely generated, conical, refinement monoids. Suppose that $g$ is such that 
    \begin{itemize}
        \item[(a)] it maps free primes to free primes, and
        \item[(b)] it maps regular primes to regular primes, and 
        \item[(c)] $g(p) <^* g(q)$ whenever $p<^*q$ in $M^{(1)}$, and
        \item[(d)] the induced map
    \[
        \psi \colon \mathbb{P}(\ol{M^{(1)}}) \to \mathbb{P} (\ol{M^{(2)}}),\quad \psi (\ol{p}) = \ol{g(p)}
    \]
    is injective.
    \end{itemize}
    
    Then, there are marks $\mathfrak m_i$ on $M^{(i)}$, $i=1,2$, and a plain homomorphism 
	$f\colon \mathcal I (M^{(1)},\mathfrak m_1)\to \mathcal I (M^{(2)},\mathfrak m_2)$ such that
    \[
        g= \Phi^{-1}_{(M^{(2)},\mathfrak m_2)} \circ \mathcal M (f) \circ \Phi_{(M^{(1)},\mathfrak m_1)}\andSep 
        g(\mathfrak m_1)\subseteq \mathfrak m_2.
    \]
    
	In particular, any isomorphism of monoids $g\colon M^{(1)}\to M^{(2)}$ is induced by a plain isomorphism of systems. 
\end{cor}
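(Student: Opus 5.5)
The plan is to reduce to \autoref{prp:morphism-of-systems} and then use the freedom in choosing marks to kill the elements $h(p)$. Looking at the proof of (i)$\implies$(ii) there, for any marks $\mathfrak m_1,\mathfrak m_2$ the lifted elementary homomorphism $f$ has data $(\psi,\{f_p\},\{h(p)\})$ with
\[
    h(p)=g(\tilde p)-\widetilde{\psi(p)}\in G^{(2)}_{\psi(p)},
\]
where $\tilde p\in\mathfrak m_1$ and $\widetilde{\psi(p)}\in\mathfrak m_2$ are the chosen representatives. So $f$ is plain exactly when $g(\tilde p)=\widetilde{\psi(p)}$ for every free prime $p$. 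It is also natural to ask for $g(\mathfrak m_1)\subseteq\mathfrak m_2$ on regular primes as well.

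First, fix an arbitrary mark $\mathfrak m_1$ on $M^{(1)}$. Next, build $\mathfrak m_2$ class by class on $\mathbb P(\ol{M^{(2)}})$, using the injectivity in (d).
\begin{itemize}
\item[(1)] If $r=\psi(p)$ is in the image of $\psi$, then $p$ is unique. Put $g(\tilde p)$ into $\mathfrak m_2$. By (a) and (b), $g(\tilde p)$ is a prime with $\pi_{M^{(2)}}(g(\tilde p))=r$, and it is free or regular according to $p$.
\item[(2)] If $r$ is not in the image of $\psi$, choose any admissible representative.
\end{itemize}

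The one point to check is the convention that, for a regular class, the mark element must be the identity of the group $\pi_{M^{(2)}}^{-1}(r)$. For $p$ regular, $\tilde p$ is the identity $e$ of the group $M^{(1)}_p$. The map $g$ restricts to a monoid map $M^{(1)}_p\to M^{(2)}_{\psi(p)}$, as used in the proposition. Since $g(e)$ is idempotent in the group $M^{(2)}_{\psi(p)}$, it is the identity. Hence $\mathfrak m_2$ is a genuine mark and $g(\mathfrak m_1)\subseteq\mathfrak m_2$.

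Now apply \autoref{prp:morphism-of-systems}, direction (i)$\implies$(ii), with these marks; hypotheses (a)--(c) are exactly its condition (i). The resulting elementary homomorphism has $h(p)=g(\tilde p)-\widetilde{\psi(p)}=0$ for every free $p$, so $f$ is plain. The identity $g=\Phi^{-1}_{(M^{(2)},\mathfrak m_2)}\circ\mathcal M(f)\circ\Phi_{(M^{(1)},\mathfrak m_1)}$ is part of that statement.

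For the final claim, an isomorphism $g$ satisfies (a)--(d): it preserves primality, freeness, regularity, $\equiv$ and strict $<^*$, and it induces a bijection on $\ol{M}$. So there is a plain $f$, and $g$ maps $\mathfrak m_1$ bijectively onto $\mathfrak m_2$. Applying the same construction to $g^{-1}$ with the marks swapped gives a plain $f'$ with $h'=0$. The composites $f'\circ f$ and $f\circ f'$ then have data $(\mathrm{id},\mathrm{id},0)$ by the composition formula and the explicit form of the $f_p$. Hence $f$ is a plain isomorphism.

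The main obstacle is only the bookkeeping around (d). Without injectivity, two classes $p\neq p'$ with $\psi(p)=\psi(p')$ could have $g(\tilde p)\neq g(\tilde p')$, and then no single choice of $\mathfrak m_2$ could make both $h$'s vanish. The regular-identity check above is the other, minor, point.
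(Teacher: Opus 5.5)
Your proposal is correct and is essentially the paper's proof: you fix any mark $\mathfrak m_1$, use the injectivity of $\psi$ to choose $\mathfrak m_2$ with $g(\mathfrak m_1)\subseteq \mathfrak m_2$, and observe that the elements $h(p)=g(\tilde p)-\widetilde{\psi(p)}$ from the proof of \autoref{prp:morphism-of-systems} then vanish, so the lifted $f$ is plain. Your extra checks correctly fill in details the paper leaves implicit: that $g$ sends the identity of a regular group component to the identity, so $\mathfrak m_2$ is a genuine mark, and that lifting $g^{-1}$ gives a plain inverse via the composition formula.
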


\begin{proof}
	Suppose that the conditions in the statement are fulfilled. Take any mark $\mathfrak m_1$ of $M^{(1)}$. Since 
	$\psi$ is injective, we may choose a mark $\mathfrak m_2$ of $M^{(2)}$ such that $g(\mathfrak m_1)\subseteq \mathfrak m_2$. If follows that $\widetilde{\psi(p)} = g(\tilde{p})$ for all free primes $p$, and thus $h(p)= e_{\psi (p)}$, where $h(p)$ are the elements built in the proof of \autoref{prp:morphism-of-systems}. Hence the homomorphism of systems $f$ built in \autoref{prp:morphism-of-systems} is plain. 
\end{proof}


\begin{rmk}
	\label{rmk:Dobbertin} In \cite[Theorem 2]{Dobb84}, Dobbertin established an equivalence of categories between the category of \emph{partial orders of groups}  --—a particular class of I-systems--—   with a restricted class of morphisms, and the category of primely generated regular refinement monoids with a restricted class of monoid homomorphisms. However, since we are interested in different types of morphisms, this result does not play a role in our approach. 
	\end{rmk}

\chapter{From I-systems to separated graphs}\label{sec:FromIsystmToSepGraph}
In this chapter we introduce the class of \emph{super adaptable} (separated) graphs. This provides the main ingredient of the paper, and after proving a number of important properties, such as that any such graph can be written as the direct limit of finite ones (\autoref{prp:SupAdapLim}), we establish \hyperlink{Step2}{Step (2)} from the introduction.

To this end, first, we introduce the separated graph $\mathcal{G}(J)$ of a s.a.~$I$-system $J$ and show in \autoref{prp:constructed-super adaptable} that it is a super adaptable graph. Then, we define the system $\mathcal{I}(E,C)$ of a super adaptable graph and prove that $\mathcal{I}(E,C)$ is a s.a.~system (\autoref{thm:necessary-condition}). This corresponds to the diagram:
\[
\xymatrixrowsep{1pc}
\xymatrixcolsep{5pc}
\xymatrix{
    {\begin{array}{c}
         \text{s.a.}  \\
          I\text{-system}\\
          J
    \end{array}}
    \ar@/^/[r]^-{\mathcal{G}}
    &
    {\begin{array}{c}
         \text{s.a.}  \\
          \text{graph}\\
          (E,C)
    \end{array}}
    \ar@/^/[l]^-{\mathcal{I}}
}
\]

\section{Super adaptable graphs}

We begin the section by recalling the definition of a separated graph from \cite{AraGoo12Crelle}.

\begin{dfn}[{\cite[Definition~2.1]{AraGoo12Crelle}}]\label{dfn:SepGraph}
    A pair $(E,C)$ is a \emph{separated graph}\index[terms]{separated graph} if $E$ is a directed graph and $C=\sqcup_{v\in E^0} C_v$ with $C_v$ a partition of $s^{-1}(v)$, where $C_v=\emptyset$ if $s^{-1}(v)=\emptyset$.

    Further, $(E,C)$ is said to be \emph{finitely separated}\index[terms]{separated graph!finitely separated} if $\vert X\vert$ is finite for each $v\in E^0$ and $X\in C_v$.
\end{dfn}

One thinks of each $C_v$ as a coloring of $s^{-1}(v)$, with each $X\in C_v$ codifying the edges with the same color. For example, a graph with vertices $v_1,v_2,v_3$ and edges $C_{v_1}=\{ v_1\rightarrow v_3\}$, $C_{v_2}=\{ v_2\rightarrow v_1\}$, $C_{v_3}=\{ \{v_3\rightarrow v_3\}, \{v_3\rightarrow v_1\}\}$ should be though of as the following colored graph:

\begin{center}
\begin{tikzpicture}[>=Stealth, scale=1.5]
    \tikzstyle{vertex} = [inner sep=1.5pt]

    \node[vertex] (v1) at (0, 1) {$v_1$};
    \node[vertex] (v3) at (-1, 0) {$v_3$};
    \node[vertex] (v2) at (1, 0) {$v_2$};

    \draw[->, blue, thick, bend left=15] (v1) to (v3);
    
    \draw[->, red, thick] (v2) -- (v1);
    
    \draw[->, orange, thick, in=150, out=210, looseness=6] (v3) to (v3);
    
    \draw[->, green!60!black, thick, bend left=15] (v3) to (v1);
\end{tikzpicture}
\end{center}

\begin{ntn}
\label{ntn:notations-for-separated-graphs}
    Throughout the section, we will make use of the following notation:
    \begin{itemize}
        \item For $C$ as above, we let $C_{\rm fin}$ denote the subset of $C$ that consists on all elements of finite cardinality.
        \item The set of vertices of a directed graph comes equipped with a natural relation $\leq$, given by $v\leq w$ whenever there is a finite directed path from $w$ to $v$. We will denote by $\mathbb{P}(E)$ the poset given by the  antisymmetrization of $(E^0,\leq )$.
        \item A subset $H$ of vertices of a directed graph $E$ is said to be \emph{hereditary} if $v\le w$ and $w\in H$ imply $v\in H$. Note that hereditary subsets of $E^0$ correspond to lower subsets of $\mathbb{P}(E)$.
        
        If $(E,C)$ is a separated graph and $H$ is a hereditary subset of $E^0$, we denote by $(E_H,C^H)$ the restriction of $(E,C)$ to $H$. We thus have $(E_H)^0=H$ and $C^H_v= C_v$ for $v\in H$.  
\item A subset $S$ of vertices of a directed graph $E$ is said to be \emph{saturated} if for each $v\in E^0$ such that $0<|s^{-1}(v)|<\infty$, we have that $v\in S$ whenever $r(e)\in S$ for all $e\in s^{-1}(v)$.     
    \end{itemize}

\end{ntn}

Our notion of super adaptability will make use of \emph{SPI graphs}\index[terms]{graph!SPI}. To define them, recall that a non-empty graph is \emph{strongly connected}\index[terms]{graph!strongly connected} (or \emph{transitive}) if every pair of vertices can be connected by a finite directed path.

A closed path $\mu:=e_1\cdots e_n$ is said to be \emph{simple}\index[terms]{graph!simple} if $s(e_j)\neq s(e_1)$ for all $j>1$. We denote the set of closed simple paths based at $v$ (i.e., $s(\mu)=v$) by $\CSP (v)$. A \emph{cycle}\index[terms]{graph!cycle} is a closed path 
$\mu:=e_1\cdots e_n$ such that $s(e_i)\neq s(e_j)$ for all $1\le i <j\le n$. 

We say that a row-finite graph $E$ is \emph{Simple Purely Infinite} (SPI) if the following conditions hold:
\begin{itemize}
    \item[(a)] the only hereditary and saturated subsets of $E^0$ are $\emptyset$ and $E^0$; and
    \item[(b)] each cycle has an exit; and
    \item[(c)] every vertex connects to a cycle.
\end{itemize}

As shown in \cite[Theorem 3.1.10]{AAS}, $E$ is SPI if and only if the Leavitt path algebra $L_K(E)$\index[symbols]{$L_K(E)\quad$(Leavitt path algebra)} is a simple purely infinite ring (for any field $K$).

\begin{lma}\label{lma:charcSPI}
    Let $E$ be a strongly connected row-finite graph. Then, $E$ is either a single vertex without edges, a single cycle, or a SPI graph.  
    
    Moreover, the following are equivalent:
    \begin{itemize}
        \item[(i)] $E$ is SPI;
        \item[(ii)] $|\CSP (v)| \ge 2$ for some $v\in E^0$;
        \item[(iii)] $| \CSP (v)|\ge 2$ for all $v\in E^0$.
    \end{itemize}
\end{lma}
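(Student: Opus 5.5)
The plan is to work entirely with closed simple paths and cycles in a strongly connected row-finite graph $E$. The first step is a trichotomy according to the edges. If $E^1=\emptyset$, strong connectivity forces $E^0$ to be a single vertex, and we are in the first case. Otherwise every vertex $v$ lies on a closed path, since by strong connectivity there is a path from $v$ to any edge source and back. Hence $|\CSP(v)|\ge 1$ for every $v$.

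Next I would show that (ii) and (iii) are equivalent, with (ii) $\Rightarrow$ (iii) the only nontrivial direction. Suppose $\mu\neq\nu$ lie in $\CSP(v)$, and let $w$ be any vertex. Choose paths $\alpha$ from $w$ to $v$ and $\beta$ from $v$ to $w$. The closed paths $\alpha\mu\beta$ and $\alpha\nu\beta$ based at $w$ are distinct, but they need not be simple. I would instead argue via exits.
\begin{itemize}
\item Because $\mu\ne\nu$ are both simple at $v$, they first diverge at some vertex, and there the edge of one of them is an exit of a cycle contained in the other.
\item If some $w$ had $|\CSP(w)|=1$, I would show that $E$ is a single cycle. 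Take the unique $\gamma\in\CSP(w)$. Any vertex $u$ together with an edge $e$ with $s(e)=u$ can be completed, using strong connectivity, to a closed path at $w$. Decomposing that closed path at its returns to $w$ yields an element of $\CSP(w)$ containing $e$, so it equals $\gamma$.
\item Thus every edge lies on $\gamma$ and every vertex lies on $\gamma$. Since $E$ is strongly connected and every edge is on $\gamma$, $E$ consists exactly of the edges of $\gamma$. One then checks that a closed path using each of its vertices' unique outgoing edges is a cycle, so $E$ is a single cycle.
\item In a single cycle $|\CSP(v)|=1$ for all $v$, which contradicts (ii).
\end{itemize}
The same argument yields the trichotomy: if some $|\CSP(v)|=1$ then $E$ is a cycle, and otherwise (iii) holds.

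It remains to prove (iii) $\Leftrightarrow$ (i).

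For (iii) $\Rightarrow$ (i), the three SPI conditions are checked in turn.
\begin{itemize}
\item For condition (a), a nonempty hereditary set $H$ in a strongly connected graph is all of $E^0$, since every vertex reaches every other.
\item For condition (c), every vertex connects to a cycle, since closed paths contain cycles.
\item For condition (b), let $c$ be a cycle based at $v$. If $c$ had no exit, then every closed path at $v$ would follow $c$, which gives $|\CSP(v)|=1$.
\end{itemize}

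For (i) $\Rightarrow$ (ii), a cycle $c$ at $v$ has an exit $e$ at a vertex $u$ on $c$. Rebasing at $u$, strong connectivity gives a return path from $r(e)$ to $u$. Truncating at the first return to $u$ produces a simple closed path at $u$ beginning with $e$, which is distinct from the rotation of $c$ at $u$. Hence $|\CSP(u)|\ge2$.

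The main obstacle is the step showing that $|\CSP(w)|=1$ forces every edge onto the unique closed simple path $\gamma$. This needs a careful "first return to $w$" decomposition, so that a closed path through an arbitrary edge yields a simple closed path at $w$ through that edge. It also needs row-finiteness, so that no subtleties arise with infinite emitters, although the argument seems not to actually use more than finiteness of paths.
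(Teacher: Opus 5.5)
Your proof is correct apart from one unjustified step, and it reaches the equivalence (ii)$\Leftrightarrow$(iii) and the trichotomy by a different route than the paper. For (iii)$\Rightarrow$(i) and (i)$\Rightarrow$(ii) you follow the paper's argument. Strong connectivity kills nontrivial hereditary sets. A cycle without an exit would make $\CSP(v)$ a singleton. An exit $e$ of a cycle, followed by a shortest return path, gives a second closed simple path at $s(e)$.

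The paper gets (ii)$\Leftrightarrow$(iii) by appealing to the proof of \cite[Lemma~3.3.8]{AAS}. It obtains the trichotomy by fixing a cycle and splitting into two cases. If the cycle has an exit, then $|\CSP(s(e))|\ge 2$, so $E$ is SPI via the equivalences. If it has no exit, strong connectivity forces $E$ to be that cycle. You instead prove a single self-contained lemma: if $|\CSP(w)|=1$ for one vertex $w$, then $E$ is a single cycle. This yields both (ii)$\Rightarrow$(iii) and the trichotomy at once, with no external reference. The paper's version is shorter only because it outsources the vertex-independence of the condition $|\CSP(v)|\ge 2$.

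The unjustified step is in that lemma. From ``every edge lies on $\gamma$'' you pass to ``each vertex has a unique outgoing edge'', but this does not follow on its own. Take the strongly connected graph with edges $w\to u$, $u\to x$, $x\to u$, $u\to w$. Every edge lies on the closed simple path $w\to u\to x\to u\to w$, yet $u$ emits two edges.

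You need to use $|\CSP(w)|=1$ once more. Write $\gamma=e_1\cdots e_n$ and suppose $s(e_i)=s(e_j)$ for some $1<i<j$. Then $e_1\cdots e_{i-1}e_j\cdots e_n$ is a strictly shorter closed simple path at $w$, contradicting uniqueness. So $\gamma$ is a cycle, and since every vertex and edge of $E$ lies on it, $E$ is that cycle. With this line added, your argument is complete.

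Your first bullet, about where $\mu$ and $\nu$ diverge, is never used and can be dropped.
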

\begin{proof}
	We begin by proving the second statement. By (the proof) of \cite[Lemma 3.3.8]{AAS}, we have that for any two vertices $v,w$ of $E^0$, $|\CSP (v)|\ge 2$ if and only if $| \CSP (w)|\ge 2$. This gives that (ii) and (iii) are equivalent.
	
	Suppose that $| \CSP (v)| \ge 2$ for all $v\in E^0$ and let us prove (i). We have to check conditions (a), (b), and (c) above. For (a), note that the only hereditary subsets of $E^0$ are $\emptyset$ and $E^0$, because $E$ is strongly connected. For (b), note that each cycle has an exit, because $| \CSP (v)| \ge 2$ for each vertex $v$ in the cycle.  For (c), observe that $E$ must have some cycle, because $E$ has some closed simple path. This proves that $E$ is SPI.
	
	Conversely, if $E$ is SPI, then $E$ contains a cycle $C$. Using condition (b), the cycle $C$ has an exit, say $e$. Then there must be a path $\ell $, which may be chosen of minimal length, from $r(e)$ to $s(e)$, and thus $e\ell$ and $C$ provide two distinct closed simple paths based at $s(e)$. Hence $| \CSP (s(e))| \ge 2$. This shows the desired equivalence.
	
	Now let $E$ be any strongly connected row-finite graph. If $E$ has no edges, then clearly $E$ is a single vertex, because $E$ is strongly connected. If $E$ has some edge, then $E$ must have a cycle, again by strong connectedness. 
	Let $C$ be a cycle in $E$. If $C$ has an exit, then $E$ is SPI by the above argument. Thus, suppose that $C$ has no
	exits. If $w$ is a vertex of $E$ which is not in the cycle $C$, then there must be a directed path from some vertex of the cycle to $w$, so the cycle must have an exit, which is not the case. Hence, every vertex and every edge of $E$ belong to the cycle $C$, and thus $E$ consists of a single cycle $C$. 
\end{proof}

\begin{dfn}\label{dfn:SupAdSepGra}
	Let $(E,C)$ be a finitely separated graph, and let $\mathbb P (E)$ be its induced poset. We say that $(E,C)$ is \emph{super adaptable}\index[terms]{super adaptable graph} if the family of full subgraphs $\{E_p\}_{p\in \mathbb{P}(E)}$ with vertex set
	\[
	E_p^0=\{ w\in E^0\mid p=[w] \}
	\]
	is such that either:
	\begin{itemize}
		\item[(a)] $E_p$ is a row-finite (strongly connected) SPI graph and $\vert C_w\vert =1$ for each $w\in E_p$; or
		\item[(b)] $E_p^0=\{ v^p\}$, with either 
        \begin{itemize}
            \item $s^{-1}(v^p)=\emptyset$; or
            \item $C_{v^p}=\{ X_\alpha^{(p)} \mid \alpha\in E_p^1 \}$ where $|E_p^1|\ge 1$, and  $2\leq \vert X_\alpha^{(p)}\vert<\infty $ and $X_\alpha^{(p)}\cap E_p^1 =\{ 
	\alpha\}$ for each $\alpha\in E_p^1$.
        \end{itemize}
	\end{itemize}
	
	The elements $p\in \mathbb{P}(E)$ which satisfy (a) are called \emph{regular}\index[terms]{vertex!regular}, while those that satisfy (b) are called \emph{free}\index[terms]{vertex!free}. Given $q < p\in \mathbb{P}(E)$ we say that a \emph{connector}\index[terms]{graph!connector} from $p$ to $q$ is any edge $e\in E^1$ such that $s(e)\in E^0_p$ and $r(e)\in E^0_q$; notice that, if $p$ is free, all edges in $X_\alpha^{(p)}\setminus\{\alpha\}$ are connectors for any $\alpha\in E^1_p$.
\end{dfn}

\begin{pgr}[Adaptable graphs]\label{pgr:AdapGraph}
    The notion of \emph{adaptability}\index[terms]{separated graph!adaptable} for separated graphs was introduced in \cite[Definition~1.4]{AraBosPar20SgpFor}. Standard arguments show that the original definition is equivalent to the following: $(E,C)$ is \emph{adaptable} if $\mathbb{P}(E)$ is finite and the family of full subgraphs $\{E_p\}_{p\in \mathbb{P}(E)}$ with vertex set
	\[
	E_p^0=\{ w\in E^0\mid p=[w] \}
	\]
	is such that either:
	\begin{itemize}
		\item[(a)] $E_p$ is a row-finite (strongly connected) graph with $\vert C_w\vert =1$ and $\vert s^{-1}_{E_p}(w)\vert \geq 2$ for each $w\in E_p$; or
		\item[(b)] $E_p^0=\{ v^p\}$, with either 
        \begin{itemize}
            \item $s^{-1}(v^p)=\emptyset$; or
            \item $C_{v^p}=\{ X_\alpha^{(p)} \mid \alpha\in E_p^1 \}$ with $1\le \vert E_p^1\vert<\infty$, $2\leq \vert X_\alpha^{(p)}\vert<\infty $ and $X_\alpha^{(p)}\cap E_p^1 =\{ 
	\alpha\}$ for each $\alpha\in E_p^1$.
        \end{itemize}
	\end{itemize}

    Using \autoref{lma:charcSPI} (and the fact that the subgraphs $E_p$ are strongly connected), one sees that any adaptable graph is super adaptable. However, we note that there exist finite super adaptable graphs that are not adaptable. For example, consider the SPI graph
    
    \begin{center}
    \begin{tikzpicture}[>=Stealth, scale=1.5]
    \tikzstyle{vertex} = [inner sep=2pt]

    \node[vertex] (v1) at (0,0) {$v_1$};
    \node[vertex] (v2) at ([xshift=2.5cm]v1) {$v_2$};
    \node[vertex] (v3) at ([xshift=2.5cm]v2) {$v_3$};

    \draw[->, blue, thick] (v1) -- (v2);
    \draw[->, blue, thick, in=-110, out=-50, looseness=8] (v1) to (v1);
    
    \draw[->, red, thick] (v2) -- (v3);
    
    \draw[->, green!60!black, thick, bend left=50] (v3) to (v1);
    \end{tikzpicture}
    \end{center}
    with colouring $C_{v_i}=\{ s^{-1}(v_i)\}$ for each $i$. 

    It follows from \autoref{dfn:SupAdSepGra} and \autoref{lma:charcSPI} that the resulting separated graph is super adaptable. However, it is not adaptable. Indeed, $\vert s^{-1}(v_2)\vert=\vert s^{-1}(v_3)\vert =1$ which shows that neither (a) nor (b) above are satisfied.
\end{pgr}

We now move to the study of morphisms between super adaptable graphs. First, let us recall the definition of morphism of separated graphs from \cite[Definition~3.2]{AraGoo12Crelle}.

\begin{pgr}[Morphisms in $\rm SGr$]
\label{pgr:morphisms-in-SGr}
    Recall that a \emph{morphism}\index[terms]{separated graph! morphism} $\phi\colon (E,C)\to (F,D)$ between separated graphs is a graph homomorphism $\phi = (\phi^0,\phi^1) \colon E\to F$ such that
    \begin{enumerate}
        \item[(M1)] $\phi^0$ is injective and $\phi^1\vert_X$ is injective for every $X\in C$.
        \item[(M2)] There exists $\tilde{\phi}\colon C\to D$ such that $\phi^1(X)\subseteq \tilde{\phi}(X)$ for every $X\in C$.
        \item[(M3)] $\tilde{\phi}(C_{\rm fin})\subseteq D_{\rm fin}$ and for every $X\in C_{\rm fin}$ the map $\phi^1\vert_X\colon X\to \tilde{\phi}(X)$ is bijective.
    \end{enumerate}

    We will denote by $\rm SGr$\index[terms]{separated graph!category}\index[symbols]{$\rm SGr\quad$(separated graph category)} the category of separated graphs and their morphisms.
    
    Further, for a finitely separated graph $(E,C)$, a \emph{complete} subobject \index[terms]{separated graph!complete subobject} of $(E,C)$ will be a separated graph $(F,D)$ such that $F$ is a subgraph of $E$ and the inclusion map $i\colon F \to E$ induces a morphism of separated graphs
    $i\colon (F,D)\to (E,C)$. This amounts to say that, for each $v\in F^0$, we have 
    \[
        D_v= \{ Y\in C_v \mid  Y\cap F^1\ne \emptyset  \}.
    \]
\end{pgr}

\begin{prp}\label{prp:SuperGr}
    Let $(E,C)$ and $(F,D)$ be super adaptable graphs, and let $\phi\colon (E,C)\to (F,D)$ be a morphism. Then, the poset morphism $\rho\colon \mathbb{P}(E)\to \mathbb{P}(F)$ induced by $\phi$ satisfies:
    \begin{itemize}
        \item[(a)] $\phi (E_p)\subseteq F_{\rho (p)}$ for each $p\in\mathbb{P}(E)$;
        \item[(b)] $\rho (\Preg (E))\subseteq \Preg (F)$.
    \end{itemize}
\end{prp}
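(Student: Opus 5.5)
The plan is to read everything off from the two basic features of a morphism of separated graphs: it is a graph homomorphism, and it satisfies (M1) and (M2).

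First I would record why $\rho$ is well defined. Since $\phi$ is a graph homomorphism, it sends a directed path $e_1\cdots e_n$ from $w$ to $v$ to the directed path $\phi^1(e_1)\cdots\phi^1(e_n)$ from $\phi^0(w)$ to $\phi^0(v)$. Hence $v\le w$ implies $\phi^0(v)\le\phi^0(w)$, so $\rho([v]):=[\phi^0(v)]$ is a well-defined poset morphism $\mathbb P(E)\to\mathbb P(F)$.

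For (a), take $v,w\in E_p^0$. Then $v\le w$ and $w\le v$, so by the previous step $\phi^0(v)\le\phi^0(w)$ and $\phi^0(w)\le\phi^0(v)$. Thus $\phi^0(E_p^0)\subseteq F^0_{\rho(p)}$. An edge $e\in E_p^1$ has both endpoints in $E_p^0$, so $\phi^1(e)$ has both endpoints in $F^0_{\rho(p)}$. Since $F_{\rho(p)}$ is a full subgraph, $\phi^1(e)\in F^1_{\rho(p)}$, and therefore $\phi(E_p)\subseteq F_{\rho(p)}$.

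For (b), let $p\in\Preg(E)$, so $E_p$ is a row-finite SPI graph with $|C_w|=1$ for every $w\in E_p^0$. Suppose, for contradiction, that $\rho(p)$ is free, so $F^0_{\rho(p)}=\{u\}$ is a single vertex. I split into two cases.

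If $|E_p^0|\ge 2$, then injectivity of $\phi^0$ from (M1), combined with (a), gives $|F^0_{\rho(p)}|\ge 2$, which is already a contradiction.

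If $E_p^0=\{v\}$, then by \autoref{lma:charcSPI} we have $|\CSP(v)|\ge 2$. Closed simple paths at a single vertex of the full subgraph $E_p$ are loops, so there are two distinct loops $e\ne e'$ at $v$. Both lie in the unique set $X\in C_v$, which is finite by finite separation. By (M1), $\phi^1(e)\ne\phi^1(e')$, and by (a) both are loops at $u$, hence elements of $F^1_{\rho(p)}$. In particular $s^{-1}(u)\neq\emptyset$, so we are in the second alternative of \autoref{dfn:SupAdSepGra}(b): $D_u=\{X^{(\rho(p))}_\alpha\}_\alpha$ with $X_\alpha\cap F^1_{\rho(p)}=\{\alpha\}$. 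Now (M2) gives $\phi^1(X)\subseteq\tilde\phi(X)=X_\alpha$ for a single $\alpha$. Then $X_\alpha$ would contain the two distinct elements $\phi^1(e),\phi^1(e')$ of $F^1_{\rho(p)}$, contradicting $X_\alpha\cap F^1_{\rho(p)}=\{\alpha\}$.

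Hence $\rho(p)$ is regular in both cases. The only delicate point of the proof is this single-vertex case, where the colouring condition (M2), rather than mere injectivity, is what rules out a free target.
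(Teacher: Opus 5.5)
Your proof is correct and follows essentially the same route as the paper: (a) comes from $\phi$ preserving the path preorder (equivalently, the image of the strongly connected $E_p$ is strongly connected). For (b), the case $|E_p^0|\ge 2$ is handled by injectivity of $\phi^0$, and the case $E_p^0=\{v\}$ by the two distinct loops in the single colour class at $v$, injectivity of $\phi^1$ on colour classes, and (M2) placing their images in one colour class at $\phi^0(v)$, which rules out freeness.
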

\begin{proof}
    Let $E_p$ and $F_q$ be the full subgraphs with vertex sets $\{ w\in E^0\mid p=[w] \}$ and $\{ v\in F^0\mid q=[v] \}$ respectively.

    Take $p\in\mathbb{P}(E)$ and note that, since $E_p$ is strongly connected, so is $\phi (E_p)$. Thus, we must have $\phi (E_p)\subseteq F_{\rho (p)}$. Now assume that $p$ is regular. If $\vert E_p^0\vert >1$, we have $\vert F_{\rho (p)}^0\vert \geq \vert \phi^0  (E_p^0)\vert\geq 2$ because $\phi^0$ is injective. This implies that $\rho (p)$ is regular. If $\vert E_p^0\vert =1$, let $w$ be the vertex in $E_p$. Then, property (a) in \autoref{dfn:SupAdSepGra} implies that $C_w=\{ X_w\}$ where $X_w$ contains at least two distinct $w$-loops (\autoref{lma:charcSPI}). Since $\phi^1$ is injective on any set of $C$, the set $\tilde{\phi} (X_w)\in D_{\phi^0 (w)}$ contains two distinct loops at $\phi^0 (w)$. By definition, $\rho ([w])=\rho (p)$ cannot be free. Thus, it is regular.
\end{proof}

Recall that the category $\rm SGr$  of separated graphs is closed under \emph{arbitrary} directed limits \cite[Proposition~3.3]{AraGoo12Crelle}.

\begin{prp}\label{prp:LimitSuperAdapt}
    The full subcategory ${\rm SGr}_{sa}$\index[symbols]{${\rm SGr}_{sa}$} of super adaptable graphs is closed under direct limits in $\rm SGr$.
\end{prp}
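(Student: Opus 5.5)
We must show: if $((E_i,C^i),\phi_{ij})_{i\in\Lambda}$ is a directed system of super adaptable graphs in $\rm SGr$, then its limit $(E,C)$ is super adaptable. The plan is to use the explicit description of direct limits in $\rm SGr$ from \cite[Proposition~3.3]{AraGoo12Crelle}. There $E^0=\varinjlim E_i^0$ and $E^1=\varinjlim E_i^1$ as sets, with injective canonical maps $\phi_{i,\infty}^0$. Each $X\in C_v$ is the increasing union of the images $\tilde\phi_{i,\infty}(X_i)$ of compatible sets. By (M3), if some $X_i$ is finite then all later images are bijective copies, so $X$ is finite of the same cardinality. Finite separation of $(E,C)$ then follows because each $X\in C$ comes from some finite $X_i$.

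Next we identify the components $E_p$ of the limit. Two vertices $v,w\in E^0$ lie in the same class $p\in\mathbb P(E)$ iff there are paths in both directions. Both paths are finite, so they live in some $E_i$ (after lifting $v,w$ and the edges to a common stage). Hence there is $i_0$ with $v,w$ lifted to $v_i,w_i\in E_{i,p_i}$ for all $i\ge i_0$. By \autoref{prp:SuperGr}(a), the maps carry $E_{i,p_i}$ into $E_{j,\rho_{ij}(p_i)}$. So $E_p$ is the direct limit (an increasing union of injective images) of the subgraphs $E_{i,\rho_{i,\infty}^{-1}(p)}$ along the system. By \autoref{prp:SuperGr}(b), the set of indices $i$ at which the lifted class is regular is upward closed.

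We then split into cases. \emph{Case 1:} $p$ has a regular lift at some stage. Then all later lifts are regular, so $E_p$ is a union of images of strongly connected row-finite SPI graphs, and $|C_w|=1$ for $w\in E_p$ because $\tilde\phi$ preserves the single color class. Strong connectivity passes to the union. Row-finiteness follows since $s^{-1}(w)$ is the unique finite set $X_w$, whose cardinality is stabilized by (M3). The SPI property follows from \autoref{lma:charcSPI}(ii): a vertex with two distinct closed simple paths at a finite stage keeps two distinct closed simple paths in the limit, by injectivity of the maps on vertices and on each $X\in C$. \emph{Case 2:} all lifts are free. Then each $E_{i,p_i}$ is a single vertex, and by injectivity of $\phi^0$ together with strong connectivity of the union, $E_p^0=\{v^p\}$. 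If $s^{-1}(v^p)=\emptyset$ we are done. Otherwise each $X\in C_{v^p}$ comes from some $X^{(p_i)}_{\alpha}$ at a finite stage, with cardinality $\ge 2$ preserved by (M3). Loops at $v^p$ in the limit come from loops at some stage, and each $X^{(p_i)}_\alpha$ contains exactly one loop. Hence every $X\in C_{v^p}$ contains exactly one loop $\alpha$, and distinct loops lie in distinct sets, giving $C_{v^p}=\{X_\alpha\mid\alpha\in E_p^1\}$. A free vertex with nonempty $s^{-1}$ must have at least one loop, so $|E_p^1|\ge1$.

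The main obstacle is Case 2. A connector at stage $i$ could become a loop at a later stage if its range vertex merges into the class of $v^p$; this is exactly what happens when a free class becomes regular. The key claim is that in Case 2 every $X_\alpha$ has exactly one loop in the limit. The plan for this is to argue at a stage where the would-be second loop already exists, where it contradicts the free description of that stage. If the stage's class were instead regular, we would be in Case 1. The remaining point is to check that for each $X$ the finite set stabilizes, so that we can pick a single stage where the loop structure of $X$ is final. This works because $X$ is finite and the vertex-classes of the finitely many ranges involved stabilize at some finite stage.
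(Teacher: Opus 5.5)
Your proof is correct and follows essentially the same route as the paper's. You split according to whether some vertex of $E_p$ lifts to a regular class at a finite stage, using \autoref{prp:SuperGr} for upward closure; you lift closed paths to a finite stage to get the singleton property; you use (M3) for row-finiteness and for $2\le |X|<\infty$; and you invoke \autoref{lma:charcSPI} for the SPI property.

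The ``main obstacle'' in your last paragraph is not real, however. Every $\phi^0$ is injective, so an edge with distinct source and range can never map to a loop: a connector can at most become an internal edge of a larger strongly connected component, which is what happens when a free class turns regular. Hence a loop of the limit is already a loop at every stage where it has a preimage. This is exactly the one-line observation the paper uses to show that each $X\in C_{v^p}$ contains at most one loop, so no stabilization argument is needed.
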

\begin{proof}
    Let $(E,C)$ be the $\rm SGr$-limit of a system $((E_j , C_j)_j,\phi_{j,k})$ with $(E_j,C_j)$ super adaptable for each $j$. Denote by $\phi_j$ the limit maps $(E_j,C_j)\to (E,C)$.

    Let $\mathbb{P}(E)$ and $\mathbb{P}(E_j)$ denote the induced posets of $E$ and $E_j$ respectively. For $p\in \mathbb P (E)$,  let $E_p$ be the canonical full subgraphs from \autoref{dfn:SupAdSepGra}. We will show that each $E_p$ satisfies either (a) or (b) in that definition.
    
    Set 
    \[
        E^0_{\rm reg} := \{
            w\in E^0 \mid 
            \exists w'\in E^0_j \text{ such that }
            w=\phi_j^0 (w') \text{ and }
            [w']\in \Preg (E_j)
        \}.
    \]
    Let $p\in\mathbb{P}(E)$. Assume first that $p=[w]$ for some $w\in E^0_{\rm reg}$. We show that $p$ satisfies (a) in \autoref{dfn:SupAdSepGra}. Let $w'\in E_j^0$ be as above. \autoref{prp:SuperGr} implies that $[\phi_{k,j}(w')]$ is regular for every $k\geq j$. Take any $v\in E_p^0$, and let $\gamma$ be a closed path in $E$ with $r(\gamma)=s(\gamma) = w$ and such that $\gamma$ passes through $v$. Then, there exist some $i\ge j$ and a path $\gamma'$ in $E_i$ such that $\phi_i(\gamma')= \gamma$. Using injectivity of the map $\phi_i^0$, we get $\phi_{i,j}^0(w') = s(\gamma') = r(\gamma')$. We can write $\gamma' = \gamma'_1\gamma'_2$, with $\phi_i^0 (v')=v$, where $v'=r(\gamma_1')= s(\gamma'_2)$. Now since $[\phi_{i,j}^0(w')]$ is regular, and $[v']= [\phi_{i,j}^0(w')]$, it follows that $[v']$ is regular and thus that $v= \phi_i^0(v')\in E^0_{\rm reg}$. It follows that $v\in E^0_{\rm reg}$ for any $v\in E_p$.
    
    Fix $v\in E_p^0$ and find $i\in\NN$ and $v'\in E^0_i$ such that  $v=\phi_i^0 (v')$ and $[v']\in \Preg (E_i) $. By \autoref{prp:SuperGr}, $\phi_{k,i}(v')$ has a regular class for each $k\geq i$ and, in particular, $\vert (C_k)_{\phi_{k,i}(v')}\vert =1$ for all $k\ge i$. This gives that $\vert C_v\vert =1$. Moreover, the unique set $s^{-1}_E(v)$ in $C_v$ is in bijective correspondence (through $\phi_i$) with the finite non-empty set $s^{-1}_{E_i}(v')$. In particular, this implies that $E_p$ is a row-finite and strongly connected graph with at least one edge.

    Further, $E_p$ cannot consist of a single cycle. Indeed, given $v\in E_p^0$ and $v'\in E_i^0$ as above, we know that $v'$ lies in the strongly connected SPI full subgraph $(E_i)_{[v']}$ of $E_i$. Thus, \autoref{lma:charcSPI} implies the existence of two edges in this component that share their source (and necessarily have the same color). Since $\phi_j^1$ is injective on edges with the same color and $\phi_i^0$ maps all of $(E_i)_{[v']}^0$ into $E_p^0$, it follows that $E_p$ has at least one vertex that is the source of two edges. This shows that $E_p$ cannot be a single cycle. By \autoref{lma:charcSPI}, $E_p$ must be a SPI graph. In short, if $w\in E^0_{\rm reg}$, then $E_{[w]}$ satisfies (a) in \autoref{dfn:SupAdSepGra}.

    Now, take $p=[w]$ with $w\not\in E^0_{\rm reg}$. It follows that all the elements in $E_p^0$ come from free elements that remain free throughout the system (\autoref{prp:SuperGr}). We will show that in this situation the full subgraph $E_p$ satisfies (b) in \autoref{dfn:SupAdSepGra}.

    First, note that $E_p^0$ only contains one element. Indeed, assume that $E_p^0$ contains two vertices. By definition, these two vertices would be connected by a closed simple path, which could be realized in some super adaptable graph $(E_j, C_j)$. This whole path ---in particular, the two distinct vertices--- is contained in a free component of $(E_j, C_j)$, which by definition only has one vertex. This leads to a contradiction. Hence $E_p^0$ is a singleton. 
    
    Now let $v^p$ be the vertex in $E_p$. If $s^{-1}(v^p)=\emptyset$, we are done. Else, we need to prove that $C_{v^p}=\{ X_{\alpha}^{(p)} \mid \alpha\in E_p^1 \}$ with $2\leq \vert X_\alpha^{(p)}\vert <\infty$ and $X_\alpha^{(p)}\cap E_p^1 =\{\alpha\}$.

    Take $X\in C_{v^p}$. First, let us prove that it contains at most one loop. Thus, let $e,f\in X\cap E_p^1$. By definition of the limit partition \cite[Proposition~3.3]{AraGoo12Crelle}, this happens if $e,f$ come from edges $e',f'\in X'$, where $X'\in (C_j)_{v_j}$ for some $j\in\NN$ and $v_j\in E_j^0$ such that $\phi_j^0(v_j)=v^p$; notice that $e', f'$ must be loops, because $\phi_j^0$ is injective. By our assumptions $[v_j]$ is free and so, since $e',f'$ belong to the same element of the partition, we must have $e'=f'$. This implies $e=f$, as required.

    Since each $X\in C_{v^p}$ comes from a partition at a finite stage associated to a free element, we get $2\leq \vert X\vert <\infty$. This shows that $(E,C)$ is super adaptable, as required.
\end{proof}

We can now show that each super adaptable graph is a direct limit of finite super adaptable graphs.

\begin{prp}\label{prp:SupAdapLim}
 Any super adaptable graph $(E,C)$ is the direct limit in {\rm SGr} of finite complete super adaptable subgraphs $(F,D)$ with the property that the associated map $\rho \colon \mathbb P (F)\to \mathbb P (E)$ sends each non-minimal free element $p\in \Pfree (F)$ to a free element $\rho (p) \in \Pfree (E)$.
 
 Hence any countable super adaptable graph is the direct limit  in {\rm SGr} of a sequence of finite super adaptable graphs with the above property. 
\end{prp}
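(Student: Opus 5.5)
We will write $(E,C)$ as the directed union of its finite complete super adaptable subgraphs $(F,D)$ satisfying the stated condition on non-minimal free elements. Since a directed union of complete subobjects is the direct limit in ${\rm SGr}$ (the colimit construction of \cite[Proposition~3.3]{AraGoo12Crelle} is then just the union), it suffices to prove two things. First, the family $\mathcal F$ of such subgraphs is upward directed. Second, every finite set of vertices and edges of $E$ is contained in some member of $\mathcal F$. In the countable case, we then enumerate $E^0\sqcup E^1$ and pick an increasing chain $F_1\subseteq F_2\subseteq\cdots$ in $\mathcal F$ exhausting $E$, which gives the sequential statement.

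To enlarge a finite set $S\subseteq E^0\sqcup E^1$, we proceed as follows. For each regular component $E_p$ met by $S$, we choose a finite strongly connected subgraph $F_p\subseteq E_p$. It should contain the vertices and edges of $S$ lying in $E_p$ together with two distinct closed simple paths at some vertex, so that $F_p$ is SPI by \autoref{lma:charcSPI}. Completeness forces us to include, for each vertex $w$ of $F_p$, the whole of $s^{-1}_E(w)$, since $|C_w|=1$ and $E$ is finitely separated; the added edges may leave $F_p$. For each free vertex $v^p$ met by $S$, and for each $X^{(p)}_\alpha$ containing an edge already chosen, we add all of $X^{(p)}_\alpha$; this set is finite, and its only loop is $\alpha$. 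We then repeat the process with the new ranges that have been created. Each new range lies strictly below the current component in $\mathbb P(E)$, except for ranges inside a regular component, which are handled by the strongly connected closure. The process is iterated finitely often because of the next step.

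The main obstacle is termination together with the condition on free elements. Below a free vertex the poset may be infinite and non-artinian, so adding full color classes downward need not stop. To get around this, we decide that a vertex is added as a \emph{sink} in $F$ (so $D_w=\emptyset$, which is allowed in a complete subobject since $D_w=\{Y\in C_w: Y\cap F^1\neq\emptyset\}$) unless some edge of $S$ starts at it. With this rule, the edges added downward create only finitely many new vertices, which become sinks, and the construction stops after one round of closure per component touched by $S$. This is fine because the regular components already chosen are closed under sources of their own edges. It remains to check that $F$ is super adaptable. A sink $w$ gives a free minimal class $[w]$ of $\mathbb P(F)$, which is allowed by \autoref{dfn:SupAdSepGra}(b). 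A regular $F_p$ is SPI with $|D_w|=1$. A free $v^p$ with chosen colors has classes $X^{(p)}_\alpha$ whose loop sets are correct. The point to verify carefully is that $\mathbb P(F)\to\mathbb P(E)$ behaves as expected. We must check that a sink $w$ lying in a regular class of $E$ is minimal in $F$; this holds because it is a sink. Any non-minimal free element of $\mathbb P(F)$ is the class of a vertex with outgoing edges in $F$, and no such vertex can come from a regular component of $E$, since those have been given the full SPI structure. Hence non-minimal free elements map to $\Pfree(E)$. One also has to check that $F_p$ is not split into several classes by missing paths, which is ensured by choosing $F_p$ strongly connected. Directedness follows by applying the construction to $S=F\cup F'$.
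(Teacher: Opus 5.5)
Your proposal is correct and follows essentially the paper's route. The paper starts from an arbitrary finite complete subobject (via \cite[Proposition~3.5]{AraGoo12Crelle}). It then repairs each class that maps to a regular class of $E$ but is only a single vertex or a single cycle, by adjoining closed simple paths of $E_{\rho(p)}$ (\autoref{lma:charcSPI}) together with all edges they emit, and leaves the new ranges as sinks, which is the same mechanism as your SPI pieces, completeness at regular vertices, and minimal free sinks. In the write-up, state plainly that ranges created inside a regular component $E_p$ but outside your chosen $F_p$ are also made sinks rather than absorbed by a ``strongly connected closure'', since closing $F_p$ under them need not terminate; your sink rule is exactly what makes the construction finite.
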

\begin{proof}
Let $(E,C)$ be a super adaptable graph. By \cite[Proposition 3.5]{AraGoo12Crelle}, $(E,C)$ is the direct limit of finite complete subobjects $(F,D)$. Thus, it suffices to construct, for any given finite complete subobject $(F,D)$ of $(E,C)$, another finite complete suboject $(G,D')$ of $(E,C)$ such
 that $F$ is a subgraph of $G$ and $(G,D')$ is a super adaptable graph. 

Fix a finite complete subobject $(F,D)$ of $(E,C)$. Let $\mathbb{P}(E)$ and $\mathbb{P}(F)$ be the posets associated to $E$ and $F$ respectively. Since $F^0$ is finite, $\mathbb{P}(F)$ is also finite. Let $\rho \colon \mathbb{P}(F)\to \mathbb{P}(E)$ be the map from \autoref{prp:SuperGr}, defined by the property that $[w]_E = \rho (p)$ whenever $w\in F_p^0$. 
Let $p\in \mathbb{P}(F)$, and recall that each graph $F_p$ is strongly connected. 
In particular, if $F_p^0$ contains a vertex $w$ that is a sink in $F$, then necessarily $F_p^0= \{w\}$.

Else, if $F_p^0$ does not contain a sink in $F$, we consider two cases. If $\rho (p)\in \Pfree (E) $, then $F_p$ must satisfy (b) in \autoref{dfn:SupAdSepGra} (with respect to the coloring $D$) since $(F,D)$ is a complete subobject of $(E,C)$. Suppose now that $\rho (p)\in \Preg (E)$. 
We then have that $C_v = \{ s_E^{-1}(v)\}$ and $1\le |s_E^{-1}(v)| <\infty$ for all $v\in E_{\rho(p)}$.  
If $F_p$ is SPI, then, since $(F,D)$ is a complete subobject of $(E,C)$, it follows that $F_p$ satisfies (a) in \autoref{dfn:SupAdSepGra} with respect to $D$, and moreover $s_F^{-1}(v)= s_E^{-1}(v)$ for all $v\in F_p^0$. Else, it follows from \autoref{lma:charcSPI} that $F_p$ is either a single vertex without edges or a single cycle:
\begin{itemize}
    \item[(a)] If $F_p$ is a single vertex, write $F_p^0= \{w\}$. Since $\rho (p)\in \Preg (E)$ (in particular, since $E_{\rho (p)}$ is SPI), \autoref{lma:charcSPI} proves the existence of two distinct closed simple paths based at $w$ in $E_{\rho(p)}$, which we denote by $c_1 (w)$ and $c_2 (w)$.
    \item[(b)] If $F_p$ consists of a single cycle $c$, we know from \autoref{lma:charcSPI} that for every $w\in F_p^0$ there exists another closed simple path $c'(w)$ in $E_{\rho (p)}$.
\end{itemize}

We set $\Lambda _1 = \bigcup_w (c_1(w)^0\cup c_2(w)^0)$, where $w$ ranges on the non-sinks of $F$ such that their associated class $p=[w]_F$ satisfies (a) above (i.e. $F_p$ is a single vertex). Similarly, we set  $\Lambda _2 = \bigcup_w (c'(w))^0$, where $w$ now ranges on the non-sinks of $F$ such that their associated class $p=[w]_F$ satisfies (b) above. Let $G$ be the subgraph of $E$ with
\[
    G^0 = F^0 \cup \Lambda _1 \cup \Lambda _2 \cup r_E(s_E^{-1}(\Lambda_1\cup \Lambda_2)),\andSep 
    G^1= F^1 \cup   s_E^{-1}(\Lambda_1\cup \Lambda_2).
\]

By construction, $F\subseteq G$. Note that $G$ is a finite graph because $|s_E^{-1}(v)|<\infty$ for all $v\in E^0$ such that $[v]\in \Preg (E)$. 

Let $\mathbb{P} (G)$ be the set of components associated to $G$. By discussing a number of cases separately, we will show that for each $p\in\mathbb{P}(G)$ the full subgraph $G_p$ of $G$ is either a strongly connected SPI graph contained in some regular component of $E$, with $s_G^{-1}(w)=s_E^{-1}(w)$ for all $w\in G_p^0$; or has a single vertex $w$ with $s_G^{-1}(w)=\emptyset$; or $\rho(p)$ is free in $(E,C)$ and $s_G^{-1}(w)=\cup_{X\in L} X$ for some $\emptyset \neq L\subseteq C_{w}$. This will prove that the coloring $D'$ of $G$ induced by $C$ makes $(G,D')$ both a complete subobject of $(E,C)$ and a super adaptable graph. Moreover, the map $\rho \colon \mathbb P (G)\to \mathbb P (E)$ will satisfy that $\rho (p)\in \Pfree (E)$ for each non-minimal $p\in \Pfree (G)$. 

Let $p\in\mathbb{P}(G)$ and $w\in G^0$ such that $p=[w]_G$. If $w$ is a sink in $G$, then $G_p^0=\{w\}$ and $s_G^{-1}(w)=\emptyset$. Thus, assume that $w$ is not a sink in $G$. In this case, the definition of $G$ forces $w$ to be either in $F^0$ or in $\Lambda_1\cup\Lambda_2$.

Assume first that $w\in \Lambda_1\cup\Lambda_2$. Then, it follows by construction that $G$ contains at least two closed simple paths based at $w$. Also, it follows from \autoref{lma:charcSPI}~((ii)$\implies $(i)) that $G_p$ is a SPI graph. By construction, $G_p$ is contained in a regular component of $E$, and $s_G^{-1}(w) =s_E^{-1}(w)$.

Now assume that $w\in F^0\setminus (\Lambda_1\cup \Lambda _2)$. If $w$ is a sink in $F$, then it is also a sink in $G$, and we are done. Otherwise, we have two final cases: If $\rho ([w]_F) \in \Pfree (E) $, then ---as observed above---  $F_{[w]_F}$ satisfies (b) in \autoref{dfn:SupAdSepGra} with respect to the coloring $D$. Since $w$ emits the same edges in $F$ as in $G$, it follows that $s_G^{-1}(w)=\cup_{X\in D_w} X$ where $\emptyset \ne D_w\subseteq C_{w}$. If $\rho ([w]_F) \in \Preg (E) $ and $[w]_G=[w']_G$ for some $w'\in \Lambda_1\cup \Lambda_2$, then we may reduce to the above case. Finally suppose that $\rho ([w]_F) \in \Preg (E) $ and $[w]_G\ne [w']_G$ for every $w'\in \Lambda_1\cup \Lambda_2$. Then we have $p=[w]_G=[w]_F$, $F_{p}$ must be SPI,
 and ---as also observed above--- $G_p=F_p$ is a SPI graph contained in a regular component of $E$, with $s_G^{-1}(v)= s_F^{-1}(v) = s_E^{-1}(v)$ for all $v\in G_p^0=F_p^0$.  

This concludes the case by case study, and the proof.   
\end{proof}

\section{The separated graph of a s.a.~I-system}
Given a countable super adaptable system $J$ (as defined in \autoref{dfn:superadaptableIsystem}), we now associate to it a countable separated graph, denoted by $\mathcal{G}(J)$, and show that the graph is super adaptable. We do not know whether this construction can be replicated at the level of system morphisms, thus obtaining a functor between the category of s.a.~systems and s.a.~graphs. The main difficulty relies on the non-canonical choices made to construct $\mathcal{G}(J)$.

\begin{pgr}[Defining $\mathcal{G}(J)$]\label{pgr:DfnGJ}\index[terms]{super adaptable graph!of a system}\index[symbols]{$\mathcal{G}(J)\quad$(graph of a system)}
\label{prg:construction}
Let us first fix any countable super adaptable $I$-system 
\[
    J = (I,\le, (G_p)_{p\in I}, \varphi_{p,q}).
\]
To construct $\mathcal G(J)$ as a separated graph $(E,C)$, we start by describing all the notation needed for its definition, then define the set of vertices $E^0$, and finally introduce the collection of edges and colors. 

Given a row-finite graph $E$, recall from \cite{AAS,AraMorPar07NonStab} that its \emph{graph monoid}\index[terms]{monoid!of graph}\index[symbols]{$\mathcal{M}(E)\quad$(graph monoid)} $\mathcal{M}(E)$ is 
\begin{equation}
\label{eq:definition-graph-monoid}
\mathcal{M}(E):=
        \left\langle
        a_v\,\, (v\in E^0)
        \mid
        a_v = \sum_{e\in s^{-1}(v)} a_{r(e)}\text{ for each }v\in E^0\setminus \text{Sink}(E)
        \right\rangle
,   
\end{equation}
and, as we will see in \autoref{dfn:monoidsepgraph}, this construction can be generalized to separated graphs.

For each $p\in \Ireg$, we fix a \emph{Szymanski model}\index[terms]{graph!Szymanski model} for $G_p$. This is a countably infinite, strongly connected, SPI row-finite graph $S_p$ such that $\mathcal{M}(S_p) \cong \{0\}\sqcup G_p$ (equivalently, $K_0(C^*(S_p))\cong G_p$). The existence of this graph is guaranteed by \cite{Szy02Ind}, where we note that the construction performed in \cite{Szy02Ind} always produces a countably infinite number of vertices. By definition, the set $\{ a_v\mid v\in S_p^0\}$ is a family of semigroup generators for $G(S_p):= \mathcal{M}(S_p)^*$, although one can often find a smaller subset of vertices that generate $G(S_p)$. For each regular $p$, fix a countable (finite or infinite) non-empty subset $\Sgen_p$ of vertices of $S_p$ such that $\{a_v\mid v\in \Sgen_p\}$ is a family of semigroup generators of $G(S_p)$.

Further, fix a concrete isomorphism 
\[
    \gamma_p \colon G_p \to G(S_p)
\]
for each $p\in \Ireg$, and denote by $\Xgen_p $ the set $\{\gamma_p^{-1} (a_v)\mid v\in \Sgen _p\}$, which is a set of semigroup generators of $G_p$. 

For a given non-minimal prime $p\in I$, take the lower subset $J_p= \{ q\in I : q<p \}$ of $I$.  We consider the following subset of $\sqcup_{q\in J_p}M_q$:
\[
    {\bf X}_p := \left(\bigsqcup _{q\in \Ireg\cap J_p} \Xgen_q \right)\sqcup \{ q \mid q\in \Ifree\cap J_p \},
\]
where, as stated in \autoref{ntn:pMp}, we identify $q\in \Ifree $ with the element $(1,0_{G_q})$ in $M_q$. Note that ${\bf X}_p$ is countable, because $I$ and each $\Sgen_p$ are countable.

Since $J$ is a super adaptable $I$-system, the set of elements $\chi_q ( x)$, with $ x \in {\bf X}_p \cap M_q$, is a countable family of semigroup generators of $\mathcal{M}(J_p)$ (where recall from \autoref{pgr:OIdMJ} that $\mathcal{M}(J_p):=\mathcal{M}(J\mid_{J_p})$).
Indeed, as defined in \autoref{pgr:MJ}, each element of $\mathcal{M}(J_p)$ is a finite sum of elements of the form $\chi_q(z)$ with $z\in M_q$ and $q\in J_p$. 
If $q\in \Ireg$ then $z$ is a finite sum of elements in $\Xgen_q$. If $q\in \Ifree$, then $z= nq+h$, where $n\in \NN$, $h\in G_q$ and, since $J $ is super adaptable, $h$ is a finite sum of two types of elements: $\varphi _{q,q'}(g)$ with $q'<q$ regular and $g\in \Xgen_{q'}$; and $\varphi _{q,q''} (q'')$ with $q''<q$ free. Using this notation, it follows that $\chi_q(z)$ is a sum of elements of the form $\chi_{q'} (g)$ for $g\in \Xgen_{q'}$ and $\chi_{q''}(q'')$ for $q''\in \Ifree$, as desired.  

Now, we  define the directed graph $E$ and certain subgraphs $E_p$, for $p\in I$, as follows (these subgraphs will correspond to the canonical subgraphs in \autoref{dfn:SupAdSepGra} once we identify $I$ with $\mathbb{P}(E)$ in \autoref{prp:constructed-super adaptable}). For the set of vertices $E^0 =\bigsqcup_{p\in I} E_p^0$, we need to distinguish three cases:

\textbf{Case 1.} If $p\in I$ is minimal and regular, let $E_p= S_p$, that is, $E_p^0= S_p^0$ and $s_E^{-1} (S_p^0)= E_p^1= S_p^1$.

\textbf{Case 2.} If $p\in I$ is non-minimal and regular, let $(v_i^p)_{i\in\NN}$ be an enumeration of the vertices of $S_p$ and set $E_p^0 = S_p^0\sqcup \{ \widehat{v_i^p} , (v_i^p)^j \mid i,j\in\NN\}$.

\textbf{Case 3.} If $p\in I$ is free, let $E_p^0= \{v^p\}$ be a single vertex. Further, if $p$ is minimal, set $E_p^1= s_E^{-1}(v^p) = \emptyset$, that is, $v^p$ is a sink in $E$ and in $E_p$.
\vspace{0.2cm}

What is now left to do is to describe the edges emitted in $E$ by vertices in $E_p^0$ whenever $p$ is non-minimal. Thus, fix such a $p$, and consider the set of vertices in $E$ corresponding to ${\bf X}_p$: 
\[
    {\bf V}_p := \left(\bigsqcup_{q\in \Ireg, q<p} \Sgen_q \right) \sqcup \{ v^q\mid q\in \Ifree , q<p\}.
\]
There is a bijection 
\[
    \gamma^p \colon {\bf X}_p \to {\bf V}_p
\]
such that $z=\gamma_q^{-1}(a_{\gamma^p(z)})$ whenever $z\in X_q^{\rm gen}$ for $q\in \Ireg$, $q<p$, and $\gamma^p (q)= v^q$ for $q\in \Ifree$, $q<p$.

The description of the emitted edges is divided between the regular and free case: 

\noindent \textbf{(I)  $p$ regular.} Suppose that $p$ is regular but non-minimal. The vertices $v_i^p$ emit the same edges in $E$ as they do in $S_p$, plus a new edge from $v_i^p$ to $\widehat{v_i^p}$ for each $i$.

The other edges emitted from $E_p$ are precisely those that induce the following relations in $\mathcal{M}(E)$ for each $i\in\NN$ (with $B_i$, $v(i)$ and $v_i^{p-}$ specified below):
\begin{enumerate}
\item [(R1)] $\widehat{v_i^p} =	v_i^p + \left( v_i^p\right)^1 + B_i + v(i) + v_i^{p-}$;
\item[(R2)] $\left( v_i^p \right)^{1} = \widehat{v_i^p} + \left( v_i^p \right)^{1} +\left( v_i^p \right)^{3}$;
\item[(R3)] $\left( v_i^p \right)^{2j} = \left( v_i^p \right)^{2j} + \left( v_i^p \right)^{2j-1}$ for all $j\ge 1$;
\item[(R4)] $\left( v_i^p \right)^{2j+1} = \left( v_i^p \right)^{2j+1} + \left( v_i^p \right)^{2j} + \left( v_i^p \right)^{2j+3}$ for all $j\ge 1$.   
\end{enumerate}

Here, we are identifying the vertices in $E$ with the canonical generators of $\mathcal{M}(E)$ to simplify notation. For example, one should read (R2) as saying that $(v_i^p)^1$ emits an edge to $\widehat{v_i^p}$, $(v_i^p)^1$ and $(v_i^p)^3$.

For each fixed $i$, $v_i^{p-}$ is a non-negative integral combination of vertices in $S_p$ that represents the element $-v_i^p$ in $G(S_p)=\mathcal{M}(S_p)^*$.

To define $B_i$ and $v(i)$, consider the lower subset $J_p= \{ q\in I \mid q<p \}$ of $I$, and recall the definition of the sets ${\bf X}_p$ and ${\bf V}_p$ given above. Fix a bijection $g\colon [1,m_p)\cap\NN \to {\bf X}_p$, where $ [1,m_p)\cap\NN$ is an interval of natural numbers and $m_p\le \omega$.

If $ [1,m_p)\cap\NN$ is finite and $i\ge m_p$, set $B_i=v(i)=0$. Else, for $i\in  [1,m_p)\cap\NN$, we have that $g(i)\in M_q$ for a unique $q<p$, and we set 
\[
    B_i := \sum_{j=1}^\infty b_{j,i}v_j^p,
    \text{ where }
    - \varphi_{p,q} (g(i)) = \sum_{j=1}^\infty b_{j,i} \gamma_p^{-1}(a_{v_j^p}),
\]
with $b_{i,j}\geq 0$ for each $j$ and $b_{i,j}=0$ for all but finitely many $j$'s. In addition, we set 
\[
    v(i) := \gamma^p (g(i)) \in {\bf V}_p \text{ for all }   i\in  [1,m_p)\cap\NN.
\]

We define $E_p$ to be the full subgraph with vertex set $E_p^0$ (note that we have defined all the edges emitted by $E_p^0$, so this makes sense), and we set $C_v=\{s_E^{-1}(v)\}$ for all $v\in E_p^0$ in this case.

\noindent \textbf{(II) $p$ free.} If $p$ is free but not minimal, recall that its vertex set is $E_p^0=\{ v^p\}$. Consider the semigroup $K_p$ of all the elements $y\in \mathcal{M}(J_p)$ such that $\varphi_p (y) = 0_{G_p}$, where recall that $\varphi_p\colon \mathcal{M}(J_p)\to G_p$ is the map induced by the maps $\varphi_{p,q}$ for $q<p$; see \autoref{pgr:OIdMJ}.  As we already observed, ${\bf X}_p$ is a family of semigroup generators of $\mathcal{M}(J_p)$. Now, consider the set of tuples $(n_z)_z\in \oplus^{\text{o}}_{z\in {\bf X}_p}(\NN\cup\{0\})$ such that $\sum_{z\in {\bf X}_p} n_z \chi (z)\in K_p$, where ---in order to simplify the notation--- we write $\chi(z)$ for $\chi_q(z)$ whenever $z\in M_q$ for some $q\in I$. Since ${\bf X}_p$ is countable, the set of all such tuples is also countable.  Let $({\bf n}_i)_{i\in \NN}$ be an enumeration of the above set, where we now write ${\bf n}_i:=(n_{z,i})_{z\in {\bf X}_p}$ and denote the set of all the ${\bf n}_i$'s by ${\bf N}_p$.

Using ${\bf N}_p$, we can now define the edges emitted by $v^p$ and the countable coloring $C_{v^p}$ of $s^{-1}(v^p)$. For each $i\in\NN$, the edges with color $X_i\in C_{v^p}$ are: A loop based at $v^p$; and $n_{z,i}$ edges from $v^p$ to $\gamma^p(z)$ with $z$ ranging in ${\bf X}_p$ and ${\bf n}_i = (n_{z,i})_z\in{\bf N}_p$. Set $C_{v^p}:= \{X_i\}_{i\in \NN}$. 

We remark two facts for future use:
\begin{itemize}
    \item[(a)] First, note that $E_p^1=\{ e\in E^1\mid r(e)=s(e)=v^p\}$ and, by construction, $\vert E_p^1\cap X_i\vert =1$ whenever $p\in I$ is free.
    \item[(b)] Second, for every $z\in{\bf X}_p$ there exists ${\bf n}_i\in{\bf N}_p$ such that $n_{z,i}\neq 0$. Indeed, let $q\in J_p$ be such that $z\in M_q$. Then, $\varphi_{p,q} (z)\in G_p$ and, since $\varphi_p$ is surjective, we can find some element $z'\in \mathcal{M}(J_p)$ such that $-\varphi_{p,q}(z) = \varphi_p (z')$. In other words, the element $\chi_q (z) + z'$ is in $K_p$. By writing $z'$ as a non-negative integral combination of the elements in ${\bf X}_p$, we see that there is some  ${\bf n}_i\in{\bf N}_p$ such that $n_{z,i}\neq 0$, as desired.
\end{itemize}

This completes the construction of the separated graph $\mathcal{G}(J):=(E,C)$ and of the subgraphs $E_p$ for $p\in I$.
\end{pgr}

\begin{thm}\label{prp:constructed-super adaptable}
Let $J$ be a countable super adaptable $I$-system. Then, the associated separated graph $\mathcal{G}(J)$ is a countable, super adaptable row-finite graph. Moreover, the poset $\mathbb{P}(E)$ associated to the graph $\mathcal{G}(J)=(E,C)$ is naturally order-isomorphic to $I$. 
\end{thm}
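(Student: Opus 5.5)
The plan is to verify the properties in the order: countability and row-finiteness, then identification of the strongly connected components with the sets $E_p^0$, then the order-isomorphism $\mathbb{P}(E)\cong I$, and finally conditions (a)/(b) of \autoref{dfn:SupAdSepGra}.

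\textbf{Countability and finiteness.} $I$ is countable, each $S_p$ is countable, and each $E_p^0$ is countable by construction. Hence $E^0$ is countable. For the edges, I check vertex by vertex that each one emits finitely many edges:
\begin{itemize}
\item vertices of $S_p$ emit the finitely many $S_p$-edges, plus one edge to $\widehat{v_i^p}$;
\item the vertices $\widehat{v_i^p}$ and $(v_i^p)^j$ emit finitely many edges by (R1)--(R4), since $B_i$ and $v_i^{p-}$ are finite combinations;
\item $v^p$ emits countably many edges, each color $X_i$ being finite (one loop plus $\sum_z n_{z,i}<\infty$ edges).
\end{itemize}
So $(E,C)$ is countable and finitely separated. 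Row-finiteness in the strict sense holds at regular vertices; at free vertices "row-finite" should be read in the separated sense.

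\textbf{Components and order.} Next I show that each $E_p$ is exactly one strongly connected class and that the induced order on classes matches $\le$ on $I$.

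For regular $p$, $S_p$ is strongly connected. Each $\widehat{v_i^p}$ returns to $v_i^p$ by (R1), and $v_i^p$ reaches $\widehat{v_i^p}$. The chain $(v_i^p)^1\to(v_i^p)^3\to(v_i^p)^5\to\cdots$, together with $(v_i^p)^{2j+1}\to(v_i^p)^{2j}\to(v_i^p)^{2j-1}$ and $(v_i^p)^1\to\widehat{v_i^p}$, gives strong connectivity provided $\widehat{v_i^p}\to(v_i^p)^1$, which holds by (R1). For free $p$, the class is the single vertex $v^p$, which has loops when $p$ is non-minimal.

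Every edge leaving $E_p^0$ ends in some $E_q^0$ with $q<p$:
\begin{itemize}
\item in (R1), the only such edges go to $v(i)\in{\bf V}_p$;
\item at free $p$, the edges of each $X_i$ go to $\gamma^p(z)\in{\bf V}_p$.
\end{itemize}
Hence paths only descend in $I$, no two distinct $E_p^0$ are merged, and $[w]\mapsto p$ is an order-embedding $\mathbb{P}(E)\to I$.

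For surjectivity onto the relations, I take $q<p$ and produce an edge from $E_p^0$ into $E_q^0$. For $p$ regular, $g$ is a bijection onto ${\bf X}_p$, and ${\bf X}_p\cap M_q\neq\emptyset$ because $\Xgen_q$ is nonempty for regular $q$ and $q$ itself lies in ${\bf X}_p$ for free $q$. So some $v(i)$ lies in $E_q^0$. For $p$ free, remark (b) after the construction gives an ${\bf n}_i$ with $n_{z,i}\neq 0$ for $z\in{\bf X}_p\cap M_q$. Thus $q<p$ in $I$ implies $[E_q]<[E_p]$, and $\mathbb{P}(E)\cong I$.

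\textbf{Super adaptability.} For free $p$, condition (b) is immediate. Either $v^p$ is a sink ($p$ minimal), or $C_{v^p}=\{X_i\}$ with $|X_i\cap E_p^1|=1$ by remark (a). Also $|X_i|\ge2$, since ${\bf n}_i$ can be taken nonzero. Here I discard the zero tuple from ${\bf N}_p$, or note that $K_p$ contains nonzero elements, which remark (b) guarantees.

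For regular $p$, $|C_w|=1$ for all $w\in E_p^0$ by definition, and row-finiteness was shown above. It remains to show that $E_p$ is SPI, and by \autoref{lma:charcSPI} it suffices to find $|\CSP(v)|\ge2$ at one vertex. If $p$ is minimal, $E_p=S_p$ is SPI by choice. If $p$ is non-minimal, $E_p\supseteq S_p$ is strongly connected and contains the SPI graph $S_p$, so $S_p$ already provides two closed simple paths at some vertex; these are still closed simple paths in $E_p$.

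\textbf{Main obstacle.} I expect the main difficulty to be bookkeeping rather than conceptual. Specifically, it lies in checking that the auxiliary vertices $\widehat{v_i^p}$ and $(v_i^p)^j$ really lie in the same class as $S_p$, without accidentally creating edges that point upward. This requires confirming that every summand in (R1)--(R4) other than $v(i)$ lies in $E_p^0$; the term $v_i^{p-}$ does, because it is supported in $S_p$.
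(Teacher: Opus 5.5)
Your proposal is correct and follows essentially the same route as the paper's proof. Like the paper, you identify the strongly connected classes with the sets $E_p^0$ and show that edges only descend in $I$, which gives an injective order-preserving map $\mathbb{P}(E)\to I$. You then obtain order-reflection from the edges $\widehat{v_i^p}\to v(i)$ and Remark (b), and read off conditions (a)/(b) of \autoref{dfn:SupAdSepGra} from the construction.

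You also fill in details the paper leaves to ``by construction'': strong connectivity of the regular $E_p$ via (R1)--(R4), and the SPI property via \autoref{lma:charcSPI} applied to the closed simple paths of $S_p$. Both of your caveats are legitimate. First, $v^p$ emits infinitely many loops for non-minimal free $p$, so ``row-finite'' can only mean ``finitely separated'' there. Second, the zero tuple must be removed from ${\bf N}_p$ to guarantee $|X_i|\ge 2$, and this removal is harmless for the later arguments.
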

\begin{proof}
By construction, $\mathcal{G}(J):=(E,C)$ is a countable finitely separated graph. Let $\mathbb{P}(E)$ be the poset associated to $\mathcal{G}(J)$. 

Let $E_p$ be the strongly connected full subgraphs of $E$ defined above, which by definition partition the set of vertices in $E$. Further, note that, if there is a path from some vertex $v\in E_p^0$ to some $w\in E_q^0$, then $p\ge q$ by construction of the edges of $E$. Thus, there is a well-defined and order-preserving map $\delta \colon \mathbb{P}(E)\to I$ sending $[v]$ to $p$ whenever $v\in E_p^0$.

Each $E_p$ contains at least one vertex, so $\delta$ is surjective. Using that all the $E_p$'s are strongly connected in $E$, one sees that $\delta$ is also injective. Finally, let $[v],[w]\in\mathbb{P}(E)$ such that $\delta ([v]) \geq \delta ([w])$. Set $p=\delta ([v])$ and $q= \delta([w])$. If $p=q$ then we already have shown that $[v]=[w]$ in $\mathbb{P}(E)$. Else, if $q<p$ and $p\in \Ireg$, then there is some $i\in \NN$ such that, in the notation used in the construction, the vertex $v(i)$ belongs to $E_q^0$, and hence there is an edge from the vertex $\widehat{v_i^p}\in E_p^0$ to $v(i)$, showing that $[w]=[v(i)]< [v]$. If $q<p$ and $p\in \Ifree$, then there exists a vertex $w'\in {\bf V}_p$ such that $w'\in E_q^0$. As observed, $n_{(\gamma^p)^{-1}(w'),i}\neq 0$ for some $i\in\NN$ and ${\bf n}_i\in{\bf N}_p$, so it follows that there is an edge from $v^p$ to $w'$ in $E$, showing that $[w']= [w]<[v^p]$. Thus, $\delta$ is an order-isomorphism between $\mathbb{P}(E)$ and $I$.

Using this map, we see that the sets $E_p$ coincide (up to changing the index $p\in I$ for $\delta^{-1}(p)$) with the canonical subgraphs in \autoref{dfn:SupAdSepGra}. It follows from their construction that the $E_p$'s satisfy either (a) (if $p\in\Ireg$) or (b) (if $p\in \Ifree$) in that definition, which shows that $(E,C)$ is super adaptable.
\end{proof}

\section{The I-system of a s.a.~graph}
We now proceed with the converse of \autoref{prp:constructed-super adaptable}, that is, we define the system of a super adaptable graph and prove that it is always a super adaptable system.

\begin{pgr}[Defining $\mathcal{I}(E,C)$]
\label{pgr:Isystem-of-superadaptable-graph}\index[terms]{I-system!of separated graph}\index[symbols]{$\mathcal{I}(E,C)\quad$(I-system of separated graph)}
Let $(E,C)$ be a super adaptable graph. Let $\mathbb{P}(E)$ denote the poset induced by $E$, and let $\mathbb{P}(E)=\Pfree (E)\sqcup \Preg (E)$ be the canonical partition of the set (\autoref{dfn:SupAdSepGra})\index[symbols]{$\Pfree (E)\quad$ (free vertices of $(E,C)$)}\index[symbols]{$\Preg (E)\quad$ (regular vertices of $(E,C)$)}. For a fixed $p\in \mathbb{P}(E)$, we define $G_p$ as follows:
\begin{enumerate}
\item If $p\in \Pfree (E)$ is minimal, $G_p=\{ 0\}$.

\item If $p\in \Pfree (E)$ is not minimal, $G_p$ is the abelian group generated by elements $x_w^p$, with $w\in E^0$ such that $[w]<p$, with the relations
\begin{equation}\tag{$e_{\rm reg}$}
    x_w^p = \sum_{e\in s_E^{-1}(w)}x^p_{r(e)}
    \text{ if }
    [w]\in  \Preg (E)
\end{equation}
and
\begin{equation}\tag{$e_{\rm free}$}
    0 = \sum_{e\in X\setminus E_{[v]}^1} x^p_{r(e)}
    \text{ if }
    [v]\in  \Pfree (E) ,X\in C_v,\text{ and } [v]\leq p.
\end{equation}

\item If $p\in \Preg (E)$, $G_p$ is the abelian group generated by elements $x_w^p$, with $w\in E^0$ such that $[w]\leq p$, with the relations ($e_{\rm reg}$) and ($e_{\rm free}$).
\end{enumerate}

Finally, for a pair $q<p$, we define the morphism $\varphi_{p,q}\colon M_q\to G_p$ as 
\[
    \varphi_{p,q} (x^q_w) = 
    x^p_w
\]
if $q\in \Preg (E)$ and
\[
    \varphi_{p,q} \left(n,\sum_{[w]<[v^q]} c_w x^q_w\right)
    = nx^p_{v^q} + \sum_{[w]<[v^q]} c_w x^p_w
\]
if $q\in \Pfree (E)$.

One can check that $\mathcal{I}(E,C):= (\mathbb{P}(E),\leq , (G_p)_p, \varphi_{p,q})$ with the partition $\mathbb{P}(E)=\Pfree (E)\sqcup \Preg (E)$ is a $\mathbb{P}(E)$-system.
\end{pgr}




\begin{thm}
\label{thm:necessary-condition}
Let $(E,C)$ be a super adaptable graph. Then, the associated $\mathbb{P}(E)$-system $\mathcal{I}(E,C)$ is super adaptable. 
\end{thm}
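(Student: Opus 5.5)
Fix a non-minimal free element $p\in\Pfree(E)$; minimal free $p$ have $G_p=\{0\}$ and need nothing. Write $S\subseteq G_p$ for the subsemigroup generated by $\bigcup_{q<p \text{ reg.}}\varphi_{p,q}(G_q)$ together with the elements $\varphi_{p,q}(q)=x^p_{v^q}$ for $q<p$ free. By definition $G_p$ is generated as an abelian group by the $x^p_w$ with $[w]<p$. The aim is therefore to show, first, that every $x^p_w$ lies in $S$, and second, that every $-x^p_w$ lies in $S$. Together these give $S=G_p$.

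For the first point, split on the class $q=[w]<p$. If $q$ is free, then $w=v^q$ and $x^p_w=\varphi_{p,q}(q)\in S$ by definition. If $q$ is regular, then $x^q_w\in G_q$ is defined and $\varphi_{p,q}(x^q_w)=x^p_w$, so again $x^p_w\in S$. Thus $S$ contains all the positive generators, and it remains only to produce their inverses.

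For the second point, again split on $q=[w]$.
\begin{itemize}
\item If $q$ is regular, then $-x^q_w$ is an element of the group $G_q$. Its image $\varphi_{p,q}(-x^q_w)=-x^p_w$ (here I use that $\varphi_{p,q}$ is a group homomorphism when $q$ is regular) therefore lies in $\varphi_{p,q}(G_q)\subseteq S$.
\item If $q$ is free and minimal, then $v^q$ is a sink. Here I expect some condition to make $x^p_{v^q}$ invertible within $S$, and this needs care.
\item In general, the source of inverses is the relation ($e_{\rm free}$) at $v=v^p$. Since $p$ is non-minimal, $v^p$ is not a sink. Its colours $X\in C_{v^p}$ each contain exactly one loop, and $|X|\ge2$. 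Hence each ($e_{\rm free}$) relation reads $0=\sum_{e\in X\setminus E^1_p}x^p_{r(e)}$, a nonempty sum of positive generators with $[r(e)]<p$. So every $x^p_{r(e)}$ appearing in such a colour has its inverse equal to a sum of the other terms, all of which already lie in $S$.
\end{itemize}

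The hard step is to show that every $w$ with $[w]<p$ and $[w]$ free is reached this way, through chains of relations. If $w$ is a range of an edge from $v^p$, the argument above applies directly. For a deeper free vertex $v^{q'}$ with $q'<q<p$, I would use ($e_{\rm free}$) at $v^q$ (note $[v^q]\le p$, so the relation is imposed in $G_p$). It gives $0=\sum_{e\in X\setminus E_q^1}x^p_{r(e)}$ for every colour $X$ at $v^q$. Since $v^{q'}$ lies below $v^q$, some path leads from $v^q$ to $v^{q'}$. Its first edge is a connector lying in some colour at $v^q$, and that colour yields inverses of the targets. I would then argue inductively along a path from $v^p$ to $w$. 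At free vertices, the relation ($e_{\rm free}$) shows that the target of each connector is invertible in $S$. At regular vertices nothing is needed, because of the first case of the list. Each step along the path passes through a vertex whose class is $\le p$, so the relation is available in $G_p$. Finally, $-x^p_w$ is expressed as a sum of generators $x^p_{r(e')}$, all in $S$.

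I expect the main obstacle to be making this induction uniform, in particular when a free vertex $v^{q'}$ is reachable from $v^p$ only through regular components. In that case the relevant relation ($e_{\rm free}$) at $v^{q'}$ expresses $0$ in terms of vertices strictly below $q'$, not in terms of $x^p_{v^{q'}}$ itself. The inverse of $x^p_{v^{q'}}$ must then come from ($e_{\rm free}$) at a free vertex emitting an edge directly to $v^{q'}$, or from a regular vertex $u$ emitting to $v^{q'}$ via ($e_{\rm reg}$). In the second situation $x^p_{v^{q'}}=x^p_u-(\text{other terms})$, and all of these lie in $S$ because $x^p_u$ comes from the group $G_{[u]}$. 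Both alternatives are covered by examining the first edge into $v^{q'}$ on a path from $v^p$.
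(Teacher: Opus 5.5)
Your argument is correct in substance and takes a different route from the paper's, although the clean version only appears in your last sentence. The paper proves the stronger fact that $G_p$ is generated as a semigroup by the vertex elements $x^p_w$ with $[w]<p$ alone. It does this by induction on the length of a path from $v^p$ to $w$: at a regular intermediate vertex $w'$ it combines $(e_{\rm reg})$ with the inductive hypothesis on $-x^p_{w'}$, and at a free one it uses $(e_{\rm free})$.

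You instead exploit that the required family contains the whole groups $\varphi_{p,q}(G_q)$ for regular $q$. So $-x^p_w=\varphi_{p,[w]}(-x^{[w]}_w)\in S$ is immediate for regular $[w]$, and no induction is needed. For $w=v^{q'}$ with $q'$ free (minimal or not), it suffices to look at the edge $e$ along which a path from $v^p$ first enters $v^{q'}$. Its source $u$ satisfies $q'<[u]\le p$, because $u\neq v^{q'}$ and $E_{q'}^0=\{v^{q'}\}$.
\begin{itemize}
\item If $[u]$ is free, then $e$ is a connector in some colour at $u$. The relation $(e_{\rm free})$ at $u$, which is imposed even when $u=v^p$, writes $-x^p_{v^{q'}}$ as a sum of the other $x^p_{r(f)}$.
\item If $[u]$ is regular, then $[u]<p$, and $(e_{\rm reg})$ gives $-x^p_{v^{q'}}=-x^p_u+\sum_{f\in s^{-1}(u),\,f\neq e}x^p_{r(f)}$. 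This lies in $S$ because $-x^p_u=\varphi_{p,[u]}(-x^{[u]}_u)$.
\end{itemize}
Note the sign: you wrote the identity for $x^p_{v^{q'}}$, but what you need, and what your justification actually supports, is the identity for $-x^p_{v^{q'}}$ above.

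This entering-edge argument also settles the minimal-free case you left marked as needing care, since a sink is still entered by such an edge. It makes the inductive sketch in your middle paragraph unnecessary, so you should present only the two-case argument.

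Your route is shorter and avoids the loop and path-length bookkeeping. The paper's route gives the sharper conclusion that nonnegative combinations of the $x^p_w$ alone already exhaust $G_p$.
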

\begin{proof}	
Let $p$ be a free prime in $\mathbb{P}(E)$. By the construction in \autoref{pgr:Isystem-of-superadaptable-graph} the group $G_p$ is generated as a group by all the elements $x^p_w$, where $w\in E^0$ and $[w]<p$.

Now let $w\in E^0$ be such that $[w]<p$. If $q:= [w]$ is regular then $x^p_w = \varphi_{p,q} (x^q_w)\in \varphi _{p,q} (G_q)$. If $w=v^q$, where $q\in \Pfree (E)$, then $\varphi_{p,q} (q) = x^p_{w}$, where we have once again used the identification $q=(1,0_{G_q})$. Hence $G_p$ is generated as a group by the family given in  \autoref{dfn:superadaptableIsystem}.

We now show that $G_p$ is also generated as a semigroup by the family $\{x^p_w : [w] < p\}$. For this, it is enough to show that the inverses $-x^p_w$ in $G_p$ of all the generators can be expressed as sums of generators $x^p_{w'}$.  Let $w\in E^0$ such that $[w] <p$. This means that there is a finite directed path $\ell $ such that $s(\ell) = v^p$ and $r(\ell) = w$. We proceed by induction on the length of $\ell $. If the length of $\ell $ is $1$, then $\ell = e\in X\setminus E^1_p$ for some $X\in C_{v^p}$. By ($e_{\rm free}$), with $v=v^p$, we have 
\[
    x^p_{w} + \sum _{f\ne e,f\in X\setminus E_p^1} x^p_{r(f)} = 0,
\]
so the inverse $-x^p_w$ of $x^p_w= x^p_{r(e)}$ is a sum of generators in our family. 

Now fix $k\geq 1$ and suppose that $-x^p_{w'}$ is a sum of generators for all vertices $w'$ such that there exists a path of length at most $k$ from $v^p$ to $w'$, and let $\ell = \ell' e$ be a path of length $k+1$ from $v^p$ to some vertex $w$ such that $[w]<p$. Here $\ell '$ is a path of length $k$ and $e\in E^1$. Set $w':= r(\ell ')$. If $q:= [w']\in \Preg (E) $, then by  ($e_{\rm reg}$), we have
\[
    x^p_{w'} = x^p_w + \sum _{f\ne e,\,  f\in s_E^{-1}(w')} x^p_{r(f)},
\]
hence 
\[
    0= x^p_{w'} + (-x^p_{w'}) =   x^p_w + \sum _{f\ne e, f\in s_E^{-1}(w')} x^p_{r(f)} + (-x^p_{w'}),
\]
and using the induction hypothesis, we see that $-x^p_w$ is a sum of generators. If $q\in \Pfree (E)$, then $w'=v^q$ and we conclude as before from  ($e_{\rm free}$), with $v=v^q$, that $-x^p_w= -x^p_{r(e)}$ is a sum of generators. This concludes the proof.
\end{proof}

\chapter{From separated graphs to monoids}\label{sec:FromSepGraphToMon}

The purpose of this chapter is to prove \autoref{thm:MondToGraph}, which shows that the realization problem for super adaptable monoids can be reduced to realizing the monoids $\mathcal{M}(E,C)$ of super adaptable graphs $(E,C)$. Alluding to the diagram from the outlined strategy in the introduction, in this chapter we prove the following isomorphism:
\[
\xymatrixrowsep{1pc}
\xymatrixcolsep{4pc}
\xymatrix{
    {\begin{array}{c}
         \text{s.a.}  \\
          \text{monoid}\\
          M
    \end{array}}
    \ar[r]^-{\mathcal{I}}
    \ar@/^3.0pc/[rrr]^-{\id}_{\cong}& 
    {\begin{array}{c}
         \text{s.a.}  \\
          I\text{-system}\\
          J
    \end{array}}
    \ar[r]^-{\mathcal{G}}& 
    {\begin{array}{c}
         \text{s.a.}  \\
          \text{graph}\\
          (E,C)
    \end{array}}
    \ar[r]^-{\mathcal{M}}&
    {\begin{array}{c}
         \text{s.a.}  \\
          \text{monoid}\\
          M
    \end{array}}
}
\]

We divide the chapter in two sections: In \autoref{sec:IsoIsystem}, we prove that $\mathcal{I}(E,C)$ is plainly isomorphic to $\mathcal{I}(\mathcal{M}(E,C),\mcan )$; see \autoref{prp:isoIsysMon}. In \autoref{sec:IsoMonod}, we use this result to prove \autoref{thm:MondToGraph}.


\section{\texorpdfstring{Isomorphism of $I$-systems}{Isomorphism of I-systems}}\label{sec:IsoIsystem}

\begin{pgr}[The monoid of a separated graph]\label{dfn:monoidsepgraph}
Let $(E,C)$ be a finitely separated graph. Following \cite[Definition~4.1]{AraGoo12Crelle}, we define its associated monoid $\mathcal{M}(E,C)$\index[terms]{monoid!of separated graph}\index[symbols]{$\mathcal{M}(E,C)\quad$(separated graph monoid)} as
\[
    \mathcal{M}(E,C):=
    \left\langle
    a_v^C\,\, (v\in E^0)
    \mid
    a_v^C = \rr (X)\text{ for each }X\in C_v,\, v\in E^0
    \right\rangle
\]
where $\rr (X):=\sum_{e\in X} a_{r(e)}^C$.

Here, we write the generators as $a_v^C$ to distinguish them from the generators of the monoid $\mathcal{M}(E)$ defined in \autoref{prg:construction}.

As a quick example, the monoid associated to the coloured graph
\begin{center}
\begin{tikzpicture}[>=Stealth, scale=1.3]
    \tikzstyle{vertex} = [inner sep=2pt]

    \node[vertex] (v1) at (0,0) {$v_1$};
    \node[vertex] (v2) at (1.5,0) {$v_2$};
    \node[vertex] (v3) at (3,0) {$v_3$};

    \draw[->, red, thick, in=60, out=120, looseness=6] (v2) to (v2);
    
    \draw[->, blue, thick, in=-120, out=-60, looseness=6] (v2) to (v2);
    \draw[->, blue, thick, bend right=25] (v2) to (v1);
    
    \draw[->, red, thick, bend left=25] (v2) to (v3);

    \draw[->, green!60!black, thick, in=60, out=120, looseness=6] (v3) to (v3);
    \draw[->, green!60!black, thick, in=-120, out=-60, looseness=6] (v3) to (v3);
\end{tikzpicture}
\end{center}

is
\[
    \langle\quad 
    a_{v_1}^C,a_{v_2}^C,a_{v_3}^C\quad 
    \mid\quad 
    a_{v_2}^C=a_{v_1}^C+a_{v_2}^C,\quad 
    a_{v_2}^C=a_{v_3}^C+a_{v_2}^C,\quad 
    a_{v_3}^C=2a_{v_3}^C\quad 
    \rangle .
\]
\end{pgr}


As stated, our first goal is to identify two systems, the one associated to $(E,C)$ (given in \autoref{pgr:Isystem-of-superadaptable-graph}) and the one associated to the monoid $\mathcal{M}(E,C)$ (given in \autoref{pgr:IM}). First, in order to get a system for $\mathcal{M}(E,C)$, we need to ensure that $\mathcal{M}(E,C)$ is a primely generated, conical, refinement monoid.

\begin{lma}\label{thm:SuperAdaptRefin}
    Let $(E,C)$ be a super adaptable graph. Then, $\mathcal{M}(E,C)$ is a primely generated, conical, refinement monoid. Moreover, all elements $a^C_v$, for $v\in E^0$, are prime elements of $\mathcal M (E,C)$. 
\end{lma}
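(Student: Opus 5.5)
The plan is to reduce to the finite case via continuity and then prove the finite case from known results on adaptable graphs.

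\emph{Reduction to finite graphs.} By \autoref{prp:SupAdapLim}, $(E,C)$ is the direct limit in $\mathrm{SGr}$ of finite complete super adaptable subgraphs $(F,D)$. The functor $\mathcal M(-)$ on finitely separated graphs is continuous \cite{AraGoo12Crelle}, so $\mathcal M(E,C)\cong\varinjlim \mathcal M(F,D)$. Conicality and refinement pass to direct limits, so it suffices to establish these for finite super adaptable graphs.

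\emph{Primeness of the generators.} Primeness does not obviously pass to limits, so I would argue for it directly.
\begin{itemize}
\item[(i)] Show that $a^C_v$ is prime in each finite stage $\mathcal M(F,D)$ with $v\in F^0$.
\item[(ii)] Suppose $a_v\le x+y$ in the limit. This inequality is witnessed at some finite stage, where primeness gives $a_v\le x$ or $a_v\le y$, and this persists in the limit.
\end{itemize}
Once every $a_v$ is prime, $\mathcal M(E,C)$ is primely generated, since every nonzero element is a finite sum of generators.

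\emph{The finite case.} For finite $(F,D)$, I would prove the three properties by induction on $|\mathbb P(F)|$, removing a maximal component $p$; the graph minus $F_p$ is hereditary. Refinement follows the approach of \cite{AraBosPar20SgpFor,AraGoo12Crelle}: use the confluent rewriting description of $\mathcal M(E,C)$, in which relations are applied one step at a time. Equality of two sums is witnessed by a common descendant, and refinement then follows by splitting the rewriting steps. For primeness of $a_v$ with $[v]=p$, I would split into two cases.
\begin{itemize}
\item[(a)] $p$ regular. Then $F_p$ is SPI with one color per vertex. Any element containing some $a_w$, $w\in F_p^0$, dominates all $a_{w'}$ with $w'\in F_p^0$, because strong connectivity together with two closed simple paths (\autoref{lma:charcSPI}) gives $a_{w'}\le a_w$. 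The rewriting description should show that $a_v\le x+y$ forces $x$ or $y$ to involve a vertex in $F_p$ after rewriting.
\item[(b)] $p$ free. The vertex $v^p$ has loops, one per color, so each relation reads $a_{v}=a_v+\rr(X\setminus\{\alpha\})$. Again, $a_v\le x+y$ forces $a_v$ to appear in a descendant of $x$ or of $y$.
\end{itemize}

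\emph{Main obstacle.} I expect the hardest step to be the confluence and refinement argument in the presence of SPI components that are not adaptable, such as vertices emitting only a single edge. I would try to reduce to the adaptable case: collapse paths through degree-one vertices, or pass to an out-split or in-split graph with isomorphic monoid. Failing that, I would redo the refinement proof of \cite[Theorem~B]{AraBosPar20Sel} with $|\CSP(v)|\ge 2$ replacing the out-degree $\ge2$ hypothesis.
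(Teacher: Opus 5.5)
Your argument is correct, and its core is the paper's. The paper's proof notes that the rewriting lemmas \cite[Lemmas~2.3 and~2.4]{AraBosPar20SgpFor} hold verbatim for super adaptable graphs and obtains refinement as in \cite[Proposition~4.4]{AraMorPar07NonStab}. It gets primeness of every $a^C_v$ from the support argument of \cite[Proposition~2.6]{AraBosPar20SgpFor}, which is what your cases (a) and (b) sketch.

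The one real difference is that you first pass to finite complete subobjects via \autoref{prp:SupAdapLim} and continuity of $\mathcal M$. This is valid: conicality and refinement survive direct limits, and your step (ii) works, with $a_v\neq 0$ in the limit because $a_v$ is prime, hence nonzero, at every stage. But the detour buys nothing. Finite super adaptable graphs need not be adaptable (\autoref{pgr:AdapGraph}), so you cannot quote the adaptable results and must check confluence yourself anyway. That check uses no finiteness, which is why the paper works with $(E,C)$ directly; your induction on $|\mathbb P(F)|$ is also not needed.

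The obstacle you anticipate does not exist, so no collapsing or out-splitting is needed. Work in the free commutative monoid on $E^0$ with one-step rewriting $v+\delta\to_1\rr(X)+\delta$ for $X\in C_v$, where $\rr(X)=\sum_{e\in X}r(e)$.
\begin{itemize}
\item Rewrites of distinct occurrences commute.
\item Two different rewrites of the same occurrence can only happen at a free vertex $v^p$, since regular vertices carry a single colour.
\item At $v^p$ one has $\rr(X_\alpha)=v^p+\rr(X_\alpha\setminus\{\alpha\})$, so one further step on each side reaches $v^p+\rr(X_\alpha\setminus\{\alpha\})+\rr(X_{\alpha'}\setminus\{\alpha'\})+\delta$.
\end{itemize}
This diamond property gives confluence whatever the out-degrees, the number of colours, or the size of $\mathbb P(E)$. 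The real structural input is that each colour at a free vertex contains its loop, which your (b) correctly isolates; for general separated graphs refinement fails.

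Similarly, primeness in the regular case needs only two facts: every vertex of the SPI component $E_p$ emits an edge inside $E_p$, so every descendant of $v+z$ meets $E_p^0$, and $E_p$ is strongly connected. The condition $|\CSP(v)|\ge 2$ plays no role here. It is used only later, in \autoref{lma:identifing-the-primes}, to make the corresponding prime regular rather than free.
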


\begin{proof} The proof of this lemma is the same as the one in \cite{AraBosPar20SgpFor} for adaptable graphs. 
	The proofs of Lemmas 2.3 and 2.4 in \cite{AraBosPar20SgpFor} hold \emph{verbatim} in our situation, hence by the proof of \cite[Proposition 4.4]{AraMorPar07NonStab}, $\mathcal M (E,C)$ is a refinement monoid. Now the proof of \cite[Proposition 2.6]{AraBosPar20SgpFor} gives that each $a_v^C$, $v\in E^0$, is a prime element of $\mathcal M (E,C)$, and thus this monoid is primely generated. 
	\end{proof}


We now obtain a \emph{plain} isomorphism of systems between the system $\mathcal I (E,C)$ and the system
$\mathcal I (\mathcal M (E,C),\mcan)$, where $\mcan = \mcan (E,C)$ is a certain \emph{canonical mark} on $\mathcal M(E,C)$.
This is completely analogous to \cite[Proposition 2.9]{AraBosPar20SgpFor}, but we warn the reader that the marks
are not explicitly considered in \cite{AraBosPar20SgpFor}, although they are implicit in the arguments.
Here we clarify the situation by explicitly defining the mark $\mcan$.  First, it is convenient to identify 
$\mathbb P (\ol{\mathcal M (E,C)})$ with $\mathbb P (E)$, as follows.

We define an antisymmetric transitive relation $\lhd$ on the poset $\mathbb{P}(E)$ induced by $E$ by setting $p\lhd q$ if $p<  q$ or if $p=q\in \Preg (E)$. Let $M(\mathbb{P}(E),\lhd )$ be the primitive monoid generated by $\mathbb{P}(E)$ with relations $q=q+p$ if $p\lhd q$; recall the discussion in \autoref{pgr:PrimMonod}.

\begin{lma}\label{lma:identifing-the-primes} Let $(E,C)$ be a super adaptable graph. 
The antisymmetrization $\overline{\mathcal{M}(E,C)}$ of $\mathcal{M}(E,C)$ is isomorphic to $M(\mathbb{P}(E),\lhd )$. Hence
\[
    (\mathbb P (\overline{\mathcal{M}(E,C)}),\lhd) \cong (\mathbb P (E),\lhd).
\]
In particular, the free/regular primes of $\overline{\mathcal{M}(E,C)}$ correspond to the free/regular elements of $\mathbb P (E)$.
\end{lma}
\begin{proof}
First, observe that there is a surjective monoid morphism from $\mathcal{M}(E,C)$ to $M(\mathbb{P}(E),\lhd )$ such that $a_v^C\mapsto [v]$. Since $M(\mathbb{P}(E),\lhd )$ is a primitive monoid, we get a well-defined induced map $\overline{\mathcal{M}(E,C)}\to M(\mathbb{P}(E),\lhd )$ such that $\overline{a_v^C}\mapsto [v]$. One can now easily check from the definition of super adaptability that such a map is also injective.

Now by the general theory of primitive monoids \cite{Pierce}, we can recover $(\mathbb{P}(E),\lhd)$ from $M(\mathbb{P}(E),\lhd )$, that is,
\[
    (\mathbb P (\overline{\mathcal{M}(E,C)}),\lhd) \cong (\mathbb P (M(\mathbb P  (E),\lhd)), \lhd) \cong (\mathbb P (E),\lhd). 
\]
This proves the lemma.
\end{proof}

Using \autoref{lma:identifing-the-primes} above, we identify the poset $\mathbb{P}(\ol{\mathcal{M}(E,C)})$ with 
$\mathbb P (E)$ through the isomorphism  $\overline{a_v^C}\mapsto [v]$. 

We can now define the \emph{canonical mark}\index[terms]{marked monoid!mark of $\mathcal{M}(E,C)$}\index[symbols]{$\mcan\quad$ (mark of $\mathcal{M}(E,C)$)} $\mcan = \mcan (E,C)$ on $\mathcal M (E,C)$ by 
\[
    \mcan = \{ e_p \mid p\in \Preg (E)\} \cup \{ a^C_{v^p} \mid p\in \Pfree (E)\},
\]
where, for $p\in \Preg (E)$, $e_p$ is the neutral element of the group $G_p= \pi^{-1} (p)$ for $\pi \colon \mathcal M (E,C)\to \ol{\mathcal M (E,C)}$ the canonical projection.

\begin{prp}\label{prp:isoIsysMon}
Let $(E,C)$ be a super adaptable graph. Then, there is a plain isomorphism of systems 
\[
    \mathcal{I}(E,C)\cong \mathcal{I}(\mathcal{M}(E,C),\mcan ).
\]
\end{prp}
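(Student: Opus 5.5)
We propose to build the plain isomorphism explicitly, using the identification $\mathbb P(\ol{\mathcal M(E,C)})\cong \mathbb P(E)$ from \autoref{lma:identifing-the-primes} as the poset map $\psi=\mathrm{id}$. That lemma already guarantees that free and regular elements correspond. It then remains to produce, for each $p\in\mathbb P(E)$, a group isomorphism $f_p\colon G_p^{(E,C)}\to G_p^{\mathcal M}$ and to check compatibility with the connecting maps $\varphi_{p,q}$ and $\varphi^{\mcan}_{p,q}$. Write $M=\mathcal M(E,C)$ and $a_w=a^C_w$.

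\textbf{Defining the maps.} For $p$ regular, $G^{\mathcal M}_p=M_p$ is the archimedean component, which is a group with neutral element $e_p$. We set
\[
f_p(x^p_w)=a_w+e_p .
\]
Since $[w]\le p$, the element $a_w+e_p$ lies in $M_p$. For $p$ free and non-minimal, $G^{\mathcal M}_p=\{(a_{v^p}+a)-a_{v^p}\}$, and we set
\[
f_p(x^p_w)=(a_{v^p}+a_w)-a_{v^p}.
\]
This is legitimate because $[w]<p$ gives $a_{v^p}+a_w\le a_{v^p}$: some path from $v^p$ to $w$ leaves through a connector, and the loop $\alpha$ in its colour yields $a_{v^p}=a_{v^p}+\sum_{e\in X\setminus\{\alpha\}}a_{r(e)}$. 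For $p$ free and minimal both groups are trivial.

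Well-definedness means checking the relations ($e_{\rm reg}$) and ($e_{\rm free}$).
\begin{itemize}
\item[(i)] ($e_{\rm reg}$) holds because $a_w=\rr(s^{-1}(w))$ whenever $[w]$ is regular.
\item[(ii)] ($e_{\rm free}$) holds because $a_v=a_v+\rr(X\setminus E^1_{[v]})$ with $[v]\le p$. After adding $a_{v^p}$, respectively $e_p$, this forces $\rr(X\setminus E^1_{[v]})$ to vanish in $G_p$. Here we use that $a_v$ is absorbed: $a_{v^p}+a_v\equiv a_{v^p}$ in the appropriate Grothendieck group, and $e_p+a_v=e_p$ whenever $[v]<p$ (and $a_v + e_p$ is just an element of the group when $[v]=p$).
\end{itemize}

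\textbf{Compatibility and the mark.} For $q<p$ we must show $f_p\circ\varphi_{p,q}=\varphi^{\mcan}_{p,q}\circ\ol{f_q}$ with all $h(q)=0$, so that the map is plain.
\begin{itemize}
\item[(i)] For $q$ regular both sides send $x^q_w$ to the class of $a_w$ in $G_p$.
\item[(ii)] For $q$ free, formula \eqref{eq:formula-for-varphi-m} with the representative $\tilde q=a_{v^q}\in\mcan$ gives
\[
\varphi^{\mcan}_{p,q}(n,\textstyle\sum c_w x^q_w)=(p+na_{v^q}+\sum c_wa_w)-p .
\]
This is exactly $f_p(nx^p_{v^q}+\sum c_wx^p_w)$.
\end{itemize}
This is where the choice of the canonical mark matters.

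\textbf{Bijectivity: the main obstacle.} Surjectivity is direct. $M$ is generated by the elements $a_w$, so by the description of $G_p$ in \autoref{pgr:IM} every element is a combination of classes of $a_w$ with $[w]\le p$, respectively $[w]<p$. In the regular case we also use that $a_w+e_p$ for $w\in E_p^0$ already generate the group.

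Injectivity is the hard step. The plan is to follow \cite[Proposition~2.9]{AraBosPar20SgpFor}. Restrict to the hereditary subset $H=\{w:[w]\le p\}$; this is an order-ideal via \autoref{pgr:OIdMJ}. We then need an explicit description of $M$ as the union of the order-ideal $\mathcal M(E_{H'},C^{H'})$ with $H'=\{[w]<p\}$ and the archimedean component. In that description, the relations imposed on the component of $a_{v^p}$ (respectively on the SPI part) by $\mathcal M(E,C)$ are exactly ($e_{\rm reg}$) and ($e_{\rm free}$).

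The first approach to try is a construction in the other direction. Define a monoid $M'$ as the disjoint union of $\mathcal M(E_{H'})$ with $\NN\times G_p^{(E,C)}$ (free case), respectively with $G_p^{(E,C)}$ (regular case). Give $M'$ the evident operations and verify that the defining relations of $\mathcal M(E,C)$ hold in it. This yields a homomorphism $M\to M'$ that inverts $f_p$. The verification in the SPI case uses that $\mathcal M(E_p)^*$ is a group, by the Szyma\'nski/graph-monoid theory invoked in \autoref{pgr:DfnGJ}. Once each $f_p$ is bijective, the inverses also form a plain morphism, which gives the claimed plain isomorphism.
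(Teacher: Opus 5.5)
This is essentially the paper's proof. You use the same maps $x^p_w\mapsto e_p+a^C_w$ and $x^p_w\mapsto (a^C_{v^p}+a^C_w)-a^C_{v^p}$ over the identification of \autoref{lma:identifing-the-primes}, and the same compatibility check with $\tilde q=a^C_{v^q}\in\mcan$, which is what makes the isomorphism plain. Like the paper, you delegate injectivity to \cite[Proposition~2.9]{AraBosPar20SgpFor}, and your left inverse into $M'$ is a sound way to fill that in.

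Two of your side remarks are false as stated, though neither is load-bearing. First, $e_p+a^C_v=e_p$ fails in general for $[v]<p$. Take a single vertex $w$ with two loops and edges to two sinks $u_1,u_2$, and let $p=[w]$. Then $M_p\cong\ZZ^2$ and $e_p+a^C_{u_1}\neq e_p$. So derive ($e_{\rm free}$) by cancelling $e_p+a^C_v$ in the group $M_p$, using $e_p+a^C_v=(e_p+a^C_v)+\bigl(e_p+\rr(X\setminus E^1_{[v]})\bigr)$. Second, the same example shows that the elements $e_p+a^C_w$ with $w\in E_p^0$ need not generate $M_p$. Surjectivity only needs all $e_p+a^C_w$ with $[w]\le p$.
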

\begin{proof}
We follow the steps from \cite[Proposition~2.9]{AraBosPar20SgpFor}. Using \autoref{lma:identifing-the-primes}, we identify $\mathbb P (\ol{\mathcal M (E,C)})$ with $\mathbb P (E)$. Denote the components of the systems by
\[
    \mathcal{I}(\mathcal{M}(E,C),\mcan) = (\mathbb{P} (E) , \leq , (G_p)_p, \varphi_{p,q} \,(q<p))
    \]
    and \[ \mathcal{I}(E,C)= (\mathbb{P}(E),\leq , (G_p'')_p, \varphi_{p,q}''\, (q<p)).
\]

Now take $p\in \Pfree (E)$ and let us show that the group $G_p''$ of $\mathcal{I}(E,C)$ is isomorphic to $G_p$. First, recall from \cite[Remark~2.5]{AraPar16Isr} that $G_p$ is isomorphic to the group
\[
    G_p'=
    \{
        a_{v^p}^C+\alpha \mid 
        a_{v^p}^C+\alpha\leq a_{v^p}^C
    \}
\]
with product 
\[
    (a_{v^p}^C+\alpha)\circ (a_{v^p}^C+\beta):=a_{v^p}^C+(\alpha +\beta).
\]

Define the map $\lambda^{p}_0 \colon G_p''\to G_p'$ by $x^p_w\mapsto a_{v^p}^C+a_w^C$. Note that this is a well-defined group homomorphism, since
\[
    a_{v^p}^C+a_w^C = a_{v^p}^C+\rr (s_E^{-1}(w))=\bigcirc_{e\in s_E^{-1}(w)} (a_{v^p}^C+a_{r(e)}^C)
\]
if $[w]$ is regular, and
\[
    a_{v^p}^C+a_w^C=a_{v^p}^C+\rr\left( X_i^{[w]}\right) = \bigcirc_{e\in X_i^{[w]}} (a_{v^p}^C+a_{r(e)}^C),
\]
which implies
\[
a_{v^p}^C+0 =  \bigcirc_{e\in X_i^{[w]}\setminus E^1_{[w]}} (a_{v^p}^C+a_{r(e)}^C)
\]
if $[w]$ is free with $[w]\leq p$ and $X_i^{[w]}\in C_w$. Hence relations 
($e_{\rm reg}$) and ($e_{\rm free}$) are preserved by $\lambda^p_0$.

Using that $p$ is free, one can check that all the elements in $G_p'$ can be written (when viewed as elements in $\mathcal{M}(E,C)$) as $a_{v^p}^C+\alpha$ with $\alpha$ a sum of $a_w^C$'s with $w< v^p$. Thus, $\lambda^p$ is surjective.
    
The same proof as in \cite[Proposition~2.9]{AraBosPar20SgpFor} shows that $\lambda^{p}_0$ is injective. Thus, the map $\lambda^{p} \colon G_p '' \to G_p$ given by
\[
    x_{w}^{p}\mapsto (a_{v^p}^C+a_w^C)-a_{v^p}^C
\]
is an isomorphism, where note that $\lambda^p$ is the composition of $\lambda^{p}_0$ with the isomorphism $G_p'\cong G_p$ given by $a_{v^p}^C+\alpha\mapsto (a_{v^p}^C+\alpha)-a_{v^p}^C$.

Now, if $p$ is regular, define $\lambda^p\colon G_p''\to G_p$ by $x^p_w\mapsto e_p+a_w^C$. It is readily checked  that this is an isomorphism.
Moreover, the maps $\lambda^p$ provide a plain isomorphism of systems, as follows:

Suppose that $q<p$. Then we have to show that $\lambda^p\circ \varphi''_{p,q} = \varphi_{p,q}\circ \ol{\lambda ^q}$. We will only check this when $p,q$ are free, and leave the rest of cases to the reader.  Thus, suppose that $p,q\in \Pfree (E)$. Using the formula in \autoref{pgr:Isystem-of-superadaptable-graph}, we get
\begin{align*}
	(\lambda^p \circ \varphi_{p,q}'')\left(n,\sum _{[w]< [v^q]} c_w x^q_w\right) & = \lambda ^p \left(nx^p_{v^q} + \sum _{[w]<[v^q]} c_w x_w^p\right) \\
	& = \left(a^C_{v^p} +na^C_{v^q} + \sum_{[w]<[v^q]} c_w a^C_{w}\right) -a_{v^p}^C.
	\end{align*}
Now we use \eqref{eq:formula-for-varphi-m}, taking into account that $\varphi_{p,q} = \varphi_{p,q}^{\mcan}$, to compute
\begin{align*}
(\varphi_{p,q}\circ \ol{\lambda^q})\left(n,\sum _{[w]< [v^q]} c_w x^q_w\right) & = \varphi_{p,q}^{\mcan} \left(n, \left(a_{v^q}^C +\sum_{[w]<[v^q]} c_w a_w^C\right)-a_{v^q}^C\right) \\
& = \left(a_{v^p}^C +n a_{v^q}^C +  \sum_{[w]<[v^q]} c_w a_w^C \right) -a_{v^p}^C.	
\end{align*}
This shows the formula, and completes the proof.
\end{proof}

\section{Isomorphism of monoids}\label{sec:IsoMonod}

We begin the section with the following lemma, which is fundamental to show our main result. 

\begin{lma}\label{lma:SuperAdpMonoid}
Let $J$ be a countable super adaptable $I$-system. Then there exists a plain isomorphism of systems 
\[
    J \cong \mathcal{I}(E,C),
\]
where $(E,C) = \mathcal G (J)$ is the super adaptable graph associated to $J$. 
\end{lma}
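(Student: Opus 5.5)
We use the order-isomorphism $\delta\colon \mathbb P(E)\to I$ from \autoref{prp:constructed-super adaptable} as the poset part $\psi=\delta^{-1}$ of the plain isomorphism. By construction it sends regular elements of $I$ to regular classes and free elements to free classes. Write $\mathcal I(E,C)=(\mathbb P(E),\le,(G''_p),\varphi''_{p,q})$. The main task is to build, for every $p\in I$, a group isomorphism $f_p\colon G_p\to G''_p$. We take all $h(p)$ to be neutral, so the map is plain, and check compatibility with the connecting maps.

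\textbf{Free $p$.} For $p$ free and non-minimal, we send each generator $\chi(z)$ with $z\in{\bf X}_p$ to $x^p_{\gamma^p(z)}$. This gives a semigroup map $\theta_p\colon\mathcal M(J_p)\to G''_p$. First we check that $\theta_p$ is well defined, by verifying that the relations of $\mathcal M(J_p)$ hold among the corresponding $x^p_w$'s in $G''_p$. Next we check that $\theta_p$ vanishes on $K_p=\ker\varphi_p$: each element ${\bf n}_i\in{\bf N}_p$ gives the relation ($e_{\rm free}$) at $v^p$ with $X=X_i$, which reads exactly $\sum_z n_{z,i}x^p_{\gamma^p(z)}=0$. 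Since $\varphi_p$ is surjective, $G_p\cong \mathcal M(J_p)/{\sim}$ with $a\sim b$ iff $\varphi_p(a)=\varphi_p(b)$. In a group quotient this is generated by adding elements of $K_p$, because $\varphi_p(a)=\varphi_p(b)$ gives $a+c\in K_p$ and $b+c\in K_p$ for a $c$ with $\varphi_p(c)=-\varphi_p(a)$. Hence $\theta_p$ factors through a surjective homomorphism $G_p\to G''_p$. For the inverse, we define $x^p_w\mapsto \varphi_{p,q}(\cdot)$ of the element represented by $w$. For the vertices added in the regular components (Cases 1--2), the relations (R1)--(R4) determine those elements: $\widehat{v_i^p}$ and $(v_i^p)^j$ collapse to the value of $v_i^p$ in $G_{[w]}$. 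Finally we check that ($e_{\rm reg}$) and ($e_{\rm free}$) are respected; at lower free vertices $v^q$ this uses precisely that the colours $X_i$ encode elements of $K_q$.

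\textbf{Regular $p$.} For $p$ regular, $G''_p$ is generated by the $x^p_w$ with $[w]\le p$. The relations ($e_{\rm reg}$) restricted to $S_p\subseteq E_p$ reproduce the graph monoid $\mathcal M(S_p)^*\cong G_p$ via $\gamma_p$. The extra vertices and relations (R1)--(R4) must be shown to contribute nothing new, and this is the main obstacle. In $G''_p$, (R3) and (R4) force $x^p_{(v_i^p)^{j}}=0$ for the relevant $j$. Then (R2) gives $x^p_{\widehat{v_i^p}}=0$, and the extra edge from $v_i^p$ to $\widehat{v_i^p}$ is harmless. Relation (R1) then yields $x^p_{v(i)}=-x^p_{B_i}$, and by the choice of $B_i$ this equals $\gamma_p\varphi_{p,q}(g(i))$. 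So every lower generator $x^p_w$ is expressed inside the image of $G(S_p)$, and lower relations map consistently because $\varphi_{p,q}$ are homomorphisms and $\varphi_{p,q}\circ\varphi_{q,r}$ is compatible by (b1). We therefore define $f_p=$ the map induced by $\gamma_p^{-1}(a_v)\mapsto x^p_v$, prove surjectivity from the computation above, and prove injectivity by constructing the inverse $x^p_w\mapsto$ its computed value in $G_p$. For the inverse we check all of ($e_{\rm reg}$), ($e_{\rm free}$) and (R1)--(R4); relation (R1) is exactly where $v_i^{p-}$ and $B_i$ were chosen. Minimal $p$ is handled the same way (free: both groups trivial; regular: just $S_p$).

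\textbf{Commuting squares.} Lastly we verify $f_p\circ\varphi_{p,q}=\varphi''_{p,q}\circ\bar f_q$ on generators. For $z\in {\bf X}_p\cap M_q$ both sides equal $x^p_{\gamma^p(z)}$. This holds by construction in the free case. In the regular case it follows from the identity $x^p_{v(i)}=\gamma_p\varphi_{p,q}(g(i))$ derived above, together with $\varphi_{p,q}(q)\mapsto x^p_{v^q}$ for free $q$. Since ${\bf X}$ generates and everything is additive, this yields the plain isomorphism.
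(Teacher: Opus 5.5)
Your route is genuinely different from the paper's, and its core is sound, but three steps need repair.

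\textbf{Comparison with the paper.} The paper never builds the group isomorphisms and their inverses by hand.
\begin{itemize}
\item It constructs a surjective monoid map $\eta\colon\mathcal M(E,C)\to\mathcal M(J)$, checking (R1)--(R4) and the colour relations at the monoid level.
\item Via \autoref{cor:morphisms-inducing-plain-homos} and \autoref{prp:isoIsysMon} it turns $\eta$ into a surjective plain homomorphism $f=\chi^{-1}\circ\mathcal I(\eta)\circ\lambda\colon\mathcal I(E,C)\to J$. All commuting squares therefore come for free.
\item It then only proves that each $f_p$ is injective. For regular $p$ this uses the left inverse $\zeta_p$, which is your regular $f_p$. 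For free $p$ it uses the same $\mathbf N_p$/($e_{\rm free}$) kernel argument you use.
\end{itemize}
Your ``inverse'' maps $x^p_w\mapsto\varphi_{p,[w]}(\cdot)$ are exactly the paper's $f_p$, so the local computations coincide. Your direct version avoids \autoref{thm:SuperAdaptRefin}, \autoref{prp:isoIsysMon} and the lifting corollary. The price is that you must check, by hand, that the inverses respect ($e_{\rm reg}$) and ($e_{\rm free}$), and that every square commutes.

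\textbf{Repairs needed.}

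First, in the free case the auxiliary vertices $\widehat{v_i^q}$ and $(v_i^q)^j$ (with $q=[w]$ regular) must be sent to the neutral element, not to the value of $v_i^q$. Your own regular paragraph shows their $x$'s vanish. With your prescription, write $y=\varphi_{p,q}(\gamma_q^{-1}(a_{v_i^q}))$. Then (R3), or ($e_{\rm reg}$) at $v_i^q$ (which has the extra edge to $\widehat{v_i^q}$), maps to $y=y+y$ and forces $y=e_p$. So the inverse would not be well defined.

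Second, do not define $\theta_p$ on $\mathcal M(J_p)$ by ``checking its relations''.
\begin{itemize}
\item Those relations involve $\chi_q(x)$ for arbitrary $x\in M_q$. You can only evaluate these if compatible maps on all lower $G_q$ are already available.
\item No induction is possible, because $I$ need not be well founded; for instance $I=\ZZ$ in \autoref{exa:not-super adaptable-Isystem}.
\item Instead, define the map on the free commutative monoid on $\mathbf X_p$. The set $\mathbf N_p$ is defined at exactly that level, and the composite to $G_p$ is surjective by super adaptability and (b2).
\item Your $a+c,\,b+c$ argument then shows that the kernel of $\ZZ^{(\mathbf X_p)}\to G_p$ is generated by $\mathbf N_p$, so the map factors through $G_p$.
\item Surjectivity onto $G''_p$ also needs justification: the $x^p_v$ with $v\in\mathbf V_p$ generate $G''_p$. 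This follows because the auxiliary vertices vanish and $a_v\mapsto x^p_v$ gives a homomorphism $G(S_q)\to G''_p$.
\end{itemize}

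Third, for free $q<p$ the semigroup $M_q=\NN\times G_q$ is not generated by $\mathbf X_p\cap M_q=\{q\}$. So checking the square on ${\bf X}_p$ is not enough; you must also check $f_p\circ\widehat\varphi_{p,q}=\widehat{\varphi''}_{p,q}\circ f_q$ on $G_q$. To do this, write $G_q$ as generated by the $\varphi_{q,q'}(z)$ with $z\in\mathbf X_q\subseteq\mathbf X_p$. Then use (b1), $\widehat\varphi_{p,q}\circ\varphi_{q,q'}=\varphi_{p,q'}$, together with $\widehat{\varphi''}_{p,q}(x^q_w)=x^p_w$, to reduce to the generator case for the pair $(p,q')$.
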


\begin{proof}
Using that $\mathbb{P}(E)$ is order-isomorphic to the poset $I$ associated to $J$ (\autoref{prp:constructed-super adaptable}), we write $J$ and $\mathcal{I}(E,C)$ as follows:
\[
    J= (\mathbb{P}(E),\leq , (G_p)_p, \varphi_{p,q})
    ,\andSep 
    \mathcal{I}(E,C)= (\mathbb{P}(E),\leq , (G_p'')_p, \varphi_{p,q}'').
\]

We first show that some relations hold in the groups $G_p''$, where $p\in \Preg (E)$. Throughout the proof we make use of the notation introduced in both \autoref{prg:construction} and \autoref{pgr:Isystem-of-superadaptable-graph}.

Suppose that $p\in \Preg (E)$. It follows from $(e_{\mathrm{reg}})$ (in \autoref{pgr:Isystem-of-superadaptable-graph}) and (R3) (in \autoref{prg:construction}(I)) that $x^p_{(v_i^p)^{2j-1}} = 0$ for all $j\ge 1$. Using again $(e_{\mathrm{reg}})$, it follows from (R4) that $x^p_{(v_i^p)^{2j}} = 0$ for all $j\ge 1$, and from (R2) that $x^p_{\widehat{v_i^p}} = 0$. Each vertex $v_i^p$ emits the same edges in $E$ as in $S_p$ plus an additional edge to $\widehat{v_i^p}$, and we know that $x^p_{\widehat{v_i^p}} = 0$. Thus, $(e_{\mathrm{reg}})$ implies that
\begin{equation}
\label{eq:xvp-relations}
    x_v^p = \sum_{e\in s_{S_p}^{-1} (v)} x_{r(e)}^p
\end{equation}
for all $v\in S_p^0$.
  
We observe, for later use, that for $p\in \mathbb P (E)$, the group $G_p''$ is generated by the family
\[
    \{x^p_w\mid w\in S_q^0, q\in \Preg (E), q\le p\}\cup \{x^p_{v^q}\mid q\in \Pfree (E), q<p\} .
\]
Indeed, by definition $G_p''$ is generated by
\[
    \{x^p_w\mid w\in E^0, [w]\in \Preg (E), [w] \leq p\}\cup 
    \{x_{v^q}^p\mid q\in \Pfree (E), q<p\}.
\]

However, since we have $x^q_{(v_i^q)^j}= x^q_{\widehat{v_i^q}}=0_q$ for all $q\in \Preg (E)$ and all $i,j\in \NN$, we see that
\[
    x^p_{(v_i^q)^j} = \varphi_{p,q} (x^q_{(v_i^q)^j}) = \varphi (0_q) = 0_p
\]
and
\[
    x^p_{\widehat{v_i^q}} = \varphi_{p,q} (x^q_{\widehat{v_i^q}})= \varphi (0_q) = 0_p, 
\]
for $q\in \Preg (E)$, $q\le p$, $i,j\in \NN$. Hence the mentioned subset of generators is indeed a generating set of $G_p''$. 
  
We now define group homomorphisms $\zeta_p\colon G_p\to G_p''$ for $p\in \Preg (E)$. Recall that $\gamma_p \colon G_p \to G(S_p)$ is a group isomorphism, and that $G(S_p)$ is the group generated by $\{a_v\mid v\in S_p^0\}$, subject to the defining relations
\[
    a_v= \sum_{e\in s_{S_p}^{-1}(v)} a_{r(e)}
\]
for all $v\in S_p^0$. It follows from \eqref{eq:xvp-relations} that we have a well-defined homomorphism $\zeta_p \colon G_p \cong G(S_p)\to G_p''$ given by 
\[
    \zeta_p( \gamma_p^{-1} (a_v)) = x_v^p\in G_p''
\]
for all $v\in S_p ^0$. 

Recall that $v_i^{p-} = \sum _j c_{j,i}v_j^p$, where $c_{j,i}\ge 0$ are such that 
\[
    a_{v_i^p} + \sum_j c_{j,i} a_{v_j^p} = 0
\]
in the group $G(S_p)$. Since $\zeta_p$ is a group homomorphism, we obtain that 
\[
    x^p_{v_i^p} + \sum _j c_{ji} x^p_{v_j^p} = 0
\]
in $G_p''$. 

On the other hand, recall the notation $B_i = \sum _j b_{j,i} v^p_j$ and the definition of the coefficients $b_{j,i}\ge 0$ and of $v(i)$ from  \autoref{prg:construction}(I). The set of edges in $E$ emitted by $\widehat{v_i^p}$ is described by (R1), which in expanded form reads as follows:
\begin{equation}
    \tag{R1}\widehat{v_i^p} = v_i^p + (v_i^p)^1 + \sum_j b_{j,i} v^p_j + v(i) + \sum_j c_{j,i} v_j^p.
\end{equation}

Therefore by  $(e_{\mathrm{reg}})$ we get
\[
    x^p_{\widehat{v_i^p}} = x^p_{v_i^p} + x^p_{(v_i^p)^1}  + \sum_j b_{j,i} x^p_{v^p_j} + x^p_{v(i)} + \sum_j c_{j,i} x^p_{v_j^p}.
\]

Now, since 
\[
    x^p_{\widehat{v_i^p}} = x^p_{(v_i^p)^1} = x^p_{v_i^p} +  \sum_j c_{j,i} x^p_{v^p_j} =0,
\]
it follows that
\begin{equation}
\label{eq:elemenstA_i-and-vi}
    \sum_j b_{j,i} x^p_{v_j^p} + x^p_{v(i)} = 0.
\end{equation}  

Using \eqref{eq:elemenstA_i-and-vi}, we can show commutativity of some diagrams. In the rest of the proof, we will denote the neutral element of $G_p$ by $e_p$, and the neutral element of $G_p''$ by $0_p$. \medskip

\noindent\hypertarget{Claim1.5.5}{\textbf{Claim 1.}} \emph{With the above notation, we have 
\begin{equation}
\label{eq:commuting1}
    \zeta_p \circ \varphi_{p,q} =\varphi''_{p,q} \circ \zeta_q, \quad \text{ for } q<p,\quad  p,q\in \Preg (E).
\end{equation}
Moreover, one also has
\begin{equation}
\label{eq:commuting2}
    \zeta_p (\varphi_{p,q} (n, e_q))  = \varphi''_{p,q} (n, 0_q), \quad \text{ for } q<p,\quad
    p\in \Preg (E), q\in \Pfree (E), n\in \NN.
\end{equation}}
\medskip 

\noindent\emph{Proof of Claim 1.} To prove \hyperlink{Claim1.5.5}{Claim 1}, note that all maps involved in \eqref{eq:commuting1} are group homomorphisms. Thus, it suffices to check the property for the generating family $X_q^{\rm gen}$. If $z\in X_q^{\rm gen}$, there exists a unique $i\in \NN$ such that $z= \gamma_q^{-1}(a_{v(i)})$. Further, since $z=g(i)$, we have $\varphi_{p,q} (z) = -\sum_j b_{j,i}\gamma_p^{-1}(a_{v_j^p})$. Hence we get, using \eqref{eq:elemenstA_i-and-vi} for the third equality,
\begin{align*}
    \zeta_p (\varphi_{p,q} (z)) &= \zeta_p \left(-\sum_j b_{j,i}\gamma_p^{-1}(a_{v_j^p}) \right)\\
    &= - \sum _j b_{j,i} x_{v_j^p}^p \\
    &= x_{v(i)}^p = \varphi''_{p,q} (x_{v(i)}^q) =  \varphi''_{p,q} (\zeta_q (z)).
\end{align*}
To show \eqref{eq:commuting2}, suppose $q<p$,  $p\in \Preg (E)$ and  $q\in \Pfree (E)$. Again it suffices to show the case where $n=1$. Now recall that the element $(1,e_q)$ has been denoted also by $q$ in the setting of \autoref{prg:construction}. There exists a unique $i\in \NN$ such that $g(i)= q$. Moreover in the bijection
$\gamma^p\colon \mathbf{X}_p \to \mathbf{V}_p$, we have that $\gamma^p(q) = v^q$. Thus we obtain
\[
v(i)= \gamma^p (g(i)) = \gamma^p (q) = v^q
\]  
so that $v(i)= v^q$. We obtain, again using \eqref{eq:elemenstA_i-and-vi} for the third equality,
\begin{align*}
    \zeta_p (\varphi_{p,q} (1, e_q)) &=  \zeta_p \left(-\sum_j b_{j,i}\gamma_p^{-1}(a_{v_j^p}) \right)\\
    &= - \sum _j b_{j,i} x_{v_j^p}^p\\
    &= x_{v(i)}^p = x_{v^q}^p =  \varphi''_{p,q} (1,0_q),
\end{align*}
as desired. This proves \hyperlink{Claim1.5.5}{Claim 1}.\medskip

\noindent\hypertarget{Claim2.5.5}{\textbf{Claim 2.}} \emph{
There exists a surjective monoid homomorphism $\eta\colon \mathcal{M}(E,C)\to \mathcal{M}(J)$ such that $\eta (a_v^C) = \chi_p (\gamma_p^{-1} (a_v))$ for all  $p\in \Preg (E)$ and $v\in S_p^0$, and $\eta (a_{v^q}^C) = \chi_q(q)$ for $q\in \Pfree (E)$.}
\medskip

\noindent\emph{Proof of Claim 2.} 
First, we define the assignment
\begin{align*}
    \eta (a_{v^p}^C) &:= \chi _p (p),\quad &&\text{if }p\in \Pfree (E)\\
    \eta (a_v^C) &:= \chi_p (\gamma_p^{-1} (a_v)),\quad &&\text{for all }v\in S_p^0,\, p\in \Preg (E)\\
    \eta (a^C_{\widehat{v_i^p}}) &:= \chi_p(e_p),\quad &&\text{for all }i\ge 1,\, p\in \Preg (E)\\
    \eta (a^C_{(v_i^p)^j}) &:= \chi_p(e_p),\quad &&\text{for all }i,j\ge 1,\, p\in \Preg (E)
\end{align*}
where $e_p$ is the neutral element of $G_p$.

We have to show that the relations in $\mathcal{M}(E,C)$ are preserved by $\eta$. If $p$ is regular and $v= v_i^p\in S_p^0$, we have
\begin{align*}
	     \sum_{e\in s_E^{-1}(v)} \eta (a_{r(e)}^C)  &= \sum_{e\in s_{S_p}^{-1}(v)} \eta (a_{r(e)}^C) + \eta (a_{\widehat{v_i^p}}^C)  \\
   &    = \chi_p\circ\gamma_p^{-1}\left(\sum_{e\in s_{S_p}^{-1}(v)}  a_{r(e)}\right) + \chi_p(e_p) \\
   & =  \chi_p\circ\gamma_p^{-1}(a_v) =  \eta(a_v^C).
\end{align*}
Note that we have a well-defined semigroup homomorphism $j_p\colon G(S_p)\to \mathcal M (E,C)$ such that $a_v\mapsto a^C_v$ for all $v\in S_p^0$, and that the composition $\eta \circ j_p$ is the map
given by 
\[
(\eta \circ j_p) (a_v) = \chi _p (\gamma_p^{-1} (a_v)). 
\]
Hence $\eta \circ j_p = \chi_p \circ \gamma_p^{-1}$, where $\chi_p\colon G_p \to \chi_p(G_p)\subseteq \mathcal M (J)$ is the canonical isomorphism from $G_p$ onto $\chi_p(G_p)$. In particular $\eta\circ j_p$ is a group isomorphism from $G(S_p)$ onto $\chi_p(G_p)$, and so, since $a_{v_i^{p}} + \sum _j c_{j,i} a_{v_j^p} = 0$ in 
$G(S_p)$, we get
\begin{align*}
	\sum _j c_{j,i} \eta (a^C_{v_j^p}) + \eta (a^C_{v_i^p}) &= \sum_j c_{j,i} (\eta\circ j_p)(a_{v_j^p}) + (\eta \circ j_p) (a_{v_i^p})\\
	& =   (\eta\circ j_p) \left(\sum_j c_{j,i} a_{v_j^p} +  a_{v_i^p}\right)\\
	& =  (\eta\circ j_p)(0) = \chi_p (e_p).
\end{align*}
Hence for the preservation of the (extended form) of (R1) we have for $q:= [v(i)] < p$,
\begin{align*}
& \eta(a_{v_i^p}^C) + \eta(a_{(v_i^p)^1}^C) + \sum_j b_{j,i} \eta (a_{v^p_j}^C) + \eta (a_{v(i)}^C) + \sum_j c_{j,i} \eta (a_{v_j^p}^C) \\
& = \chi_p (e_p) + \eta (a_{v(i)}^C)+ \sum_j b_{j,i} \eta (a_{v^p_j}^C) \\
& = \chi_p (e_p) +  \chi_q (g(i)) + \sum_j b_{j,i} \chi_p  (\gamma_p^{-1} (a_{v^p_j})) \\ 
& = \chi _p (\varphi_{p,q} (g(i))) + \sum_j b_{j,i}   \chi_p (\gamma_p^{-1} (a_{v^p_j}))\\
& = \chi_p \left(- \sum _j b_{j,i} \gamma_p^{-1} (a_{v_j^p}) + \sum_j b_{j,i}  \gamma_p^{-1} (a_{v^p_j})\right)\\
&  = \chi_p (e_p) = \eta ( a^C_{\widehat{v_i^p}})
\end{align*}
which shows that indeed relation (R1) is preserved by $\eta$. Also, relations (R2), (R3) and (R4) are trivially preserved by $\eta$.

Let $p$ be a free prime, and recall the notation and caveats introduced in \autoref{prg:construction}(II). 
In particular, recall that $K_p$ denotes the kernel of the map $\varphi_p \colon \mathcal M (J_p)\to G_p$.
For each ${\bf n}_i\in {\bf N}_p$, set
\[
    \widehat{x_i} := \sum_{z\in {\bf X}_p} n_{z,i} a_{\gamma^p (z)}^C,
\]

Then we have to show that the relations 
\[
    a_{v^p}^C = a_{v^p}^C + \widehat{x_i}
\]
are preserved by $\eta$ for all $i$. Observe that 
\[
\sum_{z\in {\bf X}_p} n_{z,i} \eta (a_{\gamma^p (z)}^C) =  \sum_{z\in {\bf X}_p} n_{z,i} \chi (z) \in K_p,
\]
since $\mathbf{n}_i = (n_{z,i})_{z\in \mathbf{X}_p} \in \mathbf{N}_p$; see \autoref{prg:construction}(II). 
Thus, 
\begin{align*}
    \eta (a_{v^p}^C) + \sum_{z\in {\bf X}_p} n_{z,i}  \eta (a_{\gamma^p (z)}^C)   &= \chi_p \left(p + \varphi_p \left(\sum_{z\in {\bf X}_p} n_{z,i} \chi (z)\right)\right)\\
    &= \chi_p (p) = \eta (a_{v^p}^C),
\end{align*}
which proves that $\eta$ preserves the desired relation.
		
It follows that $\eta$ is a well-defined monoid homomorphism. Note that the image of $\eta$ contains $\chi_p(x)$ for every $x\in \Xgen _p$, for all $p\in \Preg (E)$ and $\chi_p(p)$ for all $p\in\Pfree (E)$. Since $J$ is super adaptable, $\mathcal{M}(J)$ is generated as a semigroup by these elements, which shows that $\eta$ is surjective. This finishes the proof of \hyperlink{Claim2.5.5}{Claim 2}.\medskip

We will now define a surjective plain homomorphism of systems, which we will denote by 
$f\colon \mathcal I (E,C) \to J$. First observe that $\eta $ defines indeed a homomorphism of marked monoids
\[
\eta \colon (\mathcal M (E,C),\mcan) \longrightarrow (\mathcal M (J), \mathfrak m_J)
\]
because $\eta (a^C_{v^p})= \chi_p(p)= \chi_p (1,e_p)$ for all $p\in \Pfree (E)$.
Moreover, $\eta$ satisfies all the hypothesis in \autoref{cor:morphisms-inducing-plain-homos}, so it follows that there is an induced plain homomorphism of systems
\[
\mathcal I (\eta) \colon \mathcal I (\mathcal M (E,C),\mcan) \longrightarrow \mathcal I (\mathcal M (J), \mathfrak m_J).
\]
Since $\eta$ is surjective, we see that $\mathcal I (\eta)$ is surjective; i.e., all maps $\mathcal I (\eta)_p$ are surjective.

On the other hand, we have defined in \autoref{prp:isoIsysMon} a plain isomorphism of systems $\lambda \colon \mathcal I (E,C) \to \mathcal I (\mathcal M (E,C),\mcan )$. We therefore consider the composition of the plain homomorphisms
\[
\mathcal I (E,C) \overset{\lambda}{\longrightarrow}  \mathcal I (\mathcal M (E,C),\mcan)
\overset{\mathcal I (\eta)}{\longrightarrow}  \mathcal I (\mathcal M (J), \mathfrak m_J) \overset{\chi^{-1}}{\longrightarrow}  J,
\]
where $\chi^{-1}= \{\chi_p^{-1}\}_{p\in \mathbb P (E)}$ is the natural plain isomorphism from \autoref{thm:APIsr16}~(b).

Now define $f= \chi^{-1}\circ \mathcal I (\eta)\circ \lambda$, which is a surjective plain homomorphism of systems from $\mathcal I (E,C)$ to $J$.
To finish the proof, we need to show that each map $f_p$ is injective.

Let us compute the map $f_p$ for $p\in \mathbb P (E)$. Suppose first that 
$p \in \Preg (E)$ and consider $x_w^p\in G_p''$, with $q:=[w]\leq p$,
$q\in \Preg (E)$ and $w\in S_q^0$. Then we have
\begin{align*}
	f_p (x^p_w) & = \chi_p^{-1}(\mathcal I (\eta)_p (\lambda^p(x^p_w)))\\
	&= \chi_p^{-1}(\mathcal I (\eta)_p (e_p+a^C_w))\\
        &=	\chi_p^{-1}(\chi_p (e_p)+ \chi_q(\gamma_q^{-1}(a_w)))\\
	& = \chi_p^{-1}(\chi_p (e_p+ \varphi_{p,q}(\gamma_q^{-1}(a_w))))\\
    &= \varphi_{p,q}(\gamma_q^{-1}(a_w)).
\end{align*}

Here and below, $\varphi_{p,q}= \text{id}_{G_p}$ if $p=q$.

If $w=v^q$ for $q\in \Pfree (E)$, $q<p$, then
\begin{align*}
	f_p (x^p_{v^q}) & = \chi_p^{-1}(\mathcal I (\eta)_p (\lambda^p(x^p_{v^q})))\\
	& = \chi_p^{-1}(\mathcal I (\eta)_p (e_p+a^C_{v^q}))\\
	& =	\chi_p^{-1}(\chi_p (e_p) + \chi_q(q))\\
        &= \chi_p^{-1}(\chi_p (e_p+\varphi_{p,q}(q)))\\
        &= \varphi_{p,q}(q) .
\end{align*}

Suppose now that $p\in \Pfree (E)$. Then, similar computations lead to the same formulas
\[
f_p (x^p_w)= \varphi_{p,q} (\gamma _q^{-1}(a_w))
\]
when $q:=[w]<p$, $q\in \Preg (E)$, and $w\in S_q^0$, and
\[
f_p (x^p_{v^q})= \varphi_{p,q} (q)
\]
when $q<p$ and $q\in \Pfree (E)$.

We now show that $\zeta_p \circ f_p = \text{id}_{G_p''}$ for $p\in \Preg (E)$: 	

Recall from the beginning of the proof that  $G_p''$ is generated by the family
\[
    \{x^p_w\mid w\in S_q^0, q\in \Preg (E), q\le p\}\cup \{x^p_{v^q}\mid q\in \Pfree (E), q<p\} .
\]
Thus, to show the identity $\zeta_p \circ f_p = \text{id}_{G_p''}$, it suffices to check that $\zeta (f_p (x^p_w))
= x^p_w$ for every element $x^p_w$ in this family. 

	 For a regular $[w]=q\leq p$, $w\in S_q^0$, we can use \hyperlink{Claim1.5.5}{Claim 1} and the above formula for $f_p$ to see that 
\[        \zeta_p(f_p (x^p_w)) =
     \zeta_p(\varphi_{p,q}(\gamma_q^{-1} (a_w))) = \varphi_{p,q}'' (\zeta_q (\gamma_q^{-1} (a_w))) = \varphi_{p,q}''(x^q_w) = x^p_w \]
     and for $q\in \Pfree (E)$, $q<p$,
     \[        \zeta_p(f_p (x^p_{v^q})) = 
     \zeta_p(\varphi_{p,q} (q)) = \zeta_p(\varphi_{p,q} (1,e_q))= \varphi''_{p,q}(1,0_q) = x^p_{v^q} \]
  Hence $\zeta_p \circ f_p= \text{id}_{G_p''}$, which implies that $f_p$ is injective for all $p\in \Preg (E)$. 
  Since we already know that $f_p$ is surjective, we conclude that $f_p$ is an isomorphism, with inverse $\zeta_p$, for all $p\in \Preg (E)$. 
  
  In particular $\zeta_q$ is surjective for all $q\in \Preg (E)$, and thus $G_q''$ is generated as a semigroup by $\{ x^q_w : w\in \Sgen _q\}$. In addition, since $\mathcal I (E,C)$ is super adaptable, we know that, for each $p\in \Pfree (E)$,  $G_p''$ is generated as a semigroup by  
  \[
  (\cup_{q<p\text{ reg.}}\varphi_{p,q}'' (G_q''))\cup \{ \varphi _{p,q}''(q)\}_{q<p\text{ free}}.
  \]
We deduce that $G_p''$ is generated as a semigroup by   
\[
\{ \varphi_{p,q}'' (x_w^q)\}_{q<p\text{ reg.}, w\in \Sgen_q}\cup \{ \varphi _{p,q}''(q)\}_{q<p\text{ free}} = \{ x^p_v : v\in \mathbf{V}_p \},
\]
where we use again the notation introduced in \autoref{prg:construction}.
We are now ready to conclude the proof by showing that $f_p$ is injective whenever $p$ is free. Thus, assume that $p\in \Pfree (E)$.  Take $a\in \ker (f_p)$, and write 
\[
a= \sum_{v\in \mathbf{V}_p} n_v x^p_v,
\]
where $(n_v)_{v\in \mathbf{V}_p}\in \oplus^{\text{o}}_{v\in \mathbf{V}_p} (\NN\cup \{0\})$.
Then we have
\begin{align*}
    e_p &= f_p (a) = f_p \left(\sum _{v\in \mathbf{V}_p} n_v x^p_v \right)\\
    &=\sum _{v\in \Sgen_q, q<p\text{ reg.}} n_v \varphi _{p,q}(\gamma_q^{-1}(a_v)) + \sum_{q<p\text{ free}} n_{v^q} \varphi_{p,q}(q)\\
    &= \varphi_p \left(\sum _{z\in \mathbf{X}_p} n_{\gamma ^p (z)} \chi (z)\right).
\end{align*}
It follows that $\sum _{z\in \mathbf{X}_p} n_{\gamma ^p (z)} \chi (z) \in K_p $, so that $(n_{\gamma^p(z)})_{z\in \mathbf{X}_p} \in \mathbf{N}_p$; see \autoref{prg:construction}(II). Hence there is a unique $i\in \NN$ such that
$\mathbf{n}_i = (n_{\gamma^p(z)})_{z\in \mathbf{X}_p} $. Now by the construction in \autoref{prg:construction}(II), the set $X_i\in C_{v^p}$ emits $n_{\gamma^p(z)}$ edges from $v^p$ to $\gamma^p(z)$ plus a loop at $v^p$. Using ($e_{\rm free}$) we see that $a= \sum_{v\in \mathbf{V}_p} n_v x^p_v = 0_p$, showing that $f_p$ is injective. 

This concludes the proof of the lemma.
\end{proof}


We can now show one of our main theorems.

\begin{thm}\label{thm:MondToGraph}
Let $(M,\mathfrak m)$ be a countable super adaptable marked monoid. Then, with
$(E,C) = \mathcal G (\mathcal I (M,\mathfrak m))$, we have an isomorphism of marked monoids 
\[
     (M, \mathfrak m) \cong (\mathcal{M}(E,C), \mcan).
\]
In particular, for any countable super adaptable monoid $M$ there exists a countable super adaptable graph $(E,C)$ such that $M\cong\mathcal{M}(E,C)$.
\end{thm}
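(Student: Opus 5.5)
The plan is to chain the isomorphisms already established, keeping track of marks at each stage. Set $J:=\mathcal I(M,\mathfrak m)$. This is a countable $I$-system by \autoref{rmk:GpCountGen}, and it is super adaptable by hypothesis. By \autoref{prp:constructed-super adaptable}, $(E,C)=\mathcal G(J)$ is a countable super adaptable graph, and its poset $\mathbb P(E)$ is order-isomorphic to $\mathbb P(\ol M)$. The argument then has four steps.

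\textbf{Step 1.} By \autoref{lma:SuperAdpMonoid} there is a plain isomorphism $J\cong \mathcal I(E,C)$.

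\textbf{Step 2.} By \autoref{prp:isoIsysMon} there is a plain isomorphism $\mathcal I(E,C)\cong \mathcal I(\mathcal M(E,C),\mcan)$. Composing with Step 1 gives a plain isomorphism of systems
\[
f\colon \mathcal I(M,\mathfrak m)\longrightarrow \mathcal I(\mathcal M(E,C),\mcan).
\]

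\textbf{Step 3.} Apply $\mathcal M(\cdot)$ to $f$. By \autoref{lma:morphisms-of-systems-induce-homos}, plain morphisms induce monoid homomorphisms. Functoriality on plain morphisms holds because $\mathcal M(g\circ f)=\mathcal M(g)\circ\mathcal M(f)$ on generators $\chi_p(x)$ by the composition formula, and similarly for identities. Hence $\mathcal M(f)$ is an isomorphism with inverse $\mathcal M(f^{-1})$.

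As observed before \autoref{prp:morphism-of-systems}, a plain morphism satisfies $\ol{f_q}(1,e_q)=(1,f_q(e_q))=(1,e_{\psi(q)})$ for $q$ free, and sends $e_q$ to $e_{\psi(q)}$ for $q$ regular. So $\mathcal M(f)$ carries the canonical mark $\mathfrak m_{\mathcal I(M,\mathfrak m)}$ onto the canonical mark $\mathfrak m_{\mathcal I(\mathcal M(E,C),\mcan)}$. The inclusion one way follows from this computation, and the reverse inclusion follows from applying the same argument to $f^{-1}$.

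\textbf{Step 4.} Use \autoref{thm:APIsr16}~(a) twice to obtain isomorphisms of marked monoids
\[
\Phi_{(M,\mathfrak m)}\colon (M,\mathfrak m)\to(\mathcal M(\mathcal I(M,\mathfrak m)),\mathfrak m_{\mathcal I(M,\mathfrak m)})
\]
and $\Phi_{(\mathcal M(E,C),\mcan)}$. The composite
\[
\Phi^{-1}_{(\mathcal M(E,C),\mcan)}\circ\mathcal M(f)\circ\Phi_{(M,\mathfrak m)}
\]
is then an isomorphism of marked monoids $(M,\mathfrak m)\cong(\mathcal M(E,C),\mcan)$.

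For the final sentence of the theorem, take a countable super adaptable monoid $M$. By \autoref{dfn:superadaptableIsystem} there is a mark $\mathfrak m$ with $(M,\mathfrak m)$ super adaptable, and the conclusion follows from the marked statement, since $(E,C)$ is countable and super adaptable by \autoref{prp:constructed-super adaptable}.

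The main obstacle is bookkeeping rather than new mathematics. One must check that the identification of posets used in \autoref{lma:SuperAdpMonoid}, namely $\delta\colon\mathbb P(E)\to I$, and the one used in \autoref{prp:isoIsysMon}, namely \autoref{lma:identifing-the-primes}, are compatible, so that the composite in Step 2 really is a plain isomorphism with poset map a bijection. One must also verify that $\mathcal M(\cdot)$ preserves the distinguished marks. I would handle the first point by noting that both identifications send $[v]$ to the index $p$ with $v\in E_p^0$, so the composite poset map is the canonical one. For the second point I would check directly on the generators $\chi_p(1,e_p)$ and $\chi_p(e_p)$, as in Step 3.
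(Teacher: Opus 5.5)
Your proposal is correct and is essentially the paper's proof: you chain the plain isomorphisms of \autoref{lma:SuperAdpMonoid} and \autoref{prp:isoIsysMon}, pass to the induced isomorphisms of marked monoids (which needs plainness, exactly as you check), and conjugate by the isomorphisms $\Phi$ from \autoref{thm:APIsr16}~(a). The only differences are cosmetic: you compose the two plain isomorphisms of systems before applying $\mathcal M(\cdot)$, whereas the paper applies $\mathcal M(\cdot)$ to each one separately, and the compatibility of poset identifications you flag is not actually an obstacle, since a composite of isomorphisms of systems is an isomorphism whatever the underlying poset bijections are.
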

\begin{proof}
Let $(M,\mathfrak m)$ be a countable super adaptable marked monoid. Then 
$\mathcal I (M, \mathfrak m)$ is a super adaptable system. Let 
\[
(E,C) = \mathcal G (\mathcal I (M,\mathfrak m))
\]
be the associated separated graph. 
By \autoref{lma:SuperAdpMonoid} there is a plain isomorphism of systems 
$\mathcal I (M, \mathfrak m) \cong \mathcal I (E,C)$. By \autoref{lma:morphisms-of-systems-induce-homos}, and since the isomorphism is plain, there is an induced isomorphism of marked monoids
\begin{equation}
	\label{eq:iso1}
	(\mathcal M (\mathcal I (M, \mathfrak m)), \mathfrak m_{\mathcal I (M, \mathfrak m)}) \cong (\mathcal M (\mathcal I (E,C)), \mathfrak m_{\mathcal I (E,C)}). 
	\end{equation}  
Now by \autoref{prp:isoIsysMon} there is a plain isomorphism $\mathcal I (E,C)\cong \mathcal I (\mathcal M (E,C), \mcan)$. Just as before we obtain an induced isomorphism of marked monoids 
	\begin{equation}
	\label{eq:iso2}
 (\mathcal M (\mathcal I (E,C)), \mathfrak m_{\mathcal I (E,C)}) \cong (\mathcal M (\mathcal I (\mathcal M (E,C), \mcan)), \mathfrak m_{\mathcal I (\mathcal M (E,C), \mcan)}). 
\end{equation}  
We finally obtain
\begin{align*}
	(M,\mathfrak m) & \overset{2.10(1)}{\cong} (\mathcal M (\mathcal I (M,\mathfrak m)),\mathfrak m_{\mathcal I (M,\mathfrak m)}) \\
	& \overset{\eqref{eq:iso1}}{\cong } ( \mathcal M ( \mathcal I (E,C)), \mathfrak m_{\mathcal I (E,C)})\\
	 & \overset{\eqref{eq:iso2}}{\cong }  (\mathcal M (\mathcal I (\mathcal M (E,C), \mcan)), \mathfrak m_{\mathcal I (\mathcal M (E,C), \mcan)}) \\
	  &  \overset{2.10(1)}{\cong} (\mathcal M (E,C), \mcan).
\end{align*}
Hence $(M,\mathfrak m)\cong (\mathcal M (E,C), \mcan)$, as desired.
\end{proof}

\begin{exa}\label{exa:SupAdapt}
We end the chapter by showcasing an example of a super adaptable graph $(E,C)$ whose associated (primely generated, conical, refinement) monoid $\mathcal{M}(E,C)$ cannot be realized as the monoid of an adaptable separated graph.

Concretely, consider the following separated graph $(E,C)$:
\begin{center}
\begin{tikzpicture}[>=Stealth, rounded corners, scale=0.8]
  \tikzstyle{vertex} = [.]

  \node[vertex] (a) at (0,2.5) {$a$};

  \node[vertex] (b1) at (0,0) {$b_1$};
  \node[vertex] (b2) at ([xshift=2.5cm]b1) {$b_2$};
  \node[vertex] (b3) at ([xshift=2.5cm]b2) {$b_3$};
  \node (dots) at ([xshift=2.5cm]b3) {$\dots$};       
  \node[vertex] (bn) at ([xshift=2.5cm]dots) {$b_n$};
  \node (ldots) at ([xshift=2.5cm]bn) {$\ldots$};     

  \draw[->] (b1) -- (b2);
  \draw[->] (b2) -- (b3);
  \draw[->] (b3) -- (dots);
  \draw[->] (dots) -- (bn);
  \draw[->] (bn) -- (ldots);

  \draw[->] (a) -- (b1);
  \draw[->, red] (a) -- (b1);
  \draw[->, blue] (a) -- (b2);
  \draw[->, green] (a) -- (b3);
  \draw[->, orange] (a) -- (bn);

  \draw[->, blue, in=130, out=100, looseness=8] (a) to (a);
  \draw[->, green, in=80, out=50, looseness=8] (a) to (a);
  \draw[->, red, in=180, out=150, looseness=8] (a) to (a);
  \draw[->, orange, in=30, out=0, looseness=8] (a) to (a);

  \draw[->, in=-80, out=-110, looseness=8] (b1) to (b1);
  \draw[->, bend left=20] (b2) to (b1);
  \draw[->, bend left=30] (b3) to (b1);
  \draw[->, bend left=35] (bn) to (b1);
\end{tikzpicture}
\end{center}
It follows from its construction that $(E,C)$ is super adaptable with associated poset $\mathbb{P}(E)=\{f,r\}$, where $f:=\{a\}$ and $r:=\{b_i \mid i\in\NN\}$. Construct the monoid $M:=\mathcal{M}(E,C)$ as done in \autoref{dfn:monoidsepgraph}. Then, $M$ is generated by $\{a,b_1,b_2,\dots\}$ and is subject to the following relations: for any $i\geq 1$, we have
\[
a=a+b_i \quad \text{ and }\quad
b_i=b_{i+1}+b_1.
\]

It is readily checked that $M$ does not have a finitely generated presentation. Therefore, any separated graph realizing $M$ needs to have a free connected component $f:=\{a\}$ with infinitely many colors. We conclude that no adaptable separated graph can realize $M$ while, by construction, the super adaptable graph $(E,C)$ does.
\end{exa}

\chapter{Realizing super adaptable monoids}\label{sec:RealMon}

In this chapter we finish the proof of \autoref{thmIntro:Main} by showing that, for any countable super adaptable graph and any field $K$, there exists a regular $K$-algebra $Q_K (E,C)$ such that $V(Q_K (E,C))$ is isomorphic to $\mathcal{M}(E,C)$. This corresponds to \hyperlink{Step4}{Step (4)} in the outline.

\section{The construction}
\label{subsect:the-construction}

Extending the work of \cite{AraBosPar20Sel} on adaptable graphs, we aim to construct the \emph{regular algebra of a super adaptable graph}. The original construction heavily relied  on the fact that each vertex on an adaptable graph has a finite number of colors. To address this limitation, we first introduce the notion of \emph{auxiliary data} for a super adaptable graph, as follows:

Let $(E,C)$ be a countable super adaptable graph. For each $p\in \Pfree (E)$, we fix a countable (finite or infinite) set $\Lambda_p$, with the property that the sets $\Lambda _p$, $p\in \Pfree (E)$, along with $\NN$ and $E^1$, are pairwise disjoint. For each non-minimal $p\in \Pfree (E)$ fix an injective map
\[
    \sigma^p = \sigma_E^p \colon E^1_p \sqcup \Lambda _p \sqcup \NN \to \NN.
\]

We write $\NN_p:= \Lambda_p \sqcup \NN$ whenever $p\in \Pfree (E)$, and $\NN_p:=\NN$ whenever $p\in \Preg (E)$.
In general, we will denote by $\alpha$ the elements of $E^1_p$, for $p\in \Pfree (E)$. Connectors, as defined in \autoref{dfn:SupAdSepGra}, will be generically denoted by $\beta$. 
We will also write $\sigma_E$, $\Lambda^E_p$ or $\NN^E_p$ whenever we want to specify the separated graph $(E,C)$. Although technically one should also specify the colouring $C$ and write, for example, $\sigma_{(E,C)}$, it will often be clear what the colouring $C$ is. Thus, we will omit this for ease of notation.

We refer to the pair $(\Lambda = \{\Lambda_p\}_{p\in \Pfree (E)}, \sigma= \{\sigma^p\}_{p\in \Pfree (E), \, p \text{ non-minimal}})$ as a set of \emph{auxiliary data}\index[terms]{super adaptable graph!auxiliary data} for $(E,C)$. Again for ease of notation, we will simply denote a super adaptable graph $(E,C)$ with a prescribed choice of auxiliary data $(\Lambda,\sigma)$ by  $(E,C,\sigma)$. We will denote by $\mathcal T$ the family of all such triples $(E,C,\sigma)$.  

\begin{pgr}[The algebra $\mathcal S_K(E,C,\sigma)$]\label{pgr:AlgebraSKEC}
For $(E,C,\sigma)\in \mathcal T$ and a field $K$, the algebra $\mathcal S_K(E,C,\sigma)$\index[symbols]{$\mathcal S_K(E,C,\sigma)$} is the $*$-algebra with generators $E^0\cup E^1$ and $t_i^v$, $i\in \NN_{[v]}$, $v\in E^0$, and relations
given by the separated graph relations
\begin{enumerate}
\item[(V)] $vv'=\delta_{v,v'}v$ for all $v,v'\in E^0$,
\item[(E1)] $s(e)e=e=er(e)$ for all $e\in E^1$,
\item[(E2)] $r(e)e^*=e^*=e^*s(e)$ for all $e\in E^1$,
\item[(SCK1)] $e^*e'=\delta_{e,e'}r(e)$ for all $e,e'\in X$, $X\in C$, and
\item[(SCK2)] $v=\sum_{e\in X}ee^*$ for every finite set $X\in C_v$, $v\in E^0$;
\end{enumerate}
together with the following additional relations: Let $v\in E^0$ and write $p:=[v]$ for the corresponding prime in $I: =\mathbb P (E)$. We have
        \begin{itemize}
            \item[(i)] $v\cdot t_i^v=t_i^v=t_i^v \cdot v$, $t_i^v \cdot (t_i^v)^{*}=(t_i^v)^{*}\cdot  t_i^v=v$, and $t_i^v \cdot t_j^v = t_j^v\cdot t_i^v$ for all $i,j\in\NN_p$.
            \item[(ii)] Whenever $p$ is regular, we have
            $t_i^{s(e)} \cdot e = e\cdot t_i^{r(e)}$ for any $e\in E^1$ with $s(e)\in E_p^0$ and any $i\in \NN_p = \NN$.
            \item[(iii)] Whenever $p$ is free, we have 
            \begin{itemize}
            \item[(a)] $t_i^{v^p} \cdot \alpha =\alpha \cdot  t_i^{v^p}$ and $t_i^{v^p} \cdot \alpha^*=\alpha^* \cdot t_i^{v^p}$ for any $i\in\mathbb{N}_p$ and any $\alpha \in E_p^1$.
            \item[(b)] $\alpha \alpha'=\alpha'\alpha$ and $\alpha^*\alpha'=\alpha'\alpha^*$ for every $\alpha,\alpha'\in E_p^1$ such that $\alpha \ne \alpha'$.
            \item[(c)] $\beta^* \beta'=0$ for all distinct connectors $\beta,\beta'$ such that $s(\beta) = s(\beta')= v^p$.
            \end{itemize}
            \item[(iv)]  Whenever $p$ is free and non-minimal, we have
            \begin{itemize}
            \item[(a)] $(t_i^{v^p})^{\pm}\cdot \beta =\beta  \cdot (t_{\sigma^p(i)}^{r(\beta)})^{\pm}$ for any $i\in \NN_p$ and any connector $\beta$ such that $s(\beta) = v^p$. 
            \item[(b)] $\alpha \cdot \beta =\beta \cdot t_{\sigma^p(\alpha)}^{r(\beta)}$ for any $\alpha\in E^1_p$ and any connector $\beta\in X_{\alpha'}$ with $\alpha'\in E^1_p$ and $\alpha'\ne \alpha$. 
            \item[(c)] $\alpha^* \cdot \beta =\beta \cdot (t_{\sigma^p(\alpha)}^{r(\beta)})^{-1}$ for any $\alpha\in E^1_p$ and any connector $\beta\in X_{\alpha'}$ with $\alpha'\in E_p^1$ and $\alpha'\ne \alpha$. 
            \end{itemize}
            \end{itemize}

Let us remark that, since this is a presentation of $\mathcal S_K(E,C,\sigma)$ within the category of $*$-algebras, the $*$-relations of the above relations are enforced in the $*$-algebra $\mathcal S_K(E,C,\sigma)$.
\end{pgr}


In what follows, we construct the \emph{regular algebra}\index[terms]{regular algebra!of a super adaptable triple}\index[symbols]{$Q_K(E,C,\sigma)\quad$(regular algebra of $(E,C,\sigma)$} $Q_K(E,C,\sigma)$ of a super adaptable triple $(E,C,\sigma)\in \mathcal T$  following the blueprint from \cite{AraBosPar20Sel} for adaptable graphs.
 Notice that the particular type of universal localisation we are using here is the one described at the beginning of Subsection 2.1 of \cite{AraBosPar20Sel}. We refer the reader to that paper and to
    the books \cite{Cohn,Schofield} for background material on universal localisation and related matters.
    
 \begin{dfn}
     \label{dfn:regular-algebra}
Let $(E,C,\sigma)$ be a triple in $\mathcal T$, with auxiliary data $(\Lambda,\sigma)$. We first define an auxiliary algebra $\mathcal S^1_K(E,C, \sigma)$, as follows:

    Let $\mathcal S_K(E,C,\sigma)$ be the $*$-algebra defined above. 
    For each vertex $v\in E^0$, we invert all nonzero polynomials in the variables $t_i^v$, for $i\in \NN_{[v]}$, obtaining a new algebra $\mathcal{S}^1_K(E,C,\sigma)$. In this way, we obtain a copy of the 
purely transcendental extension field $L= K(t_i)$ in countably many variables inside each corner $v\mathcal{S}^1_K(E,C,\sigma) v$, for $v\in E^0$.
    
    Then, in the algebra $\mathcal{S}^1_K(E,C,\sigma)$, we consider, for each free prime $p\in \Pfree (E)$, the set $\Sigma(p)$ of all the polynomials of the form $f(\alpha) \in L[\alpha : \alpha\in E_p^1 ]$ such that $v(f)=0$; cf. \cite[Definition 2.7]{AraBosPar20Sel}. 
    In short, these are the polynomials $f(\alpha)$ in the variables $\alpha\in E_p^1$ which cannot be written in the form $(\alpha')^i g(\alpha)$ for some $\alpha'\in E_p^1$ and some $i\in \NN$. 

    Finally, for each regular prime $p \in \Preg (E)$, we consider the set $\Sigma (p)$ of all square matrices, with coefficients in the path $L$-algebra over the component $E_p$, which become invertible under the augmentation map $\varepsilon$; see \cite[Definition 2.7]{AraBosPar20Sel} for details.  

Now set $\Sigma = \bigcup_{p\in \mathbb P (E)} \Sigma (p)$, and define the \emph{regular algebra} $Q_K(E,C,\sigma)$ as
\[
Q_K(E,C,\sigma) := \mathcal S^1_K (E,C,\sigma) \Sigma^{-1},
\]
which is the universal localisation of $\mathcal S^1_K (E,C,\sigma)$ with respect to the set $\Sigma$. 
\end{dfn}
    
 We remark that $Q_K(E,C,\sigma)$ is not in general a $*$-algebra, since the involution $*$ on $\mathcal S_K(E,C,\sigma)$ cannot be extended to $Q_K(E,C,\sigma)$.  


\section{Restriction and quotient}
\label{subsect:restrict-and-quotient}

Suppose that $(E,C)$ and $(F,D)$ are super adaptable graphs such that $(E,C)$ is a complete subobject of $(F,D)$ in the category ${\rm SGr}$ of separated graphs; see \autoref{pgr:morphisms-in-SGr}.
We assume that all non-minimal free primes $p\in \Pfree (E)$ are also free primes in $F$, and recall that all regular primes $p\in \Preg (E)$ are automatically regular in $F$ by \autoref{prp:SuperGr}.
Let $(\Lambda ^F, \sigma_F)$ be auxiliary data for $(F,D)$, as defined at the beginning of \autoref{subsect:the-construction}.

Then, we can \emph{restrict} $\sigma_F$ to $(E,C)$ by taking subsets $\Lambda^E_p\subseteq \Lambda ^F_p$ for each $p\in \Pfree (E)$ and taking, for each non-minimal free element $p\in \Pfree (E)$,
\[
\sigma_E^p\colon E^1_p \sqcup \Lambda_p^E \sqcup \NN \to \NN,
\]
to be the restriction of $\sigma_F^p$ to $E^1_p \sqcup \Lambda_p^E \sqcup \NN$. This can be done because $p$ is a non-minimal free prime of $F$, 
$E^1_p\subseteq F^1_p$, and $\Lambda_p^E\subseteq \Lambda_p^F$. In this situation we will write $(E,C,\sigma_E) \le (F,D,\sigma_F)$, and we will denote by  $Q_K(E,C,\sigma_E)$ and $Q_K(F,D,\sigma_F)$ the corresponding algebras. 

\begin{prp}
\label{prp:restriction}
Let $(E,C,\sigma_E) \le  (F,D,\sigma_F)$ be as above. Then, there exists an induced $K$-algebra homomorphism $\iota \colon Q_K(E,C,\sigma_E)\rightarrow Q_K(F,D,\sigma_F)$ sending 
the canonical generators of $Q_K(E,C,\sigma_E)$ to the corresponding elements of $Q_K(F,D,\sigma_F)$. 
\end{prp}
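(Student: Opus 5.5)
The plan is to build $\iota$ in three stages, following the construction in \autoref{dfn:regular-algebra}. Each stage uses the universal property of the previous one.

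\textbf{Stage 1: the $*$-algebras $\mathcal S_K$.} First define a $*$-homomorphism $\iota_0\colon \mathcal S_K(E,C,\sigma_E)\to \mathcal S_K(F,D,\sigma_F)$. It sends $v\mapsto v$ and $e\mapsto e$ for $v\in E^0$, $e\in E^1$, and $t_i^v\mapsto t_i^v$ for $i\in \NN^E_{[v]_E}$. For free $p$ this uses $\NN^E_p=\Lambda^E_p\sqcup\NN\subseteq \NN^F_p$; for regular $p$ both index sets are $\NN$. Each defining relation must be checked to hold in the target.
\begin{itemize}
\item (V), (E1), (E2) are immediate.
\item (SCK1) holds because every $X\in C_v$ lies inside a unique $Y\in D_v$.
\item (SCK2) holds because, by completeness of the subobject (\autoref{pgr:morphisms-in-SGr}), each finite $X\in C_v$ actually equals an element of $D_v$.
\item Relations (i)--(iii) involve only generators attached to a single component. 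They make sense in $F$ because a regular $[v]_E$ is regular in $F$ (\autoref{prp:SuperGr}), a non-minimal free $[v]_E$ is free in $F$ by hypothesis, and $E_p\subseteq F_{\rho(p)}$.
\item For (iii)(c), a connector of $E$ at $v^p$ remains a connector in $F$, since its range lies in a strictly lower class in $F$ by (b) of \autoref{dfn:SupAdSepGra}.
\item Relation (iv) is where the restriction of auxiliary data matters. Since $\sigma_E^p=\sigma_F^p|_{E_p^1\sqcup\Lambda_p^E\sqcup\NN}$, the target index $\sigma^p(i)$ or $\sigma^p(\alpha)$ is the same in both algebras. Also $\beta\in X_{\alpha'}$ in $C$ implies $\beta\in X_{\alpha'}$ in $D$.
\end{itemize}

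\textbf{A subtle case in Stage 1.} A minimal free prime of $E$ could be a sink of $E$ while lying in a different class in $F$. Sinks emit no relations except (i), and (i) only concerns $t_i^v$ with $i\in\Lambda^E_p\sqcup\NN$. If $[v]_F$ is regular, $\NN^F=\NN$, so I would need $\Lambda^E_p=\emptyset$ there, or else send these variables injectively into $\NN$. I expect to handle this by choosing $\Lambda^E_p\subseteq\Lambda^F_p$ only when $p$ stays free. Otherwise I would use an injection $\NN^E_p\hookrightarrow\NN^F_{[v]_F}$, which preserves (i) because (i) is symmetric in the indices.

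\textbf{Stage 2: the first localisation $\mathcal S^1_K$.} Compose $\iota_0$ with the canonical map to $\mathcal S^1_K(F,D,\sigma_F)$. Every nonzero polynomial in the $t_i^v$ (indices in $\NN^E_{[v]}$) maps to a nonzero polynomial in the variables $t_j^v$ of $F$. The index map is injective, so nonzero polynomials stay nonzero. Hence these polynomials are invertible in the target corner $vQv\supseteq K(t_j)$, and the universal property of this localisation gives $\iota_1\colon\mathcal S^1_K(E,C,\sigma_E)\to\mathcal S^1_K(F,D,\sigma_F)$. As a consequence, the copy of $L=K(t_i)$ in $v\mathcal S^1_Kv$ maps into the corresponding field for $F$ via a field embedding.

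\textbf{Stage 3: the universal localisation at $\Sigma$.} Show that $\iota_1(\Sigma_E)$ consists of elements or matrices invertible in $Q_K(F,D,\sigma_F)$.

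For free $p$, an element $f(\alpha)\in L[\alpha:\alpha\in E_p^1]$ with $v(f)=0$ maps to a polynomial over the bigger field in the variables $\alpha\in E_p^1\subseteq F^1_{p}$. It still cannot be written as $(\alpha')^ig$, since its image has a nonzero constant term or the same lowest-degree structure. So it lies in $\Sigma_F(p)$.

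For regular $p$, a square matrix over the path $L$-algebra of $E_p$ maps into the path algebra of $F_{\rho(p)}$ over the larger field. The augmentation is compatible, since $\varepsilon$ kills edges and is the identity on vertices and scalars. So an $\varepsilon$-invertible matrix maps to an $\varepsilon$-invertible one, that is, into $\Sigma_F(\rho(p))$, possibly after padding by identity blocks on vertices of $F_{\rho(p)}$ not in $E_p$.

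The universal property of $\mathcal S^1_K\Sigma^{-1}$ then yields $\iota$. The main obstacle is precisely this matrix compatibility in the regular case: checking that $\varepsilon$ intertwines the two path algebras and the idempotent sizes match, in the sense of the localisation of \cite[Subsection~2.1]{AraBosPar20Sel}. The other Stage 3 checks are formal.
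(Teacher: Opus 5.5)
Your three stages are the paper's proof. First, the relations of $\mathcal S_K(E,C,\sigma_E)$ hold in $\mathcal S_K(F,D,\sigma_F)$ by completeness of the subobject and because $\sigma^p_E$ is a restriction of $\sigma^p_F$. Second, the map extends to $\mathcal S^1_K$ because nonzero polynomials in the $t^v_i$ are invertible in $v\mathcal S^1_K(F,D,\sigma_F)v$. Third, it extends to $Q_K$ because $\Sigma_E$ lands in $\Sigma_F$. The paper calls that last inclusion clear, and your augmentation argument (with identity padding if needed) is just what makes it precise, so it is routine rather than the main obstacle.

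The one thing to fix is your fallback in the ``subtle case''. An injection $\tau\colon\Lambda^E_w\sqcup\NN\hookrightarrow\NN$ checked only against (i) does not give a well-defined map. Such a sink $w$ can be $r(\beta)$ for a connector $\beta$ at a non-minimal free vertex $v^q$ of $E$. Then relation (iv)(a) in $E$, namely $(t^{v^q}_i)^{\pm}\beta=\beta(t^w_{\sigma^q(i)})^{\pm}$, must map to a relation valid in $F$. But $F$ already has $(t^{v^q}_i)^{\pm}\beta=\beta(t^w_{\sigma^q(i)})^{\pm}$, since $\sigma^q_F(i)=\sigma^q_E(i)$. Multiplying on the left by $\beta^*$ and using $\beta^*\beta=w$ forces $t^w_{\tau(\sigma^q(i))}=t^w_{\sigma^q(i)}$. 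This fails in $F$ whenever $\tau$ moves $\sigma^q(i)$, because the $t^w_j$ generate a purely transcendental field in $w\mathcal S^1_K(F,D,\sigma_F)w$. So $\tau$ must fix every index used by (iv), and in general that leaves no room for a nonempty $\Lambda^E_w$.

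You must therefore take your first option. It is in fact what the restriction ``as above'' prescribes: regular primes carry no $\Lambda$, so $\Lambda^E_p\subseteq\Lambda^F_{\rho(p)}$ means $\Lambda^E_p=\emptyset$. This choice is made explicitly in the proof of \autoref{thm:QK-is-an-injective-direct-limit}. With it, $t^w_j\mapsto t^w_j$ for $j\in\NN$ respects all the relations.
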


\begin{proof}
Since $(E,C)$ is a complete subobject of $(F,D)$, the relations involving edges and vertices of $E$ are preserved by $\iota$. Since $\sigma_E^p$ is the restriction of $\sigma_F^p$ for each non-minimal free prime $p$ of $E$, the relations (i)-(iv) are also preserved by $\iota$. We thus obtain a well-defined $K$-algebra homomorphism $\iota \colon \mathcal S_K (E,C,\sigma_E) \to \mathcal S_K(F,D,\sigma_F)$.
Since all nonzero polynomials in $K[t_i^v:i\in \NN_{[v]}]$ become invertible in the corner $v\mathcal S_K^1(F,D,\sigma_F) v$, for $v\in E^0$, we obtain that $\iota$ extends uniquely to a $K$-algebra homomorphism, also denoted by $\iota$, from $\mathcal S_K^1(E,C,\sigma_E)$ to $\mathcal S_K^1(F,D,\sigma_F)$. Since $\Sigma_E \subseteq \Sigma_F$, it is clear that this map can be further uniquely extended to a $K$-algebra homomorphism $\iota\colon  Q_K(E,C,\sigma_E)\rightarrow Q_K(F,D,\sigma_F)$. 
\end{proof}

\begin{rmk}
    Observe that functoriality at this level is clear, that is, if we have $(E_1,C_1,\sigma_{E_1}) \le (E_2,C_2,\sigma_{E_2}) \le (E_3,C_3,\sigma_{E_3})$, then $\iota_{E_3,E_1}=\iota_{E_3,E_2}\circ \iota_{E_2,E_1}$. 
\end{rmk}

We now study the behaviour of our construction with respect to quotients of separated graphs. We first recall the following definitions from \cite{AraGoo12Crelle}.

\begin{dfn}[{\cite[Definition 6.3]{AraGoo12Crelle}}]
Let $(E, C)$ be a finitely separated graph. A subset $H$ of $E^0$ is said to be
\begin{itemize}
    \item[(a)] \emph{hereditary}\index[terms]{graph!hereditary set} if for any $e \in E^1$, we have $r(e) \in H$ whenever $s(e)\in H$.
    \item[(b)] \emph{$C$-saturated} if for any $v\in E^0$ and $X\in C_v$, $v\in H$ whenever $r(X) \subseteq H$.\index[terms]{separated graph!saturated set}
\end{itemize}
We denote by $\mathcal H (E, C)$ the lattice of hereditary $C$-saturated subsets of $E^0$.
\end{dfn}

Note that if $(E,C)$ is a super adaptable separated graph, then every hereditary subset $H$ of $E^0$ is automatically $C$-saturated. 

Given $H\in \mathcal H (E,C)$, we can form a \emph{quotient separated graph}\index[terms]{separated graph!quotient}\index[symbols]{$(E/H,C/H)\quad$(separated graph quotient)} $(E/H,C/H)$ \cite{AraGoo12Crelle} by setting
\begin{itemize}
    \item[(a)] $(E/H)^0= E^0\setminus H$,
    \item[(b)] $(E/H)^1= \{ e\in E^1 \mid r(e)\notin H\}$, and
    \item[(c)] for $X\in C$, set $X/H= \{e\in X\mid r(e)\notin H\}$. For $v\in (E/H)^0$, set $(C/H)_v = \{ X/H \mid X\in C_v\}$, which is a partition of 
$s^{-1}_{(E/H)^0} (v)$, and $C/H= \sqcup_{v\in (E/H)^0} (C/H)_v$. 
\end{itemize}

Take now $(E,C,\sigma_E)\in  \mathcal T$, with auxiliary data $(\Lambda^E,\sigma_E)$, and $H\in \mathcal H (E,C)$.
The separated graph $(E/H,C/H)$ is not in general super adaptable. In order to get a super adaptable graph, we need to define a new separated graph $((E/H)^{\ad} , (C/H)^{\ad})$, as follows:

Set $((E/H)^{\ad})^0 =(E/H)^0 = E^0\setminus H$, and
\[
((E/H)^{\ad})^1 = (E/H)^1 \setminus \left\{\alpha \in E_p^1\; \middle| \;
\begin{aligned}
    &p\in \Pfree (E) \text{ and }\\
    &r(\beta )\in H \text{ for all connectors } \beta \in X_\alpha
\end{aligned}
 \right\}.
\]
Take $(C/H)^{\mathrm{ad}}_v= (C/H)_v$ for $[v]\in \Preg (E)$ such that $v\notin H$, and 
\[
(C/H)^{\mathrm{ad}}_{v^p} = \{ (X_\alpha)/H \mid \text{ there exists a connector } \beta \in X_\alpha \text{ such that } r(\beta)\notin H \}
\]
for $p\in \Pfree (E)$ such that $v^p\notin H$. 

Note that $((E/H)^{\ad}, (C/H)^{\ad})$ is a super adaptable graph, with
\[
\Preg ((E/H)^{\ad}) = \Preg (E)\setminus \{[v]\in \Preg (E) \mid v\in H \}
\]
and 
\[
\Pfree ((E/H)^{\ad}) = 
\Pfree (E)\setminus \{p\in \Pfree (E) \mid v_p\in H \}.
\]

Further, we define 
\[
\Lambda _p^{(E/H)^{\ad}} =  \{\alpha \in E^1_p\mid r(\beta)\in H \text{ for all connectors } \beta\in  X_\alpha  \}  \sqcup \Lambda_p^E 
\]
for each $p \in \Pfree ((E/H)^{\ad})$, and  
\[
\sigma^p_{(E/H)^\ad} \colon ((E/H)^\ad)^1 \sqcup \Lambda_p^{(E/H)^\ad} \sqcup \NN = E_p^1 \sqcup \Lambda_p^E \sqcup \NN \to \NN
\]
by $\sigma^p_{(E/H)^\ad} = \sigma_E^p$ for each non-minimal $p \in \Pfree ((E/H)^{\ad})$. 

For $H\in \mathcal H(E,C)$, denote by $I(H)$ the ideal of $Q_K(E,C,\sigma_E)$ generated by $H$. 

\begin{prp}
\label{prp:quotients}
Suppose that $(E,C,\sigma_E)\in \mathcal T$, and that $H\in \mathcal H (E,C)$. Then there is a canonical $K$-algebra isomorphism  
\[
Q_K(E,C,\sigma_E) /I(H) \longrightarrow  Q_K ((E/H)^\ad ,(C/H)^\ad ,\sigma_{(E/H)^\ad})
\]
sending $v+I(H)$ to $v$ for $v\in E^0\setminus H$, $e+I(H)$ to $e$ whenever $r(e)\notin H$ and $[s(e)]\in \Preg (E)$, 
$\alpha + I(H)$ to $\alpha$ whenever $\alpha \in ((E/H)^\ad)^1_p$ for $p\in \Pfree ((E/H)^\ad)$, and $\alpha + I(H)$ to $t_{\alpha}$ whenever $\alpha \in E^1_p$,
$s(\alpha)= r(\alpha)\notin H$, and $r(\beta )\in H$ for all connectors $\beta \in X_\alpha$.  
    \end{prp}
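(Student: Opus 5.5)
We construct two mutually inverse $K$-algebra homomorphisms by passing through each stage of the construction of $Q_K$: first at the level of the $*$-algebras $\mathcal S_K$, then after inverting the polynomials in the variables $t_i^v$, and finally after the universal localisation at $\Sigma$. Write $(E',C')=((E/H)^\ad,(C/H)^\ad)$, $\sigma'=\sigma_{(E/H)^\ad}$, $Q=Q_K(E,C,\sigma_E)$ and $Q'=Q_K(E',C',\sigma')$.

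\textbf{The map $\Phi\colon Q\to Q'$.} Define $\Phi$ on generators of $\mathcal S_K(E,C,\sigma_E)$ as follows.
\begin{itemize}
\item[(a)] Send $v\in H$ and every edge $e$ with $r(e)\in H$ (together with its adjoint) to $0$.
\item[(b)] Send the remaining vertices and edges to themselves, except for the removed loops $\alpha\in E^1_p$ with all connectors of $X_\alpha$ ending in $H$. Such an $\alpha$ is sent to $t_\alpha^{v^p}$, and $\alpha^*$ to $(t_\alpha^{v^p})^{-1}$. This is meaningful because $\alpha\in\Lambda_p^{E'}\subseteq\NN^{E'}_p$.
\item[(c)] Send $t_i^v\mapsto t_i^v$ for $v\notin H$.
\end{itemize}

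Next we verify that the defining relations are preserved.
\begin{itemize}
\item[(i)] (SCK2) for a set $X\in C_v$ with $v\notin H$: if $X/H\in C'_v$, the relation reduces to (SCK2) in $E'$, since the removed terms are $0$. If $X=X_\alpha$ has all connectors in $H$, only $\alpha\alpha^*$ survives, and it maps to $t_\alpha t_\alpha^{-1}=v^p$.
\item[(ii)] (SCK1) for such $X_\alpha$ gives $\alpha^*\alpha\mapsto v^p$, which holds.
\item[(iii)] Relations (iii)(a),(b) of \autoref{pgr:AlgebraSKEC} hold because the $t$'s commute with each other and, by (iii)(a) in $E'$, with the surviving loops. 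Relation (iii)(b) for $\alpha^*\alpha'$ uses the same commutation.
\item[(iv)] In relations (iv)(b),(c), if $\alpha$ is removed we need $t_\alpha^{v^p}\beta=\beta\,t_{\sigma^p(\alpha)}^{r(\beta)}$. This is exactly (iv)(a) in $E'$, because $\sigma'^p(\alpha)=\sigma_E^p(\alpha)$ by the definition of $\sigma'$. This is the reason the auxiliary data are set up this way.
\end{itemize}
Relations where some connector $\beta$ lands in $H$ map to $0=0$.

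Since $\Phi(v)=v$ for $v\notin H$, nonzero polynomials in the $t_i^v$ still become invertible, so $\Phi$ extends to $\mathcal S^1_K$. For $\Sigma$: a polynomial $f\in\Sigma(p)$ in the loops of $E_p$ becomes, after substituting $t_\alpha$ for the removed loops, a polynomial over the larger field $L(t_\alpha)$ in the remaining loops. It still has nonzero constant valuation, i.e. $v(f)=0$, so it is invertible in $Q'$. Here we use that $L$ is the field of rational functions in countably many variables, and that $\sigma'$ identifies these variables with the $t_\alpha$. Matrices in $\Sigma(p)$ for regular $p\notin H$ go to $\Sigma'(p)$. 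Elements of $\Sigma(p)$ with $p\subseteq H$ go to $0$ in a zero corner; this is harmless because invertibility there is the trivial condition, since $v=0$. Hence $\Phi$ extends to $Q$, and it kills $I(H)$.

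\textbf{The map $\Psi\colon Q'\to Q/I(H)$.} Define $\Psi$ as the obvious inverse on generators.
\begin{itemize}
\item[(a)] Send the vertices, edges and $t_i^v$ with $i\in\NN$ to their cosets.
\item[(b)] For the new variables $t_\alpha^{v^p}$ with $\alpha\in\Lambda^{E'}_p\setminus\Lambda^E_p$, set $t_\alpha\mapsto\alpha+I(H)$.
\end{itemize}

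We need $\alpha$ to be a unitary in $v^pQv^p/I(H)$ commuting with the other $t$'s.
\begin{itemize}
\item[(i)] $\alpha^*\alpha=v^p$ holds by (SCK1).
\item[(ii)] $\alpha\alpha^*=v^p$ holds modulo $I(H)$, by (SCK2) for $X_\alpha$, because every connector in $X_\alpha$ ends in $H$.
\item[(iii)] Commutation with the other $t$'s and loops follows from (iii)(a),(b).
\item[(iv)] Relation (iv)(a) in $E'$ for $i=\alpha$ is (iv)(b),(c) in $E$.
\end{itemize}
So $\Psi$ is defined on $\mathcal S_K(E',C',\sigma')$. For the extension to $\mathcal S^1_K$, we must show that nonzero polynomials in the $t_i$ together with the loops $\alpha$ become invertible in the corner modulo $I(H)$. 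For this we use that such a polynomial has valuation $0$ in the relevant variables, so it lies in $\Sigma(p)$ in $Q$. For the extension through $\Sigma'$: an element of $\Sigma'(p)$ with coefficients in $L(t_\alpha)$ can, after clearing denominators (which are already inverted), be written as an element of $L[\text{loops of }E_p]$ with $v(\cdot)=0$, hence as an element of $\Sigma(p)$. Similarly, matrices in $\Sigma'(p)$ for regular $p$ are already in $\Sigma(p)$, since $E'_p=E_p$.

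\textbf{Conclusion and main obstacle.} Both composites fix generators, so $\Phi$ and $\Psi$ are mutually inverse. The hardest step is the localisation bookkeeping at free primes $p$. There one must justify that substituting units $t_\alpha$ for loops $\alpha$ exactly matches $\Sigma(p)$ in $E$ with the set of elements invertible over $L(t_\alpha)$ in $E'$. The plan is to argue as in \cite[Definition 2.7]{AraBosPar20Sel}: the corner at $v^p$ is a localisation of a Laurent-type commutative algebra, and valuation-zero elements correspond under the substitution.
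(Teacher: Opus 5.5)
Your strategy matches the paper's proof. You define the map on the generators of $\mathcal S_K(E,C,\sigma_E)$, check the defining relations, extend through $\mathcal S^1_K$ and the localisation at $\Sigma$ by universal properties, and build the inverse the same way. Your relation checks are fine, including the use of $\sigma^p_{(E/H)^\ad}=\sigma^p_E$ on removed loops for (iv)(b),(c). The forward localisation step is also fine. It really rests on the $t^{v^p}_\alpha$, for $\alpha\in\Lambda^{E'}_p\setminus\Lambda^E_p$, being fresh indeterminates algebraically independent over $L$. Then a surviving loop dividing the substituted polynomial would already divide $f$.

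The gap is in the localisation steps for $\Psi$. You claim that the image of a nonzero polynomial in the $t$'s lies in $\Sigma(p)$, and likewise the image of an element of $\Sigma'(p)$ after clearing denominators. This is false.

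\begin{itemize}
\item Take a removed loop $\alpha$. Then $t^{v^p}_\alpha$ is a nonzero polynomial that is inverted in $\mathcal S^1_K(E',C',\sigma')$, and $\Psi(t^{v^p}_\alpha)=\alpha+I(H)$. But $\alpha$ is divisible by the loop $\alpha$, so $\alpha\notin\Sigma(p)$.
\item Similarly, for a surviving loop $\alpha'$, the element $t^{v^p}_\alpha(v^p+\alpha')\in\Sigma'(p)$ maps to $\alpha(v^p+\alpha')\notin\Sigma(p)$.
\end{itemize}

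So substituting units for loops does not ``exactly match'' $\Sigma(p)$ with the invertible elements over $L(t_\alpha)$, as your last paragraph suggests. Nor does it need to: you only need the images to be invertible in $Q/I(H)$.

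The missing step is to factor each such image as $\mu h$. Here $\mu$ is the largest monomial in the removed loops dividing it, in the formal polynomial ring over $K[t_i : i\in\NN^E_p]$.

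\begin{itemize}
\item The factor $\mu$ is invertible in $v^p(Q/I(H))v^p$. You already showed each removed loop satisfies $\alpha^*\alpha=v^p$ and $\alpha\alpha^*\equiv v^p$ modulo $I(H)$.
\item The factor $h$ lies in $\Sigma(p)$. It is not divisible by any removed loop, by maximality of $\mu$.
\item Nor is $h$ divisible by any surviving loop. The substitution $t^{v^p}_\alpha\mapsto\alpha$ is just a renaming of polynomial variables over $K[t_i : i\in\NN^E_p]$, so it preserves divisibility by surviving loops. For an element of $\mathcal S^1$ the original involves no loops at all. For an element of $\Sigma'(p)$ it has $v(\tilde f)=0$ in $(E/H)^\ad$.
\end{itemize}

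Do this first for nonzero polynomials in $t_i$ and $t_\alpha$, which gives the extension of $\Psi$ to $\mathcal S^1_K$. Then treat $\Sigma'(p)$: there the denominators are polynomials of exactly that kind, so they are already sent to units.
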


\begin{proof}
The proof uses the universal properties of the objects involved. First, one defines a homomorphism 
\[
\mathcal S _K(E,C,\sigma_E)  \longrightarrow  Q_K ((E/H)^\ad ,(C/H)^\ad ,\sigma_{(E/H)^\ad})
\]
by using the rules in the statement, and sending $H$ to $0$ and $r^{-1}(H)$ to $0$. Further, all the variables $(t_i^v)^\pm$ must be sent to the corresponding variables $(t_i^v)^\pm$ when $v\notin H$. One can check that all the defining relations of $\mathcal S _K(E,C,\sigma_E)$ are preserved by this homomorphism. By the universal property of universal localization, this homomorphism induces a $K$-algebra homomorphism 
\[
 Q_K(E,C,\sigma_E)  \longrightarrow  Q_K ((E/H)^\ad ,(C/H)^\ad ,\sigma_{(E/H)^\ad})
\]
which factors through the quotient by the ideal $I(H)$. Using the same method, one can construct an inverse map. 
\end{proof}


\section{The proof for finite super adaptable graphs}
\label{subsect:proof-finite-case}

The aim of this section is to prove \autoref{thmIntro:Main} for \emph{finite} super adaptable graphs. Concretely, we will show the following:

\begin{thm}
    \label{thm:realization-finite-case}
Let $(E,C)$ be a finite super adaptable graph, and let $(\Lambda,\sigma)$ be auxiliary data for $(E,C)$.
Then $Q_K(E,C,\sigma)$ is a von Neumann regular ring, and the natural map 
\[
    \mathcal M (E,C)\to V (Q_K(E,C,\sigma))
\]
is a monoid isomorphism. 
\end{thm}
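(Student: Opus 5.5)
We argue by induction on the number of elements of the finite poset $\mathbb P(E)$, following the scheme of \cite[Theorem~B]{AraBosPar20Sel} for adaptable graphs, and using the restriction and quotient results of \autoref{subsect:restrict-and-quotient}. Fix a maximal element $p\in\mathbb P(E)$ and set $H=E^0\setminus E^0_p$. Since $p$ is maximal, $H$ is hereditary, and it is automatically $C$-saturated because $(E,C)$ is super adaptable. By \autoref{prp:restriction}, the complete subobject $(E_H,C^H)$ with the restricted auxiliary data gives a homomorphism $\iota\colon Q_K(E_H,C^H,\sigma_H)\to Q_K(E,C,\sigma)$. By \autoref{prp:quotients},
\[
Q_K(E,C,\sigma)/I(H)\cong Q_K\bigl((E/H)^{\ad},(C/H)^{\ad},\sigma_{(E/H)^{\ad}}\bigr),
\]
and the graph on the right consists only of the single component $E_p$ with the connectors removed.

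\textbf{One-component case.} If $p$ is free and $E_p$ is a single vertex, the quotient algebra is a localisation of a commutative polynomial/Laurent algebra over $L=K(t_i)$ in the commuting loops $\alpha$, by the polynomials in $\Sigma(p)$. As in \cite[Section~2]{AraBosPar20Sel}, this is a field. For a regular component, the quotient is the regular algebra of the SPI graph $E_p$ over $L$, obtained by Cohn–Schofield localisation of the path algebra. Its regularity and $V$-monoid, namely $\mathcal M(E_p)$, follow from \cite{AraBru02Quiver} and \cite{AraBosPar20Sel}. For a strongly connected SPI graph this monoid is $\{0\}\sqcup G$.

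For the ideal, I would show that $I(H)\cong$ (a ring with local units Morita equivalent to) $Q_K(E_H,C^H,\sigma_H)$ via $\iota$. The aim is to prove that $\iota$ is injective onto a corner-type ideal. By induction, that ideal is then regular with $V(I(H))\cong\mathcal M(E_H,C^H)$, which is the order-ideal of $\mathcal M(E,C)$ attached to the lower subset $H$; see \autoref{pgr:OIdMJ}. Regularity of $Q_K(E,C,\sigma)$ then follows from regularity of the ideal and of the quotient. Refinement and the standard extension argument give that $V(Q)$ is an extension of $V(Q/I)$ by $V(I)$.

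\textbf{Computing $V$.} I would then show that the natural map $\mathcal M(E,C)\to V(Q)$ is an isomorphism. Surjectivity uses that $V(Q)$ is generated by the classes of vertices, since both $V(I)$ and $V(Q/I)$ are. Injectivity uses a comparison of the extension data. Both monoids are primely generated refinement monoids with the same order-ideal $H$ and the same quotient, so it suffices, by \autoref{thm:APIsr16} and \autoref{prp:isoIsysMon}, to check that the groups $G_p$ agree. That is, the stabiliser of $[v^p]$, or the group $\mathcal M(E_p)^*$ in the regular case, must be detected in $V(Q)$ by the same relations ($e_{\rm reg}$), ($e_{\rm free}$).

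\textbf{Main obstacle.} The hardest step will be the free maximal component with possibly several colours. There one must show two things: that $I(H)$ is generated by $\iota(Q_K(E_H,C^H))$ with the expected $V$-monoid, and that $v^p$ absorbs exactly the elements of the kernel $K_p$. This is where the auxiliary data $\sigma$ and the twisting relations (iv) are used. They make the connectors $\beta$ intertwine the transcendental variables $t^{v^p}_i$ and the loops $\alpha$ with independent variables at $r(\beta)$, so that a corner computation as in \cite[Section~3]{AraBosPar20Sel} goes through. I would first attempt to reduce to that argument colour by colour, using the injectivity of $\sigma^p$ to guarantee that distinct loops and colours land on distinct variables. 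For the regular maximal component, I expect the SPI argument of \cite{AraBosPar20Sel} to carry over, since it uses \autoref{lma:charcSPI} in place of the condition $|s^{-1}(w)|\ge2$.
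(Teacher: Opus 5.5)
Your route is genuinely different from the paper's, but as written it has gaps exactly where the work lies.

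\textbf{The paper's route.} The paper does not peel off a maximal component. It first reduces, through chains of crowned pushouts (\autoref{thm:reduction-theorem-for-conditionF}), to graphs satisfying condition (F). It then cuts such a graph into building blocks and identifies each $Q_K(G,\sigma_G)$ with a regular algebra $Q_{\mathbf K}(G)$ over a poset of fields \cite{Ara19Poset}; this is where the corners at lower vertices are controlled, with the variables tracked through $\sigma$. Finally it glues back through pullbacks $Q\to Q_i\to \ol{Q}$, using \cite[Theorem~4.13]{AraBosPar20Sel}. That theorem gives regularity and $V(Q)$ simultaneously, once its $K_1$-hypothesis is checked via $\partial_i([t^v_{\alpha_i}])=\sum_{\beta\in X_i'}[\beta\beta^*]$ and $\rho_j(t^v_{\alpha_i})=t^v_{\alpha_i}$.

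\textbf{The ideal $I(H)$ is not identified.} That $I(H)$ is Morita equivalent to $Q_K(E_H,C^H,\sigma_H)$ is only stated as an aim. You need $\iota$ injective and $wQw'=\iota(wQ_K(E_H,C^H,\sigma_H)w')$ for $w,w'\in H$. However, localising at $\Sigma(p)$ for the top component creates new elements in these corners, for instance $\beta^*f(\alpha)^{-1}\beta$ or $(\alpha\beta)^*f(\alpha)^{-1}\beta$ with $f\in\Sigma(p)$. Showing they already lie in $K(t^{r(\beta)}_j)$ needs a normal-form argument using relations (iii)(c), (iv) and the injectivity of $\sigma^p$. Without it you have neither regularity of $I(H)$ nor $V(I(H))$.

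\textbf{The computation of $V(Q)$ is missing.} $V$ of an extension of regular rings is not determined by $V(I)$ and $V(Q/I)$.
\begin{itemize}
\item Your surjectivity inference is false in general. Take $R=K\times M_2(K)$ and $I=0\times M_2(K)$. Then $V(I)$ is generated by $u=[(0,e_{11})]=(0,1)$ and $V(R/I)$ by the image of $[1]=(1,2)$. Yet $[(1,0)]=(1,0)$ is not in the submonoid generated by $(1,2)$ and $(0,1)$. In your setting, the same issue arises for idempotents $e\le v^p$ with $v^p-e\in I(H)$.
\item Your injectivity step presupposes that $V(Q)$ is primely generated, with the vertex classes as its primes, so that \autoref{thm:APIsr16} and \autoref{prp:isoIsysMon} apply.
\item ``Checking that the groups $G_p$ agree'' is the whole problem. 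When $p$ is free and maximal, every connector lands in $H$, so passing to $Q/I(H)$ turns every loop into a transcendental $t_\alpha$ and erases all colour information. All relations $a_{v^p}=a_{v^p}+\widehat{x_i}$, and the absence of further ones, must then be recovered from the index map $K_1(v^p(Q/I)v^p)\to K_0(I)$. You would also need a nonstable result on lifting units of the quotient to partial isometries with prescribed defects, which the proposal does not supply.
\end{itemize}

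\textbf{Graphs without condition (F).} Your ``colour by colour'' reduction has no mechanism when a vertex of $H$ receives connectors from several colours of $v^p$, or when $\mathbb P(E)$ is not a forest. This is precisely the situation the paper handles with the crowned-pushout reduction.
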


For adaptable graphs, the proof of \autoref{thm:realization-finite-case} is \cite[Theorem~B]{AraBosPar20Sel}. Recall from \autoref{pgr:AdapGraph} that all finite adaptable graphs are finite super adaptable graphs, but that the converse is not true. However, the proof of \cite[Theorem~B]{AraBosPar20Sel} can be adapted to finite super adaptable graphs by doing a few modifications. Namely, these modifications involve keeping track of the functions $\sigma^p$ that we have used to define our algebras $Q_K(E,C,\sigma)$ through all the processes performed to the separated graphs appearing in \cite{AraBosPar20Sel}. We heavily encourage the reader to
follow the proof of \cite[Theorem~B]{AraBosPar20Sel} whilst, in parallel, apply the modifications listed below.

Just like \cite[Theorem~B]{AraBosPar20Sel}, the proof of \autoref{thm:realization-finite-case} proceeds by a sequence of reduction steps to certain building blocks, and then a reconstruction of the separated graph (with auxiliary data) $(E,C,\sigma)$, the monoid
$\mathcal M (E,C)$, and the algebra $Q_K(E,C,\sigma)$ from said building blocks, in such a way that we can infer \autoref{thm:realization-finite-case} from the corresponding result for the blocks.

The fact that the result holds for the building blocks will follow from the results in \cite{Ara19Poset}, which we explain in detail below. Note that, at the level of the constructions with separated graphs and associated monoids, we follow exactly the same steps as in \cite{AraBosPar20Sel}. The only variation concerns the consideration here of auxiliary data, and the changes that this introduces in the corresponding algebras. 

Let us have a look at the different steps, keeping track of the auxiliary data we are presently using:

\begin{ntn}
Let $(E,C)$ be a super adaptable graph, with corresponding poset $I=\mathbb P (E)=E^0/{\sim}$. If $J$ is a lower subset of $I$, the restriction graph $E_J$ has a natural structure of separated graph $(E_J,C_J)$, under which it is clearly super adaptable. When $J= I\downarrow [v]$ for a vertex $v\in E^0$, we will denote by $T(v)$ the separated graph $(E_J,C_J)$, and we will say that $T(v)$ is the \emph{tree} of $v$. Of course, the vertex set of $T(v)$, denoted by $T^0(v)$, is the set
of all $w\in E^0$ such that $v\ge w$. We will also need to consider the \emph{strict tree} of $v$, which is the separated graph $\tilde T (v)$ obtained by restricting $(E,C)$ to the lower subset $J' = \{ [w] \in E^0 \mid [w] < [v] \}$ of $I$. Note that lower subsets of $I$ correspond bijectively to hereditary subsets of $E^0$.

We will use the following notation: Let $(E,C)$ and $I$ be as above.
For $[v]\in \Ireg$, set
\[
    X_{[v]} = \{ e\in E^1 \mid s(e)\in [v] \}= s^{-1}([v]),
\]
that is, $X_{[v]}$ is the set of arrows departing
from the strongly connected component $[v]$. 
Now write
\[
    \ol{C} =  \Big( \bigsqcup_{v\in I_{{\rm free}}} C_v\Big) \sqcup \{ X_{[v]} \mid [v] \in \Ireg \},
\]
and set $\ol{C}_v = C_v$ if $v\in \Ifree$ and $\ol{C}_v= \{ X_{[v]}  \}$
if $v\in \Ireg$. 
\end{ntn}

We can now introduce condition (F) for super adaptable graphs; cf. \cite[Section 3]{AraBosPar20Sel}.

Recall the definitions of a tree and a forest for a poset, given in \autoref{dfn:poset-be-a-tree}.

\begin{dfn}
\label{dfn:consition-F}
Let $(E,C)$ be a super adaptable separated graph. We say that $(E,C)$ satisfies \emph{condition (F)}\index[terms]{super adaptable graph!condition (F)} if  
for each $v\in E^0$ there is at most one $X\in \ol{C}\setminus \ol{C}_v$ such that $r(X)\cap [v] \ne \emptyset$.\end{dfn}

By \cite[Lemma 3.1]{AraBosPar20Sel}, if $(E,C)$ is a finite super adaptable graph satisfying condition (F), then $I$ is a forest.

We now introduce a class of morphisms that will be considered in this chapter. Note that they do not belong in general to the category SGr.

\begin{dfn}
 \label{dfn:covermorphism}
 Let $(E,C)$ and $(F,D)$ be two super adaptable separated graphs. A \emph{cover}\index[terms]{super adaptable graph!cover} $\phi \colon (F,D) \to (E,C)$ is a graph homomorphism $\phi = (\phi^0,\phi^1)\colon F\to E$ such that the following conditions hold:
 \begin{itemize}
  \item[(a)] $\phi ^0$ and $\phi^1 $ are surjective.
  \item[(b)] For each $v\in F^0$, the map $\phi^1$ induces a bijection $\phi^1_v\colon s_F^{-1} (v) \to s^{-1}_E (\phi^0(v))$ such that $\phi ^1_v(X)\in C_{\phi^0(v)}$ for each $X\in D_v$. In particular, $\phi^1$ induces a bijection
  $X\mapsto \phi ^1(X)$ from $D_v$ onto $C_{\phi^0 (v)}$.
 \end{itemize}
\end{dfn}

The notion of crowned pushout from \cite[Definition 5.1]{AraBosPar20Sel} is crucial for our approach. 
We will use this notion here, with the extra information of the auxiliary data.

\begin{dfn}
 \label{dfn:crowned-pushout}
Let $((E_1,C^1,\sigma_1), (E_2,C^2,\sigma_2))$ be a pair of finite super adaptable graphs with auxiliary data $(\Lambda^i,\sigma_i)$, $i=1,2$.
We say that the pair $((E_1,C^1,\sigma_1), (E_2,C^2,\sigma_2))$ is a \emph{crowned pushout}\index[terms]{super adaptable graph!crowned pushout} if there is a cover map $\phi \colon (E_1,C^1)\to (E_2,C^2)$ satisfying conditions (i)-(v)
in \cite[Definition 5.1]{AraBosPar20Sel} and such that for each $p\in \Pfree (E_1)$ we have $\Lambda_p^1 = \Lambda_{\phi(p)}^2$, $\sigma_2^{\phi(p)} (i) =\sigma_1^p(i)$ for each 
$i\in \NN_p$, and $\sigma_2^{\phi(p)} (\phi^1(\alpha)) = \sigma_1^p (\alpha)$ for all $\alpha \in (E_1)_p^1$. 
\end{dfn}

In \autoref{dfn:crowned-pushout} above, we have implicitly used that $\phi$ sends free primes to free primes. This property and other important properties of crowned pushouts are shown in \cite[Lemma 5.2]{AraBosPar20Sel}.

\begin{thm}
\label{thm:reduction-theorem-for-conditionF}
Let $(E,C,\sigma)$ be a finite super adaptable graph with auxiliary data $(\Lambda, \sigma)$. Then there is a finite sequence of finite super adaptable graphs $(E_i,C^i,\sigma_i)$ with auxiliary data $(\Lambda_i, \sigma_i)$, $i=0,1,\dots ,n$, such that
\begin{itemize}
    \item[(a)] $(E_n,C^n,\sigma_n)=(E,C,\sigma)$;
    \item[(b)] $(E_0,C^0)$ satisfies condition (F);
    \item[(c)] each pair $((E_{i},C^{i},\sigma_{i}),(E_{i+1},C^{i+1},\sigma_{i+1}))$ is a crowned pair.
\end{itemize}
\end{thm}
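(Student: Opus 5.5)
The plan is to follow the proof of the corresponding reduction result in \cite{AraBosPar20Sel} (the one producing a chain of crowned pushouts ending in a graph with condition (F)), performing exactly the same combinatorial operations on the separated graphs, and only adding a bookkeeping layer for the auxiliary data. I will argue by induction on a complexity measure of $(E,C)$ that counts the failures of condition (F). Concretely, I take the sum over $v\in E^0$ of $\max(0, N(v)-1)$, where $N(v)$ is the number of sets $X\in \ol{C}\setminus\ol{C}_v$ with $r(X)\cap[v]\neq\emptyset$. Since $(E,C)$ is finite, this measure is a non-negative integer, and it vanishes precisely when condition (F) holds. In that case I take $n=0$ and $(E_0,C^0,\sigma_0)=(E,C,\sigma)$.

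For the inductive step, I pick a component $[v]$ that is maximal among those where (F) fails, so that at least two distinct sets $X,X'\in\ol{C}$ hit $[v]$. I then build, exactly as in \cite[Section 5]{AraBosPar20Sel}, a finite separated graph $(E',C')$ in which the lower subgraph $T(v)$ is duplicated. One copy receives the arrows coming from $X$ and the other copy receives the rest, and the obvious map $\phi\colon (E',C')\to(E,C)$ is a cover satisfying conditions (i)--(v) of \cite[Definition 5.1]{AraBosPar20Sel}. Here I have to check that super adaptability is preserved. This should be immediate, because the components of $E'$ are copies of components of $E$. Regular components remain strongly connected SPI graphs with a single colour at each vertex, by \autoref{lma:charcSPI}. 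Free components remain single vertices whose colours $X_\alpha$ contain exactly one loop and have size between $2$ and a finite bound, because $\phi$ induces bijections $s^{-1}_{E'}(w)\to s^{-1}_E(\phi^0(w))$ respecting the colours. The new graph has strictly smaller complexity, so induction applied to $(E',C',\sigma')$ yields the chain, and appending $(E,C,\sigma)$ completes it.

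The new ingredient is the auxiliary data on $(E',C')$. For each free $p'\in\Pfree(E')$, I set $\Lambda'_{p'}:=\Lambda_{\phi(p')}$. For non-minimal $p'$, I define $\sigma'^{p'}(i):=\sigma^{\phi(p')}(i)$ for $i\in \NN_{p'}$ and $\sigma'^{p'}(\alpha):=\sigma^{\phi(p')}(\phi^1(\alpha))$ for $\alpha\in E'^1_{p'}$. This is well defined and injective because $\phi^1$ restricts to a bijection $E'^1_{p'}\to E^1_{\phi(p')}$, since the loops at $v^{p'}$ correspond bijectively to the colours, and $\phi$ preserves minimality and freeness by \cite[Lemma 5.2]{AraBosPar20Sel}. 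With this choice, the compatibility conditions in \autoref{dfn:crowned-pushout} hold by construction. The disjointness requirement on the sets $\Lambda'_{p'}$ across different $p'$ needs care, because two copies $p'_1,p'_2$ of the same $p$ would share $\Lambda_p$. I would handle this by formally tagging each copy, writing $\Lambda'_{p'}:=\Lambda_p\times\{p'\}$ and transporting $\sigma$ along the tag. I then need to check that \autoref{dfn:crowned-pushout} asks only for equality of these sets up to this canonical identification. If it does not, I would instead relax the disjointness condition, which is only notational.

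I expect the main obstacle to be verifying that the duplication step of \cite{AraBosPar20Sel} still works when the regular components are SPI but not adaptable, that is, when they contain vertices emitting a single edge. The concern is whether condition (ii) of \cite[Definition 5.1]{AraBosPar20Sel}, which governs the lifting of edges leaving regular components, uses the hypothesis $|s^{-1}_{E_p}(w)|\ge 2$. If it does, I would replace it with the weaker property guaranteed by \autoref{lma:charcSPI}(iii), namely that $|\CSP(w)|\ge 2$ for every vertex $w$, and then check that this is all the argument really needs.
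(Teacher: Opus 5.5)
Your auxiliary-data bookkeeping is what the paper does. Its proof simply runs the reduction of \cite[Theorem~3.3]{AraBosPar20Sel} and equips each new graph with the unique data compatible with \autoref{dfn:crowned-pushout}. That is precisely your pull-back $\Lambda'_{p'}=\Lambda_{\phi(p')}$, $\sigma'^{p'}(i)=\sigma^{\phi(p')}(i)$, $\sigma'^{p'}(\alpha)=\sigma^{\phi(p')}(\phi^1(\alpha))$.

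The gap is in the induction you supply yourself. If $[v]$ is chosen \emph{maximal} among the components violating (F), then duplicating $T(v)$ also duplicates the violations inside $T(v)$, and your measure need not drop. For example, take:
\begin{itemize}
\item free vertices $u_1,u_2$, each with a single colour consisting of a loop and a connector to a free vertex $v$;
\item $C_v=\{X_1,X_2\}$ with $X_j=\{\alpha_j,\beta_j\}$, where $\alpha_j$ is a loop and $\beta_j$ is a connector to a sink $w$.
\end{itemize}
This graph is finite and super adaptable. The component $[v]$ is maximal violating, being hit by the colours at $u_1$ and $u_2$, and $[w]$ also violates (F), being hit by $X_1$ and $X_2$. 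So your measure is $1+1=2$. After duplicating $T(v)=\{v,w\}$, each $v^{(j)}$ is hit by one set, but each $w^{(j)}$ is still hit by both $X_1^{(j)}$ and $X_2^{(j)}$. The measure is again $0+0+1+1=2$.

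There is a second problem with the maximal choice. Components of $T(v)\setminus[v]$ may be entered by sets whose sources lie outside $T(v)$, so ``the other copy receives the rest'' does not determine a canonical cover.

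The fix is to take $[v]$ \emph{minimal} among violating components (or simply to invoke the induction of \cite[Theorem~3.3]{AraBosPar20Sel}, as the paper does). With $[v]$ minimal, each $[w]$ with $w<v$ is hit by a unique $Y\in\ol{C}\setminus\ol{C}_w$. Some path from $v$ enters $[w]$ through an edge of $Y$, so the source of $Y$ lies in $T(v)$. Consequently:
\begin{itemize}
\item $T(v)$ is entered from outside only through $[v]$;
\item each copy of $[w]$ is hit only by the corresponding copy of $Y$;
\item components outside $T(v)$ keep their counts;
\item your measure drops by $|[v]|$.
\end{itemize}

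Your disjointness worry is legitimate, but tagging $\Lambda'_{p'}=\Lambda_p\times\{p'\}$ is not available. \autoref{dfn:crowned-pushout} demands the literal equality $\Lambda^1_p=\Lambda^2_{\phi(p)}$, so distinct copies of one free prime must carry the same set $\Lambda_p$, and the paper tacitly accepts this. It is harmless for three reasons:
\begin{itemize}
\item $\Lambda_p$ only indexes the variables $t^{v^p}_i$ at $v^p$;
\item the copies are pairwise incomparable;
\item the disjointness actually used later (the quotient construction and the building-block argument) is only from $\NN$, from $E^1$, and among primes on a common chain.
\end{itemize}

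Finally, the SPI-versus-adaptable concern does not arise. Duplication produces isomorphic copies of components, so $|\CSP(w)|\ge 2$ and the single-colour condition are inherited verbatim. The paper accordingly uses the combinatorial construction of \cite{AraBosPar20Sel} unchanged.
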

\begin{proof}
    The proof follows the steps in \cite[Theorem 3.3]{AraBosPar20Sel}. Note that, in the inductive step of that proof, we can define unique auxiliary data $(\Lambda_{(F',D')},\sigma_{(F',D')})$ for $(F',D')$
   satisfying the required properties with respect to the assumed auxiliary data $(\Lambda_{(F,D)},\sigma_{(F,D)})$ for $(F,D)$. As observed in \cite[Section 5]{AraBosPar20Sel}, after all the steps involved in the proof of \cite[Theorem 3.3]{AraBosPar20Sel} are completed, we can define the above chain, starting with the separated graph $(E_0,C^0,\sigma_0)$, and ending with the original separated graph $(E,C,\sigma)= (E_n,C^n,\sigma_n)$.  
\end{proof}

The next step consists in establishing \autoref{thm:realization-finite-case} in the particular case where $(E,C)$ satisfies condition (F) (this is done in \cite[Section 4]{AraBosPar20Sel} for adaptable graphs). For this purpose, we further decompose a finite super adaptable separated graph $(E,C)$ satisfying condition (F) by using some subgraphs, called the \emph{building blocks} of $(E,C)$. 

\begin{pgr}[Building blocks]
     Say that $\varphi \colon E^0\setminus \text{Sink}(E) \to C$ is a \emph{choice function}\index[terms]{super adaptable graph!choice function}  if $\varphi (v)\in C_v$ for each $v\in E^0\setminus \text{Sink}(E)$. Given such a choice function $\varphi$, define a graph $E_{\varphi}$ by  $E_{\varphi}^1 = \bigsqcup _{v\in E^0\setminus \text{Sink}(E)} \varphi (v)$ and $E_{\varphi}^0 = s_E(E_{\varphi}^1) \cup r_E(E_{\varphi}^1)$. The source and range maps  in $E_{\varphi}$ are defined in  such a way that the inclusion map $E_{\varphi} \to E$ becomes a graph homomorphism.  A \emph{building block}\index[terms]{super adaptable graph!building blocks} of $(E,C)$ is a connected component of a graph of the form $E_{\varphi}$. We will denote by $\mathcal F$ the collection  of all the building blocks of $(E,C)$. Observe that, since $(E,C)$ satisfies condition {\rm (F)}, the associated poset of each building block is a tree.
\end{pgr}

\begin{rmk}\label{rmk:BldBlocks}
    Let $G$ be a building block. This is a non-separated graph, but we may also think of it as a special instance of a super adaptable separated graph, in which 
the non-minimal free primes emit just a single loop (and at least one connector). 
Here, for auxiliary data $(\Lambda^G, \sigma_G)$, if $p$ is a non-minimal free prime, then the value of $\sigma_G^p$ on the unique $\alpha \in G_p^1$ is irrelevant for the definition of the algebra $Q_K(G,\sigma_G)$, because relations (iv)(b) and (iv)(c) only apply when $|G_p^1|>1$. Hence all the structure is determined by the auxiliary sets $\Lambda _p^G$ and the injective maps $\sigma^p_G\colon \Lambda_p^G\sqcup \NN\to \NN$.
\end{rmk}

By \autoref{rmk:BldBlocks}, we can fix mutually disjoint countable sets $\Lambda_p^G$, for $p\in \Pfree (G)$, and injective maps $\sigma^p \colon \Lambda^G_p\sqcup \NN \to \NN$ for each non-minimal free prime, and consider the corresponding $K$-algebra $Q_K(G,\sigma_G)$. This algebra is isomorphic to a certain regular algebra $Q_{\mathbf K}(G)$ of $G$ over a poset of fields $\mathbf K$. We recall what these are below:

\begin{pgr}[The algebra $Q_{\mathbf K}(G)$]
    The theory presented in this paragraph was first developed in \cite{Ara19Poset}. Let $(I,\le )$ be a finite poset. 
Following \cite{Ara19Poset}, we define a \emph{poset of fields}\index[terms]{poset!of fields} as a family $\mathbf K = \{ K_i : i\in I \}$ of fields $K_i$ with the property that $K_i\subseteq K_j$ if $j\le i$.
Let $G$ be a finite directed graph. We assume that there is a pre-order $\le $ on $G^0$ such that $v\le w$ whenever there is a directed path $\gamma $ such that $s_G(\gamma) = w$ and $r_G(\gamma) = v$, and 
we further assume that the partially ordered set $I:= G^0/{\sim}$, associated to the pre-ordered set $(G^0,\le)$, is a tree with greatest element $i_0:=[v_0]$. Denote by $[v]$ the class of $v\in G^0$ in $I$.

Given a poset of fields $\mathbf K$ over $I$, we define the algebra $P_{\mathbf{K}}((G))$ as the algebra of formal power series of the form $a=
\sum_{\gamma \in \text{Path} (G)} a_{\gamma} \gamma $, where each
$a_{\gamma}\in K_{[r(\gamma )]}$. The usual multiplication of formal
power series gives a structure of algebra over $K_0:=K_{i_0}$ on
$P_{\mathbf{K}}((G))$. Indeed if $(a\gamma)(b\mu)$ is nonzero, where $a\in K_{[r(\gamma)]}$ and $b\in K_{[r(\mu )]}$, then $s(\mu) = r(\gamma )$
and it follows from the property of $\le$ that $[r(\gamma)] \ge [r(\mu )]$ in $I$. Then we have $K_{[r(\gamma)]}\subseteq K_{[r(\mu)]}$ 
and so $ab\in K_{[r(\mu)]} = K_{[r(\gamma \mu)]}$, which shows that the product in $P_{\mathbf{K}}((G))$ is well-defined. 

The path algebra $P_{\mathbf{K}}(G)$\index[terms]{path algebra}\index[symbols]{$P_{\mathbf{K}}(G)\quad$(path algebra)} is defined as the subalgebra of
$P_{\mathbf{K}}((G))$ consisting of all the series in
$P_{\mathbf{K}}((G))$ having finite support. We have a natural augmentation homomorphism 
\[
    \epsilon \colon P_{\mathbf K} ((G)) \longrightarrow \bigoplus _{v\in G^0} K_{[v]}v .
\]
We denote by $\Sigma $ the set of all square matrices over $P_{\mathbf K}(G)$ which are sent to invertible matrices by $\epsilon$. 

Write $R:=P_{\mathbf{K}}(G)$. For any $v\in G^0$ such that $s^{-1}(v)\neq\emptyset$ we put
 $s^{-1}(v)=\{e^v_1,\dotsc,e^v_{n_v}\}$, and we
consider the left $R$-module homomorphism
 \begin{align*}
  \mu_v\colon Rv&\longrightarrow \bigoplus_{i=1}^{n_v}Rr(e^v_i)\\
  r&\longmapsto\left(re^v_1,\dotsc,re^v_{n_v}\right)
 \end{align*}
 Write $\Sigma_1=\{\mu_v\mid v\in G^0,\,s^{-1}(v)\neq \emptyset\}$, and define $Q_{\mathbf K}(G) := P_{\mathbf K}(G) (\Sigma \cup \Sigma_1)^{-1}$
\end{pgr}

\begin{thm}[\cite{Ara19Poset}]
\label{thm:Poset-of-Fields}
With the previous notation, the following properties hold:
\begin{itemize}
 \item[(a)] $Q_{\mathbf{K}}(G)$ is a hereditary von Neumann regular ring.
 \item[(b)] The natural map $\mathcal M(G) \to V (Q_{\mathbf K}(G))$ is a monoid isomorphism. 
\end{itemize}
\end{thm}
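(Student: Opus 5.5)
This result is attributed to \cite{Ara19Poset}, so the proof I would give is a reduction to the structure theory of regular algebras over a poset of fields; the main inductive tool is the tree structure of $I$. The approach is induction on $|I|$ along the tree, peeling off the root component.

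\textbf{Base case.} Suppose $G$ consists of a single strongly connected component, so $I=\{i_0\}$ and $\mathbf K$ is a single field $K_0$. Then $Q_{\mathbf K}(G)$ is the regular algebra $Q_{K_0}(G)$ of a finite graph, as in \cite{AraBru02Quiver}. Inverting $\Sigma\cup\Sigma_1$ is exactly the Cohn–Schofield localization that yields a hereditary von Neumann regular ring with $V\cong\mathcal M(G)$. I would invoke this directly, with Bergman's theorem \cite{Ber74CoProd,BerDic78UniDer} giving $V(P(\Sigma_1)^{-1})\cong\mathcal M(G)$ together with the hereditary property. The further inversion of $\Sigma$ does not change $V$, since each matrix in $\Sigma$ becomes invertible in the regular quotient.

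\textbf{Inductive step.} Let $H=G^0\setminus [v_0]$; this is a hereditary set whose poset $I\setminus\{i_0\}$ is a forest of smaller trees. I would first show that the ideal generated by $H$ in $Q_{\mathbf K}(G)$ is isomorphic to $\bigoplus Q_{\mathbf K|}(G_{H_j})$, one summand for each subtree $H_j$, and that these summands are regular with the correct $V$ by induction. I would then show that the quotient by this ideal is $Q_{K_0}(G/H)$, which is covered by the base case.

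Regularity then follows from the fact that extensions of regular rings by regular ideals are regular. Heredity follows similarly, using that each piece is a universal localization of a hereditary path algebra, since universal localization preserves heredity by Bergman–Dicks.

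For $V$, the inclusion of $H$ gives an order-ideal of $\mathcal M(G)$ with quotient $\mathcal M(G/H)$. The natural map is compatible with the short exact sequences of monoids, and an ideal-extension argument for refinement monoids lifting idempotents shows that it is an isomorphism.

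\textbf{Main obstacle.} The hardest point is identifying the ideal with the localized algebra of the lower part while the coefficient field jumps from $K_0$ to the larger $K_{[w]}$. Here I would use the power series realization $P_{\mathbf K}((G))$: show that $Q_{\mathbf K}(G)$ embeds as the division closure of $P_{\mathbf K}(G)$ in a suitable completion. This lets me compute the corners $wQw$ for $w\in H$ inside $P_{\mathbf K}((G_H))$ over $K_{[w]}$, which gives injectivity of the comparison maps.
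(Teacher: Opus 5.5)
This statement is not proved in the paper. It is quoted from \cite{Ara19Poset} and used as a black box for the building blocks. Judged on its own, your induction has a genuine gap exactly at the step you call the main obstacle.

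First, the ideal $I(H)$ generated by $H=G^0\setminus[v_0]$ is not isomorphic to $\bigoplus_j Q_{\mathbf K}(G_{H_j})$. In general it is non-unital, and it contains elements $\gamma x\mu^*$ with $\gamma,\mu$ paths from $[v_0]$ into $H$; these lie outside $p_HQ_{\mathbf K}(G)p_H$, where $p_H=\sum_{w\in H}w$. What one can hope for is $p_HQ_{\mathbf K}(G)p_H\cong Q_{\mathbf K|_H}(G_H)$. Even then, regularity of $I(H)=Q_{\mathbf K}(G)p_HQ_{\mathbf K}(G)$ needs an extra argument, such as local units of $I(H)$ whose corners sit inside matrix rings over $p_HQ_{\mathbf K}(G)p_H$. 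The reason is that $pRp$ regular does not make $RpR$ regular: take $R$ the upper triangular $2\times 2$ matrices over $K$ and $p=e_{11}$. Two smaller corrections: $H$ must be replaced by its saturation, since a vertex of $[v_0]$ emitting only into $H$ lies in $I(H)$. And your base case should be $|I|=1$, i.e.\ an arbitrary finite graph over one field, since the classes of $\le$ need not be strongly connected.

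More seriously, your route to the corner identification cannot work. Neither $Q_{\mathbf K}(G)$ nor its corners at non-sinks embed in $P_{\mathbf K}((G))$ or $P_{\mathbf K}((G_H))$. In $Q_{\mathbf K}(G)$ every edge has a left inverse, $e^*e=r(e)$. In the power series algebra, by contrast, every term of $xe$ has positive length, so $xe\neq r(e)$ for all $x$. At most the rational closure $P_{\mathbf K}(G)\Sigma^{-1}$ lives there; the inversion of $\Sigma_1$ does not. So surjectivity and injectivity of $Q_{\mathbf K|_H}(G_H)\to p_HQ_{\mathbf K}(G)p_H$ are left without a valid argument, and this is exactly where the change of coefficient fields has to be handled.

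The $V$-part of your inductive step also fails, because there is no five lemma for monoids. A homomorphism of refinement monoids can be an isomorphism on an order-ideal and on the quotients without being injective. For instance, take $(\ZZ^+)^2=\langle x,y\rangle\to\langle x,y\mid x=x+y\rangle$, the second being the monoid of a vertex with a loop and an edge to a sink, with the order-ideals generated by $y$. Relations of exactly this kind come from the index map $K_1(Q/I)\to K_0(I)$, which lifting idempotents does not see. This is why the gluing arguments of \cite{AraBosPar20Sel} carry $K_1$-information.

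The clean fix is the tool you already use in the base case, applied to the whole ring. $P_{\mathbf K}(G)$ is the tensor ring of a bimodule over the semisimple ring $\bigoplus_v K_{[v]}v$. Hence it is hereditary, and $V(P_{\mathbf K}(G))\cong(\ZZ^+)^{G^0}$ by Bergman. The Bergman--Dicks theorem for the universal localization $P_{\mathbf K}(G)(\Sigma\cup\Sigma_1)^{-1}$ then gives heredity and (b) at once: $\Sigma$ imposes no new relations, and $\Sigma_1$ imposes exactly $[v]=\sum_{e\in s^{-1}(v)}[r(e)]$. Only regularity then needs the ideal/quotient induction, and for that you still need a correct proof of the corner identification.
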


The algebra $Q_{\mathbf K}(G)$ is called the \emph{regular algebra of $G$ over the poset of fields}\index[terms]{regular algebra!of building block}\index[symbols]{$Q_{\mathbf K}(G)\quad$(regular algebra of building block)} $\mathbf K$. Note that it is an algebra over $K_0$ (where $K_0=K_{i_0}$).    

\begin{prp}
\label{prp:realization-for Posets}
Let $(G,\sigma_G)$ be a building block. Then, there is a poset of fields $\mathbf K$ such that 
$Q_K(G,\sigma_G) \cong Q_{\mathbf{K}}(G)$. In particular, we obtain that $Q_K(G,\sigma_G)$ is a separative von Neumann regular ring and the natural map
$\mathcal M (G)\to V(Q_K(G,\sigma_G))$ is an isomorphism. 
\end{prp}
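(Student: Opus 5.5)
The plan is to construct the poset of fields $\mathbf K$ explicitly from the auxiliary data, and then produce mutually inverse homomorphisms between $Q_K(G,\sigma_G)$ and $Q_{\mathbf K}(G)$ using the universal properties of the relevant universal localisations. The last two assertions then follow from \autoref{thm:Poset-of-Fields}. Separativity also follows, since $\mathcal M(G)$ is a primely generated refinement monoid by \autoref{thm:SuperAdaptRefin} and is therefore separative by \cite[Theorem 4.5]{Bro01Canc}.

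\textbf{Step 1: the poset of fields.} Since $(E,C)$ satisfies condition (F), the poset $I=\mathbb P(G)$ is a tree. For $p\in I$, let $L_p=K(t_i\mid i\in\NN_p)$; this is the field that appears in the corner $v\mathcal S^1_K v$ for $v\in G_p^0$.

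If $p$ is free, we make two adjustments:
\begin{itemize}
\item[(a)] the unique loop $\alpha$ at $v^p$ is treated as a further transcendental $t_\alpha$, because after inverting $\Sigma(p)$ the corner at $v^p$ becomes $L_p(\alpha)$;
\item[(b)] we transport variables down the tree via $\sigma^p$.
\end{itemize}
Concretely, define $K_p$ recursively from the root downward. Put $K_{i_0}=L_{i_0}(\alpha)$ (or $L_{i_0}$ if $i_0$ is regular). For $q<p$ a lower cover of $p$, embed $K_p$ into $K_q$ by $t_i^{v^p}\mapsto t^{r(\beta)}_{\sigma^p(i)}$ and $\alpha\mapsto t^{r(\beta)}_{\sigma^p(\alpha)}$. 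This is exactly what relations (iv)(a)--(c) and (ii) force on connectors $\beta$.

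Injectivity of $\sigma^p$ makes these maps field embeddings. Because $I$ is a tree, every $q$ receives its embedding from a unique chain of predecessors, so the embeddings are compatible. Throughout, we identify each $K_p$ with its image in the fields below it.

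\textbf{Step 2: the comparison maps.} Define $\Phi\colon\mathcal S_K(G,\sigma_G)\to Q_{\mathbf K}(G)$ on generators as follows:
\begin{itemize}
\item[(a)] vertices and edges map to themselves;
\item[(b)] $e^*$ maps to the element dual to $e$ arising from inverting $\mu_{s(e)}\in\Sigma_1$ (this is the standard realisation of the Cohn--Leavitt relations in $Q_{\mathbf K}(G)$);
\item[(c)] $t_i^v$ maps to $t_i v\in K_{[v]}v$.
\end{itemize}
We then check each relation. (SCK1) and (SCK2) hold because $\mu_v$ is inverted. Relations (i) and (iii) hold because the $t$'s and $\alpha$ commute with everything in the corner. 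Relations (ii) and (iv) hold because in $P_{\mathbf K}(G)$ a scalar $a\in K_{[s(e)]}$ satisfies $ae=ea$ with $a$ viewed in $K_{[r(e)]}$, and Step 1 was designed so that this identification is precisely the one given by $\sigma^p$.

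Next we extend $\Phi$. Nonzero polynomials in the $t_i^v$ are invertible in the field $K_{[v]}$, so $\Phi$ extends to $\mathcal S^1_K$. Elements of $\Sigma(p)$ for free $p$, and matrices in $\Sigma(p)$ for regular $p$, map to matrices invertible under $\epsilon$, so they lie in $\Sigma$. Hence $\Phi$ extends to $Q_K(G,\sigma_G)$.

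For the inverse $\Psi$, we send $P_{\mathbf K}(G)$ into $Q_K(G,\sigma_G)$, writing coefficients in $K_{[r(\gamma)]}$ via the $t$'s and $\alpha$'s, and check that $\Psi$ inverts $\Sigma\cup\Sigma_1$. The matrices in $\Sigma_1$ are inverted because $\mu_v$ has inverse $(x_1,\dots,x_n)\mapsto\sum x_ie_i^*$, using (SCK1) and (SCK2).

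\textbf{Main obstacle.} The hardest step will be showing that $\Psi$ inverts all of $\Sigma$, not just the generating families $\Sigma(p)$. A matrix invertible under $\epsilon$ has diagonal blocks over different components, and the coefficients of its entries live in different fields. I would follow the argument of \cite[Section 4]{AraBosPar20Sel}: triangularise with respect to the forest order, reduce to matrices supported over a single component $G_p$, and then use $\Sigma(p)$ together with the fact that $K_{[v]}$ is already a field in $Q_K$.

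Once $\Phi$ and $\Psi$ are defined, checking that they are mutually inverse on generators is routine.
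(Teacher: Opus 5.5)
There is a genuine gap in Step~1(a). Your overall plan matches the paper's: a tree-indexed poset of purely transcendental fields built from the auxiliary data, a map $Q_K(G,\sigma_G)\to Q_{\mathbf K}(G)$ fixing vertices and edges and sending $t^v_l$ to a scalar multiple of $v$, an inverse, and then \autoref{thm:Poset-of-Fields}. But Step~1(a) is false, and as written it breaks the isomorphism.

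First, inverting $\Sigma(p)$ does \emph{not} turn the corner at $v^p$ into $L_p(\alpha)$. $\Sigma(p)$ contains only polynomials not of the form $\alpha^i g(\alpha)$, so $\alpha$ itself is never inverted. It also cannot become invertible: $\alpha^*\alpha=v^p$, whereas (SCK2) gives $\alpha\alpha^*=v^p-\sum_\beta\beta\beta^*\neq v^p$, because in a building block a non-minimal free vertex emits its loop and at least one connector.

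Second, relations (iv)(b),(c) do not force $\alpha\mapsto t^{r(\beta)}_{\sigma^p(\alpha)}$. They concern only connectors in $X_{\alpha'}$ with $\alpha'\neq\alpha$, and in a building block every connector at $v^p$ lies in the colour of the unique loop; this is exactly why \autoref{rmk:BldBlocks} says $\sigma^p(\alpha)$ is irrelevant. In fact the relation $\alpha\beta=\beta\,t^{r(\beta)}_{\sigma^p(\alpha)}$ is inconsistent. Multiply on the left by $\alpha^*$ and use (SCK1) ($\alpha^*\alpha=v^p$, $\alpha^*\beta=0$) to get $\beta=0$.

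Consequently your $K_p$ carries a spurious transcendental $t_\alpha$, identified with $t_{\sigma^p(\alpha)}$ in the fields below it, and it is not the image of any generator under $\Phi$, since edges go to edges. Your $\Psi$ then has nowhere to send $t_\alpha v^p$:
\begin{itemize}
\item Sending it to the loop $\alpha$, as ``coefficients via the $t$'s and $\alpha$'s'' suggests, gives the contradiction above.
\item Nothing in the presentation of $Q_K(G,\sigma_G)$ supplies an alternative: an element of $v^pQ_K(G,\sigma_G)v^p$ commuting with $\alpha$ and satisfying $y\beta=\beta\,t^{r(\beta)}_{\sigma^p(\alpha)}$ for all connectors $\beta$.
\end{itemize}
Your claim that ``the $t$'s and $\alpha$ commute with everything in the corner'' reflects the same conflation. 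The loop is an isometry and must remain an edge of $P_{\mathbf K}(G)$, not a scalar.

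The repair is to drop adjustment (a). Take $K_p\cong K(t_l: l\in\NN_p)$ for every $p$, and embed $K_p$ into the field of a lower cover $q$ via $t_l\mapsto t_{\sigma^p(l)}$ for $l\in\NN_p$ only. Then $\sigma^p(\alpha)$ lies in $\Omega_p=\NN\setminus\sigma^p(\NN_p)$ and simply becomes a fresh variable of $K_q$. This is precisely the paper's construction, with $\NN'_{p_i}=\Lambda_{p_i}\sqcup\NN'_{p_{i-1}}\sqcup\Omega'_{p_{i-1}}$ and the bijections $\psi_{p_i}$. After this, the only nontrivial relation to check is $t^v_l\beta=\beta\,t^{r(\beta)}_{\sigma^p(l)}$.

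For the inverse, the paper does not triangularise matrices of $\Sigma$ by hand as you propose; it obtains $\varphi^{-1}$ directly from \cite[Proposition~2.6]{Ara19Poset}. Once the fields are corrected, the remaining checks in your Step~2 and your separativity argument are fine.
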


\begin{proof}
We follow the argument in \cite{AraBosPar20Sel}. The main difficulty is to choose a suitable poset of fields $\mathbf K =\{K_p \mid p\in I\}$, where $(I,\le)$ is the poset associated to the path-way relation on $G^0$. We set $\sigma:= \sigma_G$ and $\Lambda_p := \Lambda_p^G$.

$I$ is a tree, so let $p_0$ be its maximum element. For each element $p\in I$, there is a unique maximal chain
\[
    p=p_i < p_{i-1} < \cdots < p_1<p_0.
\]
Recall the notation $\NN_p= \Lambda_p\sqcup \NN$ for $p\in I$ (where $\Lambda_p=\emptyset$ whenever $p\in \Ireg$). We set $\sigma^p= \text{id}_\NN$ whenever $p\in \Ireg$, and write 
\[
    \NN = \sigma^{p} (\NN_p) \sqcup \Omega_p
\]
for each non-minimal $p\in I$ (note that $\Omega_p=\emptyset $ if $p\in \Ireg$). 

We will define inductively countable sets $\NN_p'$ such that $\NN_p' \subseteq \NN_q'$ whenever $q\le p$, and bijections $\psi_p \colon \NN_p\to \NN'_p$ for all $p\in I$ satisfying certain properties. 
The fields $K_p$ will then be defined by
\[
    K_p:= K(t_l: l\in \NN'_p ),
\]
so that $\mathbf K = \{K_p \mid p\in I\}$ will be a poset of fields. 

Now, to start, take $\NN_{p_0}' = \NN_{p_0}$ and $\psi _0=\text{id}_{\NN_{p_0}}$. For $p\in I$ with $p\ne p_0$, the sets $\NN_p'$ and the maps $\psi_p\colon \NN_p\to \NN_p'$ are recursively defined as follows:

Let $p=p_i< p_{i-1} < \cdots < p_0$ be the unique maximal chain starting with $p$.
Assume that we have defined $\NN_{p_j}'$ and $\psi_{p_j}\colon \NN_{p_j} \to \NN_{p_j}'$ for $0\le j\le i-1$. Then, set 
\begin{itemize}
\item[(1)] $\NN_{p_i}' = \Lambda_{p_i} \sqcup \NN_{p_{i-1}}' \sqcup \Omega_{p_{i-1}}'$, where $\Omega _{p_{i-1}}'$ is a set disjoint from $\Lambda_{p_i} \sqcup \NN_{p_{i-1}}'$
such that there is a bijection $\tau_{p_{i-1}} \colon \Omega_{p_{i-1}} \to \Omega_{p_{i-1}}'$. 
\item[(2)]
$\psi_i\colon \NN_{p_i}= \Lambda_{p_i} \sqcup \sigma^{p_{i-1}}(\NN_{p_{i-1}}) \sqcup \Omega_{p_{i-1}} 
\to \Lambda_{p_i} \sqcup \NN_{p_{i-1}}' \sqcup \Omega_{p_{i-1}}'=\NN_{p_i}'$
is given by 
\[
    \psi_i = \text{id}_{\Lambda_{p_i}} \sqcup \psi_{p_{i-1}} \circ (\sigma^{p_{i-1}})^{-1} \sqcup \tau_{p_{i-1}}.
\]
\end{itemize}

We have now defined a poset of fields $\mathbf K = \{ K_p \mid p\in I\}$, and can thus consider the map
\[
    \varphi \colon Q_K(G,\sigma_G) \longrightarrow Q_{\mathbf K}(G)
\]
by sending the generators $G^0\cup G^1\cup (G^1)^*$ to the corresponding generators in $Q_{\mathbf K}(G)$, and 
\[
    \varphi (t_l^v) = t_{\psi_{[v]}(l)} v\in Q_{\mathbf K}(G)
\]
for $v\in G^0$ and $l\in \NN_{[v]}$.
 
We need to show that the defining relations of $Q_K(G,\sigma_G)$ are preserved by $\varphi$. This is obvious for all relations except for relations (ii) (in the case where $e$ is a connector) and (iv)(a). These relations can be formulated in a uniform way as 
\[
    t_l^v \beta = \beta t^{r(\beta)}_{\sigma^p(l)}
\]
for all $l\in \NN_p$, where $v\in G^0$, $p=[v]$, and $\beta $ is a connector with $s(\beta) = v$. Now set $p_i=[r(\beta)]$, and let $p_i<p_{i-1}< \dots < p_0$ be the unique maximal chain starting with $p_i$. We then have $p= [v]= p_{i-1}$, and we compute
\begin{align*}
 \varphi(\beta) \varphi( t^{r(\beta)}_{\sigma^{p_{i-1}}(l)}) &=  \beta \varphi (t^{r(\beta)}_{\sigma^{p_{i-1}}(l)})\\
&  =\beta (t_{\psi_{p_i}(\sigma^{p_{i-1}}(l))}r(\beta))=t_{\psi_{p_i}(\sigma^{p_{i-1}}(l))} (\beta r(\beta)) \\
& = t_{\psi_{p_i}(\sigma^{p_{i-1}}(l))} \beta =  t_{\psi_{p_{i-1}}\circ (\sigma^{p_{i-1}})^{-1}(\sigma^{p_{i-1}}(l))} \beta \\
& =  t_{\psi_{p_{i-1}}(l)} \beta=  (t_{\psi_{p_{i-1}}(l)}v) \beta =  \varphi (t^v_l) \beta  =  \varphi (t^v_l ) \varphi ( \beta).
\end{align*}
This shows the desired property and thus $\varphi$ gives a well-defined $K$-algebra homomorphism
 $\varphi \colon \mathcal  S_K(F,\sigma )\to Q_{\mathbf K}(G)$. It is easily checked that this induces 
 a homomorphism, also denoted by $\varphi$, from $Q_K(G,\sigma_G)$ to $Q_{\mathbf K}(G)$. By using \cite[Proposition 2.6]{Ara19Poset}, we obtain a well-defined inverse map $\varphi^{-1}$ from $Q_{\mathbf K}(G)$ onto $Q_K(G,\sigma_G)$.
 Hence $Q_{\mathbf K}(G)$ and $Q_K(G,\sigma_G)$ are isomorphic, and the result now follows from 
 \cite{Ara19Poset} and \cite[Theorem 3.6.21]{AAS}.
 \end{proof}


The next step is to reconstruct the triple $(E,C,\sigma)$, where $(E,C)$ has property (F), in terms of its building blocks. We need to show that
this process works well at the level of graphs, monoids, and algebras. For separated graphs, this is done exactly as in  \cite[Section 4]{AraBosPar20Sel}, and the part concerning the monoids goes in the same way. Our result is as follows:

\begin{thm}
    \label{thm:realization-finite-case-with-condition-F}
Let $(E,C)$ be a finite super adaptable separated graph satisfying condition (F), and let $(\Lambda,\sigma)$ be arbitrary auxiliary data for $(E,C)$.
Then, $Q_K(E,C,\sigma)$ is a separative von Neumann regular ring and the natural map 
\[
    \mathcal M (E,C)\to V (Q_K(E,C,\sigma))
\]
is a monoid isomorphism. 
\end{thm}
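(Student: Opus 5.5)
The plan is to follow \cite[Section~4]{AraBosPar20Sel}, with the auxiliary data $\sigma$ tracked throughout. We argue by induction on the number of elements of the forest $I=\mathbb P(E)$; this poset is a forest by \cite[Lemma 3.1]{AraBosPar20Sel}, since $(E,C)$ satisfies condition (F).

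\textbf{Step 1 (reduction to a tree).} If $I$ is a disjoint union of trees $I_1,\dots,I_k$, the graph $(E,C)$ decomposes as a disjoint union of the restrictions $(E_{I_j},C_{I_j})$. Each restriction inherits the restricted auxiliary data. Then $Q_K(E,C,\sigma)\cong\prod_j Q_K(E_{I_j},C_{I_j},\sigma_j)$ and $\mathcal M(E,C)\cong\bigoplus_j\mathcal M(E_{I_j},C_{I_j})$. So we may assume that $I$ is a tree with root $p_0$.

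\textbf{Step 2 (the root is regular or emits a single colour).} Suppose that either $p_0$ is regular or $p_0$ is free and $|C_{v^{p_0}}|=1$. Then condition (F) forces $(E,C)$ to be a single building block $G$, possibly after choosing $\varphi$ trivially, so the result is \autoref{prp:realization-for Posets}.

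\textbf{Step 3 (the root is free with several colours).} Let $p_0$ be free, so $E^0_{p_0}=\{v\}$, and suppose $C_v=\{X_{\alpha_1},\dots,X_{\alpha_m}\}$ with $m\ge 2$. By condition (F), the strict tree $\tilde T(v)$ splits into pieces $T_1,\dots,T_m$, where $T_j$ is the part reached from the connectors in $X_{\alpha_j}$; distinct colours cannot reach a common component. For each $j$ we form the separated graph $E^{(j)}=\{v\}\cup T_j$, in which $v$ emits only $X_{\alpha_j}$. Its auxiliary data is obtained by putting $\Lambda^{(j)}_{p_0}=\Lambda_{p_0}\sqcup\{\alpha_l: l\ne j\}$ and $\sigma^{(j)}=\sigma^{p_0}$. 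This is exactly the mechanism used in \autoref{prp:quotients} to turn the inert loops into transcendental variables.

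By induction each $E^{(j)}$ satisfies the conclusion. In fact each $E^{(j)}$ is a building block, hence covered by \autoref{prp:realization-for Posets}. Following \cite[Section 4]{AraBosPar20Sel}, we then show two things.
\begin{enumerate}
\item $\mathcal M(E,C)$ is the pushout of the monoids $\mathcal M(E^{(j)})$ amalgamated over the rank-one submonoid generated by $a_v$. This is a purely monoid-theoretic identification, done exactly as in that paper.
\item $Q_K(E,C,\sigma)$ is obtained from the $Q_K(E^{(j)},\sigma^{(j)})$ by a ``crowned'' amalgamated construction over the corner $vQv$. This corner is the localized Laurent-type ring $L(\alpha_1,\dots,\alpha_m)$ described via $\Sigma(p_0)$.
\end{enumerate}
Relations (iii)(b), (iv)(b) and (iv)(c) are exactly what identifies the loop $\alpha_l$ for $l\ne j$, acting on $T_j$, with the variable $t_{\sigma^{p_0}(\alpha_l)}$ transported along connectors. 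This yields the isomorphism of the $j$-th piece with $Q_K(E^{(j)},\sigma^{(j)})$.

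We then invoke the results of \cite{AraBosPar20Sel} (Bergman-type universal constructions and the computation of $V$ for such amalgams, which preserve regularity) to conclude that the amalgam is regular, separative, and has $V$ equal to the monoid pushout. Separativity is also automatic from the monoid side, since $\mathcal M(E,C)$ is primely generated (\autoref{thm:SuperAdaptRefin}, \cite{Bro01Canc}).

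\textbf{Main obstacle.} The hard part is Step 3 at the algebra level. We must verify that, with the auxiliary data, the corner ring at $v$ and the relations (iv) match the amalgamation hypotheses of \cite{AraBosPar20Sel}. In that paper the colours at a free vertex are finite and no $\sigma$ appears. Here the root can only have finitely many colours because $E$ is finite, and the point to check is that the extra variables indexed by $\Lambda_p$ and $\NN$ pass through the isomorphisms compatibly. This is essentially the bookkeeping done with $\psi_p$ in \autoref{prp:realization-for Posets}, and we would carry it out the same way.
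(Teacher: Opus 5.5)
Your proposal has two genuine gaps. First, Step~2 is false, and the case analysis breaks with it. Condition (F) constrains which colours reach each component, but it says nothing about how many colours the vertices \emph{below} the root emit. So a tree whose root is regular, or free with a single colour, need not be a building block. For example, let the root $v$ be free with the single colour $\{\alpha,\beta\}$ and $r(\beta)=w$. Let $w$ be free with colours $\{\alpha_1,\beta_1\}$ and $\{\alpha_2,\beta_2\}$, whose connectors end at two distinct sinks. This graph is super adaptable and satisfies (F). It is not a building block, since any choice function drops one of the sinks. The same example refutes your aside that each $E^{(j)}$ is a building block. Because your only reduction splits at the root, a vertex like $w$ is never reached.

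The paper instead inducts bottom-up on the domain of a choice function. It takes $J\subseteq \Ifree$ to be a lower subset containing the sinks and $v$ minimal in $\Ifree\setminus J$ with $|C_v|>1$. It then treats the whole component $(F,D)$ of $E_\varphi$ containing $v$, wherever $v$ sits, and compares it with the components $(F_i,D^i)$ in which $v$ keeps only $X_i$. When $v$ happens to be the root, your $E^{(j)}$ with $\Lambda^{(j)}_{p_0}=\Lambda_{p_0}\sqcup\{\alpha_l : l\neq j\}$ is exactly the paper's $(F_i,D^i,\sigma_i)$, so that part of your bookkeeping is right.

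Second, the gluing at the level of algebras runs in the opposite direction from what you describe. With your data, $t^v_{\alpha_l}$ is a unitary in $vQ^{(j)}v$. In $Q=Q_K(E,C,\sigma)$, however, the loop $\alpha_l$ is a proper isometry, because $v=\alpha_l\alpha_l^*+\sum_{\beta\in X_{\alpha_l}\setminus\{\alpha_l\}}\beta\beta^*$. Hence there are no maps $Q^{(j)}\to Q$ of the kind an amalgamated construction over $vQv$ would need. Moreover, coproducts of regular rings are not regular in general.

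What \autoref{prp:quotients} actually gives is two families of surjections. Let $\mathcal H=\bigoplus_i\mathcal H_i$ be the ideal generated by $\tilde T^0(v)=\bigsqcup_i H_i$. The surjections are $Q\to Q_i\cong Q/(\bigoplus_{j\neq i}\mathcal H_j)$ and $\rho_i\colon Q_i\to \ol{Q}\cong Q/\mathcal H$. The ring $Q$ is the \emph{pullback} of the $\rho_i$ \cite[Proposition 4.12]{AraBosPar20Sel}. Regularity, separativity and $V(Q)\cong \mathcal M(F,D)$ then follow from \cite[Theorem 4.13]{AraBosPar20Sel}.

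The essential input to that theorem is a $K$-theoretic index computation \cite[Proposition 4.15]{AraBosPar20Sel}. For the extension $Q_i\to\ol{Q}$ one has $\partial_i([t^v_{\alpha_i}])=\sum_{\beta\in X_i\setminus\{\alpha_i\}}[\beta\beta^*]$. For $j\neq i$, the class $[t^v_{\alpha_i}]$ lifts to $K_1$ of a corner of $Q_j$, because there $t^v_{\alpha_i}$ is already a unitary. This $K_1$ argument is the real reason the loops $\alpha_l$, $l\neq i$, are turned into auxiliary variables, and it is missing from your proposal. Your monoid pushout is a correct presentation of $\mathcal M(E,C)$, but nothing in your argument connects it to $V(Q)$.
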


Let us sketch the main steps of the reconstruction process for algebras. It is noteworthy that the constructions in our setting are even more natural than those in \cite{AraBosPar20Sel}, because the way we define the sets $\Lambda_p$ and the maps $\sigma_p$ in our auxiliary data sets is more general and far less rigid than that of \cite{AraBosPar20Sel}; in particular, it allows us to extend/restrict the construction, thus being able to use it through inclusions, quotients, and direct limits, which is critical for obtaining our main result.

The inductive argument in this direction is given in \cite[Notation 4.6]{AraBosPar20Sel} when one has no auxiliary data. In our situation, which incorporates auxiliary data, one does the following:

\begin{pgr}{(The inductive step of the reconstruction)} 
\label{not:NotationPullbacks}
Let $(E,C)$ be a finite super adaptable graph satisfying condition {\rm (F)}. Denote by $(I,\leq)$ the natural associated poset and let $J$ be a lower subset of $\Ifree$ containing all the sinks of $E$. 
	
Let $v\in \Ifree$ such that $v$ is minimal in $\Ifree \setminus J$. We further assume that $|C_v|>1$. Let $\varphi \colon \Ifree \setminus (J\cup \{ v \}) \to C$ be a choice function, so that $\varphi (w) \in C_w$ for each $w\in \Ifree \setminus (J\cup \{ v \})$. We recall from \cite[Notation~4.6]{AraBosPar20Sel} that (by definition) 
\[
    E^1_\varphi := \Big(\bigsqcup _{w\in  \Ifree \setminus (J\cup \{v\})} \varphi (w)\Big) \sqcup \Big(\bigsqcup_{w\in E^0\setminus (\Ifree \setminus (J\cup \{v\}))} s_E^{-1} (w)
\Big)
\]
and $E_{\varphi}^0 : =s_E(E_{\varphi}^1) \cup r_E(E_{\varphi}^1)$. The source and range maps, and the structure of
$C^{\varphi}$ are the natural ones, making the inclusion map $(E_{\varphi}, C^{\varphi}) \to (E,C)$ a morphism in the category
SGr defined in \autoref{pgr:morphisms-in-SGr}. Let $(F,D)$ be the unique connected component of $(E_{\varphi},C^{\varphi})$ such that $v\in F^0$. We consider the triple $(F,D,\sigma_F)$, where $(\Lambda^F, \sigma_F)$ are arbitrary auxiliary data for $(F,D)$. 
    
Write $C_v = \{ X_1,\dots , X_r \}$, and let $\varphi_i \colon E^0\setminus J\to C$ be the unique choice function which extends $\varphi$ and such that $\varphi_i (v)= X_i$. Let $(F_i,D^i)$ be the unique connected component of $(E_{\varphi_i}, C^{\varphi_i})$ with $v\in F_i^0$. Here $(E_{\varphi_i}, C^{\varphi_i})$ has the same meaning as above, using now the set $J$ instead of the set $J\cup \{v\}$. The separated graph $(F_i,D^i)$ is a finite super adaptable graph. We will need to apply our induction hypothesis not to the restriction of $(F,D,\sigma_F)$ to $(F_i,D^i)$, but to a triple $(F_i,D^i,\sigma_i)$, where $\Lambda^i_{[v]}$ is a somewhat larger set than $\Lambda^F_{[v]}$. Hence the correct form of the induction hypothesis is that \autoref{thm:realization-finite-case-with-condition-F} holds for arbitrary triples $(F',D',\sigma')$, where $(F',D')$ is a complete subobject of $(E,C)$ which is determined by a choice function $\varphi'$ whose domain is strictly larger than the domain of $\varphi$, and $(\Lambda',\sigma')$ are arbitrary auxiliary data for $(F',D')$. \autoref{prp:realization-for Posets} shows that the base case holds under these assumptions, and there the domain of $\varphi'$ is the larger possible domain. Under this induction hypothesis, we proceed to show that \autoref{thm:realization-finite-case-with-condition-F} holds for $Q_K(F,D,\sigma_F)$.

Denote $H = \tilde T_F^0(v)$ and $H_i= \tilde T_{F_i}^0(v)$. Then $H$ is a hereditary $D$-saturated subset of $F^0$ and each $H_i$ is a hereditary $D^i$-saturated subset of $F_i^0$ and a hereditary and $D$-saturated subset of $F^0$. Note that, by \cite[Lemma 4.3]{AraBosPar20Sel}, we have $H=\bigsqcup_{i=1}^r H_i$. Now observe that
\[
    (F_i,D^i) = ((F/(\oplus_{j\ne i} H_j))^{\ad}, (D/(\oplus_{j\ne i} H_j))^{\ad} ) .
\]

Taking $\Lambda^i = \Lambda ^{(F/(\oplus_{j\ne i} H_j))^\ad}$ and  $\sigma_i = \sigma_{(F/(\oplus_{j\ne i} H_j))^{\ad}}$, we obtain auxiliary data for $(F_i,D^i)$ so
that
\[
    \Lambda^i_p = \Lambda^F_p \text{ if } p\ne [v],\text{ and } \Lambda^i_{[v]} = \Lambda^F_{[v]} \sqcup \{\alpha_j : j\ne i\},
\]
where $\alpha_k$ is the only loop in $X_k$, for $k=1,\dots , r$. 

Now take 
\[
    (\ol{F},\ol{D},\ol{\sigma}) : = ((F/H)^{\ad}, (D/H)^{\ad},\sigma_{(F/H)^{\ad}}) .
\]
Note that 
\[
    \ol{\Lambda}_p = \Lambda^F_p \text{ if } p\ne [v],\text{ and } \ol{\Lambda}_{[v]} = \Lambda^F_{[v]} \sqcup \{\alpha_i \mid 1\le i\le r \}.
\]
Let $\mathcal H$ be the ideal of $Q:=Q_K(F,D,\sigma_F)$ generated by $H$ and let $\mathcal H _i$ be the ideal of $Q_i:= Q_K(F_i, D^i,\sigma_i)$ generated by $H_i$. It follows from \cite[Proposition 2.22]{AraBosPar20Sel} that $\mathcal H = \oplus_{i=1}^r \mathcal H_i$. By \autoref{prp:quotients}, we have
\[
    Q_i \cong Q/(\oplus _{j\ne i} \mathcal H _j) \quad \text{ and }\quad  \ol{Q} \cong Q/\mathcal H .
\]
Let $\theta _i \colon Q \to Q_i$, $\rho_i\colon Q_i\to \ol{Q}$, and $\theta \colon Q\to \ol{Q}$ be the canonical maps provided by \autoref{prp:quotients}. We have that $\theta = \rho_i\circ \theta_i$ for all $i=1,\dots , r$, and $\theta_i\colon Q\to Q_i$ is the pullback (limit) of the $K$-algebra homomorphisms $\rho_i\colon Q_i\to \ol{Q}$, $i=1\dots, r$; see \cite[Proposition 4.12]{AraBosPar20Sel}. 

We can now apply our induction hypothesis to each $(F_i,D^i,\sigma_i)$ to obtain that $Q_i= Q_K(F_i,D^i,\sigma_i)$ is a separative von Neumann regular ring, and the natural map $\mathcal M (F_i,D^i)\to V(Q_i)$ is an isomorphism, for $i=1,\dots , r$. Now \cite[Proposition 4.15]{AraBosPar20Sel} holds in our setting. The key point is that, for each $i=1\dots , r$,
we have that $\partial _i ([t^v_{\alpha_i}]) = \sum _{\beta \in X_i'} [\beta \beta^*]$ (cf. \cite[page 51]{AraBosPar20Sel}) and that $[t^v_{\alpha_i}] \in (\rho_j)_* (K_1(e_jQ_je_j))$ for all $j\ne i$, since $\rho_j (t^v_{\alpha_i})= t^v _{\alpha_i}$ for all $j\ne i$, see \autoref{prp:quotients}.

It follows from \cite[Theorem 4.13]{AraBosPar20Sel} that $Q=Q_K(F,D,\sigma_F)$ is a separative von Neumann regular ring and that the natural map
$\mathcal M (F,D)\to V(Q_K(F,D,\sigma_F))$ is a monoid isomorphism. 

This concludes the proof of the induction step and, hence, that of \autoref{thm:realization-finite-case-with-condition-F}.

Finally, we can follow \emph{verbatim} all the steps in \cite[Section 5]{AraBosPar20Sel}, using \autoref{thm:reduction-theorem-for-conditionF} and \autoref{thm:realization-finite-case-with-condition-F}, to get a proof of \autoref{thm:realization-finite-case}.
\end{pgr}

\section{Proof of the general case}
\label{subsect:proof-general-case}

We can now put all the pieces together. We show:

\begin{thm}
    \label{thm:realization-general-case}
Let $(E,C)$ be a countable super adaptable graph, and let $(\Lambda^E,\sigma_E)$ be auxiliary data for $(E,C)$.
Then, $Q_K(E,C,\sigma_E)$ is a von Neumann regular ring and the natural map 
\[
    \mathcal M (E,C)\to V (Q_K(E,C,\sigma))
\]
is a monoid isomorphism. 
\end{thm}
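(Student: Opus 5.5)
The plan is to reduce to the finite case (\autoref{thm:realization-finite-case}) by a direct limit argument, as outlined in Step (4) of the introduction.

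\textbf{Step 1: exhaust $(E,C)$ by finite subgraphs.} By \autoref{prp:SupAdapLim}, $(E,C)$ is the direct limit in SGr of an increasing sequence of finite complete super adaptable subobjects $(E_n,C^n)$. These satisfy
\[
(E_1,C^1)\subseteq (E_2,C^2)\subseteq\cdots .
\]
Moreover, every non-minimal free prime of $E_n$ is free in $E$. I would check that this property passes to every $E_m$ with $m\ge n$, using \autoref{prp:SuperGr} and the fact that freeness in $E$ forces freeness at every intermediate stage.

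Restricting $\sigma_E$ then gives auxiliary data $(\Lambda^{E_n},\sigma_{E_n})$ with
\[
(E_n,C^n,\sigma_{E_n})\le (E_{n+1},C^{n+1},\sigma_{E_{n+1}})\le (E,C,\sigma_E).
\]
For this I choose $\Lambda^{E_n}_p=\Lambda^E_p$ when $p$ is also a prime of $E$ containing the relevant vertex, and restrict $\sigma^p_E$ to $(E_n)^1_p\sqcup\Lambda_p\sqcup\NN$. \autoref{prp:restriction} and its functoriality then yield a directed system of $K$-algebras $Q_K(E_n,C^n,\sigma_{E_n})$ with compatible maps $\iota_n$ into $Q_K(E,C,\sigma_E)$.

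\textbf{Step 2: $Q_K(E,C,\sigma_E)$ is the limit.} I would show that the induced map
\[
\varinjlim_n Q_K(E_n,C^n,\sigma_{E_n})\to Q_K(E,C,\sigma_E)
\]
is an isomorphism, using universal properties.

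For surjectivity, every generator of $Q_K(E,C,\sigma_E)$ involves finitely many vertices, edges, variables $t_i^v$ and inverted matrices, so it lies in the image at some finite stage. To see this, note that each element of $\Sigma(p)$ involves only finitely many edges of $E_p$. Since the exhaustion eventually contains any finite part of the SPI component, such a matrix already lies in $\Sigma_{E_n}$ for $n$ large.

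For the inverse, I would build a homomorphism $\mathcal S_K(E,C,\sigma_E)\to\varinjlim Q_K(E_n,\dots)$ by checking the defining relations. Each relation is finitary, and the completeness of the subobjects guarantees that (SCK2) and relations (i)--(iv) already hold at a finite stage. The homomorphism then extends through the inversion of polynomials and the universal localization by $\Sigma$, because each element of $\Sigma$ becomes invertible at some stage.

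\textbf{Step 3: conclude.} Regularity passes to direct limits. By \autoref{thm:realization-finite-case}, each $Q_K(E_n,C^n,\sigma_{E_n})$ is regular and $\mathcal M(E_n,C^n)\cong V(Q_K(E_n,C^n,\sigma_{E_n}))$ naturally, so these isomorphisms are compatible with the connecting maps. Since $V$ commutes with direct limits, and $\mathcal M(\cdot)$ is continuous on SGr (\cite{AraGoo12Crelle}), we get
\[
V(Q_K(E,C,\sigma_E))\cong\varinjlim V(Q_K(E_n,C^n,\sigma_{E_n}))\cong\varinjlim\mathcal M(E_n,C^n)\cong\mathcal M(E,C),
\]
and this composite is the natural map.

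\textbf{Main obstacle.} I expect the hardest part to be Step 2, specifically the compatibility of the universal localization with the limit. One must verify that a matrix over the path algebra of $E_p$ that becomes invertible under the augmentation is already in $\Sigma(p)$ for the finite piece. One must also check that for free primes the set of polynomials with $v(f)=0$ is preserved. This needs care because $E_p^1$ at stage $n$ is smaller than in $E$, and restricted variables $\alpha$ are not relabelled as $t$'s here, since no quotient is taken.
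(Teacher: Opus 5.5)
Your proposal is correct and follows essentially the same route as the paper: exhaust $(E,C)$ via \autoref{prp:SupAdapLim}, restrict the auxiliary data, identify $Q_K(E,C,\sigma_E)$ with $\varinjlim Q_K(E_n,C^n,\sigma_n)$ through universal properties (\autoref{thm:QK-is-an-injective-direct-limit}), and conclude from \autoref{thm:realization-finite-case} together with the continuity of $V$ and $\mathcal M$. One detail your wording leaves open: for a minimal free prime of $E_n$ whose vertex lies in a regular component of $E$, you must take $\Lambda^{E_n}_p=\emptyset$, as the paper does, so that the variables $t_i^v$ match those of $E$; the paper's extra argument that the connecting maps are injective is not needed for this statement.
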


To show this result, we first show that the algebra $Q_K(E,C,\sigma_E)$ can be written as a direct limit (colimit) of algebras 
associated to finite super adaptable graphs.

\begin{thm}
\label{thm:QK-is-an-injective-direct-limit}
Let $(E,C)$ be a countable super adaptable graph, and let $(\Lambda^E , \sigma_E)$ be auxiliary data for $(E,C)$.
Then, there exists an inductive system 
\[
    \{(E_n,C^n,\sigma_n), \iota_{n}\}_{n\in \NN}
\]
of finite super adaptable complete subgraphs $(E_n,C^n)$ of $(E,C)$, with auxiliary data $(\Lambda^n, \sigma_n)$, and injective $K$-algebra homomorphisms 
\[
    \varphi_n\colon Q_K(E_n,C^n,\sigma_n) \to Q_K(E_{n+1},C^{n+1},\sigma_{n+1})
\]
such that $(E,C)\cong \varinjlim(E_n,C^n)$ and $Q_K(E,C,\sigma_E) \cong \varinjlim Q_K(E_n,C^n,\sigma_n)$.
\end{thm}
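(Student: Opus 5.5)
We will build the inductive system from \autoref{prp:SupAdapLim}. Enumerate $E^0$ and $E^1$ and take an increasing sequence of finite complete subobjects $(F_n,D^n)$ exhausting $(E,C)$; applying the enlargement step in the proof of \autoref{prp:SupAdapLim} at each stage (and then re-enlarging to contain the next edge/vertex in the enumeration), we obtain finite complete super adaptable subgraphs $(E_n,C^n)\subseteq (E_{n+1},C^{n+1})$ with $\bigcup_n E_n=E$. Each non-minimal free prime of $E_n$ is free in $E$. We will additionally arrange that it is free in $E_{n+1}$. This holds because regular vertices are regular in every larger complete subgraph by \autoref{prp:SuperGr}, and free classes of $E$ restricted to a complete subobject remain singletons with the induced colouring. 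Then $(E,C)\cong\varinjlim(E_n,C^n)$ in SGr, since a complete subobject inclusion is an SGr morphism and the union recovers all vertices, edges and colours.

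Next we choose the auxiliary data by restriction. For $p\in\Pfree(E_n)$ non-minimal in $E_n$, the prime $p$ is free in $E$. It is also non-minimal in $E$, because $E_n$ contains a connector from $v^p$. We set $\Lambda^n_p:=\Lambda^E_p\sqcup(E^1_p\setminus (E_n)^1_p)$ and $\sigma^p_n:=\sigma^p_E$, viewing the missing loops as extra auxiliary indices. The issue is minimal free primes of $E_n$ that are non-minimal in $E$, and hence free in $E$. For these only $\Lambda^n_p$ matters, and we take it to be $\Lambda^E_p\sqcup E^1_p\sqcup\NN$ (identifying $t^{v}_i$ suitably). With these choices $(E_n,C^n,\sigma_n)\le(E_{n+1},C^{n+1},\sigma_{n+1})$ in the sense of \autoref{subsect:restrict-and-quotient}, except that a loop $\alpha$ of $E_{n+1}$ absent in $E_n$ corresponds to a variable $t_\alpha$ in $E_n$. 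We therefore define $\varphi_n$ as in \autoref{prp:restriction}, but sending $t^{v^p}_\alpha\mapsto\alpha$. This matches the identification in \autoref{prp:quotients}, where $\alpha+I(H)\mapsto t_\alpha$. The relations (iv)(b),(c) for $E_{n+1}$ become (iv)(a) for the index $\alpha$ with $\sigma^p(\alpha)$, and the $\Sigma$-sets map into the $\Sigma$-sets. Checking this carefully, including that the inverted polynomials are sent to invertible elements because $L\subseteq$ the field generated over the $\alpha$'s, is the first delicate point. Compatibility $\varphi_{n+1}\circ\varphi_n$ with the maps into $Q_K(E,C,\sigma_E)$ is formal.

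The isomorphism $\varinjlim Q_K(E_n,C^n,\sigma_n)\cong Q_K(E,C,\sigma_E)$ then follows from universal properties. Every generator and relation of $\mathcal S_K(E,C,\sigma_E)$ involves finitely many vertices and edges, so it lives in some $E_n$. Every element of $\Sigma$ for $E$ (a polynomial in finitely many loops, or a finite matrix over a path algebra of a regular component) lies in $\Sigma$ for some $E_n$. Since universal localisation commutes with directed colimits, this gives mutually inverse maps.

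The main obstacle is injectivity of the $\varphi_n$. We plan to deduce it from \autoref{thm:realization-finite-case}. $Q_n:=Q_K(E_n,C^n,\sigma_n)$ is regular, so $\ker\varphi_n$ is an ideal generated by idempotents, and it is determined by its trace in $V(Q_n)\cong\mathcal M(E_n,C^n)$, which is an order-ideal, i.e.\ a hereditary set $H$. The relevant vertices $v\in H$ satisfy $\varphi_n(v)=v\neq0$ in $Q_{n+1}$, because $\mathcal M(E_{n+1},C^{n+1})\to V(Q_{n+1})$ is an isomorphism and $a_v\neq 0$. We therefore need that every nonzero ideal of a regular $Q_n$ contains a vertex. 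This would follow from the lattice correspondence of ideals with order-ideals of $V(Q_n)$, via \cite[Proposition~1.9]{AraPar16Isr}-type arguments, together with $a_v$ generating those order-ideals. Injectivity then follows, completing the proof.
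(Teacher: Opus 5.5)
The overall architecture matches the paper: the chain from \autoref{prp:SupAdapLim}, the colimit via finiteness of relations and universal localisation, and injectivity via ideals of $Q_n$ $\leftrightarrow$ order-ideals of $\mathcal M(E_n,C^n)$ $\leftrightarrow$ hereditary sets together with $\varphi_n(v)=v\neq 0$. However, your choice of auxiliary data breaks the construction of $\varphi_n$.

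\textbf{The map $t^{v^p}_\alpha\mapsto\alpha$ is not a homomorphism.} By relation (i), $t^{v^p}_\alpha$ is a unitary of $v^pQ_nv^p$. In $Q_{n+1}$, however, $\alpha$ is only a non-unitary isometry. Since $(E_{n+1},C^{n+1})$ is a complete subobject of $(E,C)$, once $\alpha\in E^1_{n+1}$ the whole finite colour $X_\alpha$ lies in $E_{n+1}$. This happens for every $\alpha\in E^1_p$ at some stage, because the $E_n$ exhaust $E$. The colour $X_\alpha$ contains a connector $\beta$. (SCK1) gives $\alpha^*\beta=0$, and also $\beta^*\beta=r(\beta)\neq 0$, since $a_{r(\beta)}\neq 0$ in $V(Q_{n+1})\cong\mathcal M(E_{n+1},C^{n+1})$. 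Hence $\alpha\alpha^*=v^p$ would force $\beta=v^p\beta=\alpha\alpha^*\beta=0$. For the same reason, your claim that the inverted polynomials in the $t$'s go to invertible elements is false: $\alpha$ itself is not invertible in $v^pQ_{n+1}v^p$. The identification $\alpha\leftrightarrow t_\alpha$ in \autoref{prp:quotients} exists only after all connectors of $X_\alpha$ have been killed, i.e.\ in a quotient, not along an inclusion.

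\textbf{The detour is unnecessary.} A loop $\alpha\in E^1_p\setminus(E_n)^1_p$ has its entire colour absent from $E_n$. So no defining relation of $\mathcal S_K(E_n,C^n,\sigma_n)$ involves $\alpha$ or $\sigma^p(\alpha)$, and the relations of $E_{n+1}$ involving $\alpha$ are just extra relations in the target. Simply restrict, as the paper does: take $\Lambda^n_p=\Lambda^E_p$ and let $\sigma^p_n$ be the restriction of $\sigma^p_E$ to $(E_n)^1_p\sqcup\Lambda^E_p\sqcup\NN$. Then $(E_n,C^n,\sigma_n)\le(E_{n+1},C^{n+1},\sigma_{n+1})$, and \autoref{prp:restriction} gives $\varphi_n$ sending generators to generators.

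\textbf{Minimal free primes are mishandled.} The inference ``non-minimal in $E$, hence free in $E$'' is wrong. \autoref{prp:SupAdapLim} only controls non-minimal free primes, and a sink of $E_n$ may lie in an SPI component of $E$. When $\rho_n(p)$ is regular, you must take $\Lambda^n_p=\emptyset$, so that $\NN_p=\NN$ matches the index set of the regular component at later stages. Your $\Lambda^E_p\sqcup E^1_p\sqcup\NN$ is not even defined in that case. When $\rho_n(p)$ is free, $\Lambda^n_p=\Lambda^E_p$ suffices.

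With these data, your colimit and injectivity arguments go through as in the paper.
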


\begin{proof}
    By \autoref{prp:SupAdapLim}, we can write $(E,C)$ as a direct limit in SGr of a sequence of finite complete subobjects $(E_n,C^n)$ such that the induced maps
    $\rho_n\colon \mathbb P (E_n,C^n)\to \mathbb P (E,C)$ send non-minimal free primes to free primes. We may now consider the restriction of the auxiliary data $(\Lambda^E,\sigma_E)$ to each $(E_n,C^n)$, which makes sense since non-minimal free primes of $(E_n,C^n)$ are sent to free primes in $(E,C)$, and regular primes in $(E_n,C^n)$ are sent to regular primes in $(E,C)$; see \autoref{prp:restriction}.
Concretely, we set $\Lambda^n_p = \Lambda ^E_p$ if $p$ is a non-minimal free element of $\mathbb P (E_n)$ or a minimal free element of $\mathbb P (E_n)$ such that $\rho_n(p)\in \Pfree (E)$, and we set $\Lambda^n_p = \emptyset$ if $p$ is a minimal free element in $\mathbb P (E_n)$ such that $\rho_n(p)\in \Preg (E)$. By \autoref{prp:restriction}, we have induced algebra homomorphisms 
\[
\varphi_n\colon Q_K(E_n,C^n,\sigma_n) \to Q_K(E_{n+1},C^{n+1}, \sigma_{n+1}),
\]
and
\[
\Phi_n \colon Q_K(E_n,C^n,\sigma_n) \to Q_K(E,C, \sigma_E)
\]
sending the generators of $Q_K(E_n,C^n,\sigma_n)$ to the corresponding elements in these algebras. It is clear that $\Phi_n= \Phi_{n+1}\circ \varphi_n$ for all $n\in \NN$, and thus we have an induced
map $\Phi\colon \varinjlim Q_K(E_n,C^n, \sigma_n) \to Q_K(E,C,\sigma_E)$. Additionally, the map $\Phi$ is clearly surjective. In order to show it is also injective, we may define an inverse map
$\Psi \colon Q_K(E,C,\sigma_E) \to \varinjlim Q_K(E_n,C^n,\sigma_n)$ by first defining a homomorphism $\mathcal S_K(E,C,\sigma_E) \to \varinjlim Q_K(E_n,C^n,\sigma_n)$ and then
extending it to $Q_K(E,C,\sigma_E)$ by using the universal property of universal localization. 

Finally, we show that the maps $\varphi_n \colon Q_K(E_n,C^n,\sigma_n) \to Q_K(E_{n+1},C^{n+1},\sigma_{n+1})$ are injective. For this, consider the following commutative diagram
\[
\begin{tikzcd}[column sep=huge, row sep=large]
\mathcal{M}(E_n, C^n)
  \arrow[r, "\mathcal{M}(\iota_n)"]
  \arrow[d, "\cong"']
&
\mathcal{M}(E_{n+1}, C^{n+1})
  \arrow[d, "\cong"]
\\
V\bigl(Q_K(E_n, C^n, \sigma_n)\bigr)
  \arrow[r, "V(\varphi_n)"]
&
V\bigl(Q_K(E_{n+1}, C^{n+1}, \sigma_{n+1})\bigr)
\end{tikzcd}
\]

Since $Q_K(E_n,C^n,\sigma_E)$ is a von Neumann regular algebra, its lattice of two-sided ideals is naturally isomorphic to the lattice of order-ideals
of its $V$-monoid  $V(Q_K(E_n,C^n,\sigma_E))\cong \mathcal{M}(E_n,C^n)$ (by \autoref{thm:realization-finite-case}). In turn, the order-ideals of 
$\mathcal M (E_n,C^n)$ are in bijective correspondence with the hereditary and $C^n$-saturated subsets of $E_n^0$ (\cite[Corollary 6.10]{AraGoo12Crelle}). 
Since the map $\mathcal M (\iota_n)$ sends the generators $a_v$, $v\in E_n^0$, of $\mathcal{M}(E_n,C^n)$ to non-zero elements in $\mathcal{M}(E_{n+1},C^{n+1})$,
it follows that the kernel of $\varphi_n$ must be the trivial ideal, and thus $\varphi_n$ is injective. 
\end{proof}

We can now readily finish the proof of \autoref{thm:realization-general-case}. 
Indeed, we only need to apply \autoref{thm:QK-is-an-injective-direct-limit}, \autoref{thm:realization-finite-case},
and the continuity of both the $\mathcal{M}$-functor (\cite[Section 4]{AraGoo12Crelle}) and the $V$-functor, as follows.  


\medskip

\noindent\emph{Proof of \autoref{thm:realization-general-case}.}
 Let $(E,C)$ be a countable super adaptable graph and let $(\Lambda^E, \sigma_E)$ be auxiliary data for $(E,C)$. Then, by \autoref{thm:QK-is-an-injective-direct-limit}, there exists an inductive system $((E_n, C^n, \sigma_n), \iota_n)$ of finite super adaptable complete subgraphs with auxiliary data $(\Lambda^n,\sigma_n)$, and injective $K$-algebra homomorphisms
 \[
    \varphi_n\colon Q_K(E_n,C^n,\sigma_n) \to Q_K(E_{n+1},C^{n+1},\sigma_{n+1})
 \]
such that $Q_K(E,C,\sigma_E) \cong \varinjlim Q_K(E_n,C^n,\sigma_n)$. Since each $Q_K(E_n,C^n, \sigma_n)$ is von Neumann regular by \autoref{thm:realization-finite-case}, it follows that $Q_K(E,C,\sigma_E) $ is a von Neumann regular ring, Moreover, again by \autoref{thm:realization-finite-case}, the natural map
\[
    \mathcal{M}(E_n,C^n)\rightarrow V(Q_K(E_n,C^n,\sigma_n))
\]
is a monoid isomorphism for each $n\ge 1$.

By continuity of the functors $\mathcal{M}(\cdot)$ and $V(\cdot)$, we have that
\[
    \mathcal{M}(E,C)\cong \varinjlim\mathcal{M}(E_n,C^n)\cong \varinjlim V(Q_K(E_n,C^n,\sigma_n))\cong V(Q_K(E,C,\sigma_E)).
\]
This completes the proof. \qed
\vspace{.2truecm}


We can now state and prove the main result of the paper.

\begin{thm}\label{thm:Main}
    Let $M$ be a countable super adaptable monoid, and let $K$ be any field. Then, there exists a von Neumann regular $K$-algebra $R$ such that $V(R)\cong M$.
\end{thm}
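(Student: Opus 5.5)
The plan is to assemble the results already established, following the chain of isomorphisms from the introduction. Since $M$ is super adaptable, by \autoref{dfn:superadaptableIsystem} there is a mark $\mathfrak m$ on $M$ such that $\mathcal I (M,\mathfrak m)$ is a super adaptable $I$-system. This makes $(M,\mathfrak m)$ a super adaptable marked monoid. Since $M$ is countable, \autoref{rmk:GpCountGen} shows that $\mathcal I (M,\mathfrak m)$ is a countable system. Hence the construction $\mathcal G(\mathcal I (M,\mathfrak m))$ of \autoref{pgr:DfnGJ} applies.

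Set $(E,C):=\mathcal G (\mathcal I (M,\mathfrak m))$. By \autoref{prp:constructed-super adaptable}, $(E,C)$ is a countable super adaptable graph. By \autoref{thm:MondToGraph}, we have an isomorphism of marked monoids $(M,\mathfrak m)\cong (\mathcal M (E,C),\mcan)$, and in particular $M\cong \mathcal M (E,C)$ as monoids.

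Next I fix auxiliary data $(\Lambda,\sigma)$ for $(E,C)$. I would take $\Lambda_p$ to be pairwise disjoint countable sets, disjoint from $\NN$ and $E^1$; taking each $\Lambda_p=\emptyset$ is also allowed. For each non-minimal free $p$, I need an injection $\sigma^p\colon E^1_p\sqcup\Lambda_p\sqcup \NN\to \NN$. Such an injection exists because the domain is countable: $E$ is countable, so $E^1_p$ is countable. I then set $R:=Q_K(E,C,\sigma)$. By \autoref{dfn:regular-algebra}, this is a $K$-algebra, since it is a universal localisation of a $K$-algebra.

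By \autoref{thm:realization-general-case}, $R$ is von Neumann regular and the natural map $\mathcal M (E,C)\to V(R)$ is an isomorphism. Composing the two isomorphisms gives $M\cong \mathcal M (E,C)\cong V(R)$, which proves the statement.

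There is no real obstacle here, since all the substantive work sits in the cited results. The only points to check are the following:
\begin{itemize}
\item countability is preserved at each step, so that both \autoref{pgr:DfnGJ} and \autoref{thm:realization-general-case} apply;
\item auxiliary data for $(E,C)$ actually exist, which needs only countability of the edge sets $E^1_p$;
\item the choice of mark is the one witnessing super adaptability, since by the remark after \autoref{dfn:superadaptableIsystem} super adaptability genuinely depends on the mark.
\end{itemize}
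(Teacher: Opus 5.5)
Your proposal is correct and follows the paper's argument exactly. You realize $M$ as $\mathcal M(E,C)$ for the countable super adaptable graph $(E,C)=\mathcal G(\mathcal I(M,\mathfrak m))$ via \autoref{thm:MondToGraph}, fix arbitrary auxiliary data, and apply \autoref{thm:realization-general-case} to $Q_K(E,C,\sigma)$; your extra checks (countability at each step, existence of the injections $\sigma^p$, and using the mark that witnesses super adaptability) are precisely the hypotheses the paper's short proof uses implicitly.
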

\begin{proof}
 Let $M$ be a countable super adaptable monoid. By \autoref{thm:MondToGraph} there exists a countable super adaptable graph $(E,C)$ such that $M\cong \mathcal{M}(E,C)$. Take arbitrary auxiliary data $(\Lambda, \sigma)$ for $(E,C)$. Then, by \autoref{thm:realization-general-case}, $Q_K (E,C,\sigma)$ is a von Neumann regular ring and the natural map
 $\mathcal{M} (E,C) \to V(Q_K(E,C,\sigma))$ is an isomorphism. This gives the result.
 \end{proof}

\begin{cor}\label{cor:RegRealizable}
    Every countable primely generated regular conical refinement monoid can be realized by a regular ring.
\end{cor}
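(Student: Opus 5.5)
By \autoref{thm:Main}, it suffices to show that every countable primely generated regular conical refinement monoid $M$ is super adaptable. So we need to exhibit a mark $\mathfrak m$ on $M$ for which $\mathcal I(M,\mathfrak m)$ satisfies \autoref{dfn:superadaptableIsystem}.

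The key observation is that regularity forces every prime to be regular. If $p\in\mathbb P(M)$, then $2p\le p$ because $M$ is regular. Hence $p$ cannot be free, since freeness would require $2\le 1$. Passing to the antisymmetrization, the primes of $\ol{M}$ are the images $\ol p$, and each is regular, since $\ol p$ is regular exactly when $p$ is (\autoref{pgr:FreeRegPrimes}). So $\Pfree(\ol M)=\emptyset$. For any mark $\mathfrak m$, the system $J=\mathcal I(M,\mathfrak m)$ therefore has $\Ifree=\emptyset$. A mark always exists: pick the neutral element of the group $\pi_M^{-1}(p)$ for each regular prime $p$, as in \autoref{pgr:FreeRegPrimes}.

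The super adaptability condition in \autoref{dfn:superadaptableIsystem} only quantifies over $p\in\Ifree$, so it holds vacuously. Alternatively, $\Ifree=\emptyset$ is trivially artinian, and \autoref{exa:ExasSupAdaptMon}(a) applies. Either way, $(M,\mathfrak m)$ is a super adaptable marked monoid, and hence $M$ is super adaptable.

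Finally, $M$ is countable, so \autoref{thm:Main} gives, for any field $K$, a von Neumann regular $K$-algebra $R$ with $V(R)\cong M$. I see no real obstacle. The only point requiring care is the claim that primes of $M$ and of $\ol M$ share the same free/regular type, and this is stated in \autoref{pgr:FreeRegPrimes}.
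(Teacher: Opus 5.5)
Your proof is correct and is the paper's argument. Regularity rules out free primes, so the condition in \autoref{dfn:superadaptableIsystem} holds vacuously for any mark, and \autoref{thm:Main} then applies. Your checks that the primes of $\ol{M}$ are regular and that a mark exists simply spell out what the paper calls ``clear.''
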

\begin{proof}
    It is clear from \autoref{dfn:superadaptableIsystem} that any countable  primely generated regular conical refinement monoid is super adaptable. Thus, the result follows from \autoref{thm:Main}.
\end{proof}


\chapter{A characterization of super adaptability}

In this final chapter we characterize countable super adaptable monoids (\autoref{sec:SupAdaIndLim}) and also countable primely generated regular  refinement monoids (\autoref{sec:RegSAGraph}). Interestingly, both characterizations involve natural classes of (separated) graph monoids.

\section{Super adaptable monoids as direct limits}\label{sec:SupAdaIndLim}

\begin{ntn}
    Let $M$ be a primely generated conical refinement monoid. We denote by $\Pmin (M)$\index[symbols]{$\Pmin (M)\quad$(set of minimal primes)} the set of minimal primes of $M$, and we denote by $\Pnmfree (M)$ (respectively $\Pmfree(M)$) the set of non-minimal (respectively, minimal) free primes of $M$. \index[symbols]{$\Pnmfree (M)\quad$(set of free non-minimal primes)}\index[symbols]{$\Pmfree (M)\quad$(set of free minimal primes)}
\end{ntn}

Let $f\colon M\to N$ be a monoid homomorphism between two primely generated conical refinement monoids $M$ and $N$. 
If $f$ sends primes of $M$ to primes of $N$, then we have a well-defined induced map $\mathbb P (\ol{f}) \colon \mathbb P (\ol{M}) \to \mathbb P (\ol{N})$ defined by $\mathbb P (\ol{f}) (\ol{p}) = \ol{f(p)}$ for $\ol{p}\in \mathbb P (\ol{M})$, where recall that $\ol{M}$ is the antisymmetrization of $M$, which is a primitive monoid; see \autoref{pgr:FreeRegPrimes}.  

\begin{dfn}
    \label{dfn:free-injective}
Let $f\colon M \to N$ be a monoid homomorphism between primely generated conical refinement monoids $M$ and $N$. We say that 
$f$ is \emph{free-injective}\index[terms]{free-injective morphism} if the following conditions are satisfied:
\begin{enumerate}
    \item[(a)] $f$ sends the primes of $M$ to primes of $N$;
\item[(b)]  $\mathbb P (\ol{f}) ( \Pnmfree (\ol{M})) \subseteq \Pnmfree (\ol{N})$;
\item[(c)] for each $p\in \Pnmfree (\ol{N})$, we have $|\mathbb P (\ol{f})^{-1} (\{p\})| \le 1$.
\end{enumerate}
    \end{dfn}

\begin{thm}
\label{thm:charac-super-adaptable}
    Let $M$ be a countable conical refinement monoid. The following conditions are equivalent:
\begin{itemize}
    \item[(i)] $M$ is a super adaptable monoid.
    \item[(ii)] $M\cong \mathcal M (E,C)$, where $(E,C)$ is a countable super adaptable graph.
    \item[(iii)] $M\cong \varinjlim_{n\in \NN} (M_n,\gamma_n)$ for a directed system $(M_n,\gamma_n)$, where each $M_n$ is a finitely generated conical refinement monoid, and each $\gamma_n\colon M_n\to M_{n+1}$ is a free-injective monoid homomorphism.   
\end{itemize}
    \end{thm}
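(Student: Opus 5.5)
The plan is to prove (i)$\Rightarrow$(ii)$\Rightarrow$(iii)$\Rightarrow$(i).

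\emph{(i)$\Rightarrow$(ii).} This is exactly \autoref{thm:MondToGraph}. A countable super adaptable monoid carries a mark $\mathfrak m$ for which $\mathcal I(M,\mathfrak m)$ is a countable super adaptable system (\autoref{rmk:GpCountGen}), and $(E,C)=\mathcal G(\mathcal I(M,\mathfrak m))$ is countable and super adaptable by \autoref{prp:constructed-super adaptable}.

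\emph{(ii)$\Rightarrow$(iii).} By \autoref{prp:SupAdapLim} we write $(E,C)=\varinjlim (E_n,C^n)$, with $(E_n,C^n)$ an increasing sequence of finite complete super adaptable subgraphs. Each induced map $\rho_n\colon\mathbb P(E_n)\to\mathbb P(E)$ sends non-minimal free elements to free elements. Continuity of $\mathcal M$ gives $M\cong\varinjlim\mathcal M(E_n,C^n)$. Each $M_n:=\mathcal M(E_n,C^n)$ is finitely generated, and it is a conical refinement monoid by \autoref{thm:SuperAdaptRefin}. It remains to check that the connecting maps $\gamma_n$, which send $a_v\mapsto a_v$, are free-injective.

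\begin{itemize}
\item[(a)] Primes go to primes, since every $a_v$ is prime and the primes of $\ol{M_n}$ are the classes $[v]$ (\autoref{lma:identifing-the-primes}).
\item[(b)] Let $p$ be non-minimal free in $E_n$, with vertex $v^p$. Its image in $E$ is free, and since $(E_n,C^n)\subseteq(E_{n+1},C^{n+1})\subseteq(E,C)$ are complete subobjects, the component of $v^p$ in $E_{n+1}$ cannot be regular. Indeed, a regular component of $E_{n+1}$ sits inside a regular component of $E$, because the vertices of an SPI component stay strongly connected with loops, as in \autoref{prp:SuperGr}. So the image is free, and it is non-minimal because $v^p$ still emits a connector.
\item[(c)] A free class of $E_{n+1}$ is a single vertex, and $\phi^0$ is injective, so its preimage is at most one class.
\end{itemize}

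\emph{(iii)$\Rightarrow$(i).} This is the main obstacle. First, $M$ is a direct limit of primely generated monoids along prime-preserving maps, so it is primely generated; hence $\mathcal I(M,\mathfrak m)$ is defined. The content lies in choosing the mark. Each $M_n$ is finitely generated, so by \autoref{exa:ExasSupAdaptMon}(a) every mark on $M_n$ gives a super adaptable system. The plan is to choose marks $\mathfrak m_n$ inductively so that $\gamma_n(\mathfrak m_n)\subseteq\mathfrak m_{n+1}$ on non-minimal free primes.

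This is possible by condition (c). For a non-minimal free prime of $\ol{M_{n+1}}$ the fibre has at most one element, so we take its mark to be the image of the earlier mark when the fibre is nonempty, and arbitrary otherwise. Minimal free primes pose no problem, because they do not enter the generating condition except as $\varphi_{p,q}(q)$. For these, the $h(q)$ correction in \autoref{prp:morphism-of-systems} still lands in $\varphi_{p,q}(G_q)$-type terms and is generated; this is the step I expect to require care. Set $\mathfrak m=\bigcup_n\gamma_{n,\infty}(\mathfrak m_n)$, using (b) to see that non-minimal free primes of $M$ come from non-minimal free primes at a finite stage.

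Now take a free prime $p$ of $\ol M$ and $g\in G_p$. We realize $g$ at some stage $n$ as $(\tilde p+a)-\tilde p$ with $\tilde p\in\mathfrak m_n$. By super adaptability of $\mathcal I(M_n,\mathfrak m_n)$, $g$ is a semigroup combination of $\varphi^{\mathfrak m_n}_{p,q}(G_q)$ for regular $q$ and $\varphi^{\mathfrak m_n}_{p,q}(\tilde q)$ for free $q$. Pushing this forward via the elementary morphism of \autoref{prp:morphism-of-systems}, with $h\equiv 0$ on non-minimal free primes by the choice of marks, yields the required generation in $\mathcal I(M,\mathfrak m)$.

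The delicate point is whether a free prime of $M_n$ can become regular in $M$, or whether free primes can collide in the limit. This is exactly what (b) and (c) rule out. The minimal-free case, where $\gamma$ may send a minimal free prime to something regular, needs a separate check: those terms can be absorbed into regular generators.
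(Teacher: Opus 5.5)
Your overall route matches the paper's. (i)$\Rightarrow$(ii) is \autoref{thm:MondToGraph}. (ii)$\Rightarrow$(iii) uses \autoref{prp:SupAdapLim}, continuity of $\mathcal M$ and a direct check of free-injectivity. There, your use of \autoref{prp:SuperGr} for the inclusion $E_{n+1}\to E$ to see that $[v^p]$ stays free in $E_{n+1}$ is fine, and so is your fibre argument via injectivity of $\phi^0$. (iii)$\Rightarrow$(i) proceeds by proving $M$ primely generated, choosing compatible marks $\mathfrak m_n$ via condition (c), and transporting super adaptability from the finite stages.

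\textbf{The transport step fails as written.} You cannot apply \autoref{prp:morphism-of-systems} to $\psi_n\colon M_n\to M$, because its hypotheses (i)(a) and (i)(c) fail in general for free-injective maps:
\begin{itemize}
\item a minimal free prime may be sent to a regular prime;
\item comparable regular primes may be identified. For example, $\langle r,s\mid 2r=r,\ 2s=s,\ r+s=r\rangle\to\{0,r\}$ with $r,s\mapsto r$ is free-injective.
\end{itemize}
So there is in general no elementary morphism $\mathcal I(M_n,\mathfrak m_n)\to\mathcal I(M,\mathfrak m)$ to push forward along. Your remark that (b) and (c) rule out collisions is true only over non-minimal free primes. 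That is all one needs, and the paper accordingly works locally. It uses only the identity \eqref{eq:key-for-limit}, namely $\psi_n^{p'}\circ\varphi_{p',q}=\varphi_{\ol{\psi}_n(p'),\ol{\psi}_n(q)}\circ\psi_n^{q}$ for $p'\in\Pnmfree(\ol{M_n})$ and $q<p'$. This needs two facts:
\begin{itemize}
\item $\ol{\psi}_n(p')\in\Pnmfree(\ol{M})$, which follows from (b);
\item the strict inequality $\ol{\psi}_n(q)<\ol{\psi}_n(p')$. This is automatic for regular $q$, and follows from (c) for free $q$, since the images of $q\neq p'$ can never merge into a non-minimal free prime.
\end{itemize}

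With compatible marks, $\psi_n(\tilde q)$ is the mark of $\ol{\psi}_n(q)$ whenever that prime is free, minimal or not, so no $h(q)$-correction ever arises. Your claim that such a correction ``lands in $\varphi_{p,q}(G_q)$-type terms'' would be unjustified anyway, since terms $\varphi_{p,q}(1,g)$ with $g\neq 0$ are exactly what super adaptability must avoid. When $\ol{\psi}_n(q)$ is regular, $\psi_n(\tilde q)\in G_{\ol{\psi}_n(q)}$ gives a regular generator, as you say.

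Two smaller points:
\begin{itemize}
\item $\bigcup_n\psi_n(\mathfrak m_n)$ is not a mark, because images of minimal free primes landing in regular classes need not be identities. Define $\mathfrak m$ only on the free classes.
\item The fact that every non-minimal free prime of $\ol{M}$ comes from a non-minimal free prime at a finite stage does not follow from (b). It uses that a witness $p''<p$ is already realized at a finite stage; (b) gives the reverse inclusion.
\end{itemize}
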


    \begin{proof}
(i)$\implies $(ii) follows from \autoref{thm:MondToGraph}.

For (ii)$\implies $(iii), suppose that $M\cong \mathcal M (E,C)$ for a super adaptable graph $(E,C)$. 
By \autoref{prp:SupAdapLim} and the continuity of the $\mathcal M$-functor, we have 
\[
    M\cong \mathcal M (E,C) \cong \varinjlim \mathcal M (E_n,C^n),
\]
where $(E_n,C^n)$ are finite complete super adaptable subgraphs of $(E,C)$ with the property that the induced maps
$\rho_n \colon \mathbb P (E_n) \to \mathbb P (E_{n+1})$ send non-minimal free primes to free primes.
Set $M_n:= \mathcal M (E_n,C^n)$, for $n\in \NN$, and let $\gamma_n \colon M_n\to M_{n+1}$ be the monoid homomorphisms induced by the inclusions $(E_n,C^n) \to (E_{n+1},C^{n+1})$ in SGr. Observe that $M_n$ are finitely generated conical refinement monoids.

Recalling from \autoref{lma:identifing-the-primes} that 
$\mathbb P (F)$ is canonically isomorphic to $\mathbb P (\ol{\mathcal M (F,D)})$ for any super adaptable graph $(F,D)$, we immediately deduce that the maps $\gamma_n$ send primes of $M_n$ to primes of $M_{n+1}$, so that condition (a) in \autoref{dfn:free-injective} is satisfied. Hence, there exists a well-defined monoid homomorphism $\mathbb P (\ol{\gamma}_n)\colon \mathbb P (\ol{M}_n)\to \mathbb P (\ol{M}_{n+1})$. Let $p\in \Pnmfree (\ol{M}_n)$, for $n\ge 1$. Then $p$ corresponds to a non-minimal free prime in $\Pfree (E_n)$, with associated vertex $v^p\in E_n^0$. By construction, $p$ is also a free prime in $E_{n+1}$, and $v^p$ is not a sink in $E_{n+1}$ because it is not a sink in $E_n$, which shows $\mathbb P (\ol{\gamma}_n) (p)\in \Pnmfree (E_{n+1})$. This proves (b) in \autoref{dfn:free-injective}.

Now assume that $p\in \Pnmfree (E_{n+1}) = \Pnmfree (\ol{M}_{n+1})$. Then, the strongly connected component of $p$ in $E_{n+1}$ contains a unique vertex $v^p$. Suppose that $p= \mathbb P (\ol{\gamma}_n) (q)$ for some $q\in \mathbb P (\ol{M}_n)= \mathbb P (E_n)$. Then, we must have that $q\in \Pfree (E_n)$, hence the strongly connected component $(E_n)_q$
contains a unique vertex $v^q$. Since $\ol{\gamma}_n (q)=p$, it follows that $v^p=v^q\in E_n^0$ and thus $q$ is the only
element of $\mathbb P (E_n)= \mathbb P (\ol{M}_n)$ such that $\mathbb P (\ol{\gamma}_n) (q)= p$. This shows condition (c) in \autoref{dfn:free-injective} and, consequently, that $\gamma_n$ is a free-injective homomorphism. 

For (iii)$\implies $(i), observe first that the hypothesis that $\gamma_n$ sends primes of $M_n$ to primes of $M_{n+1}$, along with the hypothesis that all the monoids $M_n$ are conical and primely generated, imply that all the maps $\gamma_n$ are conical, that is, $a=0$ whenever $\gamma_n(a) =0$. If we denote by $\psi_n\colon M_n\to M$ the canonical maps into the direct limit, then we conclude that all the maps $\psi_n$ are also conical.

We first show that $M$ is primely generated. For this, write $\gamma_{n,m} = \gamma _{n-1}\circ \cdots \circ \gamma_{m}\colon M_m\to M_n$ for $n>m$, with $\gamma_{n,n}= \text{Id}_{M_n}$. Note that, since $\gamma_n$ sends primes to primes and $\psi_n= \psi_{n+1}\circ \gamma_n$ for all $n\ge 1$, we have 
inclusions
\[
    \psi_1(\mathbb P (M_1)) \subseteq \psi_2(\mathbb P (M_2)) \subseteq \cdots \subseteq M.
\]
\medskip 

\noindent\hypertarget{Claim1.7.3}{\textbf{Claim 1.}} \emph{We have
\begin{equation}
    \label{eq:main-identity-of-prime-sets}
    \mathbb P (M) = \bigcup_{n=1}^{\infty} \psi_n (\mathbb P (M_n)). 
\end{equation}}
\medskip 

\noindent\emph{Proof of Claim 1.} Take $p\in \mathbb P (M_n)$ for some $n\ge 1$, and suppose that $\psi _n(p) \le a+b$ in $M$.
Take $d\in M$ such that $\psi _n(p)+d = a+b$. Then, there is $m\ge n$ and $a',b',d'\in M_m$ such that $\gamma_{m,n}(p) + d'= a'+b'$, $\psi_m(a')=a$, $\psi_m(b')=b$ and $\psi_m(d')=d$. 
Since $\gamma_{m,n}(p)$ is prime in $M_m$, we deduce that either $\gamma_{m,n}(p) \le a'$ or $\gamma_{m,n}(p) \le b'$. Therefore we have that either $\psi_n(p)\le a$ or $\psi _n(p)\le b$. 
This shows that $\bigcup_{n=1}^{\infty} \psi_n (\mathbb P (M_n))\subseteq \mathbb P (M)$. 

For the other inclusion, take $p\in \mathbb P (M)$. There is $a\in M_n$, for some $n\ge 1$, such that $\psi_n (a) = p$. Write $a= \sum_{i=1}^k p_i$ for some $p_i\in \mathbb P (M_n)$. Since $p$ is prime in $M$, there exists $i\in \{1,\dots , k\}$ such that $p\le \psi_n(p_i)$.
It follows that 
\[
    \psi_n (p_i) \le \psi_n(a) = p \le \psi_n (p_i).
\]
Hence there exists some $m\ge n$ such that 
\[
    \gamma_{m,n} (p_i)\le \gamma_{m,n} (a) \le \gamma_{m,n} (p_i)
\]
in the monoid $M_m$. Since $\gamma_{m,n}(p_i)$ is prime in $M_m$, it follows that $\gamma_{m,n}(a)$ is prime in $M_m$. We conclude that 
$p= \psi_m (\gamma_{m,n}(a)) \in \psi_m (\mathbb P (M_m))$, as desired. 
\medskip

Now, observe that \hyperlink{Claim1.7.3}{Claim 1} immediately implies that $M$ is primely generated, and it remains to show that $M$ is super adaptable. For this we need to define a suitable mark $\mathfrak m$ on $M$ and show that the $I$-system $\mathcal I (M,\mathfrak m)$ is super adaptable. We recall from \autoref{exa:ExasSupAdaptMon}~(a) the important fact that $\mathcal I (N,\mathfrak m)$ is a super adaptable system for each finitely generated conical refinement monoid $N$ and \emph{any} mark $\mathfrak m$ on $N$. 

Since $\psi_n$ sends primes to primes, there is a well-defined map $\mathbb P (\ol{\psi}_n) \colon \mathbb P (\ol{M}_n) \to \mathbb P (\ol{M})$. It is straightforward to show that  
$\ol{M} = \varinjlim (\ol{M}_n,\ol{\gamma}_n)$, with limiting maps $\ol{\psi}_n$. It follows from \eqref{eq:main-identity-of-prime-sets} that 
$\mathbb P (\ol{M}) = \bigcup_{n=1}^{\infty} \ol{\psi}_n (\mathbb P (\ol{M}_n))$, and note that the image by $\ol{\psi}_n$ of a regular prime of $\ol{M}_n$ is a regular prime
of $\ol{M}$. We have $\Pfree (\ol{M}) = \Pmfree (\ol{M}) \sqcup \Pnmfree (\ol{M})$.\medskip 

\noindent\hypertarget{Claim2.7.3}{\textbf{Claim 2.}} \emph{We have
\begin{enumerate}
\item[(a)] $\Pnmfree (\ol{M}) = \bigcup_{n=1}^{\infty} \ol{\psi}_n (\Pnmfree (\ol{M}_n))$.
\item[(b)] $\Pmfree (\ol{M}) = \bigcup _{n=1}^{\infty} \{ \ol{\psi}_n(q) : q\in \ol{M}_n \text{ and } \ol{\gamma}_{m,n}(q) \in \Pmfree (\ol{M}_m)\, \forall m\ge n \}. $
\end{enumerate}
}
\medskip 

\noindent\emph{Proof of Claim 2.} For (a), suppose that $q\in \Pnmfree (\ol{M}_n)$ for some $n\ge 1$. If $2\ol{\psi}_n(q) = \ol{\psi}_n (q) $, then there is $m\ge n$ such that 
$2\ol{\gamma}_{m,n} (q) = \ol{\gamma}_{m,n} (q)$, hence $\ol{\gamma}_{m,n}(q)$ is a regular prime, which contradicts condition (b) in \autoref{dfn:free-injective}. 
Hence $\ol{\psi}_n(q)$ is free. We show that it is non-minimal. Since $q$ is non-minimal in $\ol{M}_n$, it follows that there is another prime $p\in \mathbb P (\ol{M}_n)$ such that 
$p<q$, which implies that $q=p+q$. But now $\ol{\psi}_n(q)= \ol{\psi}_n (p)+ \ol{\psi}_n (q)$. If $\ol{\psi}_n (q)$ is minimal, then $\ol{\psi}_n(p)= \ol{\psi}_n(q)$, and thus $\ol{\psi}_n(q)$ is a regular prime, which
contradicts what we have just shown. Hence $\ol{\psi}_n (q) \in \Pnmfree (\ol{M})$, as required.

Conversely, suppose that $p\in \Pnmfree (\ol{M})$. There exists a prime $q\in \mathbb P (\ol{M}_n)$, for some $n\ge 1$, such that $\ol{\psi}_n(q) =p$. Note that $q\in \Pfree (\ol{M}_n)$. Since $p$ is non-minimal, there is $p'\in \mathbb P (\ol{M})$ such that $p'<p$. There is then some $m\ge n$ and some
$q'\in \mathbb P (\ol{M}_m)$ such that $\ol{\psi}_m(q')=p'$ and $ q' < \ol{\gamma}_{m,n} (q) $. This shows that $q'':=\ol{\gamma} _{m,n} (q)\in \Pnmfree (\ol{M}_m)$ and $p= \ol{\psi}_m(q'')\in \ol{\psi}_m (\Pnmfree (\ol{M}_m))$. This concludes the proof of (a).

We now show (b). If $p\in \Pmfree (\ol{M})$, then there is a prime $q\in \mathbb P (\ol{M}_n)$, for some $n\ge 1$, such that $\ol{\psi}_n(q) =p$. Then $q$ is necessarily a free prime in $\ol{M}_n$. 
By (a), $q$ is necessarily minimal in $\ol{M}_n$. The same argument can be applied to $\ol{\gamma}_{m,n}(q)$ for all $m\ge n$. A similar argument gives the other inclusion. This ends the proof of \hyperlink{Claim2.7.3}{Claim 2}.\medskip 

With these preliminaries at hand, we can now define the mark $\mathfrak m$ on $M$. Note that necessarily any mark $\mathfrak m$ on a primely generated conical refinement monoid $N$ must satisfy that $\mathfrak m_p= p$ for each $p\in \Pmfree (N)$, since in that case $N_p = p\NN$. Hence, the mark $\mathfrak m$ is completely determined by its value on the set $\Pnmfree (N)$.  

We first construct inductively marks $\mathfrak m_n$ on $M_n$ such that 
$\gamma_n (\mathfrak m_n) \subseteq \mathfrak m_{n+1}$ for all $n\ge 1$. We start with an arbitrary mark $\mathfrak m_1$ on $M_1$. Suppose that $n\ge 1$ and that we have defined marks $\mathfrak m_i$ on 
$M_i$ for all $1\le i \le n$, such that, for $1\le i <n$,
\[
    (\mathfrak m_{i+1})_{\ol{\gamma}_i(p)} =\gamma_i ((\mathfrak m_i)_p)
\]
for all $p\in \Pnmfree (\ol{M}_i)$ and for all $p\in \Pmfree (\ol{M}_i)$ such that $\ol{\gamma}_i (p) \in \Pnmfree (\ol{M}_{i+1})$. We now define $\mathfrak m_{n+1}$. 
Observe that, for each $p\in \Pnmfree (\ol{M}_n)$, the element $\gamma_n ((\mathfrak m_n)_p)$ belongs to the component 
$(M_{n+1})_{\ol{\gamma}_n (p)}$ of $M_{n+1}$, and, by condition (b) in \autoref{dfn:free-injective}, $\ol{\gamma}_n (p) \in \Pnmfree (\ol{M}_{n+1})$. 
Moreover, by condition (c) in \autoref{dfn:free-injective}, all elements in the family of free primes
\[
\begin{split}
    K :=&\{ \ol{\gamma} _n(p) : p\in \Pnmfree (\ol{M}_n)\}\\
    &\cup \{ \ol{\gamma}_n(p): p\in \Pmfree (\ol{M}_n) \text{ and } \ol{\gamma}_n (p)\in \Pnmfree (\ol{M}_{n+1}) \}
\end{split}
\]
are pairwise distinct, hence we can select a representative $(\mathfrak m_{n+1})_{\ol{\gamma}_n(p)}$ in each $\equiv$-class of these elements by the formula
\begin{equation}
\label{eq:compatibility-of-marks}
    (\mathfrak m_{n+1})_{\ol{\gamma}_n(p)} =\gamma_n ((\mathfrak m_n)_p) .
\end{equation}
Finally we define $(\mathfrak m_{n+1})_p$ as an arbitrary element in the $\equiv$-class of $p$, for all the elements $p\in \Pnmfree (\ol{M}_{n+1})\setminus K$.

Now define a mark $\mathfrak m$ on $M$ by setting, for each $p\in \Pnmfree (\ol{M})$, 
\begin{equation}
\label{eq:compatibility-of-marks-for-psi}
\mathfrak m_p = \psi_n ((\mathfrak m_n)_q),     
\end{equation}
where $q\in \Pfree (\ol{M}_n)$ and $\ol{\psi}_n(q) =p$. By \hyperlink{Claim2.7.3}{Claim 2}~(a), all elements $p\in \Pnmfree (\ol{M})$ can be written in this form, and by \eqref{eq:compatibility-of-marks}, the definition of 
$\mathfrak m_p$ does not depend on the chosen $n$ and $q$. Hence the above formula gives a well-defined mark $\mathfrak m$ on $M$. 

The final step consists in showing that the induced $I$-system $\mathcal I (M,\mathfrak m)$ is a s.a.~system. 
We will use the following facts: For each $n\in \NN$ and each $p\in \mathbb P (\ol{M}_n)$, the map $\psi_n$ induces a monoid homomorphism
$\psi_n^p \colon (M_n)_p \to M_{\ol{\psi}_n (p)}$ which in turn defines a group homomorphism, also denoted by $\psi_n^p$, from $G((M_n)_p)$ to $G(M_{\ol{\psi}_n (p)})$, which restricts to a group homomorphism $G_p \to G_{\ol{\psi}_n(p)}$. If $p\in \Pnmfree (\ol{M}_n)$, then $\ol{\psi}_n(p) \in \Pnmfree (\ol{M})$ (by \hyperlink{Claim2.7.3}{Claim 2}~(a)), and we denote also by $\psi^p_n$ the semigroup homomorphism 
\[
    \psi^p_n\colon (M_n)_p^{\mathcal I} = \NN\times G_p \to M_{\ol{\psi}_n(p)}^{\mathcal I} = \NN\times G_{\ol{\psi}_n(p)}
\]
defined by $\psi^p_n (t,g)= (t,\psi_n^p(g))$ for $g\in G_p$. If $p\in \Pmfree (\ol{M}_n)$, then note that $(M_n)_p^{\mathcal I} = \NN$ and that $p$ can be identified with a prime of $M_n$. If $\ol{\psi}_n(p) \in \Pfree (\ol{M})$, then define
$\psi^p_n\colon \NN \to  M_{\ol{\psi}_n(p)}^{\mathcal I}$ by $\psi_n^p (t)=(t,0_{G_{\ol{\psi}_n(p)}})$. Finally, if $p\in \Pmfree (\ol{M}_n)$ and $\ol{\psi}_n(p)\in \Preg (\ol{M})$, then define
$\psi^p_n\colon \NN \to  G_{\ol{\psi}_n(p)}$ by $\psi^p_n (t)= t\psi_n (p)$. In any case, we have a well-defined homomorphism $\psi_n^p \colon (M_n)_p^{\mathcal I} \to M_{\ol{\psi}_n (p)}^{\mathcal I}$.

If $p\in \Pnmfree (\ol{M}_n)$, $q\in \mathbb P (\ol{M}_n)$, and $q<p$, then necessarily $\ol{\psi}_n (q) <\ol{\psi}_n (p)$. This is clear whenever $q\in \Preg (\ol{M}_n)$, because
then $\ol{\psi}_n(q) \in \Preg (\ol{M})$ and $\ol{\psi}_n(p)\in \Pnmfree (\ol{M})$ (by \hyperlink{Claim2.7.3}{Claim 2}~(a) above), and it follows from \autoref{dfn:free-injective}~(c) whenever 
$q\in \Pfree (\ol{M}_n)$ (use that $\ol{\gamma}_{m,n} (q) < \ol{\gamma}_{m,n}(p)$ for all $m\ge n$). We denote by $\varphi_{p,q}$, $q<p$, the homomorphisms associated to the $\mathbb P (\ol{M}_n)$-system
$\mathcal I (M_n,\mathfrak m_n)$, and we use the same notation for the ones associated to the $\mathbb P (\ol{M})$-system $\mathcal I (M,\mathfrak m)$; see \autoref{pgr:IM}.
Suppose that $p\in \Pnmfree (\ol{M}_n)$ for some $n\ge 1$ and that $q\in \mathbb P (\ol{M}_n)$ satisfies $q<p$. Then since $\ol{\psi}_n( q) <\ol{\psi}_n (p)$, we have a well-defined 
map $\varphi_{\ol{\psi}_n(p),\ol{\psi}_n(q)}\colon M_{\ol{\psi}_n(q)}^{\mathcal I} \to G_{\ol{\psi}_n(p)}$, and we have
\begin{equation}
    \label{eq:key-for-limit}
    \psi_n^p (\varphi _{p,q} (z)) = \varphi_{\ol{\psi}_n(p),\ol{\psi}_n(q)} (\psi_n^q(z))
\end{equation}
for all $z\in (M_n)_q^{\mathcal I}$. This is clear when $q\in \Preg (\ol{M}_n)$ and when $q\in \Pmfree (\ol{M}_n)$ and $\ol{\psi}_n (q) \in \Preg (\ol{M})$, and it follows from \eqref{eq:formula-for-varphi-m} and \eqref{eq:compatibility-of-marks-for-psi} when $q \in \Pfree (\ol{M}_n)$ and $\ol{\psi}_n (q) \in \Pfree (\ol{M})$. 

 Take $p\in \Pnmfree (\ol{M})$. We have to check
that $G_p$ is generated as a semigroup by 
\[
    (\cup_{q<p\text{ reg.}}\varphi_{p,q} (G_q))\cup \{ \varphi _{p,q}(q)\}_{q<p\text{ free}}.
\]
Take an arbitrary element $x\in G_p \subseteq G(M_p)$. Then $x=(p+a)-p$, where $a\in M$ and $p+a\le p$. By using \hyperlink{Claim2.7.3}{Claim 2}~(a), we may find $p'\in \Pnmfree (\ol{M}_n)$ and $a'\in M_n$ such that
$\psi_n(a')=a$, $\ol{\psi}_n (p')=p$, and $p'+a'\le p'$. Hence $(p'+a')-p'\in G_{p'} \subseteq  G((M_n)_{p'})$ and since $(M_n,\mathfrak m_n)$ is super adaptable there exist regular primes
$q_1,\dots ,q_r$ and free primes $q_{r+1},\dots ,q_{r+s}$ in $\ol{M}_n$, such that $q_i < p'$ for $1\le i\le r+s$, elements $z_i\in G_{q_i}$, $i=1,\dots , r$, and $l_j\in \NN$, $j=1,\dots ,s$, such that
\[
    (p'+a')-p'= \sum_{i=1}^r \varphi_{p',q_i} (z_i) + \sum_{j=1}^s l_j \varphi_{p',q_{r+j}} (1, 0_{G_{q_{r+j}}}).
\]
We now have, using \eqref{eq:key-for-limit},
 \begin{align*}
   x =  (p+a)-p & = \psi_n^{p'} ((p'+a')-p')\\
   &= \sum _{i=1}^r \psi_n^{p'} (\varphi_{p',q_i} (z_i)) + \sum_{j=1}^s l_j \psi_n^{p'} (\varphi _{p',q_{r+j}}(1,0_{G_{q_{r+j}}})) \\
     & = \sum _{i=1}^r \varphi_{p,\ol{\psi}_n(q_i)} (\psi_n^{\ol{\psi}_n(q_i)}(z_i)) + \sum_{j=1}^s l_j \varphi _{p,\ol{\psi}_n (q_{r+j})}(\psi_n^{\ol{\psi}_n(q_{r+j})}(1, 0_{G_{q_{r+j}}})) \\
& = \sum _{i=1}^r \varphi_{p,\ol{\psi}_n(q_i)} (\psi_n (z_i)) + \sum_{j=1}^s l_j \varphi _{p,\ol{\psi}_n (q_{r+j})}(\psi_n^{\ol{\psi}_n(q_{r+j})}(1, 0_{G_{q_{r+j}}})).
 \end{align*}
 
Using that $\psi_n^{\ol{\psi}_n(q_{r+j})}(1, 0_{G_{q_{r+j}}}) = (1, 0_{G_{\ol{\psi}_n(q_{r+j})}})$ whenever $\ol{\psi}_n(q_{r+j}) \in \Pfree (\ol{M})$, we conclude that $x$ belongs to the semigroup generated by 
$(\cup_{q<p\text{ reg.}}\varphi_{p,q} (G_q))\cup \{ \varphi _{p,q}(q)\}_{q<p\text{ free}}$.

This shows that $M$ is super adaptable, which finishes the proof. 
\end{proof}    

\section{Regular super adaptable monoids as graph monoids}\label{sec:RegSAGraph}

We now proceed to obtain a characterization of the class of countable primely generated regular refinement monoids. Recall from \cite[Theorem 3.1]{ParWeh14Unpubl} that all countable regular refinement monoids are tame. However, not all of them are primely generated. 
For example, the antisymmetric regular refinement monoids are precisely the distributive join-semilattices, and one can readily exhibit a countable distributive join-semilattice which is not primely generated. Indeed, one may consider the 
Boolean algebra $B$ of all open compact subsets of a Cantor set. In this case there are no primes in $B$, so that in particular $B$ is not primely generated.

We obtain the following characterization.

\begin{thm}
    \label{thm:charac-regular-primely-generated}
Let $M$ be a conical regular refinement monoid. Then, the following conditions are equivalent:
\begin{itemize}
    \item[(i)] $M$ is a countable primely generated monoid. 
\item[(ii)] $M\cong \mathcal M (E)$ where $E$ is a countable super adaptable graph such that $\Pfree (E)= \emptyset$ (and hence $E$ is endowed with the trivial separation).
\item[(iii)] $M\cong \mathcal M (E)$ for a row-finite countable graph $E$.
\end{itemize}
Moreover, in this case $M$ is realizable by a purely infinite von Neumann regular ring and by a purely infinite real rank zero graph $\mathrm{C}^*$-algebra. 
  \end{thm}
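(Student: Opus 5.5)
The plan is to prove the cycle (i)$\Rightarrow$(ii)$\Rightarrow$(iii)$\Rightarrow$(i) and then read off the realization statements.

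For (i)$\Rightarrow$(ii), let $M$ be countable, primely generated and regular. Every element satisfies $2x\le x$, so every prime is regular and $\Pfree(\ol{M})=\emptyset$. Hence for any mark $\mathfrak m$ the system $\mathcal I(M,\mathfrak m)$ has $\Ifree=\emptyset$. The super adaptability condition of \autoref{dfn:superadaptableIsystem} is then vacuous, and the system is countable by \autoref{rmk:GpCountGen}. \autoref{thm:MondToGraph} gives $M\cong\mathcal M(E,C)$ with $(E,C)=\mathcal G(\mathcal I(M,\mathfrak m))$. By \autoref{prp:constructed-super adaptable}, $\mathbb P(E)\cong I=\Ireg$, so $\Pfree(E)=\emptyset$. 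Every component is regular and therefore satisfies (a) of \autoref{dfn:SupAdSepGra}, so $|C_v|=1$ for all $v$ and $C$ is the trivial separation. It follows that $\mathcal M(E,C)=\mathcal M(E)$, and $E$ is row-finite and countable.

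The implication (ii)$\Rightarrow$(iii) is immediate: by \autoref{dfn:SupAdSepGra}(a) every vertex lies in a row-finite SPI component, so $E$ is row-finite.

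For (iii)$\Rightarrow$(i), countability of $M\cong\mathcal M(E)$ is clear, since $E^0$ is countable and generates. Primeness is the main point. I will write $E$ as the direct limit of its finite complete subgraphs, which gives $\mathcal M(E)=\varinjlim\mathcal M(E_n)$ by continuity. For a finite graph $F$, $\mathcal M(F)$ is a finitely generated conical refinement monoid and hence primely generated; this follows from \cite{AraMorPar07NonStab} and \cite[Theorem~4.6]{AraPar16Isr}. Thus $M$ is a direct limit of finitely generated regular (hence primely generated) monoids. What remains is to get prime generation in the limit, which is the main obstacle, because the connecting maps need not send primes to primes. I would try the following argument directly in $\mathcal M(E)$. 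Regularity forces every generator $a_v$ to satisfy $2a_v\le a_v$. A class in $\mathbb P(E)$ that is a sink or a cycle without exit gives a free, non-regular element, so regularity of $M$ forces each strongly connected component, after the relations, to give prime elements. I expect the cleanest route is to show that $a_v$ is prime whenever the component of $v$ contains a cycle, using the description of the order in $\mathcal M(E)$ through the confluence/refinement lemma of \cite{AraMorPar07NonStab}. The idea is that $a_v\le x+y$ lets us pass to a common descendant along paths and then return to $v$ through the cycle. Vertices not on cycles are then finite sums of such $a_w$ by the relations, or they eventually reach infinite emitters, but there are none since $E$ is row-finite.

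For the final assertion, (ii) together with \autoref{thm:Main} gives a regular ring, which can be taken purely infinite since all primes are regular. The $\mathrm{C}^*$-algebra statement uses (iii) and the known result that $V(C^*(E))\cong\mathcal M(E)$ for row-finite $E$. Regularity of $\mathcal M(E)$ should force condition (L) and (K) and the absence of sinks reachable without cycles, which makes $C^*(E)$ purely infinite of real rank zero. Checking this graph-theoretic translation is the second place where care is needed.
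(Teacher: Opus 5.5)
Your (i)$\Rightarrow$(ii) and (ii)$\Rightarrow$(iii) are correct and match the paper. Your plan to prove, via confluence, that $a_v$ is prime when the component of $v$ contains a cycle is exactly \autoref{lma:non-light-are-prime}.

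The genuine gap is the step in (iii)$\Rightarrow$(i) for the remaining, \emph{light}, vertices. Row-finiteness does not make the expansion $a_v=\sum_{e\in s^{-1}(v)}a_{r(e)}$ terminate at vertices on cycles, because light vertices can form infinite paths. Your argument at this step never uses that $M$ is regular, so it would apply verbatim to \autoref{exa:GraphNotPG}. There $a_{p_0}=a_{p_1}+a_a=a_{p_2}+2a_a=\cdots$ never terminates, and $a_{p_0}$ is not a sum of primes, since $a$ is the only prime. The direct-limit detour does not help either. Besides the failure of prime preservation that you note, the monoids of finite complete subgraphs need not be regular, since such subgraphs may acquire sinks.

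What is missing is a way to use regularity to bound $a_v$ by finitely many primes. The paper argues as follows. The inequality $2a_v\le a_v$ says $v$ is properly infinite in $L_K(E)$. By \cite[Proposition~3.8.12]{AAS}, there are then SPI vertices $w_1,\dots,w_r\in T^0(v)$ with $v$ in the hereditary saturated closure of $\{w_1,\dots,w_r\}$. Hence $a_v\asymp x:=a_{w_1}+\cdots+a_{w_r}$, and $x$ is primely generated by \autoref{lma:non-light-are-prime}.

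Even this does not let you finish by literal expansion. Finitely many expansions of $a_v$ only reach vertices of $T^0(w_i)$, which may again be light. A comparison step is needed, and the paper takes it from \cite[Theorem~5.18(3)]{Bro01Canc}. An elementary version: regularity upgrades $a_v\asymp x$ to $x\le a_v\le x$, so $a_v=x+z$ with $z\le x$. Refine $z=\sum_i z_i$ with $z_i\le a_{w_i}$; each $a_{w_i}+z_i\equiv a_{w_i}$ is then prime.

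The same missing ingredient undermines your $\mathrm{C}^*$-paragraph. Condition (K) does follow: regularity rules out sinks and single-cycle components, and SPI components satisfy (K) by \autoref{lma:charcSPI}. However, (K) together with the absence of sinks does not give pure infiniteness. You also need that, for every hereditary saturated $H$, each vertex outside $H$ connects to a cycle in $E\setminus H$. This is again the statement that every vertex lies in the hereditary saturated closure of SPI vertices. The paper obtains it from proper infiniteness of all vertices together with \cite{HS2003}.

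Finally, for the ring, pure infiniteness comes from $R$ being an exchange ring with $V(R)\cong M$ regular, so every idempotent is properly infinite. It does not come from the primes being regular as such.
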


In particular, \autoref{thm:charac-regular-primely-generated} describes precisely which regular conical refinement monoids appear as graph monoids, thus generalizing \cite[Theorem~3.6]{AraPar17JAlg}. Note that, by the theorem above, regular graph monoids are primely generated. This is not true in general, as the following example shows:

\begin{exa}[{\cite[Comments before Theorem~6.3]{AraMorPar07NonStab}}]\label{exa:GraphNotPG}
    Consider the graph
    \begin{center}
\begin{tikzpicture}[>=Stealth, rounded corners, scale=0.7, transform shape]
  \tikzstyle{vertex} = [.]

    \node[vertex] (b1) at (0,2.5) {$p_0$};
  \node[vertex] (b2) at ([xshift=2.5cm]b1) {$p_1$};
  \node[vertex] (b3) at ([xshift=2.5cm]b2) {$p_2$};
  \node (dots) at ([xshift=2.5cm]b3) {$\dots$};       
  \node[vertex] (bn) at ([xshift=2.5cm]dots) {$p_n$};
  \node (ldots) at ([xshift=2.5cm]bn) {$\ldots$};     

  \node[vertex] (a) at (0,0) {$a$};

  \draw[->] (b1) -- (b2);
  \draw[->] (b2) -- (b3);
  \draw[->] (b3) -- (dots);
  \draw[->] (dots) -- (bn);
  \draw[->] (bn) -- (ldots);

  \draw[->] (b1) -- (a);
  \draw[->] (b2) -- (a);
  \draw[->] (b3) -- (a);
  \draw[->] (bn) -- (a);
\end{tikzpicture}
\end{center}
  The only prime element in its monoid is $a$, so it cannot be primely generated.
\end{exa}

We need to show that a situation as in \autoref{exa:GraphNotPG} cannot happen in case $\mathcal M (E)$ is regular. The key notion we need is that of a light vertex, as follows. 

\begin{dfn}
    \label{dfn:light-vertex}
Let $E$ be a row-finite graph. A vertex $v$ in $E$ is said to be a \emph{light vertex}\index[terms]{vertex!light} in case $v$ is not a sink and the strongly connected component $E_{[v]}$ of $v$ consists of the single vertex $v$ without any edges. 
\end{dfn}

Note that in \autoref{exa:GraphNotPG} all vertices $p_i$ are light vertices of $E$.

In the following, we use the description of the monoid $\mathcal M (E)$ of a row-finite graph $E$ in terms of the free commutative monoid
$F$ generated by $E^0$, introduced in \cite[Section 4]{AraMorPar07NonStab}.
The nonzero
elements of $F$ can be written in a unique form up to permutation
as $\sum _{i=1}^n v_i$, where $v_i\in E^0$ (repetition of elements is allowed). Now we give a
description of the congruence on $F$ generated by the relations \eqref{eq:definition-graph-monoid} of the graph monoid.  

 It will be convenient to introduce the
following notation, in analogy to that of \autoref{dfn:monoidsepgraph}. For a non-sink $v\in E^0$, write
\[
    {\bf r}(v):=\sum _{e\in s^{-1}(v)} r(e)\in F .
\]
With this new notation, the relations in \eqref{eq:definition-graph-monoid} become $v={\bf r}(v)$ 
for every $v\in E^0\setminus \text{Sink}(E)$.

\begin{dfn}\label{def:binary}
Define a binary relation $\rightarrow_1$ on $F\setminus \{0\}$ as follows. Let $\sum _{i=1}^n v_i\in F\setminus \{ 0 \}$, and suppose that $v_j$ is not a sink of $E$. Then $\sum _{i=1}^n v_i\rightarrow_1 \sum _{i\ne j}v_i+{\bf r}(v_j)$. Let $\rightarrow $ be the transitive and reflexive closure of $\rightarrow _1$ on $F\setminus \{0\}$, that is, $\alpha\rightarrow \beta$ if and only if there is a finite string $\alpha =\alpha _0\rightarrow _1 \alpha _1\rightarrow _1 \cdots \rightarrow _1 \alpha _t=\beta.$

Let $\sim$ be the congruence on $F$ generated by the relation $\rightarrow_1 $ (or, equivalently, by the relation $\rightarrow$). Namely $\alpha\sim \alpha$ for all $\alpha \in F$ and, for $\alpha,\beta \ne 0$, we have $\alpha\sim \beta$ if and only if there is a finite string $\alpha =\alpha _0,\alpha_1, \dots,\alpha _n=\beta$, such that, for each $i=0,\dots ,n-1$, either $\alpha _i\rightarrow _1 \alpha _{i+1}$ or $\alpha_{i+1}\rightarrow_1 \alpha _i$. The number $n$ above will be called the \emph{length} of the string.
\end{dfn}

It is clear that $\sim $ is the congruence on $F$ generated by relations \eqref{eq:definition-graph-monoid}, and so $\mathcal M(E) = F/{\sim}$.

The {\em support} of an element $\gamma$ in $F$, denoted $\text{supp}(\gamma)\subseteq E^0$, is the set of basis elements
appearing in the canonical expression of $\gamma$.

A basic and fundamental result is the \emph{confluence property} of the relation $\sim$ on $F$ \cite[Lemma 4.3]{AraMorPar07NonStab}.  This asserts that, for nonzero elements $\alpha,\beta $ in $F$,  $\alpha\sim \beta$ if and only if there is $\gamma \in F\setminus \{0\}$ such that $\alpha\to \gamma $ and $\beta \to \gamma$. 

\begin{lma}
\label{lma:non-light-are-prime}
Let $E$ be a row-finite graph. Then any non-light vertex of $E$ is prime in $\mathcal M (E)$. 
\end{lma}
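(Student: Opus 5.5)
We prove primality directly from the confluence property \cite[Lemma 4.3]{AraMorPar07NonStab} in the free commutative monoid $F$ on $E^0$. Let $v$ be a non-light vertex. Then either $v$ is a sink, or the strongly connected component $E_{[v]}$ contains a closed path through $v$. The idea is that $v\le x$ in $\mathcal M(E)$ should be detectable combinatorially: there is a vertex $w\in \mathrm{supp}(x)$ with $w\ge v$ such that $v$ appears in some $\gamma$ with $w\to\gamma$. Since $w\to\gamma$ gives $w\sim\gamma$, and $\gamma=v+\gamma'$, this yields $v\le a_w$. Once this criterion holds, primality is immediate. Suppose $v\le x+y$ with $x,y$ represented in $F$. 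The witness vertex $w$ lies in $\mathrm{supp}(x)$ or in $\mathrm{supp}(y)$, so $v\le a_w\le x$ or $v\le a_w\le y$. Note that $v$ is not invertible, because $\mathcal M(E)$ is conical.

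The first step establishes the criterion. Suppose $v+z\sim x$ for some $z\in F$. By confluence there is $\gamma\ne 0$ with $v+z\to\gamma$ and $x\to\gamma$. We track what happens to $v$ along $v+z\to\gamma$, and there are two cases.

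If $v$ is a sink, it is never rewritten. Hence $v\in\mathrm{supp}(\gamma)$. Each $\to_1$ step replaces one summand by its ranges, so each summand of $\gamma$ descends from a single summand of $x$. Since $x\to\gamma$, some $w\in\mathrm{supp}(x)$ has $w\to\gamma_w$ with $v\in\mathrm{supp}(\gamma_w)$. This gives $v\le a_w$.

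If $v$ is not a sink but lies on a closed path, the occurrence of $v$ may be rewritten along the way. Here we instead use that the set of vertices $u$ with $u\ge v$ is detected as follows. Every descendant of $v$ reachable in $\gamma$ comes from a vertex $u$ with $v\ge u$. If no summand of $\gamma$ is in $[v]$, I would still need to produce one. To do this, continue rewriting from $\gamma$: repeatedly expand the descendants of $v$ along the closed path back to $v$, applying the same moves to both $v+z\to\gamma$ and $x\to\gamma$. This extends $\gamma$ to some $\gamma''$ that contains $v$ and is still a common descendant of $v+z$ and $x$. Concretely, we choose a path $\mu$ from $v$ through the cycle returning to $v$. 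We then rewrite the summand that the original $v$ turned into. If it is not yet at $v$, we follow $\mu$, noting that any vertex $u$ with $u\le v$ obtained from $v$ lies on a path from $v$, and $u\ge v$ when $u\in[v]$. The subtle point is summands of $\gamma$ coming from $v$ that lie strictly below $[v]$; those cannot return. However, the rewriting $v\to\gamma_v$ can be arranged so that at least one summand stays in $[v]$, because $v$ has an edge into $[v]$. Using the refined confluence that lets us choose $\gamma$ via a common extension, we ensure some summand in $[v]$ survives and then walk it back to $v$. Then we conclude as in the sink case: the summand $v$ of $\gamma''$ descends from some $w\in\mathrm{supp}(x)$, giving $v\le a_w$.

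The main obstacle is this second case: guaranteeing that in a common descendant the occurrence of $v$, or some element of $[v]$, is not lost entirely below $[v]$. I would handle it by proving a small lemma. If $\alpha\to\beta$ and $u\in\mathrm{supp}(\alpha)$ with $E_{[u]}$ having an edge, then there is $\beta'$ with $\beta\to\beta'$ and $u\in\mathrm{supp}(\beta')$, obtained by re-expanding along a path inside $[u]$. The proof is by induction on the length of $\alpha\to\beta$, tracking a single summand in $[u]$ at each step, which always exists since each rewrite of a vertex in $[u]$ produces at least one range vertex in $[u]$ along the chosen cycle. Applying this lemma to $v+z\to\gamma$, and then to $x\to\gamma$ with the same additional moves, yields the common descendant containing $v$, and the argument above finishes the proof.
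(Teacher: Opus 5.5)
Your argument is correct and is essentially the paper's proof: confluence gives a common descendant $\gamma$ of $v+z$ and $x$; in the sink case $v$ survives in $\gamma$; in the cyclic case a vertex of $[v]$ persists along the rewriting (the paper tracks a vertex of a cycle through $v$) and is rewritten back to $v$; and the descendant decomposition of \cite[Lemma~4.2]{AraMorPar07NonStab} (your ``each summand of $\gamma$ descends from a single summand'') then attributes $v$ to a summand of $x$ or of $y$. The hedging in your third paragraph (``if no summand of $\gamma$ is in $[v]$'', ``can be arranged'', ``refined confluence'') is unnecessary and should be dropped, because your final tracking lemma already shows that such a summand exists for any given rewriting sequence.
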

\begin{proof}
We will work within the free commutative monoid $F$ generated by $E^0$, using the relations introduced before.
Suppose first that $v$ is a sink in $E$. If $v+\lambda = \alpha + \beta$ in $F$, then, by the confluence property, there is some $\gamma\in F$
such that $v+\lambda \to \gamma$ and $\alpha+\beta \to \gamma$. It is now clear that $v\in \text{supp} (\gamma)$, because $v$ is a sink.
On the other hand, by \cite[Lemma 4.2]{AraMorPar07NonStab}, we can write $\gamma = \gamma_1+\gamma_2$ with $\alpha\to \gamma_1$ and $\beta\to \gamma_2$. 
It follows that either $[v]\le [\gamma_1]=[\alpha]$ or $[v]\le [\gamma_2] = [\beta]$, and $[v]=a_v$ is prime in $\mathcal M (E)$. 
Suppose now that $v$ is not a sink in $E$. Since $v$ is not a light vertex, it follows from \autoref{lma:charcSPI}
that there is a cycle $c$ based at $v$. Now suppose that $v+\lambda = \alpha + \beta$ in $F$. As before, there is $\gamma\in F$ such that 
$v+\lambda \to \gamma$ and $\alpha+\beta \to \gamma$. Since $\gamma$ contains some vertex in the cycle $c$, there is $\gamma'\in F$ with $\gamma \to \gamma'$
such that $v$ belongs to the support of $\gamma'$. Replacing $\gamma$
 with $\gamma'$, we can therefore assume that $v\in \text{supp} (\gamma)$, and now the same argument we did before shows that either $[v]\le [\alpha]$
 or $[v]\le [\beta]$. Hence $a_v=[v]$ is a prime element of $\mathcal M (E)$. 
\end{proof}

Recall from \autoref{subsect:proof-finite-case} that for a vertex $v\in E^0$, we denote by $T(v)$ the tree of $v$ seen as a subgraph of $E$. The set $T^0(v)$ of vertices of $T(v)$ is precisely the set of vertices $w\in E^0$ such that there is a path from $v$ to $w$. 

For $X\subseteq E^0$, we denote by $\ol{X}$ the hereditary saturated closure of $X$.

We say that an element $a$ of a commutative monoid $M$ is \emph{primely generated} in case $a$
is a finite sum of prime elements of $M$. 




 We will use the following standard notation (see e.g. \cite{Bro01Canc}): given elements $a,b$ in a commutative monoid $M$, we write 
 $a\propto b$ if $a\le nb$ for some $n\ge 1$, and we write $a\asymp b$ if $a\propto b$ and $b\propto a$.

We can now provide the proof of \autoref{thm:charac-regular-primely-generated}. 

\medskip

\noindent\emph{Proof of \autoref{thm:charac-regular-primely-generated}.} (i)$\implies $(ii) follows from \autoref{thm:MondToGraph}, and (ii)$\implies $(iii) is trivial. For (iii)$\implies $ (i), we only need \autoref{lma:non-light-are-prime} and the results in \cite[Section 3.8]{AAS}.

Since $\mathcal M(E) \cong M$ is regular and $\mathcal M (E) \cong V(L_K(E)) $ for any fixed field $K$, it follows that each vertex $v\in E^0$ is properly infinite in $L_K(E)$; see \cite[Definition 3.8.3]{AAS}. 

Let $v$ be a vertex in $E$. It suffices to check that $a_v$ is primely generated. Since $v$ is properly infinite in $L_K(E)$, it follows from 
\cite[Proposition 3.8.12]{AAS} that there exists SPI vertices $w_1,\dots ,w_r$ in $T^0(v)$ such that $v\in \ol{\{w_1,\dots ,w_r\}}$.
Let $x= a_{w_1}+\cdots +a_{w_r}\in \mathcal M (E)$. Then, by \autoref{lma:non-light-are-prime}, $x$ is a primely generated element of $\mathcal M (E)$. 
By the argument given in the proof of \cite[Proposition 3.8.12]{AAS}, we have $x\asymp a_v$ in $\mathcal M(E)$. Hence, it follows from \cite[Theorem 5.18(3)]{Bro01Canc} that 
$a_v$ is also primely generated. This completes the proof of this implication.

For the last part, note that here we do not need our setting with auxiliary data in order to realize $M$ by a von Neumann regular ring, since the monoid $M$ is regular.
Using a graph $E$ satisfying either (ii) or (iii), we have from \cite{AraBru02Quiver} and \cite[Theorem 7.1]{AraMorPar07NonStab} that
\[
    M\cong \mathcal M (E) \cong V(L_K(E)) \cong V (C^*(E))\cong V(Q_K(E)),
\]
where $Q_K(E)$ is the regular algebra of the quiver $E$. 

As noted in the proof of (iii)$\implies $(i), every vertex of $E$ is properly infinite in $L_K(E)$. Thus, it follows from \cite[Theorem 3.8.16 and Corollary 3.8.19]{AAS} or \cite[Theorem 7.4]{ArandaGoodPerSil2010}, that $L_K(E)$ is a (properly) purely infinite exchange ring. Since $Q_K(E)$ is von Neumann regular and $V(Q_K(E))$ is a regular refinement monoid, $Q_K(E)$ is also a (properly) purely infinite ring. Finally, it follows from \cite[Theorems 2.3 and 2.5]{HS2003} that $C^*(E)$ is a purely infinite $\mathrm{C}^*$-algebra of real rank zero.  
\qed

\chapter{Final remarks and open questions}\label{chap:FROQ}

In this paper we have shown that a very large class of countable primely generated refinement monoids are realizable by von Neumann regular $K$-algebras, for an arbitrary field $K$. However, many questions in several directions remain open. After summarizing which classes are covered by our result in \autoref{sec:Chart}, we bundle these questions in three different sections: Realizing larger families, obtaining monoid models, and the $\mathrm{C}^*$-realization problem. 

\section{Diagram of Countable Conical Refinement Monoids}\label{sec:Chart}

The following diagram summarizes all types of conical refinement monoids discussed throughout the paper. To simplify the presentation, we focus on the countable setting. An explanation of each part can be found right below it.

\begin{figure}[H]
    \resizebox{1\textwidth}{!}{
        \begin{tikzpicture}[>=Stealth, scale=1.0, transform shape]
    \draw[red, dashed, line width=1.5pt, rounded corners=15pt] (-8.5, -4.5) rectangle (8.5, 4.5);
    \node[text=red, font=\bfseries\LARGE] at (0.6, 4.0) {Tame Monoids};

    \draw[green!60!black, line width=1.5pt, rounded corners=12pt]
        (4.0, 2.95) -- (7.2, 2.95) 
        -- (7.2, -2.95) 
        -- (4.0, -2.95); 
    \draw[green!60!black, line width=1.5pt, rounded corners=12pt]
        (0, 3.5) -- (-7.2, 3.5) 
        -- (-7.2, -3.5) 
        -- (0, -3.5); 
    \draw[green!60!black, line width=1.5pt] (0, 3.5) -- (4.0, 2.95);  
    \draw[green!60!black, line width=1.5pt] (0, -3.5) -- (4.0, -2.95); 
        
    \node[text=green!60!black, font=\bfseries\large] at (-3.8, -3.27) {Primely Generated Monoids};

    \draw[orange!90!black, dashed, line width=2.3pt, rounded corners=10pt] (-5.9, -2.9) rectangle (7.15, 2.9);
    \node[text=orange!90!black, font=\bfseries\Large] at (0.6, 2.5) {Super Adaptable Monoids};

    \node[draw=blue, line width=1.5pt, ellipse, minimum width=10.5cm, minimum height=4cm, fill=white, inner sep=0pt] (fc) at (0.8, 0.0) {};
    \node[text=blue, align=center, font=\bfseries\large] at (0.8, 0.0) {$\mathcal{F}$ \\[0.2em] Finitely Generated\\Monoids};

    \draw[gray!70, line width=1.2pt] (4.0, -4.5) -- (4.0, 4.5);
    \draw[gray!70, decorate, decoration={brace, raise=2pt}, line width=1pt] (4, 4.5) -- (8.3, 4.5);
    \node[text=gray!70, font=\bfseries\large] at (6.0, 5.0) {Regular Monoids};

    \node[text=orange!90!black, font=\small] at (-4.5, 2.6) {Ind-($\mathcal{F},\text{free-inj.}$};
    \node[text=orange!90!black, font=\small] at (-4, 2.3) {maps)};
    
    \node[text=red, font=\small] at (-7.8, 4.2) {Ind-$\mathcal{F}$};
    \node[text=black, font=\normalsize] at (-7.8, -2.8) {$\bullet \mathbb{Z}^+[\frac{1}{2}]$ };
    
    \node[text=black, align=left, font=\normalsize] at (-2.0, -5.0) {$\bullet$ W};

\node[text=black, align=left, font=\normalsize] at (-6.62, 0.0) {$\bullet \mathcal{M} (J)$};

    \node[text=black, align=center, font=\normalsize] at (8.0, -2.2) {$\bullet B$};

    \node[text=black, align=center, font=\normalsize] at (8.0, 2.6) {$\bullet C$};
\end{tikzpicture}
    }
    \begin{minipage}{\textwidth}
        \small \,\\\\
        $B:=\{\text{compact open sets of the Cantor space}\}$. \\
        $C:=V(A)$ for some well-chosen extended Cuntz limit $A$. See 
        \cite{ParWeh06Semi} and \autoref{prbl:Cuntz-limit}. \\
        $\mathcal M (J)$ is the monoid associated to the $\ZZ$-system $J$ built in \autoref{exam:not-super-adaptable}, with $H=\ZZ_2$ or $H=\ZZ$.\\
        $W$ is a countable wild monoid constructed in \cite{AraGoo17RealProbAtiy}. $W$ is realizable by a regular 
        $K$-algebra if and only if the field $K$ is countable. 
    \end{minipage}
    \caption{Countable Conical Refinement Monoids Chart}
    \label{fig:Dibuix}
\end{figure}

Recall that \emph{tame monoids} are ---by definition--- the \emph{Ind-completion} of the category $\mathcal{F}$ of finitely generated conical refinement monoids and their morphisms. By the characterization obtained in \autoref{sec:SupAdaIndLim}, we see that \emph{super adaptable monoids} are the \emph{Ind-completion} of the subcategory of finitely generated conical refinement monoids with free-injective maps. Further, we see that this completion lies within the class of \emph{primely generated} conical refinement monoids. The main theorem of this paper (\autoref{thm:realization-general-case}) shows that any super adaptable monoid is realizable by a regular ring. 

By \cite[Theorem 3.1]{ParWeh14Unpubl}, any regular conical refinement monoid is tame and, by \autoref{cor:RegRealizable} any countable primely generated regular conical refinement monoid is super adaptable.

Lastly, we have illustrated ---when known--- inclusions that are strict. For instance, the \emph{dimension monoid} $\mathbb{Z}^+[\frac{1}{2}]$ of dyadic numbers is tame (since it can be written as the direct limit of the sequence $(\mathbb{Z}^+,\overset{\times 2}\rightarrow)$), and yet fails to be primely generated. Also, we have several examples of countable regular conical refinement monoids failing to be primely generated, or equivalently, super adaptable. See $B$ and $C$ in the diagram, and \autoref{sect:realizing-larger-families} for a complete discussion.
Finally, we build in \autoref{exam:not-super-adaptable} an example of a countable primely generated conical refinement monoid $M= \mathcal M (J)$
which is not super adaptable, where $J$ is a suitably defined $\ZZ$-system. 

In addition, there are many examples of wild monoids in the literature, see for instance \cite{AraGoo15SgpFor,AraGoo17RealProbAtiy, Ruz2018}. We have highlighted in the diagram the countable wild monoid $W$ studied in \cite{AraGoo17RealProbAtiy}. It is shown in that paper that $W$ is realizable by a regular $K$-algebra if and only if $K$ is a countable field.
(Note that the realization results in the present paper do not depend on the field.)

\section{Realizing larger families}
\label{sect:realizing-larger-families}

The next step in the programme is the realization of all countable tame refinement monoids. Note that all primely generated refinement monoids are tame (see \cite[Theorem 4.6]{AraPar16Isr}), and that we have obtained in \autoref{thm:charac-super-adaptable} precise information on the class of direct limits of finitely generated refinement monoids that give rise to super adaptable refinement monoids.

Looking at the complement of the class of all primely generated refinement monoids within the class of tame refinement monoids, we find a very classical class of monoids: dimension monoids. By definition, a dimension monoid is a cancellative, unperforated conical refinement monoid. As stated in \autoref{thm:Effros-Handel-Shen-Grillet}, these monoids were characterized in \cite{EHS1980,Grillet1976} as the directed limits of simplical monoids $(\ZZ^+)^n$. The following lemma, characterizing the dimension monoids which are primely generated, is elementary.

\begin{lma}
\label{lma:dimension-primely-generated}
The primely generated dimension monoids are exactly the free commutative monoids.  
\end{lma}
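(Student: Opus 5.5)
The plan is to prove both inclusions directly, using the facts about primes in separative monoids. First, a free commutative monoid $(\ZZ^+)^{(X)}$ (possibly on a countably infinite set $X$) is clearly cancellative, unperforated, conical and has refinement. Its prime elements are exactly the basis vectors $e_x$. Indeed, if $e_x\le a+b$, then the $x$-coordinate of $a$ or of $b$ is at least $1$. So every nonzero element is a sum of basis vectors, hence of primes, and the monoid is primely generated.

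For the converse, let $M$ be a primely generated dimension monoid.

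\textbf{Step 1: primes are free and pairwise incomparable.} Cancellativity together with conicality implies that $M$ is antisymmetric. If $x\le y\le x$, write $y=x+a$ and $x=y+b$; then $x=x+a+b$, cancellation gives $a+b=0$, and conicality gives $a=b=0$. Hence $\equiv$ is equality, $\ol{M}=M$, and $M$ is primitive. Cancellativity also rules out regular primes: $p=p+p$ forces $p=0$. So every prime is free. Now suppose $q\le p$ for two primes with $q\neq p$. In the primitive monoid this means $p=p+q$ (cf. \autoref{pgr:IM}), and cancelling $p$ yields $q=0$, a contradiction. Thus distinct primes are incomparable, and $M$ corresponds to the pair $(\mathbb P(M),\lhd)$ with $\lhd$ empty.

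\textbf{Step 2: conclude $M$ is free.} Apply the correspondence in \autoref{pgr:PrimMonod}: $M\cong M(\mathbb P,\lhd)$, which is the free commutative monoid on $\mathbb P(M)$ since no relations are imposed. This is exactly the desired conclusion.

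The only point needing care is the use of \cite[Proposition~3.5.2]{Pierce} through \autoref{pgr:PrimMonod}. If I want to avoid it, I would instead show uniqueness of prime decompositions directly. Suppose $\sum_i p_i=\sum_j q_j$. Since $p_1$ is prime, $p_1\le q_j$ for some $j$, and incomparability of distinct primes gives $p_1=q_j$. Cancelling both sides and inducting on the length of the sum gives uniqueness. This cancellation-based induction is the main step, and it is routine. Unperforation is not actually needed.
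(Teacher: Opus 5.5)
Your proposal is correct and follows the paper's proof. The paper likewise argues that cancellativity plus conicality makes $M$ antisymmetric, hence primitive; cancelling $p$ in $p=p+q$ then forces the relation $\lhd$ to be empty, so $M\cong M(\mathbb P(M),\lhd)$ is free by the correspondence in \autoref{pgr:PrimMonod}. Your additional details are all sound and fill in what the paper leaves implicit: excluding regular primes, checking the converse for free monoids, and the Pierce-free uniqueness-of-decomposition argument, which needs neither refinement nor unperforation.
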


\begin{proof}
Let $M$ be a cancellative primely generated conical refinement monoid. Note that $M$ is necessarily antisymmetric, hence $M$ is a primitive monoid. By the characterization of primitive monoids (see \autoref{pgr:PrimMonod}), $M\cong M(D,\lhd)$, where $\lhd$ is a transitive antisymmetric relation on the set $D$, and $M(D,\lhd)$ is the quotient of the free commutative monoid on $D$ by the congruence generated by the relations $p=p+q$ for each $p,q\in D$ such that $q\lhd p$.
Since the elements of $D$ represent different primes in $M(D,\lhd)$, which is cancellative by hypothesis, it follows that $\lhd$ is the empty relation. Hence $M\cong M(D,\lhd)$ is the free commutative monoid on $D$.
    \end{proof}

We see from \autoref{lma:dimension-primely-generated} that the class of countable primely generated dimension monoids is very thin. The Bratteli diagram associated to such monoids cannot have neither bifurcations nor multiplicities. By the seminal work of Elliott \cite{Ell76Real}, all countable dimension monoids are realizable by von Neumann regular rings and by real rank zero $\mathrm{C}^*$-algebras. Hence this is a class of realizable tame refinement monoids which is not covered by the methods of the present paper. 

Note that countable tame refinement monoids are obtained by replacing simplicial monoids, which are the cancellative finitely generated conical refinement monoids, with general finitely generated conical refinement monoids as the monoids showing up in a direct limit representation. We have a good understanding of the constituents, thanks to the combinatorial structure of a finitely generated $I$-system from \cite{AraPar16Isr}. What remains to be explored is how to make a suitable analogue with the case of dimension monoids, yielding a combinatorial understanding of the morphisms between two such monoids, as follows.

\begin{prbl}
    \label{prbl:morphisms}
    Given two finitely generated conical refinement monoids $M$ and $N$, describe the monoid homomorphisms $f\colon M\to N$ combinatorially in terms of the associated finitely generated $I$-systems. 
\end{prbl}

For instance if $M=(\ZZ^+)^n$ and $N=(\ZZ^+)^m$ are two simplical monoids, then the 
corresponding $I$-systems can be identified with the finite sets $[1,n]$ and $[1,m]$
consisting of $n$ and $m$ free minimal primes respectively. A homomorphism
$f\colon M\to N$ can then be described in terms of a bipartite directed graph joining the
points in $[1,n]$ to the points in $[1,m]$ by edges corresponding to the non-negative integers 
appearing in the matrix associated to $f$ in the canonical basis. To answer \autoref{prbl:morphisms}, one would need to generalize this combinatorial picture to 
general homomorphisms between finitely generated conical refinement monoids. This would be a first step in the
construction of appropriate morphisms between von Neumann regular algebras realizing the building blocks, which we know exist thanks to the results in \cite{AraBosPar20Sel}.


Going to the opposite extreme in the family of tame refinement monoids, let us now consider some known cases of regular refinement monoids. Recall from \cite[Lemma 2.1]{GooParWeh05Semi} that a commutative monoid $M$ is regular if and only if it is a (join-)semilattice of abelian groups. Moreover, by \cite[Theorem 3.2]{GooParWeh05Semi}, the regular monoid $M$ is a refinement monoid if and only if its semilattice of idempotents is distributive and  $M$ satisfies the Mayer-Vietoris property (MVP) of \cite[p. 7]{GooParWeh05Semi}. When all associated abelian groups are trivial, then the regular refinement monoids are just the distributive semilattices.
Goodearl and Wehrung show in \cite[Theorem 6.6]{GooWeh01RepDist} that every distributive semilattice is the direct limit of finite Boolean semilattices and semilattice homomorphisms. Here a finite Boolean semilattice is a semilattice isomorphic to the Boolean 
algebra ${\bf 2}^n$ of subsets of an $n$-element set. Note that, in terms of the associated $I$-system, the Boolean semilattice 
${\bf 2}^n$ has a very simple structure, since all primes are minimal regular primes, and the associated groups are all trivial.
Nevertheless, we can express any distributive semilattice as a direct limit of such very simple building blocks, using arbitrary semilattice homomorphisms between them. 
On the other hand, Pudl\'ak's lemma \cite[Fact 4, p. 100]{Pudlak1985} states that every distributive semilattice is the {\it directed union} of its finite distributive semilattices. Here the building blocks are general antisymmetric finitely generated regular refinement monoids (so that all associated groups are trivial), but the connecting homomorphisms are injective. This also shows that injectivity of the connecting homomorphisms is not enough to guarantee that the limit object is primely generated, since there are countable distributive semilattices which are not primely generated.

In \cite{GooParWeh05Semi}, the authors generalize \cite[Theorem 6.6]{GooWeh01RepDist} by considering slightly larger groups in the associated $I$-systems of the building blocks. They consider building blocks $M$ which are finite direct sums of monoids of the form 
$\ZZ_n\sqcup \{0\}$ for $n\ge 1$, and they obtain in \cite[Theorem 6.4]{GooParWeh05Semi} an internal characterization of the monoids which are direct limits of this family of building blocks. Observe that all primes of these building blocks are minimal and regular, and the associated groups are just finite cyclic groups. Moreover, they show in \cite[Theorem 7.2]{GooParWeh05Semi} that all these monoids can be realized as the $V$-monoids of the family of Cuntz limits. Here a {\it Cuntz limit} is a $\mathrm{C}^*$-algebra which can be written as a sequential direct limit of finite direct products of algebras of matrices $M_n (\mathcal O _m)$, $1\le n<\infty, 2\le m<\infty$, where $\mathcal O _m$ is the Cuntz $\mathrm{C}^*$-algebra. This result was extended in \cite{ParWeh06Semi} to cover the case where the building blocks can also contain a summand of the form $\ZZ\sqcup \{0\}$, so that every building block is a finite direct sum of monoids $\ZZ_n\sqcup \{0\}$, where $n\ge 0$. The involved $\mathrm{C}^*$-algebras are the {\it extended Cuntz limits}; see \cite{ParWeh06Semi} for details. 
In particular these authors obtain a realization result by $\mathrm{C}^*$-algebras of real rank zero of a large class of regular refinement monoids, 
which lies beyond the class of primely generated regular conical refinement monoids. Observe however that this class does not exhaust all primely generated regular conical refinement monoids: If $M$ is a primely generated regular conical refinement monoid such that some of the homomorphisms between the associated groups in the $I$-system of $M$ is not injective, then, by \cite[Theorem 6.6]{ParWeh06Semi}, $M$ is not a direct limit of finite direct sums of monoids of the form 
$\ZZ_n\sqcup \{0\}$, for $n\ge 0$.

In view of the above results, we propose here the following open question, which is a special case of the realization problem for tame refinement monoids by
von Neumann regular rings.

\begin{prbl}
    \label{prbl:Cuntz-limit}
Let $M$ be a sequential direct limit of monoids which are finite direct sums of monoids of the form 
$\ZZ_n\sqcup \{0\}$, for $n\ge 0$. Let $K$ be a field. Is then $M$ realizable by a von Neumann regular $K$-algebra?
  \end{prbl}

Note that in \autoref{prbl:Cuntz-limit} all building blocks are graph monoids, but the direct limit is not necessarily
 a graph monoid. Indeed, by \autoref{thm:charac-regular-primely-generated}, the direct limit will be a graph monoid exactly when the monoid $M$ is primely generated.  


\section{Monoid models}

Concerning the realization project, one can wonder whether we can obtain a larger class of realizable monoids by considering graph monoids of arbitrary countable graphs, which do not need to be row-finite. However there is a process which associates to each countable graph $E$ another graph $F$ which is a countable row-finite graph, called a {\it desingularization} of $E$. It is shown in \cite{DriTom2005} that the graph $\mathrm{C}^*$-algebras $C^*(E)$ and $C^*(F)$ are strongly Morita-equivalent.
Similarly, it is shown in \cite{AbrAran2008} that the Leavitt path algebras $L_K(E)$ and $L_K(F)$ are Morita-equivalent. Hence the associated $V$-monoids are identical. Thus, we can restrict ourselves to the consideration of row-finite graphs.

A complete monoid-theoretic characterization of graph monoids, that is, monoids of the form $\mathcal M (E)$
for a countable row-finite graph $E$, is still missing:

\begin{prbl}
\label{prbl:charactrize-garph-monoids}
Find a monoid-theoretic characterization of graph monoids within the class of tame refinement monoids. 
Are all primely generated graph monoids super adaptable?
\end{prbl}

We know from \autoref{exa:GraphNotPG} that there are graph monoids which are not primely generated. Indeed there is a plethora of such examples. By \cite{Drinen} all countable dimension monoids are graph monoids, however by \autoref{lma:dimension-primely-generated} only the  trivial examples of dimension monoids are primely generated. On the other hand, all regular graph monoids are primely generated by \autoref{thm:charac-regular-primely-generated}. 

In a slightly different direction, it was shown in \cite[Corollary~7.6]{AraBosParSims21IMRN} that any finitely generated conical refinement monoid admits an inverse semigroup model in the following sense: Given any such monoid $M$, there exists an inverse semigroup $S$ such that its tight groupoid $\mathcal{G}:=\mathcal{G}_{\text{tight}}(S)$ is such that $M\cong {\rm Typ}(\mathcal{G})$, where ${\rm Typ}(\mathcal{G})$ is the type monoid of $\mathcal{G}$. More generally, the same is true for any monoid of the form $\mathcal{M}(E,C)$ for an adaptable graph $(E,C)$ \cite[Theorem~7.5]{AraBosParSims21IMRN}. Indeed, an explicit inverse semigroup $S(E,C)$ is associated to each adaptable separated graph $(E,C)$ in \cite[Section 2]{AraBosParSims21IMRN} and then it is shown in \cite[Theorem 7.5]{AraBosParSims21IMRN} that 
$$\mathcal M (E,C)\cong {\rm Typ} (\mathcal G_{\text{tight}}(S(E,C))).$$
We expect the same to also hold for super adaptable monoids by following the techniques from \cite{AraBosParSims21IMRN}. The first part of the following problem appeared in \cite[Remark~7.7]{AraBosParSims21IMRN}, and is currently being studied by some of the authors:

\begin{prbl}\label{prbl:saGroupoid}
    Let $M$ be a finitely generated conical refinement monoid, let $K$ be a field, and let $A_K(\mathcal{G})$ be the Steinberg algebra of the groupoid above (for an adaptable graph $(E,C)$ such that $M\cong \mathcal M (E,C)$). Is it true that $M\cong V(A_K (\mathcal{G}))$?
    In case this is true, can we extend the result to all countable super adaptable monoids, using a suitable definition of the inverse semigroup $S(E,C)$?
\end{prbl}

More generally, one could ask:
\begin{prbl}\label{prbl:RealGroupoid}
    For which (countable) conical refinement monoids $M$ does there exist an inverse semigroup $S$ such that $M\cong {\rm Typ}(\mathcal{G}_{\text{tight}}(S))$? For such monoids, does one have $M\cong V(A_K(\mathcal{G}_{\text{tight}}(S)))$ for a suitable $S$?
\end{prbl}

Indeed, using known results in the literature, we can provide a positive answer to the first part of \autoref{prbl:RealGroupoid}, although the inverse semigroups appearing in this approach are not explicitly described. The second part of \autoref{prbl:RealGroupoid} is open even for finitely generated conical refinement monoids.

Let $M$ be a countable conical refinement monoid.
By \cite[Theorem 4.8.9]{weh2017} (see also \cite[Theorem 4.26]{Ara26:Intro}) there exists a locally compact Hausdorff second countable zero-dimensional space $X$ and an action of a countable discrete group $G$ by homeomorphisms on $X$ such that $M\cong \text{Typ}(X,G,\mathbb K)$. Let $\mathcal G$ be the transformation groupoid associated to the action of $G$ on $X$. Observe that $\mathcal G$ is a second countable ample Hausdorff groupoid. Let $S$ be the ample semigroup of $\mathcal G$, consisting of all the compact open bisections of $\mathcal G$. Then $S$ is an inverse semigroup and by \cite[Theorem 4.8]{Exel2010}, 
$$\mathcal G_{\text{tight}}(S)\cong \mathcal G.$$
Hence 
$$\text{Typ}(\mathcal G_{\text{tight}}(S)) \cong \text{Typ} (\mathcal G)\cong \text{Typ} (X,G,\mathbb K) \cong M$$
as desired.


\section{\texorpdfstring{The $\mathrm{C}^*$-realization problem}{The C*-realization problem}}

As stated in \autoref{subsec:RR0}, the realization problem for real rank zero \ca{s} (\autoref{qstIntro:RealRR0}) has deep connections to many other open problems in the field. Currently, the techniques developed for regular rings do not seem to have a direct analytic analogue. At its core, the approach to \autoref{qstIntro:RealRR0} ---perhaps restricted to certain classes of monoids--- should mimic the strategy taken in this paper, that is,
\begin{enumerate}
    \item find combinatorial models for the monoids;
    \item prove that for such models there exists a (possibly not real rank zero) \ca{} associated to the model whose Murray-von Neumann semigroup agrees with the monoid;
    \item carefully perform operations on your \ca{} to make it real rank zero whilst keeping its Murray-von Neumann semigroup.
\end{enumerate}

For (1), \autoref{thm:MondToGraph} does this for super adaptable monoids, giving a separated graph model $(E,C)$. \autoref{prbl:saGroupoid} would do the same with the tight groupoid $\mathcal{G}_{\text{tight}}(S)$ associated to an inverse semigroup $S$ instead. To each of these objects one can naturally associate \ca{s} $C^*(E,C)$ \cite[Definition~1.5]{AraGoo11SepGra} and $C^*(\mathcal{G}_{\text{tight}}(S))$ respectively. Thus, one can ask:

\begin{prbl}[{\cite[Problem~7.6]{AraGoo11SepGra}}]
\label{prbl:Cstarmonoid}    Let $(E,C)$ be a separated graph. Is it true that $V(C^*(E,C))\cong \mathcal{M}(E,C)$?
\end{prbl}

In the case of non-separated graphs, the answer is affirmative, because the natural algebra $\ast$-monomorphism $\iota:L_\mathbb{C}(E)\hookrightarrow C^*(E)$ induces a monoid isomorphism from $V(L_\mathbb{C}(E))$ onto $V(C^*(E))$; see \cite[Theorem 7.1]{AraMorPar07NonStab}. Unfortunately, the arguments used there do not apply for general separated graphs.

\begin{prbl}[{\cite[Remark~7.7]{AraBosParSims21IMRN}}]
 \label{prbl:Cstar-groupoid-monoid}   Let $M$ be a finitely generated conical refinement monoid, and let $C^*(\mathcal{G})$ be the \ca{} of the groupoid from \cite[Corollary~7.6]{AraBosParSims21IMRN} (which is amenable). Is it true that $M\cong V(C^* (\mathcal{G}))$? The same can be asked for general super adaptable monoids if the answer to \autoref{prbl:saGroupoid} is positive. 
\end{prbl}

In fact, the answer could be related to Problems \ref{prbl:saGroupoid} and \ref{prbl:RealGroupoid} if we would be able to characterize for which ample groupoids $\mathcal{G}$ the natural algebra $\ast$-homomorphism $\iota: A_\mathbb{C}(\mathcal{G}) \rightarrow C^*(\mathcal{G})$ induces a monoid isomorphism from $V(A_\mathbb{C}(\mathcal{G}))$ onto $V(C^*(\mathcal{G}))$.

We now observe that we cannot expect a general affirmative answer to the question of whether the natural $*$-homomorphism from a complex $*$-algebra $A$ to its enveloping $C^*$-algebra $B$ induces a monoid isomorphism $V(A)\cong V(B)$, even when that $*$-homomorphism is injective (which is the case for the $*$-algebras from the above paragraph). 

An example showing the kind of obstruction that could appear is the class of irrational rotation algebras \cite{DavidsonBook1996}. Given any irrational number $\theta\in [0,1)$, the irrational rotation algebra $A_\theta$ is the universal $C^*$-algebra generated by two unitaries $u,v$ subject to the relation $vu=e^{2\pi\theta} uv$. This algebra is known to be a simple, nuclear, real rank zero, stable rank one algebra with $K_0(A_\theta)=\mathbb{Z}+\theta^{-1}\mathbb{Z}$ --a rank two dense subgroup of $\mathbb{R}$--, and monoid $V(A_\theta)=(\mathbb{Z}+\theta^{-1}\mathbb{Z})\cap \mathbb{R}^+$. But $A_\theta$ contains a dense complex $\ast$-subalgebra $R_\theta$ --known as the McConnell-Pettit algebra \cite{McConnellPettit1988}-- which is a simple, hereditary noetherian domain of stable rank two with $K_0(R_\theta)=\mathbb{Z}$ whose monoid is non-cancellative. None of them is a separated graph algebra, and while there are groupoid models of the irrational rotation algebra, the McConell-Pettit algebra cannot be the Steinberg algebra of any groupoid, as it is a domain which is not a group algebra, since it does not admit an augmentation homomorphism.

An example that might be relevant for \autoref{prbl:Cstar-groupoid-monoid}
is as follows. Let $A_n=\mathbb C [t_1^\pm,\dots ,t_n^\pm]$ be the algebra of Laurent polynomials in commuting variables $t_1,\dots ,t_n$. Then $V (A_n)=\ZZ^+$ and all finitely generated projective 
$A_n$-modules are free, by \cite[Corollary V.4.10]{Lam06Serre}. However, the algebra $B_n=C(\mathbb T^n)$ of continuous functions on the $n$-torus $\mathbb T^n$ is a completion of $A_n$, and $K_0(B_n)= \ZZ^{2^{n-1}}$. Hence, for $n>1$, the natural map
$V(A_n)\to V(B_n)$ cannot be an isomorphism. Note that $A_n=\mathbb C [\ZZ^n]$ is a Steinberg algebra and $B_n=C^*(\ZZ^n)$ is a group(oid) $C^*$-algebra.

Whilst we expect the answer to \autoref{prbl:Cstarmonoid} to be positive, the answer to \autoref{prbl:Cstar-groupoid-monoid}, in the ligth of the above example, is likely to be negative.

To do step (3), one would need to develop a theory resembling some form of universal localization\footnote{Several steps in such a theory would necessarily need to be different, as one cannot simply invert elements. More concretely, given a unital \ca{} $B$ and a unital subalgebra $A\subseteq B$, an element $a\in A$ is invertible in $A$ whenever it is in $B$.}. Recall that, in our case, we consider a universal localization of $\mathcal S_K (E,C)$ to obtain the regular algebra $Q_K (E,C)$ (\autoref{dfn:regular-algebra}). In particular, this process provides an embedding $\mathcal S_K (E,C)\to Q_K (E,C)$ which, conjecturally, is an isomorphism at the level of the Murray-von Neumann semigroup. In light of this, one can ask:

\begin{prbl}
\label{prbl:Cstar-RR0-embedding}
    Let $A$ be $C^*(E,C)$ for some separated graph $(E,C)$. When does there exist a real rank zero \ca{} $B$ and a $^*$-homomorphism $\iota\colon A\to B$ such that $V(\iota)\colon V(A)\to V(B)$ is an isomorphism? The same can be asked about the groupoid \ca{} in the previous questions.
\end{prbl}

\providecommand{\bysame}{\leavevmode\hbox to3em{\hrulefill}\thinspace}
\providecommand{\MR}{\relax\ifhmode\unskip\space\fi MR }
\providecommand{\MRhref}[2]{%
  \href{http://www.ams.org/mathscinet-getitem?mr=#1}{#2}
}
\providecommand{\href}[2]{#2}

\printindex[terms]
\printindex[symbols]

\end{document}